\renewcommand{\tocsection}[3]{%
  \indentlabel{\@ifnotempty{#2}{\bfseries\ignorespaces#1 #2\quad}}\bfseries#3}
\renewcommand{\tocsubsection}[3]{%
  \indentlabel{\@ifnotempty{#2}{\ignorespaces#1 #2\quad}}#3}
\newcommand\@dotsep{4.5}
\def\@tocline#1#2#3#4#5#6#7{\relax
  \ifnum #1>\c@tocdepth 
  \else
    \par \addpenalty\@secpenalty\addvspace{#2}%
    \begingroup \hyphenpenalty\@M
    \@ifempty{#4}{%
      \@tempdima\csname r@tocindent\number#1\endcsname\relax
    }{%
      \@tempdima#4\relax
    }%
    \parindent\z@ \leftskip#3\relax \advance\leftskip\@tempdima\relax
    \rightskip\@pnumwidth plus1em \parfillskip-\@pnumwidth
    #5\leavevmode\hskip-\@tempdima{#6}\nobreak
    \leaders\hbox{$\m@th\mkern \@dotsep mu\hbox{.}\mkern \@dotsep mu$}\hfill
    \nobreak
    \hbox to\@pnumwidth{\@tocpagenum{\ifnum#1=1\bfseries\fi#7}}\par
    \nobreak
    \endgroup
  \fi}
\renewcommand\csname r@tocindent0\endcsname{0pt}
\def\l@subsection{\@tocline{2}{0pt}{2.5pc}{5pc}{}}
\DeclareMathOperator{\lspan}{span}                          
\DeclareMathOperator{\conv}{conv}                           
\DeclareMathOperator{\supp}{supp}                           
\DeclareMathOperator{\diam}{diam}                           
\DeclareMathOperator{\rad}{rad}                             
\DeclareMathOperator{\ext}{ext}                             
\DeclareMathOperator{\Lip}{Lip}                             
\DeclareMathOperator{\sgn}{sgn}                     
\newcommand{\NN}{\mathbb{N}}             
\newcommand{\ZZ}{\mathbb{Z}}             
\newcommand{\RR}{\mathbb{R}}             
\newcommand{\abs}[1]{\left|{#1}\right|}                     
\newcommand{\pare}[1]{\left({#1}\right)}                    
\newcommand{\set}[1]{\left\{{#1}\right\}}                   
\newcommand{\norm}[1]{\left\|{#1}\right\|}                  
\newcommand{\dual}[1]{{#1}^\ast}                            
\newcommand{\ddual}[1]{{#1}^{\ast\ast}}                     
\newcommand{\ball}[1]{B_{{#1}}}                             
\newcommand{\sphere}[1]{S_{{#1}}}                           
\newcommand{\duality}[1]{\left<{#1}\right>}                 
\newcommand{\cl}[1]{\overline{#1}}                          
\newcommand{\wscl}[1]{\overline{#1}^{\dual{w}}}             
\newcommand{\weaks}{\textit{w}$^\ast$}                      
\newcommand{\wsconv}{\stackrel{\dual{w}}{\longrightarrow}}  
\newcommand{\ucl}[1]{\overline{#1}^{\mathcal{U}}}           
\newcommand{\restrict}{\mathord{\upharpoonright}}           
\newcommand{\lipfree}[1]{\mathcal{F}({#1})}                 
\newcommand{\lipnorm}[1]{\norm{#1}_L}                       
\newcommand{\ideal}[1]{\mathcal{I}({#1})}                   
\newcommand{\pos}[1]{#1^+}                                  
\newcommand{\opint}{\mathcal{L}}                            
\newcommand{\meas}[1]{\mathcal{M}({#1})}                    
\newcommand{\measz}[1]{\mathcal{M}_0({#1})}                 
\newcommand{\ucomp}[1]{#1^\mathcal{U}}                      
\newcommand{\rcomp}[1]{{#1}^\mathcal{R}}                    
\newcommand{\esupp}[1]{\mathcal{S}(#1)}                     
\newcommand{\wop}[1]{\mathcal{T}_{#1}}                      
\newcommand{\kaltonsum}[1]{{#1}_s}                          
\newcommand{\bidualfree}[1]{\dual{\Lip_0({#1})}}            
\theoremstyle{plain}
\newtheorem{theorem}{Theorem}[section]
\newtheorem{lemma}[theorem]{Lemma}
\newtheorem{corollary}[theorem]{Corollary}
\newtheorem{proposition}[theorem]{Proposition}
\newtheorem*{claim*}{Claim}
\theoremstyle{definition}
\newtheorem*{definition*}{Definition}
\newtheorem{definition}[theorem]{Definition}
\newtheorem{example}[theorem]{Example}
\newtheorem{question}{Question}
\theoremstyle{remark}
\newtheorem{remark}[theorem]{Remark}
\numberwithin{equation}{section}
\begin{document}

\title[Integral representation of functionals on Lipschitz spaces]{Integral representation and supports of functionals on Lipschitz spaces}

\author[R. J. Aliaga]{Ram\'on J. Aliaga}
\address[R. J. Aliaga]{Instituto Universitario de Matem\'atica Pura y Aplicada, Universitat Polit\`ecnica de Val\`encia, Camino de Vera S/N, 46022 Valencia, Spain}
\email{raalva@upvnet.upv.es}

\author[E. Perneck\'a]{Eva Perneck\'a}
\address[E. Perneck\'a]{Faculty of Information Technology, Czech Technical University in Prague, Th\'akurova 9, 160 00, Prague 6, Czech Republic}
\email{perneeva@fit.cvut.cz}

\date{} 


\begin{abstract}
We analyze the relationship between Borel measures and continuous linear functionals on the space $\mathrm{Lip}_0(M)$ of Lipschitz functions on a complete metric space $M$. In particular, we describe continuous functionals arising from measures and vice versa. In the case of weak$^\ast$ continuous functionals, i.e. members of the Lipschitz-free space $\mathcal{F}(M)$, measures on $M$ are considered. For the general case, we show that the appropriate setting is rather the uniform (or Samuel) compactification of $M$ and that it is consistent with the treatment of $\mathcal{F}(M)$. This setting also allows us to give a definition of support for all elements of $\mathrm{Lip}_0(M)^\ast$ with similar properties to those in $\mathcal{F}(M)$, and we show that it coincides with the support of the representing measure when such a measure exists. We deduce that the members of $\mathrm{Lip}_0(M)^\ast$ that can be expressed as the difference of two positive functionals admit a Jordan-like decomposition into a positive and a negative part.
\end{abstract}

\subjclass[2010]{Primary 46B20, 46E27; Secondary 46B40, 46B10}

\keywords{Integral representation, Jordan decomposition, Lipschitz-free space, Lipschitz realcompactification, majorizable functional, Radon measure, real-valued Lipschitz function, support, uniform compactification}

\maketitle


\tableofcontents

\section{Introduction}

Let $(M,d)$ be a complete metric space and denote by $\Lip(M)$ the space of all real-valued Lipschitz functions on $M$, i.e. those $f:M\rightarrow\RR$ whose (optimal) Lipschitz constant
$$
\lipnorm{f}=\sup\set{\frac{\abs{f(x)-f(y)}}{d(x,y)}:x\neq y\in M}
$$
is finite. Further, select a base point $0\in M$ (such $M$ is called \textit{pointed}) and let $\Lip_0(M)$ be the space of those $f\in\Lip(M)$ such that $f(0)=0$. Then $(\Lip_0(M),\lipnorm{\,\cdot\,})$ is a dual Banach space referred to as \textit{Lipschitz space}. The space
$$
\lipfree{M}=\cl{\lspan}\set{\delta(x):x\in M}\subset\dual{\Lip_0(M)} ,
$$
where $\delta(x)\in\dual{\Lip_0(M)}$ is the evaluation functional on $x\in M$, usually receives the name \textit{Lipschitz-free space over $M$} (or, more rarely, \textit{Arens-Eells space}). It has the following fundamental properties:
\begin{itemize}
\item It contains an isometric copy $\delta(M)$ of $M$ that is linearly dense.
\item It is the canonical predual of $\Lip_0(M)$ (it is in fact conjectured to be its only predual, although this has only been proved under additional conditions, e.g. when $M$ is bounded or a Banach space \cite{Weaver_2018}).
\item It satisfies the following extension property: any Lipschitz mapping from $M$ into a Banach space $X$ can be extended to a bounded linear operator from $\lipfree{M}$ into $X$ (insofar as $M$ is identified with its copy $\delta(M)$ in $\lipfree{M}$). In particular, any Lipschitz mapping between two metric spaces $M$ and $N$ can be extended to a bounded linear operator between the Lipschitz-free spaces $\lipfree{M}$ and $\lipfree{N}$. Moreover, the norm of the operator is equal to the Lipschitz constant of the original mapping.
\end{itemize}
As an immediate application of the extension property, one can dismiss the existence of bi-Lipschitz homeomorphisms between two metric spaces by proving that their Lipschitz-free spaces are not isomorphic to each other. If we take $M$ to be a Banach space with the metric induced by its norm, the extension property leads to plenty of deeper applications to the study of the nonlinear geometry of Banach spaces. Some of the most celebrated ones were proved in the paper \cite{GoKa_2003} by Godefroy and Kalton, such as the following:
\begin{itemize}
\item The bounded approximation property of Banach spaces is stable under bi-Lipschitz homeomorphisms.
\item If a Banach space contains an isometric copy of another separable Banach space, then it actually contains a \textit{linearly} isometric copy.
\end{itemize}
In the wake of these results, Lipschitz-free spaces have been the subject of very active research in the last two decades. See for instance Weaver's monograph \cite{Weaver2} for a detailed analysis of their linear, algebraic and order properties, and Godefroy's paper \cite{Godefroy_2015} for a survey of their applications in nonlinear functional analysis.

If $K$ is a closed subset of $M$, it follows from the extension property that $\lipfree{K}$ can be identified with the subspace of $\lipfree{M}$ generated by the evaluations on points of $K$. In our previous work \cite{AlPe_2020}, we established an intuitive but hitherto unnoticed fact: the intersection of Lipschitz-free spaces $\lipfree{K_i}$ over subsets $K_i\subset M$ is exactly the space $\lipfree{\bigcap K_i}$ (this was first proved for bounded $M$, then extended to the general case in \cite{APPP_2020}). This intersection property allows us to give a natural definition of the \textit{support} $\supp(m)$ of an arbitrary element $m$ of $\lipfree{M}$. Several equivalent definitions of this object are described in \cite{APPP_2020}. Let us state a very illustrative one: \textit{$\supp(m)$ is the smallest closed set with the property that $\duality{m,f}=\duality{m,g}$ whenever $f,g\in\Lip_0(M)$ coincide on $\supp(m)$}.

Notice the strong similarity between the supports defined thusly and the usual notion of support of a measure on $M$. This, together with the fact that measures also act (by integration) as functionals on the space $\Lip_0(M)$, makes it natural to analyze the relationship between both concepts of support, framed within a more general analysis of the analogy between $\lipfree{M}$ and spaces of measures on $M$.

Let us briefly report on some previous related results to be found in the existing literature. Godefroy and Kalton state in \cite[p. 123]{GoKa_2003} that finite Borel measures on $M$ with compact supports can be identified with elements of $\lipfree{M}$, even as Bochner integrals. On the other hand, Weaver shows in \cite[Theorem 3.19]{Weaver2} that not all elements of $\lipfree{M}$ correspond to such measures when $M$ is compact and infinite. In \cite[Proposition 2.7]{AmPu_2016}, Ambrosio and Puglisi show that finite Borel measures on $M$ with finite first moment induce elements of $\lipfree{M}$ and conversely, any positive element $m\in\lipfree{M}$, i.e. such that $\duality{m,f}\geq 0$ for any $f\geq 0$, can be represented by a Borel measure $\mu$ on $M$ (we warn the reader that this statement contains a typo according to which $\mu$ is always a probability measure, which is not true as $\mu$ does not even have to be finite -- see Remark \ref{rm:ambrosio_puglisi} below). The paper \cite{HiWo_2009} by Hille and Worm is also worth mentioning, wherein the authors do not analyze $\Lip_0(M)$ but rather the related space $\Lip_e(M)=\Lip_0(M)\oplus_1\RR$ and the space $\mathrm{BL}(M)$ of bounded Lipschitz functions with norm $\norm{f}=\lipnorm{f}+\norm{f}_\infty$. Both spaces are shown to admit preduals whose positive elements can be represented by finite Borel measures on $M$.

In the present paper, we carry out a comprehensive analysis of the relationship between measures and elements of $\lipfree{M}$ for a general complete metric space $M$ extending the studies mentioned in the previous paragraph. In particular, we give characterizations for those elements of $\lipfree{M}$ that can be represented as a Radon or (not necessarily finite) Borel measure on $M$ and vice versa, and show that the supports agree up to the base point. Moreover, we identify those metric spaces $M$ for which every element of $\lipfree{M}$ can be represented by a Borel measure.

We also consider the \textit{majorizable} elements of $\lipfree{M}$, i.e. those that can be expressed as a difference between two positive elements. Up to some technicalities, they can be represented by measures. This allows us to deduce that any majorizable element of $\lipfree{M}$ admits a canonical optimal decomposition as a difference of two positive elements, similar to the Jordan decomposition for measures. That is, it is possible to identify its ``positive part'' and its ``negative part''.

More importantly, we are able to generalize this whole analysis to the bidual $\ddual{\lipfree{M}}=\dual{\Lip_0(M)}$. One key issue to be solved here is determining an appropriate base space for the measures. This space cannot be $M$ as it is well known that the evaluations on certain elements of the Stone-\v{C}ech compactification $\beta M$ are also elements of $\dual{\Lip_0(M)}$ (see e.g. \cite[p. 36]{Weaver1}). It is tempting to consider measures on $\beta M$, but this leads to an unsatisfying theory because $\beta M$ is ``too big'' and Lipschitz functions on $M$ do not separate points of $\beta M$ in general. Instead, the correct choice is the lesser-known \textit{Samuel} or \textit{uniform compactification} $\ucomp{M}$, that can be interpreted as the smallest compactification of $M$ that allows extension of all bounded Lipschitz functions. The use of this object for the study of Lipschitz and Lipschitz-free spaces has been pioneered by Weaver in \cite{Weaver2} (see Chapter 7 therein).

Once the correct setting has been chosen, it is possible to generalize the techniques and arguments employed for $\lipfree{M}$ to $\bidualfree{M}$. In fact, we derive most results for $\lipfree{M}$ as particular cases of the results for $\bidualfree{M}$. It is essential in our arguments to consider measures on $M$ to be just particular cases of measures on $\ucomp{M}$ that are concentrated on $M$. Since $\ucomp{M}$ is compact, the space of Radon measures on $\ucomp{M}$ is a dual Banach space and this allows the use of weak$^\ast$ compactness arguments. Moreover our key technical result, Theorem \ref{th:normal_measure}, states that a functional in $\bidualfree{M}$ represented by a measure on $\ucomp{M}$ is weak$^\ast$ continuous if and only if the measure is concentrated on $M$; this is a vast generalization of \cite[Proposition 2.1.6]{Weaver1}. These facts lie silently but crucially behind our characterizations of measure-induced functionals and all results on majorizable functionals.

On the way to proving our main results, we obtain some new knowledge about the structure of the bidual $\bidualfree{M}$. This space has not received as much attention as the Lipschitz-free space $\lipfree{M}$ and its study has been limited to particular cases (most notably $M=\RR^n$ in \cite{CKK_2019}), but it plays a fundamental role in several open problems on Lipschitz-free spaces. The most prominent one would be determining when $\lipfree{M}$ is complemented in its bidual $\ddual{\lipfree{M}}$, or even L-embedded. The complementability holds for $M=\RR^n$ as C\'uth, Kalenda and Kaplick\'y have shown in \cite{CKK_2019}, and the L-embeddability is satisfied when $M$ is compact and purely 1-unrectifiable (see \cite[Section 3]{AGPP_2021}), but not much is known otherwise. That includes the case of $\lipfree{\ell_1}$, the complementability of which would yield a positive answer to the important open problem whether $\ell_1$ is determined by its Lipschitz structure (see e.g. \cite[Problem 16]{GLZ_2014}). Analyzing the structure of $\bidualfree{M}$ in the general case could conceivably provide insight into these questions.

In our previous work \cite{AP_2020_normality}, we started this analysis by showing that the annular decomposition for elements of $\lipfree{M}$ introduced by Kalton in \cite{Kalton_2004} is also partially valid in $\bidualfree{M}$. Here, we take these ideas further and prove that any functional in $\bidualfree{M}$ can be canonically decomposed into a part that is ``concentrated at infinity'', a part that is ``concentrated at the base point'', and a part that is compatible with Kalton's decomposition. The latter class contains $\lipfree{M}$ and all elements represented by measures.

We also define an \textit{extended support} for functionals in $\bidualfree{M}$ as a certain subset of $\ucomp{M}$ that has similar properties as the support for $\lipfree{M}$.
We make heavy use of the algebraic structure of $\Lip_0(M)$ in order to establish the stronger properties of extended supports, which only hold for functionals that are compatible with Kalton's decomposition. Let us point out that the resulting theory bears some strong similarities with other areas of analysis, in particular with the theory of Schwartz distributions. For instance, compare Proposition \ref{pr:support_derivation} below with \cite[Theorem 6.25]{Rudin_FA}, describing functionals supported on a single point as a combination of evaluations and derivations at said point. Similarly, Corollary \ref{cr:positive_elements_free} below states that all positive elements of $\lipfree{M}$ can be represented as integration against a positive measure, a fact that also holds true for positive distributions (see \cite[Exercise 6.4]{Rudin_FA}).

\subsection{Summary of the paper}

Let us now briefly summarize the contents of this paper.

After this introduction, we gather in Section 2 all the prerequisite facts about Lipschitz-free spaces, including the theory of weighting operators and supports that was developed by the authors in previous papers \cite{AlPe_2020,AP_2020_normality,APPP_2020}, and about Radon measures on metric and compact spaces. We also define the uniform compactification and state its basic properties. Finally, we recall the notion of derivations as functionals on Lipschitz spaces.

In Section 3 we analyze the structure of the space $\bidualfree{M}$. In particular, we study the decomposition of its elements into parts that lie ``at infinity'' and ``away from infinity'' (see Corollary \ref{cor:decomposition_A_infinity}), and the validity of an extension of the concept of support in $\lipfree{M}$ to $\bidualfree{M}$. We find that the properties of such an extended support are somewhat less sharp than those of the previous notion, and they may fail at infinity (see Theorem \ref{th:extended_support_minimal}).

Section 4 is devoted to proving characterizations of those functionals in $\lipfree{M}$, resp. $\bidualfree{M}$, that are represented by measures on $M$, resp. $\ucomp{M}$ (see Theorems \ref{th:induced_elements} and \ref{th:induced_elements_bidual}), and of measures that yield continuous functionals (see Propositions \ref{pr:elements_induced_by_measure} and \ref{pr:elements_induced_by_measure_bidual}). Moreover we show that the concepts of support for measures and for functionals on Lipschitz spaces coincide. As a very important tool for handling $\lipfree{M}$ as a subspace of $\dual{\Lip_0(M)}$ we also show that a measure on $\ucomp{M}$ can only represent a functional in $\lipfree{M}$ if it is concentrated on $M$ to begin with (see Theorem \ref{th:normal_measure}).

Section 5 deals with the majorizable functionals on $\Lip_0(M)$. We prove that all majorizable elements of $\lipfree{M}$ can be expressed by measures on $M$. In $\bidualfree{M}$, the corresponding result only holds for functionals that are ``away from infinity'', and only up to a derivation at the base point (see Theorem \ref{th:majorizable_characterization}). We also show that all majorizable functionals on $\Lip_0(M)$ admit Jordan decompositions (see Corollary \ref{cr:minimal_majorant_general}) and use that fact to define an analog of the total variation for these functionals (see Definition \ref{def:variation}).

In Section 6, we provide a purely metric characterization of metric spaces $M$ for which every element of $\lipfree{M}$ is majorizable or can be represented as a Radon measure (see Corollary \ref{cr:rud}).

Finally, in Section 7 we collect some remarks and unanswered questions.

\section{Preliminaries}

Let us start by describing our notation and recalling basic concepts and facts that will be used throughout the paper. As usual, the closed unit ball of a Banach space $X$ will be denoted by $\ball{X}$, its unit sphere by $\sphere{X}$, and the dual action of $\dual{x}\in\dual{X}$ on $x\in X$ as $\duality{x,\dual{x}}=\dual{x}(x)$. We will write $\lspan(S)$, resp. $\conv(S)$, for the set of finite linear, resp. convex, combinations of elements of a subset $S\subset X$. Only real scalars will be considered.

\subsection{Metric and Lipschitz-free spaces}

In what follows, and unless specified otherwise, $M$ will always denote a complete pointed metric space with metric $d$ and base point $0$. The open ball of radius $r$ around $p\in M$ will be denoted $B(p,r)$. We will also use the notation
\begin{align*}
d(x,A) &= \inf\set{d(x,a):a\in A} \\
d(A,B) &= \inf\set{d(a,b):a\in A,b\in B} \\
\rad(A) &= \sup\set{d(a,0):a\in A} \\
\diam(A) &= \sup\set{d(a,b):a,b\in A}
\end{align*}
for $x\in M$ and $A,B\subset M$; the last two quantities will be called the \textit{radius} and \textit{diameter} of $A$, respectively.

The space of all real-valued Lipschitz functions on $M$ will be denoted $\Lip(M)$, and $\Lip_0(M)$ will consist of all $f\in\Lip(M)$ such that $f(0)=0$. For $f\in\Lip(M)$ we will denote its Lipschitz constant by $\lipnorm{f}$ and by its support we will mean the closed subset of $M$ given by
$$
\supp(f)=\cl{\set{x\in M: f(x)\neq 0}}.
$$
We will frequently use the function $\rho\in\Lip_0(M)$ defined by
$$
\rho(x)=d(x,0)
$$
for $x\in M$; obviously $\lipnorm{\rho}=1$. We shall also use McShane's extension theorem: \textit{if $N\subset M$ then any $f\in\Lip(N)$ admits at least one extension $F\in\Lip(M)$ such that $F\restrict_N=f$, $\lipnorm{F}=\lipnorm{f}$, $\sup F=\sup f$ and $\inf F=\inf f$}. 

It is well known that $\lipnorm{f}$ is a complete norm on $\Lip_0(M)$. For $x\in M$, the evaluation functional $f\mapsto f(x)$ will be denoted by $\delta(x)$. Then $\delta$ is an isometric embedding of $M$ into $\dual{\Lip_0(M)}$. By a \textit{finitely supported} functional on $\Lip_0(M)$ we will mean a finite linear combination of such evaluation functionals, i.e. an element of $\lspan\,\delta(M)$. The closed space generated by them
$$
\lipfree{M}=\cl{\lspan}\,\delta(M)
$$
will be called \textit{Lipschitz-free space} over $M$, and is easily seen to be a predual of $\Lip_0(M)$. The weak$^\ast$ topology induced by $\lipfree{M}$ on $\ball{\Lip_0(M)}$ coincides with the topology of pointwise convergence. It is currently not known whether $\lipfree{M}$ is always the unique predual of $\Lip_0(M)$, but any mention of the weak$^\ast$ topology of $\Lip_0(M)$ will always make reference to this predual. For further reference we recommend Weaver's book \cite{Weaver2} (where $\lipfree{M}$ is denoted $\text{\AE}(M)$).

Recall that the pointwise order is a vector space order in $\Lip_0(M)$ and $\Lip(M)$ that turns them into lattices with the operations $\vee$, $\wedge$ of pointwise maximum and minimum. We will thus say that $f\in\Lip(M)$ is positive if $f\geq 0$. An element $\phi\in\bidualfree{M}$ (or $\lipfree{M}$) is positive if $\duality{f,\phi}\geq 0$ for any $f\in\Lip_0(M)$ such that $f\geq 0$, and we denote it as $\phi\geq 0$; this also induces a vector space order in $\bidualfree{M}$ and $\lipfree{M}$. The set of all positive elements of an ordered Banach space $X$ will be denoted by $\pos{X}$, and the set of positive elements of $\ball{X}$ by $\pos{\ball{X}}$.  An isomorphism $T$ between ordered Banach spaces $X$ and $Y$ will be called \textit{order-preserving} if $T(X^+)=Y^+$.
Let us note that, although $\Lip_0(M)$ is a Banach space and a vector lattice, it is not a Banach lattice, i.e. the norm is not monotone. Indeed, consider for example functions $f(x)=x$ and $g(x)=0\vee(2x-1)$ on $M=[0,1]$.

Since $\rho\geq f$ for any $f\in\ball{\Lip_0(M)}$, we have $\norm{\phi}=\duality{\rho,\phi}$ for any $\phi\in\pos{(\bidualfree{M})}$. Consequently, the norm of the sum of positive elements of $\bidualfree{M}$ is just the sum of the norms. We will use these facts repeatedly without further notice. The next result will also be needed. The statement for $\lipfree{M}$ is folklore and can be found e.g. in \cite[Lemma 2.6]{AmPu_2016}; the argument for $\bidualfree{M}$ is essentially the same but we could not locate it elsewhere, so we include its proof for reference.

\begin{lemma}
\label{lm:positive_sum_closure}
Every $m\in\pos{\lipfree{M}}$ is the limit of a sequence $(m_n)$ of positive, finitely supported elements of $\lipfree{M}$, and every $\phi\in\pos{(\bidualfree{M})}$ is the weak$^\ast$ limit of a net $(m_i)$ of positive, finitely supported elements of $\lipfree{M}$.
\end{lemma}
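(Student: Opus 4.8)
The plan is to describe the weak$^\ast$ closure in $\bidualfree{M}=\dual{\Lip_0(M)}$ of the convex cone
$$
C=\set{\sum_{i=1}^n a_i\,\delta(x_i):n\in\NN,\ a_i\geq 0,\ x_i\in M}
$$
of nonnegative finite linear combinations of evaluations. Every element of $C$ is a positive finitely supported functional, and $\delta(M)\subseteq C$ since $f(x)\geq 0$ whenever $f\in\Lip_0(M)$ satisfies $f\geq 0$. The heart of the matter is the claim that $\pos{(\bidualfree{M})}\subseteq\wscl{C}$, which at once yields the assertion for $\bidualfree{M}$.

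To prove the claim, suppose for contradiction that $\phi\in\pos{(\bidualfree{M})}\setminus\wscl{C}$. Since $\wscl{C}$ is a weak$^\ast$ closed convex set not containing $\phi$, the Hahn--Banach separation theorem for the dual pairing $\pare{\dual{\Lip_0(M)},\Lip_0(M)}$ provides $f\in\Lip_0(M)$ with $\sup_{\psi\in\wscl{C}}\duality{f,\psi}<\duality{f,\phi}$. Because $C$ is a cone containing $0$, this supremum is either $0$ or $+\infty$; being finite, it equals $0$. In particular $\duality{f,\phi}>0$, while testing against $\delta(x)\in C$ gives $f(x)=\duality{f,\delta(x)}\leq 0$ for every $x\in M$. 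Thus $-f\geq 0$ in $\Lip_0(M)$, and positivity of $\phi$ forces $\duality{f,\phi}\leq 0$, a contradiction. This proves $\pos{(\bidualfree{M})}\subseteq\wscl{C}$, i.e. every positive element of $\bidualfree{M}$ is a weak$^\ast$ limit of a net in $C$.

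For the Lipschitz-free statement, take $m\in\pos{\lipfree{M}}$; viewing $\lipfree{M}$ as an ordered subspace of $\bidualfree{M}$, the previous part gives $m\in\wscl{C}\cap\lipfree{M}$. Since the weak$^\ast$ topology of $\ddual{\lipfree{M}}=\dual{\Lip_0(M)}$ restricts on $\lipfree{M}$ to the weak topology of $\lipfree{M}$, we have $\wscl{C}\cap\lipfree{M}=\wcl{C}$, which by Mazur's theorem (as $C$ is convex) coincides with the norm closure of $C$ in $\lipfree{M}$. Hence $m$ belongs to the norm closure of $C$, and metrizability of the norm topology lets us write $m=\lim_n m_n$ with each $m_n$ a positive, finitely supported element, as required.

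The argument is short, and the main obstacle is essentially a bookkeeping one: in the separation step one must convert the separating functional $f$ into a \emph{positive} function on which $\phi$ can be evaluated, and this is exactly where the cone structure of $C$ (forcing the separating value to be $0$) together with the positivity of $\phi$ is used. Reducing the Lipschitz-free case to the bidual one is then routine, via the compatibility of the weak$^\ast$ and weak topologies and Mazur's theorem; no boundedness or separability hypotheses are needed.
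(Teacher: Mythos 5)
Your proof of the bidual statement is correct and essentially identical to the paper's: the same Hahn--Banach separation of $\phi$ from the weak$^\ast$ closed convex cone of positive finitely supported elements, with the cone structure forcing the separating functional to be nonpositive on $\delta(M)$ and positivity of $\phi$ giving the contradiction. Your deduction of the sequential statement for $\lipfree{M}$ from the bidual one (weak$^\ast$ closure restricted to $\lipfree{M}$ equals weak closure, then Mazur) is also correct; the paper simply cites this first part as folklore rather than proving it, so your argument is a valid and slightly more self-contained variant.
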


\begin{proof}
We will only give the proof of the second statement. Let $A$ be the set of positive, finitely supported elements of $\lipfree{M}$, and suppose that there exists $\phi\in\pos{(\bidualfree{M})}$ such that $\phi\notin\wscl{A}$. By the Hahn-Banach separation theorem, there is $f\in\Lip_0(M)$ such that
$$
\duality{f,\phi}>\sup\set{\duality{f,\psi}:\psi\in\wscl{A}}\geq 0 .
$$
In particular, taking $\psi=a\delta(x)$ for $a>0$ and $x\in M$, we get $af(x)<\duality{f,\phi}$. Since this is true for any $a>0$, it follows that $f(x)\leq 0$. Hence $f\leq 0$, and the positivity of $\phi$ implies that $\duality{f,\phi}\leq 0$, a contradiction.
\end{proof}

We will also need the following fact:

\begin{lemma}[{\cite[Lemma 3.7]{APPP_2020}}]
\label{lm:positive_functional_lemma}
Let $\phi\in\bidualfree{M}$ and suppose that $0\leq\phi\leq m$ for some $m\in\lipfree{M}$. Then $\phi\in\lipfree{M}$.
\end{lemma}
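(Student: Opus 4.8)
The plan is to prove that $\phi$, regarded as a bounded linear functional on $\Lip_0(M)$, is weak$^\ast$ continuous; since $\lipfree{M}$ is the (fixed) predual of $\Lip_0(M)$, this is exactly the assertion $\phi\in\lipfree{M}$. Note in passing that $m\geq\phi\geq 0$, so $m$ is itself a positive element of $\lipfree{M}$.

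The key step is the elementary estimate
\begin{equation*}
\abs{\duality{g,\phi}}\leq\duality{\abs{g},m}\qquad\text{for every }g\in\Lip_0(M).
\end{equation*}
To see it, observe first that $\abs{g}\in\Lip_0(M)$, because $\abs{g}(0)=0$ and $\lipnorm{\abs{g}}\leq\lipnorm{g}$; the functions $\abs{g}-g$ and $\abs{g}+g$ are then nonnegative elements of $\Lip_0(M)$, so positivity of $\phi$ gives $\duality{\abs{g}\pm g,\phi}\geq 0$, that is, $\abs{\duality{g,\phi}}\leq\duality{\abs{g},\phi}$. Since moreover $m-\phi\geq 0$ and $\abs{g}\geq 0$, we have $\duality{\abs{g},m}-\duality{\abs{g},\phi}=\duality{\abs{g},m-\phi}\geq 0$, which combined with the previous inequality yields the claim.

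Next I would deduce that $\phi$ is continuous on the ball $\ball{\Lip_0(M)}$ (indeed on every multiple $r\ball{\Lip_0(M)}$) with respect to the weak$^\ast$ topology, which on bounded sets coincides with the topology of pointwise convergence. Let $(f_i)$ be a bounded net in $\Lip_0(M)$ converging pointwise to some $f$. Then $\abs{f_i-f}$ is a bounded net converging pointwise to $0$, hence converging to $0$ in the weak$^\ast$ topology, and since $m\in\lipfree{M}$ acts weak$^\ast$ continuously on $\Lip_0(M)$ we get $\duality{\abs{f_i-f},m}\to 0$. Applying the estimate to $g=f_i-f$ forces $\duality{f_i,\phi}\to\duality{f,\phi}$, so $\phi$ restricted to each ball is weak$^\ast$ continuous.

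Finally, I would invoke the Krein--\v{S}mulian theorem: $\ker\phi$ is convex and, by the continuity just established, its intersection with each ball $r\ball{\Lip_0(M)}$ is weak$^\ast$ closed; therefore $\ker\phi$ is weak$^\ast$ closed, so $\phi$ is weak$^\ast$ continuous on $\Lip_0(M)$ and hence lies in $\lipfree{M}$. The only point requiring care is this last passage from weak$^\ast$ continuity on bounded sets to genuine weak$^\ast$ continuity; the order-theoretic computations that precede it are routine, the one mildly nonobvious ingredient being that one is free to replace $g$ by $\abs{g}$ without leaving $\Lip_0(M)$, which is what lets the hypotheses $\phi\geq 0$ and $\phi\leq m$ interact with pointwise convergence.
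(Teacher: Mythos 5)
Your proof is correct, and it follows essentially the argument of the cited source \cite[Lemma 3.7]{APPP_2020} (the present paper only quotes the lemma without reproducing a proof): the order estimate $\abs{\duality{g,\phi}}\leq\duality{\abs{g},m}$, weak$^\ast$ continuity of $\phi$ on bounded sets via the identification of the weak$^\ast$ and pointwise topologies there, and the Krein--\v{S}mulian theorem to upgrade this to genuine weak$^\ast$ continuity. No gaps.
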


Finally, let us recall that the pointwise product of Lipschitz functions is not Lipschitz in general. However, for $f,g\in\Lip(M)$ we have
$$
\lipnorm{fg}\leq\lipnorm{f}\norm{g}_\infty+\norm{f}_\infty\lipnorm{g}
$$
and therefore the product of \textit{bounded} Lipschitz functions is again Lipschitz. In particular, for bounded $M$,
$$
\lipnorm{fg}\leq2\rad(M)\lipnorm{f}\lipnorm{g},
$$
so the space $\Lip_0(M)$ is closed under pointwise products and it becomes an algebra. Note that it is not a Banach algebra in general, but the product is continuous and $\Lip_0(M)$ can be equivalently renormed with a submultiplicative norm.

\subsection{Supports in Lipschitz-free spaces}
\label{subsection:intersection_property}

For a subset $K$ of $M$ (that contains $0$), the space $\lipfree{K}$ can and will be identified with the closed subspace $\cl{\lspan}\,\delta(K)$ of $\lipfree{M}$. It is also known that $\lipfree{K}^\perp=\ideal{K}$ and $\ideal{K}_\perp=\lipfree{K}$ where
\begin{equation}
\label{eq:ideal_def}
\ideal{K}=\set{f\in\Lip_0(M):f(x)=0\text{ for all }x\in K} .
\end{equation}
In \cite{AlPe_2020,APPP_2020}, it was shown that Lipschitz-free spaces satisfy the intersection property 
\begin{equation}
\label{eq: intersection_property}
\bigcap_i\lipfree{K_i}=\mathcal{F}\left(\bigcap_i K_i\right)
\end{equation}
for any family $(K_i)$ of closed subsets of $M$, and the following concept was introduced: the \textit{support} of an element $m\in\lipfree{M}$ is defined as
$$
\supp(m)=\bigcap\set{K\subset M: \text{$K$ is closed and $m\in\lipfree{K}$}} .
$$
This set is closed and separable, as can be readily seen by approximating $m$ with a sequence of finitely supported elements, and the intersection property implies that $m$ is always contained in $\lipfree{\supp(m)}$. Moreover $\supp(m)=\varnothing$ if and only if $m=0$. Notice that $\supp(m)$ is a finite set if and only if $m\in\lspan\,\delta(M)$, so the use of the term ``finitely supported'' is consistent.

The following two propositions describe alternative characterizations of the support that will be relevant later.

\begin{proposition}[{\cite[Proposition 2.6]{APPP_2020}}]
\label{pr:equiv_char_support}
Let $m\in\lipfree{M}$ and $K\subset M$ be closed. Then $\supp(m)\subset K$ if and only if $\duality{m,f}=\duality{m,g}$ for any $f,g\in\Lip_0(M)$ such that $f\restrict_K=g\restrict_K$.
\end{proposition}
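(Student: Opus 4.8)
The plan is to reduce the statement to the single equivalence $\supp(m)\subset K\iff m\in\lipfree{K}$ and then invoke what is already known about supports. First I would observe that the condition ``$\duality{m,f}=\duality{m,g}$ whenever $f\restrict_K=g\restrict_K$'' is merely a reformulation of ``$\duality{m,h}=0$ for every $h\in\ideal{K}$''. Indeed, if $f,g\in\Lip_0(M)$ agree on $K$ then $f-g\in\ideal{K}$, and bilinearity of the duality gives $\duality{m,f}-\duality{m,g}=\duality{m,f-g}=0$; conversely, given $h\in\ideal{K}$, apply the hypothesis to the pair $f=h$ and $g=0$, which both belong to $\Lip_0(M)$ and agree (being $0$) on $K$. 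Hence the right-hand side of the proposition is equivalent to $m\in(\ideal{K})_\perp$, and since $(\ideal{K})_\perp=\lipfree{K}$ as recalled above, the proposition amounts exactly to $\supp(m)\subset K\iff m\in\lipfree{K}$.

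The backward implication of this last equivalence is immediate from the definition of the support: if $m\in\lipfree{K}$ and $K$ is closed, then $K$ is one of the sets over which the intersection defining $\supp(m)$ is taken, so $\supp(m)\subset K$. For the forward implication I would use the fact, recalled above, that $m$ always lies in $\lipfree{\supp(m)}$ (a consequence of the intersection property \eqref{eq: intersection_property}), together with the monotonicity $\lipfree{A}\subset\lipfree{B}$ for closed sets $A\subset B$, which follows at once from $\lipfree{A}=\cl{\lspan}\,\delta(A)$ and $\delta(A)\subset\delta(B)$. Then $\supp(m)\subset K$ forces $m\in\lipfree{\supp(m)}\subset\lipfree{K}$, completing the argument.

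I do not expect a genuine obstacle here: once the duality $\ideal{K}\leftrightarrow\lipfree{K}$ and the containment $m\in\lipfree{\supp(m)}$ are in hand, the proof is essentially bookkeeping, the latter fact being the only substantive input. The sole points requiring a little care are formal ones: verifying that the ``agree on $K$'' condition is symmetric and really encodes membership in $\ideal{K}$ (handled by the choice $g=0$ above), and noting that it is irrelevant whether or not $K$ contains the base point, since $f\in\Lip_0(M)$ and the evaluations $\delta(x)$ behave identically on $K$ and on $K\cup\set{0}$, so neither $\ideal{K}$ nor $\lipfree{K}$ changes upon adjoining $0$.
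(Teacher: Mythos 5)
Your proof is correct and follows what is essentially the canonical argument: the paper does not reprove this statement (it cites \cite[Proposition 2.6]{APPP_2020}), but the two facts you rely on --- the annihilator relation $\ideal{K}_\perp=\lipfree{K}$ and the containment $m\in\lipfree{\supp(m)}$ coming from the intersection property --- are precisely the ingredients the paper recalls for this purpose, and your reduction of the ``agree on $K$'' condition to membership in $\ideal{K}_\perp$ (including the remark about adjoining the base point) is exactly right. No gaps.
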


\begin{proposition}[{\cite[Proposition 2.7]{APPP_2020}}]
\label{pr:equiv_points_support}
Let $m\in\lipfree{M}$ and $x\in M$. Then $x\in\supp(m)$ if and only if for every neighborhood $U$ of $x$ there exists a function $f\in\Lip_0(M)$ whose support is contained in $U$ and such that $\duality{m,f}\neq 0$.
\end{proposition}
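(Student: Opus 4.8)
The plan is to establish both implications in contrapositive form, reducing each to the support characterization already available in Proposition \ref{pr:equiv_char_support}. Since every neighborhood of $x$ contains an open one, I may assume throughout that $U$ is open, and I expect the argument to go through with no case distinction on whether or not $x$ is the base point.

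First I would handle the ``only if'' direction: assuming $x\notin\supp(m)$, I want to exhibit a neighborhood of $x$ in which no admissible function pairs nontrivially with $m$. If $m=0$ there is nothing to prove, so assume $m\neq 0$; then $\supp(m)$ is a nonempty closed set with $x\notin\supp(m)$, so $r:=d(x,\supp(m))>0$, and I take $U=B(x,r)$, which satisfies $U\cap\supp(m)=\varnothing$. Any $f\in\Lip_0(M)$ with $\supp(f)\subset U$ then vanishes on $\supp(m)$ (the support of $f$ misses $\supp(m)$), i.e. $f\restrict_{\supp(m)}=0\restrict_{\supp(m)}$; since trivially $\supp(m)\subset\supp(m)$, Proposition \ref{pr:equiv_char_support} applied with $K=\supp(m)$ and $g=0$ yields $\duality{m,f}=\duality{m,0}=0$.

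Next I would handle the ``if'' direction: suppose there is an open neighborhood $U$ of $x$ with $\duality{m,f}=0$ for every $f\in\Lip_0(M)$ such that $\supp(f)\subset U$, and I want $x\notin\supp(m)$. If $U=M$ then $\duality{m,f}=0$ for all $f$, so $m=0$ and $\supp(m)=\varnothing$; otherwise I choose $r>0$ with $\overline{B(x,r)}\subset U$ (possible since $B(x,2r)\subset U$ for small $r$) and set $K=M\setminus B(x,r)$, a closed set with $x\notin K$. It then suffices to show $\supp(m)\subset K$, and by Proposition \ref{pr:equiv_char_support} this reduces to verifying the pairing condition for $K$: given $f,g\in\Lip_0(M)$ with $f\restrict_K=g\restrict_K$, the difference $h:=f-g\in\Lip_0(M)$ vanishes on $K=M\setminus B(x,r)$, hence $\supp(h)\subset\overline{B(x,r)}\subset U$, so the hypothesis gives $\duality{m,h}=0$, that is $\duality{m,f}=\duality{m,g}$.

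I do not anticipate any genuine obstacle. The only delicate points are bookkeeping: shrinking $U$ to a closed ball still contained in $U$ so that the support of the auxiliary function $h$ really lands inside $U$, and disposing of the degenerate cases $m=0$ and $U=M$. All the real content is carried by Proposition \ref{pr:equiv_char_support}; were that result unavailable, one would run the identical scheme but replace each appeal to it by the annihilator relations $\lipfree{K}^\perp=\ideal{K}$ and $\ideal{K}_\perp=\lipfree{K}$ together with a Hahn--Banach separation, which amounts to reproving Proposition \ref{pr:equiv_char_support} inline.
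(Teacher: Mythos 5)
Your proof is correct, and it is essentially the standard argument: the paper itself only cites this statement from \cite{APPP_2020} without reproving it, and the proof there proceeds exactly as you do, by reducing both contrapositives to the pairing characterization of Proposition \ref{pr:equiv_char_support} (taking $K=\supp(m)$ with the ball $B(x,d(x,\supp(m)))$ in one direction, and $K=M\setminus B(x,r)$ with $\overline{B(x,r)}\subset U$ in the other). The bookkeeping you flag — shrinking to a closed ball inside $U$ and disposing of the cases $m=0$ and $U=M$ — is handled correctly.
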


We will also make extensive use of the weighting operation described in \cite[Lemma 2.3]{APPP_2020} as follows. Let $h\in\Lip(M)$ have bounded support, then the operator $\wop{h}\colon \Lip_0(M)\rightarrow\Lip_0(M)$ defined by $\wop{h}(f)=fh$ for $f\in\Lip_0(M)$ satisfies
\begin{equation}
\label{eq:weighting_op_norm}
\norm{\wop{h}} \leq \norm{h}_\infty + \rad(\supp(h))\lipnorm{h}
\end{equation}
and is \weaks-\weaks-continuous, hence the adjoint operator $\dual{(\wop{h})}$ takes $\phi\in\bidualfree{M}$ into $\dual{(\wop{h})}(\phi)=\phi\circ\wop{h}\in\bidualfree{M}$, and if $m\in\lipfree{M}$ then $m\circ\wop{h}\in\lipfree{M}$ as well. Moreover, it follows easily from Proposition \ref{pr:equiv_char_support} that
\begin{equation}
\label{eq:support_Th}
\supp(m\circ\wop{h})\subset\supp(m)\cap\supp(h) .
\end{equation}

Weighting will primarily be used with Urysohn-lemma-like functions defined in the following way. Let $A,B$ be two subsets of $M$ such that $d(A,B)>0$; $A$ represents a ``region of interest'' where we want to focus, and $B$ is a region that we want to ignore. If $A$ is bounded, we define the function $h(x)=0\vee(1- d(x,A)/d(A,B))$ for every $x\in M$. Then clearly $0\leq h\leq 1$, $h=1$ in $A$, $h=0$ in $B$, $\lipnorm{h}\leq 1/d(A,B)$ and the support of $h$ is bounded. Hence, $\wop{h}$ is an operator on $\Lip_0(M)$ and for every $m\in\lipfree{M}$, one has $\duality{m\circ\wop{h},f}=\duality{m,f}$ whenever $\supp(f)\subset A$ and $\duality{m\circ\wop{h},f}=0$ whenever $\supp(f)\subset B$. If on the other hand $B$ is bounded, then defining $h(x)=1\wedge d(x,B)/d(A,B)$ for every $x\in M$ yields a bounded support for the function $1-h$, and $\wop{h}=I-\wop{1-h}$ is again an operator on $\Lip_0(M)$ with the same properties (here $I$ denotes the identity operator). If neither $A$ nor $B$ are bounded, it is in general not possible to construct such an operator.

Let us now define some standard weighting functions that will be used often in this paper, where the regions of interest are balls or annuli centered at the base point, or their complements. For all $x\in M$ and $n\in\ZZ$, let
\begin{equation}
\label{eq:H_n}
H_n(x) = \begin{cases}
1 &\text{, if } \rho(x)\leq 2^n \\
2-2^{-n}\rho(x) &\text{, if } 2^n\leq \rho(x)\leq 2^{n+1} \\
0 &\text{, if } 2^{n+1}\leq \rho(x) .
\end{cases}
\end{equation}
Further, let
\begin{equation}
\label{eq:G_n}
G_n(x) = 1-H_n(x)
\end{equation}
and
\begin{equation}
\label{eq:Lambda_n}
\Lambda_n(x) = H_n(x)-H_{n-1}(x)=G_{n-1}(x)H_n(x)
\end{equation}
and for $n\in\NN$ let
\begin{equation}
\label{eq:Pi_n}
\Pi_n(x) = H_n(x)-H_{-(n+1)}(x) = \sum_{k=-n}^n\Lambda_{k}(x).
\end{equation}
Notice that $\lipnorm{H_n}\leq 2^{-n}$ and $\rad(\supp(H_n))\leq 2^{n+1}$, so \eqref{eq:weighting_op_norm} yields $\norm{\wop{H_n}}\leq 3$ and $\norm{\wop{G_n}}\leq 1+\norm{\wop{H_n}}\leq 4$. Similarly we get $\norm{\wop{\Lambda_n}}\leq 5$. Note also that $\norm{\wop{\Pi_n}}\leq\norm{\wop{H_n}}+\norm{\wop{H_{-(n+1)}}}\leq 6$.
So all of these functions generate operators on $\Lip_0(M)$. Moreover, for every $f\in\Lip_0(M)$ the sequences $(\wop{H_n}(f))$, $(\wop{G_{-n}}(f))$ and $(\wop{\Pi_n}(f))$ (where $n\in\NN$) are bounded and converge pointwise, hence weak$^\ast$, to $f$, and if $f\geq 0$ then convergence is monotonic. Finally, notice that $\wop{H_m}\wop{H_n}=\wop{H_{\min\set{m,n}}}$, $\wop{G_m}\wop{G_n}=\wop{G_{\max\set{m,n}}}$ and $\wop{\Pi_m}\wop{\Pi_n}=\wop{\Pi_{\min\set{m,n}}}$.

The functions $\Lambda_n$ may be used to obtain a decomposition of functionals $\phi\in\bidualfree{M}$ similar to that constructed by Kalton in \cite[Section 4]{Kalton_2004}. Indeed, by \cite[Lemma 3]{AP_2020_normality} we have
\begin{equation}
\label{eq:kalton_absconv}
\sum_{n\in\ZZ}\norm{\phi\circ\wop{\Lambda_n}}\leq 45\norm{\phi}
\end{equation}
so that
\begin{equation}
\label{eq:kalton_finite_part}
\kaltonsum{\phi} := \sum_{n\in\ZZ}\phi\circ\wop{\Lambda_n} = \lim_{n\rightarrow\infty}\phi\circ\wop{\Pi_n}
\end{equation}
is an element of $\bidualfree{M}$ for every $\phi\in\bidualfree{M}$, and convergence of the limit and the series is in norm. Moreover, if $\phi\in\lipfree{M}$ then $\kaltonsum{\phi}=\phi$.

\subsection{Radon measures}

Measure-theoretic notions such as regularity or Radonness appear not to be uniquely defined in the literature, so let us briefly establish the terminology that will be used throughout this document.
Let $X$ be a Hausdorff space and let $\mu$ be a Borel measure on $X$, i.e. a measure defined on the Borel $\sigma$-algebra of $X$. If $\mu$ is positive (but not necessarily finite), then we will say that it is
\begin{itemize}
\item \textit{inner regular} if $\mu(E)=\sup\set{\mu(K):K\subset E\text{ compact}}$ for every Borel set $E\subset X$,
\item \textit{outer regular} if $\mu(E)=\inf\set{\mu(U):U\supset E\text{ open}}$ for every Borel set $E\subset X$,
\item \textit{regular} if it is both inner and outer regular,
\item \textit{Radon} if it is regular and finite.
\end{itemize}
Notice that if $\mu$ is finite then inner regularity implies regularity, and if additionally $X$ is compact then each of the four conditions above implies the other three. If $\mu$ is a finite signed (real-valued) measure instead, then it is said to have each of the previous properties if the total variation measure $\abs{\mu}$, defined by
$$
\abs{\mu}(A)=\sup\set{\sum_{i=1}^n\abs{\mu(A_i)}: A_i \textup{ pairwise disjoint Borel and } A=\bigcup_{i=1}^n A_i}
$$
for any Borel set $A\subset X$, does have the property.

Given a Borel measure $\mu$ on $X$ and a Borel set $A\subset X$, we will denote by $\mu\restrict_A$ the restriction of $\mu$ to $A$ defined by $\mu\restrict_A(E)=\mu(E\cap A)$ for any Borel set $E\subset X$. This is again a Borel measure on $X$ and it satisfies $\abs{\mu\restrict_A}=\abs{\mu}\restrict_A$. We will say that $\mu$ is \textit{concentrated on} $A$ for some Borel set $A\subset X$ if $\mu=\mu\restrict_A$, or, equivalently, $\abs{\mu}(X\setminus A)=0$. On the other hand, if $\mu$ is a Borel measure on some Borel subset $A$ of $X$, we define the extension $\nu$ of $\mu$ to $X$ by $\nu(E)=\mu(E\cap A)$ for every $E\subset X$. Then $\nu$ is a Borel measure on $X$.

Recall that every finite signed Borel measure $\mu$ on $X$ admits a unique \textit{Jordan decomposition} $\mu=\mu^+-\mu^-$ where $\mu^+,\mu^-$ are finite positive Borel measures that are minimal with respect to that property, i.e. if $\mu=\lambda^+-\lambda^-$ for positive measures $\lambda^+,\lambda^-$ then $\lambda^\pm\geq\mu^\pm$. These measures also satisfy the identity $\abs{\mu}=\mu^++\mu^-$. Moreover, there is at least one \textit{Hahn decomposition} of $X$ associated to $\mu$, i.e. a partition $X=A^+\cup A^-$ into two disjoint Borel subsets such that $\mu^+=\mu\restrict_{A^+}$ and $\mu^-=-\mu\restrict_{A^-}$.

The \textit{support} of a measure $\mu$ is defined as the set $\supp(\mu)$ of points $x\in X$ such that $\abs{\mu}(U)>0$ for every open neighborhood $U$ of $x$. It is always a closed set. If $\mu$ is Radon, then $\supp(\mu)=\varnothing$ if and only if $\mu=0$ \cite[Theorem 7.2.9]{Bogachev}.

In our analysis, we will restrict ourselves to Borel measures defined either on metric spaces $M$ or on compact spaces $X$. We will not assume measures to be finite, positive, or regular unless stated explicitly. Note that if $\mu$ is any finite Borel measure defined on $M$ then it is automatically outer regular, and if $M$ is complete and separable then $\mu$ is also Radon (see Theorems 1.4.8 and 7.1.7 in \cite{Bogachev}, where a slightly different notation is used). Let us also mention that the support of every finite Borel measure on a metric space is separable (see e.g. \cite[Lemma 2.1]{Lawson_2017}).

\subsection{The uniform compactification}

Recall that a \textit{compactification} of a completely regular Hausdorff space, in particular of a metric space $M$, is a compact Hausdorff space that contains a dense subset that is homeomorphic to $M$ (and can thus be identified with $M$). Compactifications $X,Y$ of $M$ may be partially ordered by declaring that $X\leq Y$ if there is a continuous map from $Y$ onto $X$ whose restriction to $M$ is the identity; $X$ and $Y$ are then equivalent if $X\leq Y$ and $Y\leq X$. The largest compactification under this ordering is the well-known Stone-\v{C}ech compactification $\beta M$, which is characterized (up to equivalence) by the fact that any bounded continuous function on $M$ can be extended to a continuous function on $\beta M$. This renders it a useful tool in the study of spaces of continuous functions, and in particular of Lipschitz spaces; it has been used, for instance, in \cite{AlGu_2019,AlPe_2020,Weaver_1996,Weaver2}.

In this paper, it will be useful to consider metric spaces $M$ to be embedded into some compactification $X$ because that will allow us to consider measures on $M$ as elements of the space $\meas{X}$ of Radon measures on $X$, which is then a dual Banach space. However, the common Stone-\v{C}ech compactification carries an important drawback for our intended purposes, namely the fact that Lipschitz functions do not necessarily separate points of $\beta M$. In fact, by \cite[Theorem 3.4]{Woods_1995} they only do if there is a compact subset $K$ of $M$ such that $M\setminus U$ is uniformly discrete for every open set $U\supset K$. Let us illustrate this phenomenon with a simple example:

\begin{example}
\label{ex:lip_dont_separate}
Let $M\subset\RR$ consist of $0$ and the points $x_n=n$ and $y_n=n+2^{-n}$ for $n\in\NN$. Then there is a subnet $(x_{n_i},y_{n_i})$ of the sequence $(x_n,y_n)$ such that $x_{n_i}$ and $y_{n_i}$ converge to points $\xi$ and $\eta$ of $\beta M$, respectively. Since $M$ is topologically discrete, any function on $M$ is continuous and we may take $f\in C(M)$ such that $f(x_n)=0$ and $f(y_n)=1$ for every $n$. Then $f(\xi)=0$ and $f(\eta)=1$, hence $\xi\neq\eta$. However $\xi$ and $\eta$ cannot be separated by Lipschitz functions: indeed, if $f\in\Lip(M)$ then $\abs{f(x_n)-f(y_n)}\leq 2^{-n}\lipnorm{f}$, and taking limits yields $f(\xi)=f(\eta)$.
\end{example}

It will thus be more convenient to use a smaller compactification than $\beta M$. A classical result by Gelfand and \v{S}ilov (see e.g. \cite[Proposition 14.2.2]{Semadeni}) establishes a correspondence between compactifications $X$ of $M$ and closed subalgebras $\mathcal{A}$ of bounded continuous functions on $M$ that contain the constant functions and separate points from closed subsets of $M$, such that $\mathcal{A}$ consists precisely of those bounded continuous functions that can be extended continuously to $X$. In our setting, the algebra of interest consists of the bounded Lipschitz functions on $M$ (or rather its closure, the bounded uniformly continuous functions), and the corresponding compactification is the \textit{uniform} or \textit{Samuel compactification} of $M$, which we will denote as $\ucomp{M}$. The suggested reference for information about this object is the paper \cite{Woods_1995} by Woods. The following statement collects its defining properties:

\begin{proposition}[{\cite[Corollary 2.4]{Woods_1995}}]
\label{pr:uc_definition}
Let $M$ be a metric space. Then there exists a compactification $\ucomp{M}$ of $M$ with the following properties:
\begin{enumerate}[label={\upshape{(\roman*)}}]
\item \label{uc:extension} Every bounded, uniformly continuous function $f\colon M\rightarrow\RR$ can be extended uniquely to a continuous function $\ucomp{f}\colon\ucomp{M}\rightarrow\RR$.
\item \label{uc:separation} Given two subsets $A,B\subset M$, their closures in $\ucomp{M}$ are disjoint if and only if $d(A,B)>0$.
\end{enumerate}
Moreover, these properties determine $\ucomp{M}$ uniquely up to equivalence.
\end{proposition}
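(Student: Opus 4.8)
The plan is to realize $\ucomp{M}$ concretely, as the evaluation compactification attached to the algebra $U_b(M)$ of bounded, uniformly continuous real functions on $M$ --- a sup-norm closed unital subalgebra of $C_b(M)$ --- and then to read off properties \ref{uc:extension} and \ref{uc:separation} directly from the construction, reducing uniqueness to the classical correspondence between compactifications and such subalgebras.

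Concretely, I would take the cube $P=[-1,1]^{\ball{U_b(M)}}$, compact Hausdorff by Tychonoff's theorem, and the evaluation map $\iota\colon M\to P$, $\iota(x)=(f(x))_{f\in\ball{U_b(M)}}$. All coordinates of $\iota$ are continuous; that $\iota$ is a topological embedding follows because the functions $z\mapsto\min\set{d(z,x),1}$ recover the metric topology near $x$ (so $\iota^{-1}$ is continuous on $\iota(M)$) while the functions $z\mapsto\min\set{d(z,x)/d(x,y),1}$ separate distinct points $x,y$. I would then set $\ucomp{M}=\cl{\iota(M)}\subseteq P$; this is a compactification of $M$, and \ref{uc:extension} is built in: for $f\in U_b(M)\setminus\set{0}$ the coordinate projection associated with $f/\norm{f}_\infty$, restricted to $\ucomp{M}$ and rescaled by $\norm{f}_\infty$, is a continuous extension of $f$, and it is unique because $M$ is dense in the Hausdorff space $\ucomp{M}$.

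For \ref{uc:separation}, the easy direction is that $d(A,B)=\delta>0$ yields $g=\min\set{d(\cdot,A)/\delta,1}\in U_b(M)$, whose extension $\ucomp{g}$ vanishes on $\cl{A}$ and equals $1$ on $\cl{B}$ (closures taken in $\ucomp{M}$), so these closures are disjoint. For the converse I would use a compactness argument: given $d(A,B)=0$, pick $a_n\in A$, $b_n\in B$ with $d(a_n,b_n)\to 0$, fix a free ultrafilter $\mathcal{U}$ on $\NN$, and let $\xi\in P$ be the point with coordinates $\xi_f=\lim_{\mathcal{U}}f(a_n)$. Then $\xi$ is a $\mathcal{U}$-limit of $(\iota(a_n))$, hence lies in $\ucomp{M}$ and in $\cl{A}$; and since $\abs{f(a_n)-f(b_n)}\to 0$ for every uniformly continuous $f$, $\xi$ is also a $\mathcal{U}$-limit of $(\iota(b_n))$, so $\xi\in\cl{B}$ as well, and the two closures meet.

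Finally, for uniqueness I would invoke the standard order-isomorphism between equivalence classes of compactifications of $M$ and closed unital subalgebras of $C_b(M)$ that separate points from closed sets (see, e.g., \cite{Woods_1995}), under which a compactification $Z$ corresponds to $\mathcal{A}_Z=\set{f\in C_b(M):f\text{ extends continuously to }Z}$; it then suffices to show $\mathcal{A}_Z=U_b(M)$ whenever $Z$ enjoys \ref{uc:extension} and \ref{uc:separation}. The inclusion $U_b(M)\subseteq\mathcal{A}_Z$ is precisely \ref{uc:extension}. For $\mathcal{A}_Z\subseteq U_b(M)$, suppose $f\in C_b(M)$ is \emph{not} uniformly continuous: there are $\epsilon>0$ and points $x_n,y_n$ with $d(x_n,y_n)\to 0$ and $\abs{f(x_n)-f(y_n)}\geq\epsilon$, and after passing to a subsequence along which $f(x_n)\to t\in\RR$ one gets, for large $n$, $f(x_n)\in F_0:=[t-\epsilon/3,t+\epsilon/3]$ and $f(y_n)\in F_1:=\RR\setminus(t-2\epsilon/3,t+2\epsilon/3)$, which are disjoint and closed. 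Setting $A=\set{x_n}$ and $B=\set{y_n}$ over large $n$ gives $d(A,B)=0$ while $f(A)\subseteq F_0$ and $f(B)\subseteq F_1$; if $f$ extended continuously to some $\ucomp{f}\in C(Z)$, then the closures of $A$ and $B$ in $Z$ would lie in the disjoint closed sets $(\ucomp{f})^{-1}(F_0)$ and $(\ucomp{f})^{-1}(F_1)$, hence be disjoint, contradicting \ref{uc:separation}. So $f\notin\mathcal{A}_Z$, and $\mathcal{A}_Z=U_b(M)$; applying this to $Z=\ucomp{M}$ (which satisfies \ref{uc:extension}--\ref{uc:separation}) shows it holds for the space just built, so any two compactifications with \ref{uc:extension}--\ref{uc:separation} are equivalent. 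I expect the two genuine difficulties to be the converse half of \ref{uc:separation} --- producing a \emph{single} limit point shared by the two closures, for which the ultrafilter (or net/compactness) argument above is the cleanest route --- and the inclusion $\mathcal{A}_Z\subseteq U_b(M)$ in the uniqueness part, which is the only place where \ref{uc:separation} is used in an essential rather than formal way; everything else is routine once the compactification--subalgebra dictionary is taken for granted.
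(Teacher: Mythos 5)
Your proposal is correct and complete. Note that the paper does not prove this statement at all: it is quoted verbatim from Woods (Corollary 2.4 of the cited reference), so there is no internal proof to compare against; what you have written is a self-contained construction of the Samuel compactification. Your route --- realizing $\ucomp{M}$ as the closure of the evaluation embedding of $M$ into the cube $[-1,1]^{\ball{U_b(M)}}$ --- is the standard ``algebra of functions'' construction, and all the steps check out: the functions $z\mapsto\min\set{d(z,x),1}$ and $z\mapsto\min\set{d(z,x)/d(x,y),1}$ do lie in $\ball{U_b(M)}$ and justify that $\iota$ is an embedding; property \ref{uc:extension} is immediate from the coordinate projections plus density; the forward half of \ref{uc:separation} follows from the Lipschitz Urysohn function $\min\set{d(\cdot,A)/\delta,1}$; and the ultrafilter argument for the converse is sound, since the $\mathcal{U}$-limit of $(\iota(a_n))$ exists in the compact cube, lies in $\cl{\iota(A)}\subset\ucomp{M}$, and coincides coordinatewise with the $\mathcal{U}$-limit of $(\iota(b_n))$ because every $f\in U_b(M)$ satisfies $\abs{f(a_n)-f(b_n)}\to 0$. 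The uniqueness argument correctly identifies the only place where \ref{uc:separation} is used essentially, namely to rule out non-uniformly-continuous members of $\mathcal{A}_Z$; the one ingredient you import as a black box is the order-isomorphism between equivalence classes of compactifications and closed unital point-separating subalgebras of $C_b(M)$, which is standard and is also the framework Woods works in, so the reliance is legitimate. In short, the proof is valid and supplies exactly what the paper delegates to the reference.
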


\noindent We will denote the closure of $A\subset M$ in $\ucomp{M}$ by $\ucl{A}$. Note that $\ucl{A}$ and $\ucomp{A}$ are equivalent compactifications of the metric space $A$ by \cite[Theorem 2.9]{Woods_1995}, so this notation shall lead to no confusion. We will also need the following converse to property \ref{uc:extension}:

\begin{proposition}[{\cite[Theorem 2.5]{Woods_1995}}]
\label{pr:uc_restriction}
If $f\colon M\rightarrow\RR$ is bounded and can be extended continuously to $\ucomp{M}$, then $f$ is uniformly continuous.
\end{proposition}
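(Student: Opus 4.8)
The plan is to argue by contradiction, converting a failure of uniform continuity into a violation of the separation property \ref{uc:separation} of $\ucomp{M}$. So, assume $f$ is bounded and admits a continuous extension $\tilde f\colon\ucomp{M}\rightarrow\RR$, but that $f$ is \emph{not} uniformly continuous. Negating the definition of uniform continuity produces an $\epsilon>0$ together with sequences $(x_n)$, $(y_n)$ in $M$ satisfying $d(x_n,y_n)\rightarrow 0$ while $\abs{f(x_n)-f(y_n)}\geq\epsilon$ for every $n$.

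First I would normalize the ranges of these sequences. After passing to a subsequence and, if necessary, interchanging $(x_n)$ with $(y_n)$, I may assume $f(x_n)-f(y_n)\geq\epsilon$ for all $n$. Since $f$ is bounded, $(f(x_n))$ and $(f(y_n))$ are bounded real sequences, so a further passage to a subsequence lets me assume $f(x_n)\rightarrow a$ and $f(y_n)\rightarrow b$ with $a\geq b+\epsilon$. Set $c=(a+b)/2$; then $a-c=c-b=(a-b)/2\geq\epsilon/2$, so after discarding finitely many indices I obtain $f(x_n)\geq c+\epsilon/4$ and $f(y_n)\leq c-\epsilon/4$ for every remaining $n$.

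Now put $A=\set{x_n:n\in\NN}$ and $B=\set{y_n:n\in\NN}$. Since $\tilde f$ restricts to $f$ on the dense copy of $M$, inside $\ucomp{M}$ we have $A\subset\tilde f^{-1}([c+\epsilon/4,\infty))$ and $B\subset\tilde f^{-1}((-\infty,c-\epsilon/4])$. These two sets are closed in $\ucomp{M}$, being preimages of closed subsets of $\RR$ under the continuous map $\tilde f$, and they are disjoint; hence $\ucl{A}$ and $\ucl{B}$ are disjoint as well. By Proposition \ref{pr:uc_definition}\ref{uc:separation} this forces $d(A,B)>0$. But $d(A,B)\leq d(x_n,y_n)$ for every $n$, and $d(x_n,y_n)\rightarrow 0$, so $d(A,B)=0$, a contradiction. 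Therefore $f$ must be uniformly continuous.

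I do not expect a genuine obstacle here: property \ref{uc:separation} does essentially all the work, and the only point that needs a little care is the two-step passage to subsequences in the second paragraph, which is exactly what allows the ranges of $(f(x_n))$ and $(f(y_n))$ to be separated by a single constant $c$ — without it, those values could interleave and no such threshold would exist.
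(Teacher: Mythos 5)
Your proof is correct. The paper does not prove this proposition itself --- it is quoted from Woods \cite[Theorem 2.5]{Woods_1995} --- but your argument is a clean, self-contained derivation using only the separation property \ref{uc:separation} of Proposition \ref{pr:uc_definition}, which is exactly the axiomatic description of $\ucomp{M}$ that the paper works with; the subsequence extraction separating the values of $f$ on $(x_n)$ and $(y_n)$ by a fixed threshold $c$ is handled correctly, and the contradiction with $d(A,B)>0$ is valid since $d(A,B)\leq d(x_n,y_n)\rightarrow 0$.
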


Combining property \ref{uc:separation} with the usual separation axioms immediately yields the following metric separation property, that we will be using repeatedly:

\begin{proposition}
\label{pr:uc_separation}
Let $K,L$ be disjoint closed subsets of $\ucomp{M}$. Then there are disjoint open neighborhoods $V,W$ of $K,L$ such that $d(V\cap M,W\cap M)>0$.
\end{proposition}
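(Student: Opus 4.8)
The plan is to combine the normality of the compact Hausdorff space $\ucomp{M}$ with property \ref{uc:separation} of Proposition \ref{pr:uc_definition}. The sets $K$ and $L$ are disjoint and closed in the compact space $\ucomp{M}$, hence disjoint closed sets in a normal space. I would first separate them ``with room to spare'': using the standard fact that in a normal space a closed set and an open neighborhood of it can be interpolated by another closed set, there is an open $V$ with $K\subset V$ and $\cl{V}\cap L=\varnothing$ (closures taken in $\ucomp{M}$); applying the same fact to the disjoint closed sets $\cl{V}$ and $L$, there is an open $W$ with $L\subset W$ and $\cl{W}\cap\cl{V}=\varnothing$. Then $V$ and $W$ are disjoint open neighborhoods of $K$ and $L$ respectively.

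Next I would push this down to $M$. Set $A=\cl{V}\cap M$ and $B=\cl{W}\cap M$, both subsets of $M$. Since $A\subset\cl{V}$ and $\cl{V}$ is closed in $\ucomp{M}$, monotonicity of the closure gives $\ucl{A}\subset\cl{V}$, and likewise $\ucl{B}\subset\cl{W}$; as $\cl{V}\cap\cl{W}=\varnothing$, the closures $\ucl{A}$ and $\ucl{B}$ are disjoint in $\ucomp{M}$. Property \ref{uc:separation} of Proposition \ref{pr:uc_definition} then yields $d(A,B)>0$. Finally $V\cap M\subset\cl{V}\cap M=A$ and $W\cap M\subset B$, so $d(V\cap M,W\cap M)\geq d(A,B)>0$ (the inequality holding trivially if either set is empty), which is the assertion.

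I do not expect any real obstacle: as the text itself advertises, this is the ``immediate'' consequence of property \ref{uc:separation} and the separation axioms. The only points needing a moment's care are that one should invoke the strengthened form of normality (interpolating a closed set, i.e.\ the shrinking form of Urysohn's lemma) so as to obtain open sets with \emph{disjoint closures}, and that $\ucl{\,\cdot\,}$ always denotes the closure computed in $\ucomp{M}$, so that $\ucl{A}\subset\cl{V}$ is merely closure-monotonicity applied to the $\ucomp{M}$-closed set $\cl{V}$. An equally short alternative, should one prefer to avoid the normality argument, is to take via Urysohn's lemma a continuous $g\colon\ucomp{M}\to[0,1]$ with $g\equiv 0$ on $K$ and $g\equiv 1$ on $L$; then $g\restrict_M$ is uniformly continuous by Proposition \ref{pr:uc_restriction}, so $\set{g\restrict_M\leq 1/3}$ and $\set{g\restrict_M\geq 2/3}$ lie at positive distance, and $V=\set{g<1/3}$, $W=\set{g>2/3}$ work.
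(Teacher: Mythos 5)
Your proof is correct and is exactly the argument the paper has in mind: the paper gives no written proof, merely remarking that the proposition follows ``immediately'' from property (ii) of Proposition \ref{pr:uc_definition} together with the usual separation axioms, which is precisely what you carry out (normality of the compact Hausdorff space $\ucomp{M}$ to get open neighborhoods with disjoint closures, then property (ii) applied to their traces on $M$). Both your main argument and the Urysohn-function alternative are sound, including the care taken to get \emph{disjoint closures} rather than merely disjoint open sets.
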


\noindent Thus disjoint closed subsets of $\ucomp{M}$ can be separated by (extensions of) Lipschitz functions on $M$. In particular, Lipschitz functions separate points of $\ucomp{M}$ as required. In fact, $\ucomp{M}$ may be identified with the quotient space of $\beta M$ obtained by identifying those points that cannot be separated by Lipschitz functions.

Property \ref{uc:extension} only covers the extension of bounded uniformly continuous (in particular, Lipschitz) functions on $M$, but in general we will need to deal with extensions of unbounded functions, too. It is proved (for a more general setting) in \cite[Proposition 1.4]{GaJa_2004} that they can be continuously extended to $\ucomp{M}$ if we enlarge the range from $\RR$ to its one-point compactification $\RR\cup\set{\infty}$; see also Section 1 of \cite{GaMe_2017}. The following version of that property will be more useful for our purposes:

\begin{proposition}
Every Lipschitz function $f\colon M\rightarrow\RR$ can be extended uniquely to a continuous function $\ucomp{f}\colon\ucomp{M}\rightarrow [-\infty,+\infty]$.
\end{proposition}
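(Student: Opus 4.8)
The plan is to reduce the statement to the bounded case already handled by Proposition~\ref{pr:uc_definition}\ref{uc:extension}, by compressing the range of $f$ into a compact interval via a suitable homeomorphism of the extended real line.

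First I would dispose of uniqueness, which is immediate: $M$ is dense in $\ucomp{M}$ and $[-\infty,+\infty]$ is Hausdorff, so any two continuous extensions of $f$ agree on the dense subset $M$, hence everywhere.

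For existence, I would fix a homeomorphism $\varphi\colon[-\infty,+\infty]\to[-1,1]$ whose restriction to $\RR$ is Lipschitz; for concreteness one may take $\varphi(t)=\tfrac{2}{\pi}\arctan t$ for $t\in\RR$ and $\varphi(\pm\infty)=\pm1$, so that $\lipnorm{\varphi\restrict_\RR}\leq\tfrac{2}{\pi}$. Then $g:=\varphi\circ f\colon M\to(-1,1)$ is the composition of a bounded Lipschitz function with $f\in\Lip(M)$, hence itself Lipschitz and bounded, in particular bounded and uniformly continuous. By Proposition~\ref{pr:uc_definition}\ref{uc:extension} it extends uniquely to a continuous function $\ucomp{g}\colon\ucomp{M}\to\RR$, and since $g(M)$ lies in the closed set $[-1,1]$ while $M$ is dense in $\ucomp{M}$, the range of $\ucomp{g}$ is contained in $[-1,1]$ as well.

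Finally I would set $\ucomp{f}:=\varphi^{-1}\circ\ucomp{g}$. As $\varphi^{-1}\colon[-1,1]\to[-\infty,+\infty]$ is a homeomorphism, hence continuous, and $\ucomp{g}$ is continuous with values in $[-1,1]$, the map $\ucomp{f}\colon\ucomp{M}\to[-\infty,+\infty]$ is continuous; and for $x\in M$ we get $\ucomp{f}(x)=\varphi^{-1}(\ucomp{g}(x))=\varphi^{-1}(\varphi(f(x)))=f(x)$, so $\ucomp{f}$ extends $f$. There is no genuine obstacle here beyond the idea of squeezing the range; the one point deserving a moment's care is to choose $\varphi$ so that $g$ lands within the hypotheses of Proposition~\ref{pr:uc_definition} (boundedness and uniform continuity), which is exactly why a Lipschitz $\varphi\restrict_\RR$ is convenient. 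It is worth noting that $\ucomp{f}$ may actually attain the values $\pm\infty$ at points of $\ucomp{M}\setminus M$ — already for $f=\rho$ — which is precisely why the enlarged range $[-\infty,+\infty]$ is unavoidable.
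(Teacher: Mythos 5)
Your proof is correct, but it takes a genuinely different and more self-contained route than the paper. The paper does not argue from scratch: it merely remarks that the statement follows from either of two external results --- the extension theorem of Garrido and Jaramillo, which yields a continuous extension with values in the one-point compactification $\RR\cup\set{\infty}$ and then requires the separation property \ref{uc:separation} of Proposition \ref{pr:uc_definition} to split the single point $\infty$ into $+\infty$ and $-\infty$ (the sets $\set{f\geq n}$ and $\set{f\leq -n}$ are at positive distance because $f$ is Lipschitz, so their closures in $\ucomp{M}$ are disjoint) --- or, more directly, the Taimanov extension theorem applied to the compact target $[-\infty,+\infty]$, again in combination with \ref{uc:separation}. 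Your argument instead reduces everything to the bounded case of Proposition \ref{pr:uc_definition}\ref{uc:extension} by conjugating with the homeomorphism $\varphi=\tfrac{2}{\pi}\arctan$: since $\varphi\restrict_\RR$ is Lipschitz, $\varphi\circ f$ is bounded and Lipschitz, hence extends continuously with values in $[-1,1]$, and composing with $\varphi^{-1}$ gives $\ucomp{f}$. This buys you a short, elementary proof that needs neither \ref{uc:separation} nor any outside theorem, at the cost of being tailored to targets that are compactified intervals, whereas the Taimanov route works for arbitrary compact Hausdorff targets. Your treatment of the small points --- uniqueness via density of $M$ and Hausdorffness of the target, the range of $\ucomp{g}$ remaining in $[-1,1]$ by density, and the continuity of $\varphi^{-1}$ as the inverse of a continuous bijection from a compact space --- is all in order.
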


\noindent Indeed, this follows easily using \ref{uc:separation} from either the $\RR\cup\set{\infty}$ version or, more directly, from the Taimanov extension theorem (see e.g. \cite[Theorem 3.2.1]{Engelking}).

Finally, let us consider the elements $\zeta\in\ucomp{M}$ that satisfy any of the following equivalent conditions:
\begin{itemize}
\item $\zeta$ is the limit of a bounded net in $M$,
\item $\ucomp{\rho}(\zeta)<\infty$ (for any choice of base point in $M$),
\item $\abs{\ucomp{f}(\zeta)}<\infty$ for all $f\in\Lip(M)$.
\end{itemize}
The set $\rcomp{M}$ of all such elements is called the \textit{Lipschitz realcompactification} of $M$ in \cite{GaMe_2017}. As implied by the name, $\rcomp{M}$ is realcompact, i.e. homeomorphic to a closed subset of a product of real lines (see the construction in \cite{GaMe_2017} and \cite[Theorem 3.11.3]{Engelking}), but not compact in general.
In fact, we have $M\subset\rcomp{M}\subset\ucomp{M}$, and it is clear that $\rcomp{M}=\ucomp{M}$ if and only if $M$ is bounded and $\rcomp{M}=M$ if and only if $M$ is proper, i.e. it has the Heine-Borel property. Notice also that the evaluation functional $\delta(\zeta)\colon f\mapsto\ucomp{f}(\zeta)$ is an element of $\bidualfree{M}$ if and only if $\zeta\in\rcomp{M}$, and its norm is $\norm{\delta(\zeta)}=\ucomp{\rho}(\zeta)$. Let us also mention that $\ucomp{(fg)}(\zeta)=\ucomp{f}(\zeta)\ucomp{g}(\zeta)$ for any $\zeta\in\rcomp{M}$ and $f,g\in\Lip(M)$ such that $fg\in\Lip(M)$; this is not valid for $\zeta\notin\rcomp{M}$, as an indeterminate limit of type $0\times\infty$ may appear.

\subsection{Derivations}

Let us now introduce a class of functionals on $\Lip_0(M)$ that will play a role in what follows.
In \cite[Section 7.5]{Weaver2}, Weaver defines a derivation at a point $x\in M$ (or $\ucomp{M}$, more generally) as an element $\phi\in\bidualfree{M}$ that satisfies the relation
$$
\duality{fg,\phi} = \duality{f,\phi}\cdot g(x) + f(x)\cdot\duality{g,\phi}
$$
for any $f,g\in\Lip_0(M)$. In general $fg$ is not a Lipschitz function, and this is likely one of the reasons why the original definition is only given for bounded $M$; another one is that $f(x)$ may not be finite if $x\notin\rcomp{M}$, which is possible when $M$ is unbounded. In order to eliminate these restrictions and extend the domain of the definition, we prefer to use the following alternative formulation:

\begin{definition}
\label{def:derivation}
Let $\phi\in\bidualfree{M}$ and $\zeta\in\ucomp{M}$. We say that $\phi$ is a \textit{derivation at $\zeta$} if $\duality{f,\phi}=0$ for any $f\in\Lip_0(M)$ such that $\ucomp{f}$ is constant in a neighborhood of $\zeta$.
\end{definition}

Lemma 7.47 in \cite{Weaver2} asserts that both definitions are equivalent in the original setting, i.e. when $M$ is bounded, and it follows easily that they are also equivalent in general for $\zeta\in\rcomp{M}$. This formulation makes it obvious that nontrivial derivations at $x\in M$ can only exist if $x$ is not an isolated point. Also, derivations at $\zeta\in\rcomp{M}\setminus\set{0}$ can never be positive, since given $f\in\Lip_0(M)$ it is easy to construct $g\in\pos{\Lip_0(M)}$ such that $g-f$ is constant in a neighborhood of $\zeta$. On the other hand, there are always positive derivations at $0$, assuming it is not isolated: let $x_n\rightarrow 0$ and $m_n=\delta(x_n)/d(x_n,0)\in\ball{\lipfree{M}}$, then $(m_n)$ must have a subnet that converges weak$^\ast$ to $\phi\in\bidualfree{M}$. This $\phi$ is clearly a positive derivation at $0$ such that $\norm{\phi}=\duality{\rho,\phi}=1$.

It is interesting to note that derivations at different points of $\rcomp{M}$ are ``orthogonal'' to each other and to weak$^\ast$ continuous functionals in the following sense:

\begin{proposition}
\label{pr:norm_sum_deriv}
Let $m\in\lipfree{M}$, and let $(\phi_i)\subset\bidualfree{M}$ be a sequence of derivations at different points of $\rcomp{M}$ such that $\sum\norm{\phi_i}<\infty$. Then
$$
\norm{m+\sum_{i=1}^\infty\phi_i} = \norm{m}+\sum_{i=1}^\infty\norm{\phi_i} .
$$
\end{proposition}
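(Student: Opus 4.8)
The plan is to reduce the claim to a separation argument on $\Lip_0(M)$, exploiting the fact that a derivation at $\zeta$ vanishes on functions that are locally constant near $\zeta$, together with Proposition~\ref{pr:uc_separation} to produce, for a prescribed finite set of points of $\rcomp{M}$ and a prescribed $m$, a single function $f\in\ball{\Lip_0(M)}$ that simultaneously ``witnesses'' the norm of $m$ (approximately) and the sign/size of each $\duality{\rho,\phi_i}$ while being locally constant near every other $\zeta_j$. Since one inequality is trivial from the triangle inequality, everything comes down to the lower bound
$$
\norm{m+\sum_{i=1}^\infty\phi_i}\ \geq\ \norm{m}+\sum_{i=1}^\infty\norm{\phi_i}.
$$

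First I would fix $\varepsilon>0$ and reduce to a finite sum: by $\sum\norm{\phi_i}<\infty$ it suffices to bound $\norm{m+\sum_{i=1}^N\phi_i}$ from below by $\norm{m}+\sum_{i=1}^N\norm{\phi_i}-\varepsilon$ for all $N$ and then let $N\to\infty$, $\varepsilon\to0$. Write $\zeta_1,\dots,\zeta_N$ for the (distinct) base points of $\phi_1,\dots,\phi_N$ in $\rcomp{M}$. Recall that for each $i$ with $\zeta_i\neq 0$ a derivation is never positive, so $\norm{\phi_i}$ is \emph{not} given by $\duality{\rho,\phi_i}$; instead I will pick, for each such $i$, a function $g_i\in\ball{\Lip_0(M)}$ with $\duality{g_i,\phi_i}>\norm{\phi_i}-\varepsilon/N$, and I may further modify $g_i$ outside a small neighborhood $U_i$ of $\zeta_i$ (by McShane extension from $U_i\cap M$ followed by truncation, keeping the Lipschitz constant $\le1$) so that $g_i$ is supported near $\zeta_i$; for the possible index with $\zeta_i=0$, the function can be handled the same way (note $g_i(0)=0$ automatically). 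Separately, choose $h\in\ball{\Lip_0(M)}$ with $\duality{h,m}>\norm{m}-\varepsilon$. Using Proposition~\ref{pr:uc_separation} I can shrink the neighborhoods $U_i$ so that $\ucl{U_i\cap M}$ are pairwise disjoint and each disjoint from $\supp(m)$ (which is possible because $\supp(m)\subset M$ is at positive distance from each $\zeta_i$ — here I use that the $\zeta_i$ lie in $\ucomp{M}$ and $\supp(m)$ is a genuine subset of $M$; if some $\zeta_i\in\supp(m)$, a limiting/approximation argument replacing $m$ by a finitely supported approximant avoids the issue). Now I assemble a single $f\in\Lip_0(M)$ by a partition-of-unity-style patching: $f$ agrees with (a suitably scaled copy of) $g_i$ on a neighborhood of $\zeta_i$, agrees with $h$ on $\supp(m)$, and interpolates in between, all while controlling $\lipnorm{f}$. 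The key point is that on $\supp(m)$ we get $\duality{f,m}=\duality{h,m}>\norm{m}-\varepsilon$ by Proposition~\ref{pr:equiv_char_support}, and since $f$ is locally constant near each $\zeta_j$ for $j\neq i$ (indeed equals $0$ there away from $U_j$), Definition~\ref{def:derivation} gives $\duality{f,\phi_j}=0$ for $j\neq i$ while near $\zeta_i$ the function $f$ looks like $g_i$ so $\duality{f,\phi_i}\geq\norm{\phi_i}-\varepsilon/N$. Summing, $\duality{f,m+\sum_{i\le N}\phi_i}\geq\norm{m}+\sum_{i\le N}\norm{\phi_i}-2\varepsilon$, and dividing by $\lipnorm{f}$ (which I will have arranged to be close to $1$) yields the bound.

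The main obstacle is the patching step: making $\lipnorm{f}\le 1+o(1)$ while $f$ takes the prescribed ``local shapes'' $g_i$ near the well-separated points $\zeta_i$ and the shape $h$ on $\supp(m)$. The cleanest way is to enforce that the regions are at uniformly positive mutual distance in $M$ (using Proposition~\ref{pr:uc_separation}) and to choose each $g_i$ supported in $U_i$ with $\lipnorm{g_i}\le 1$ and small sup-norm (possible because near $\zeta_i$ we only care about increments, and we can post-compose with a cutoff that is $1$ on a tiny ball and tapers off; this costs a little in the value $\duality{g_i,\phi_i}$, absorbed into $\varepsilon/N$), so that $f:=h\cdot(\text{cutoff away from the }U_i) + \sum_i g_i$ has $\lipnorm{f}\le 1+C\varepsilon$ by the standard estimate for sums of functions with disjoint, well-separated supports combined with \eqref{eq:weighting_op_norm}-type control on the cutoff. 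One must check $f\in\Lip_0(M)$, i.e. $f(0)=0$: this holds since $h(0)=0$, each $g_i(0)=0$, and the cutoffs are $1$ near $0$ if $0\notin\bigcup U_i$, while the case $0\in U_i$ (i.e. $\zeta_i=0$, the only $i$ for which $0$ can be a cluster) is handled by the $g_i=0$-at-$0$ normalization. A careful but routine bookkeeping of the $\varepsilon$'s then closes the argument.
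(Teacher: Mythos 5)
Your overall strategy --- localize near each $\zeta_i$ using the fact that derivations only see the germ of a function modulo constants, patch together near-norming functions, and control the Lipschitz constant via the separation provided by Proposition~\ref{pr:uc_separation} --- is the right one and is essentially the strategy of the paper. But there is a genuine gap in the patching step at any $\zeta_i$ that lies in $\ucl{\supp(m)}$ (in particular, at any $\zeta_i\in M$ that is a support point of $m$ or of your finitely supported approximant $m'$). Your recipe replaces $g_i$ by a truncation with small sup-norm supported in $U_i$ and multiplies $h$ by a cutoff vanishing on $U_i$, so the assembled $f$ is forced to be close to $0$ on $U_i\cap M$. If $m'$ has a support point $x_j\in U_i$ with $h(x_j)\neq 0$ --- unavoidable when, say, $m=\delta(x)$ and $\phi_1$ is a derivation at the non-isolated point $x$ --- then $\duality{f,m'}\neq\duality{h,m'}$ and the term $\norm{m}$ is lost. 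The parenthetical justification ``$\supp(m)\subset M$ is at positive distance from each $\zeta_i$'' is false in general ($\zeta_i$ may belong to $\supp(m)$, or be a cluster point of it in $\ucomp{M}$), and passing to a finitely supported $m'$ does not remove the conflict, since $m'$ can and sometimes must carry mass at or arbitrarily near the $\zeta_i$.

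The repair is to patch in $g_i$ only up to an additive constant: near $\zeta_i$ set $f=g_i-\ucomp{g}_i(\zeta_i)+\ucomp{h}(\zeta_i)$, which is legitimate because $f-g_i$ is then constant near $\zeta_i$, so $\duality{f,\phi_i}=\duality{g_i,\phi_i}$, while $f(\zeta_i)=h(\zeta_i)$ when $\zeta_i\in M$, so no mass of $m$ sitting at $\zeta_i$ is lost; the Lipschitz estimate then requires choosing the neighborhoods $W_i$ \emph{after} fixing the mutual separation $r$, so that both $g_i$ and $h$ oscillate by at most $\varepsilon r$ on $W_i$. This is exactly what the paper's proof does. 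The paper also sidesteps your approximation of $m$ entirely: instead of forcing $f=h$ on $\supp(m)$, it builds a net $(h_E)$, indexed by finite subsets $E$ of $M$, that equals the shifted $g_i$ near each $\zeta_i$ and equals $h$ on $E\cup\set{0}$; this net converges pointwise, hence weak$^\ast$, to $h$, and weak$^\ast$ continuity of $m$ gives $\duality{m,h_E}\rightarrow\duality{m,h}$. With these two modifications your argument closes; without them the lower bound fails precisely in the case where a derivation sits on top of the support of $m$.
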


\begin{proof}
It is clearly enough to prove the theorem for a finite sum $m+\phi_1+\ldots+\phi_n$, and we may assume that each $\phi_i$ is nonzero. Fix $\varepsilon>0$ and choose $f,g_i\in\sphere{\Lip_0(M)}$ such that $\duality{m,f}=\norm{m}$ and $\duality{g_i,\phi_i}>\norm{\phi_i}-\frac{\varepsilon}{n}$ for $i=1,\ldots,n$. Suppose that $\phi_i$ is a derivation at $\zeta_i\in\rcomp{M}$ and let $A$ be the set of those $\zeta_i$ that belong to $M$. Let $\mathfrak{F}$ be the family of all finite subsets of $M\setminus (A\cup\set{0})$, directed by inclusion. We will construct a net $(h_E)_{E\in\mathfrak{F}}$ in $\Lip_0(M)$ such that $\lipnorm{h_E}\leq 1+4\varepsilon$ for every $E\in\mathfrak{F}$.

Fix $E\in\mathfrak{F}$. Since the $\zeta_i$ are all different from each other and not contained in $E$, we may find disjoint open neighborhoods $U_i$ of $\zeta_i$ such that the sets $U_i\cap M$ are at a positive distance from each other and from $E$; also, if $\zeta_i\neq 0$ then we also assume that $d(U_i\cap M,0)>0$. Let $r>0$ be smaller than all of those distances. For each $i=1,\ldots,n$ let
$$
V_i=\set{\xi\in\ucomp{M}: \abs{\ucomp{g}_i(\xi)-\ucomp{g}_i(\zeta_i)}<\varepsilon r \text{ and } \abs{\ucomp{f}(\xi)-\ucomp{f}(\zeta_i)}<\varepsilon r}
$$
which is clearly an open neighborhood of $\zeta_i$, and let $W_i=U_i\cap V_i$. Define the function $h_E$ on the set $\set{0}\cup E\cup\bigcup_{i=1}^n(W_i\cap M)$ by
$$
h_E(x)=\begin{cases}
f(x) &,\, x\in E\cup\set{0} \\
g_i(x)-\ucomp{g}_i(\zeta_i)+\ucomp{f}(\zeta_i) &,\, x\in W_i\cap M
\end{cases}
$$
for $i=1,\ldots,n$. Notice that if $\zeta_i=0$ for some $i$ then both cases yield the same value $h_E(0)=0$.

Now let us estimate $\lipnorm{h_E}$. It is clear that $\abs{h_E(x)-h_E(y)}\leq d(x,y)$ if $x,y$ belong to $E\cup\set{0}$ or to $W_i\cap M$ for the same $i$. 
If $x\in W_i\cap M$ and $y\in W_j\cap M$ for $i\neq j$ then $d(x,y)\geq r$ and
\begin{align*}
\abs{h_E(x)-h_E(y)} &\leq \abs{g_i(x)-\ucomp{g}_i(\zeta_i)}+\abs{g_j(y)-\ucomp{g}_j(\zeta_j)}+\abs{\ucomp{f}(\zeta_i)-\ucomp{f}(\zeta_j)} \\
&\leq 2\varepsilon r+\abs{\ucomp{f}(\zeta_i)-f(x)}+\abs{\ucomp{f}(\zeta_j)-f(y)}+\abs{f(x)-f(y)} \\
&\leq 4\varepsilon r+d(x,y) \\
&\leq (1+4\varepsilon)d(x,y)
\end{align*}
Otherwise, if $x\in W_i\cap M$ and $y\in E$ then $d(x,y)\geq r$ again and
\begin{align*}
\abs{h_E(x)-h_E(y)} &\leq \abs{g_i(x)-\ucomp{g}_i(\zeta_i)}+\abs{\ucomp{f}(\zeta_i)-f(y)} \\
&\leq \varepsilon r+\abs{\ucomp{f}(\zeta_i)-f(x)}+\abs{f(x)-f(y)} \\
&\leq 2\varepsilon r+d(x,y) \\
&\leq (1+2\varepsilon)d(x,y) .
\end{align*}
So $\lipnorm{h_E}\leq 1+4\varepsilon$ as claimed. Finally, extend $h_E$ to $M$ using McShane's theorem.

We have thus built a bounded net $(h_E)_{E\in\mathfrak{F}}$ in $\Lip_0(M)$. Now notice that $h_E$ converges pointwise to $f$. Indeed, $h_E(x)=f(x)$ for all $x\in A\cup\set{0}$ and all $E\in\mathfrak{F}$, and $h_E(x)=f(x)$ for any $x\in M\setminus (A\cup\set{0})$ whenever $E\supset\set{x}$, by construction. Thus $(h_E)$ converges weak$^\ast$ to $f$, and therefore we can choose $E\in\mathfrak{F}$ such that $\duality{m,h_E}>\duality{m,f}-\varepsilon$. By construction, $h_E-g_i$ is constant in a neighborhood of each $\zeta_i$, therefore $\duality{h_E,\phi_i}=\duality{g_i,\phi_i}$. Putting it all together, we have
$$
\duality{h_E,m+\sum_{i=1}^n\phi_i} > \duality{m,f}-\varepsilon+\sum_{i=1}^n\duality{g_i,\phi_i} > \norm{m}+\sum_{i=1}^n\norm{\phi_i}-2\varepsilon
$$
and hence
$$
\norm{m+\sum_{i=1}^n\phi_i} > \frac{\norm{m}+\sum_{i=1}^n\norm{\phi_i}-2\varepsilon}{\lipnorm{h_E}} \geq \frac{\norm{m}+\sum_{i=1}^n\norm{\phi_i}-2\varepsilon}{1+4\varepsilon} .
$$
Letting $\varepsilon\rightarrow 0$ yields the desired result.
\end{proof}

\section{A notion of support for elements of \texorpdfstring{$\bidualfree{M}$}{Lip0(M)*}}

In this section, we will propose a generalization of the concept of support for elements of $\lipfree{M}$ that is applicable to elements of its bidual. Unfortunately, some of its properties break down for functionals with content that ``lies at infinity''. In order to make this statement more precise, we will first analyze the structure of $\bidualfree{M}$ and establish a decomposition of general functionals into elements which concentrate at different domains.

\subsection{Decomposition of functionals by domains}

Let us first make the following observation: for any $\phi\in\bidualfree{M}$, the sequence $(\phi\circ\wop{H_n})$ is Cauchy. Indeed, for $m>n\in\NN$ we have that
$$
\norm{\phi\circ\wop{H_m}-\phi\circ\wop{H_n}} = \norm{\sum_{k=n+1}^m \phi\circ\wop{\Lambda_k}} \leq \sum_{k=n+1}^\infty\norm{\phi\circ\wop{\Lambda_k}}
$$
can be made arbitrarily small by \eqref{eq:kalton_absconv}. So $(\phi\circ\wop{H_n})$ converges in norm to a functional in $\bidualfree{M}$ that can be interpreted as ``the part of $\phi$ that is concentrated away from infinity'' and since $\phi\circ\wop{G_n}=\phi-\phi\circ\wop{H_n}$, the limit of $(\phi\circ\wop{G_n})$ also exists and can be thought of as ``the part of $\phi$ that lies at infinity''. Analogously, $(\phi\circ\wop{G_{-n}})$ and $(\phi\circ\wop{H_{-n}})$ converge in norm and the limits can be understood as ``the part of $\phi$ that is concentrated away from the base point'' and ``the part that lies at the base point'', respectively.

With this idea in mind, let us introduce some terminology for the following classes of functionals:

\begin{definition}
\label{def:separation_classes}
Let $\phi\in\bidualfree{M}$. We say that $\phi$
\begin{itemize}
\item \textit{is concentrated at infinity} if $\phi\circ\wop{H_n}=0$ for all $n\in\NN$,
\item \textit{avoids infinity} if $\lim_n \phi\circ\wop{H_n}=\phi$,
\item \textit{avoids infinity strongly}  if $\phi\circ\wop{H_n}=\phi$ for some $n\in\NN$ (we will usually abbreviate this and say that $\phi$ is \textit{strongly bounded}),
\item \textit{is concentrated at $0$} if $\phi\circ\wop{G_{-n}}=0$ for all $n\in\NN$,
\item \textit{avoids $0$} if $\lim_n \phi\circ\wop{G_{-n}}=\phi$,
\item \textit{avoids $0$ strongly} if $\phi\circ\wop{G_{-n}}=\phi$ for some $n\in\NN$.
\end{itemize}
\end{definition}

Before we continue, let us mention some easy facts about these classes. If $\phi\in\bidualfree{M}$ avoids $0$ (resp. infinity) strongly then it avoids $0$ (resp. infinity), and if $\phi$ is concentrated at infinity then it avoids $0$ strongly and vice versa. It is also not difficult to verify that $\phi\in\bidualfree{M}$ avoids $0$ and infinity if and only if $\phi=\lim_n\phi\circ\wop{\Pi_n}$. Indeed, for any $n\in\NN$, we can write
\begin{align*}
\phi &= \phi\circ\wop{H_{-(n+1)}} + \phi\circ\wop{\Pi_n} + \phi\circ\wop{G_n}\\
&=\phi-\phi\circ\wop{G_{-(n+1)}} + \phi\circ\wop{\Pi_n} + \phi-\phi\circ\wop{H_n},
\end{align*}
so clearly $\phi=\lim_n\phi\circ\wop{\Pi_n}$ whenever $\phi$ avoids $0$ and infinity. For the converse, assuming that $\phi=\lim_k\phi\circ\wop{\Pi_k}$, we obtain
\begin{align*}
\phi\circ\wop{H_n} &= \pare{\lim_{k\rightarrow\infty} \phi\circ\wop{\Pi_{k-1}}}\circ\wop{H_n} \\
&= \pare{\lim_{k\rightarrow\infty} \phi\circ\wop{G_{-k}H_{k-1}}}\circ\wop{H_n}= \lim_{k\rightarrow\infty} \phi\circ\wop{G_{-k}H_n}
\end{align*}
and therefore
$$
\lim_{n\rightarrow\infty} \phi\circ\wop{H_n} = \lim_{n\rightarrow\infty}\lim_{k\rightarrow\infty} \phi\circ\wop{G_{-k}H_n} = \lim_{n\rightarrow\infty}\lim_{k\rightarrow\infty} \sum_{i=-k+1}^n \phi\circ\wop{\Lambda_i} = \phi
$$
because the series \eqref{eq:kalton_finite_part} converges absolutely by \eqref{eq:kalton_absconv}.
Since $\wop{\Pi_n}(f)$ converges weak$^\ast$ to $f$ for every $f\in\Lip_0(M)$, it follows from the remark before Definition \ref{def:separation_classes} that every element of $\lipfree{M}$ avoids $0$ and infinity. 

Note next that if $0$ is an isolated point of $M$ then every functional $\phi\in\bidualfree{M}$ avoids $0$ strongly, and if $M$ is bounded then every $\phi$ is strongly bounded. In fact, strongly bounded functionals can be completely identified with functionals over some space $\Lip_0(N)$ where $N$ is bounded - only we are considering them as elements of $\dual{\Lip_0(M)}$ for some possibly unbounded overspace $M$ of $N$. Similarly, functionals that avoid $0$ strongly can be identified with functionals over some space $\Lip_0(N)$ where $N$ has an isolated base point. In view of Proposition \ref{pr:equiv_char_support}, it is also easy to see that an element of $\lipfree{M}$ is strongly bounded precisely when its support is bounded. This is not the case in $\bidualfree{M}$, as we shall see with Example \ref{ex:empty_extended_support}.

While the definitions above are given in terms of the auxiliary functions $H_n$ and $G_n$ defined in \eqref{eq:H_n} and \eqref{eq:G_n}, it is possible to express some of these notions equivalently in a much more general way. The following proposition contains some such characterizations; additional ones will be given in Proposition \ref{pr:separation_classes_equivalences_part2}.

\begin{proposition}
\label{pr:separation_classes_equivalences}
Let $\phi\in\bidualfree{M}$.
\begin{enumerate}[label={\upshape{(\alph*)}}]
\item $\phi$ is concentrated at infinity if and only if $\duality{f,\phi}=0$ for every $f\in\Lip_0(M)$ with bounded support.
\item $\phi$ is concentrated at $0$ if and only if it is a derivation at $0$.
\item $\phi$ is strongly bounded if and only if $\phi=\phi\circ\wop{h}$ for some $h\in\Lip(M)$ with bounded support.
\end{enumerate}
\end{proposition}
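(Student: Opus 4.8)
The plan is to handle the three equivalences separately, in each case unwinding the relevant definition and exploiting that $H_n$ and $G_{-n}$ act as multipliers on $\Lip_0(M)$; recall that $\wop{G_{-n}}=I-\wop{H_{-n}}$ is a bona fide weak$^\ast$-weak$^\ast$-continuous operator since $H_{-n}$ has bounded support.

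For (a) and (c) the arguments are short and purely formal. In (a), if $\phi$ is concentrated at infinity and $f\in\Lip_0(M)$ has bounded support, I would pick $n\in\NN$ with $2^n\geq\rad(\supp(f))$, so that $\wop{H_n}(f)=fH_n=f$ and hence $\duality{f,\phi}=\duality{f,\phi\circ\wop{H_n}}=0$; conversely, for every $g\in\Lip_0(M)$ the function $\wop{H_n}(g)=gH_n$ has support contained in $\supp(H_n)$, which is bounded, so the hypothesis forces $\duality{g,\phi\circ\wop{H_n}}=0$ for all $g$, i.e. $\phi\circ\wop{H_n}=0$ for every $n$. In (c), one direction is trivial (take $h=H_n$); for the other, if $\phi=\phi\circ\wop{h}$ with $\supp(h)$ bounded, choose $n$ with $H_n\equiv 1$ on $\supp(h)$, so that $\wop{h}\circ\wop{H_n}=\wop{hH_n}=\wop{h}$ and therefore $\phi\circ\wop{H_n}=\phi\circ\wop{h}\circ\wop{H_n}=\phi\circ\wop{h}=\phi$.

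The real content is in (b), where the main obstacle is translating Definition \ref{def:derivation} at $\zeta=0$ into metric language. I would first establish the reformulation: \emph{$\phi$ is a derivation at $0$ if and only if $\duality{f,\phi}=0$ whenever $f\in\Lip_0(M)$ vanishes on some ball $B(0,r)$.} One direction of this is easy — if $\ucomp{f}$ is constant on an open $U\ni 0$ in $\ucomp{M}$ then $f$ is constant, hence (since $f(0)=0$) identically $0$, on any ball $B(0,r)\subset U\cap M$. The reverse direction is the delicate point: given $f\equiv 0$ on $B(0,r)$, since $d(\set{0},M\setminus B(0,r))\geq r>0$ Proposition \ref{pr:uc_definition}\ref{uc:separation} yields $0\notin\ucl{M\setminus B(0,r)}$, so $V:=\ucomp{M}\setminus\ucl{M\setminus B(0,r)}$ is an open neighbourhood of $0$ contained in $\ucl{B(0,r)}$ (because $\ucomp{M}=\ucl{B(0,r)}\cup\ucl{M\setminus B(0,r)}$), and $\ucomp{f}\equiv 0$ there by continuity and density of $B(0,r)$. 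With this reformulation in hand, (b) runs exactly as (a): a functional concentrated at $0$ annihilates every $f$ vanishing on a ball $B(0,r)$, since $G_{-n}f=f$ once $2^{1-n}\leq r$, whence $\duality{f,\phi}=\duality{f,\phi\circ\wop{G_{-n}}}=0$; and conversely $\wop{G_{-n}}(g)=gG_{-n}$ always vanishes on $B(0,2^{-n})$, so a derivation at $0$ satisfies $\duality{g,\phi\circ\wop{G_{-n}}}=0$ for all $g$, i.e. $\phi\circ\wop{G_{-n}}=0$ for every $n$, which is precisely being concentrated at $0$.
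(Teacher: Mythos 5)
Your proof is correct and follows essentially the same route as the paper: parts (a) and (c) are identical multiplier arguments with $H_n$, and part (b) reduces the derivation condition to vanishing on metric balls and then runs the same $G_{-n}$ argument. The only difference is that you make explicit, via Proposition \ref{pr:uc_definition}\ref{uc:separation}, why a function vanishing on $B(0,r)$ has $\ucomp{f}$ vanishing on a $\ucomp{M}$-neighbourhood of $0$ — a detail the paper's proof passes over silently when it asserts that $fG_{-n}=0$ ``in a neighborhood of $0$''.
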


\begin{proof} 
(a) Suppose that $\phi$ is concentrated at infinity, and pick $f\in\Lip_0(M)$ with bounded support. Choose $n\in\NN$ such that $2^n>\rad(\supp(f))$. Then $f=fH_n$ and hence
$$
\duality{f,\phi} = \duality{fH_n,\phi} = \duality{f,\phi\circ\wop{H_n}} = 0 .
$$
For the converse implication, if $f\in\Lip_0(M)$ and $n\in\NN$ then $fH_n\in\Lip_0(M)$ has bounded support, hence $\duality{f,\phi\circ\wop{H_n}}=\duality{fH_n,\phi}=0$. Therefore $\phi\circ\wop{H_n}=0$.

(b) If $\phi$ is a derivation at $0$, then for every $n\in\NN$ and $f\in\Lip_0(M)$ we have $\duality{f,\phi\circ\wop{G_{-n}}}=\duality{fG_{-n},\phi}=0$ since $fG_{-n}=0$ in a neighborhood of $0$, so $\phi$ is concentrated at $0$. Conversely, suppose that $\phi$ is concentrated at $0$ and let $f\in\Lip_0(M)$ be constant, i.e. $0$, in a neighborhood of $0$. Then $f=fG_{-n}$ for $n$ large enough and
$$
\duality{f,\phi} = \duality{fG_{-n},\phi} = \duality{f,\phi\circ\wop{G_{-n}}} = 0. 
$$
Hence $\phi$ is a derivation at $0$.

(c) One implication is obvious taking $h=H_n$. For the converse, let $n$ be such that $2^n>\rad(\supp(h))$, then $hH_n=h$ and hence
$$
\phi\circ\wop{H_n} = (\phi\circ\wop{h})\circ\wop{H_n} = \phi\circ\wop{hH_n} = \phi\circ\wop{h} = \phi .
$$
\end{proof}

Every functional in $\bidualfree{M}$ can be canonically decomposed as a sum of elements of the classes introduced in Definition \ref{def:separation_classes} as follows:

\begin{theorem}
\label{th:decomp_0_infty}
Every $\phi\in\bidualfree{M}$ can be expressed as
\begin{equation}
\label{eq:decomp_0_infty}
\phi=\phi_0+\kaltonsum{\phi}+\phi_\infty
\end{equation}
where $\phi_0$ is a derivation at $0$, $\phi_\infty$ is concentrated at infinity, and $\kaltonsum{\phi}$ avoids $0$ and infinity. This expression is unique and
\begin{equation}
\label{eq:decomp_0_infty_components}
\begin{split}
\phi_0 &= \lim_{n\rightarrow\infty} \phi\circ\wop{H_{-n}} \\
\kaltonsum{\phi} &= \lim_{n\rightarrow\infty} \phi\circ\wop{\Pi_n} \\
\phi_\infty &= \lim_{n\rightarrow\infty} \phi\circ\wop{G_n}
\end{split} .
\end{equation}
Moreover we have
\begin{equation}
\label{eq:decomp_0_infty_norm}
\norm{\phi}=\norm{\phi_0}+\norm{\kaltonsum{\phi}}+\norm{\phi_\infty} .
\end{equation}
If $\phi$ is positive then so are $\phi_0$, $\kaltonsum{\phi}$ and $\phi_\infty$.
\end{theorem}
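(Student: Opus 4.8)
The plan is to (i) check that the three limits in \eqref{eq:decomp_0_infty_components} define a decomposition with the stated location properties, (ii) prove uniqueness by composing an arbitrary such decomposition with the weighting operators $\wop{H_{-n}}$, $\wop{G_n}$ and passing to the limit, (iii) prove \eqref{eq:decomp_0_infty_norm} by an explicit gluing construction in the spirit of the proof of Proposition~\ref{pr:norm_sum_deriv}, and (iv) dispose of positivity by a one-line monotonicity argument. For (i): all three limits exist in norm --- for $\kaltonsum{\phi}$ this is \eqref{eq:kalton_finite_part}, and for $\phi_0=\lim_n\phi\circ\wop{H_{-n}}$, $\phi_\infty=\lim_n\phi\circ\wop{G_n}$ it is recorded at the start of this subsection. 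Taking $n\to\infty$ in the identity $\phi=\phi\circ\wop{H_{-(n+1)}}+\phi\circ\wop{\Pi_n}+\phi\circ\wop{G_n}$ (valid because $H_{-(n+1)}+\Pi_n\equiv H_n$, hence $\wop{H_{-(n+1)}}+\wop{\Pi_n}+\wop{G_n}=\wop{H_n}+(I-\wop{H_n})=I$) yields \eqref{eq:decomp_0_infty}. Using the composition rules $\wop{H_m}\wop{H_n}=\wop{H_{\min\{m,n\}}}$, $\wop{G_m}\wop{G_n}=\wop{G_{\max\{m,n\}}}$, $\wop{\Pi_m}\wop{\Pi_n}=\wop{\Pi_{\min\{m,n\}}}$ and $\wop{G_{-n}}=I-\wop{H_{-n}}$, one checks $(\phi\circ\wop{H_{-m}})\circ\wop{G_{-n}}=0$ for $m\ge n$, hence $\phi_0\circ\wop{G_{-n}}=0$ for every $n$, so $\phi_0$ is concentrated at $0$ and therefore a derivation at $0$ by part~(b) of Proposition~\ref{pr:separation_classes_equivalences}; similarly $(\phi\circ\wop{G_m})\circ\wop{H_n}=0$ for $m\ge n$ gives $\phi_\infty\circ\wop{H_n}=0$, i.e.\ $\phi_\infty$ is concentrated at infinity; and $(\phi\circ\wop{\Pi_m})\circ\wop{\Pi_n}=\phi\circ\wop{\Pi_n}$ for $m\ge n$ gives $\kaltonsum{\phi}\circ\wop{\Pi_n}=\phi\circ\wop{\Pi_n}$, so $\lim_n\kaltonsum{\phi}\circ\wop{\Pi_n}=\kaltonsum{\phi}$, which by the characterization preceding Definition~\ref{def:separation_classes} means $\kaltonsum{\phi}$ avoids $0$ and infinity.

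For uniqueness, suppose $\phi=\psi_0+\psi_1+\psi_\infty$ with $\psi_0$ a derivation at $0$, $\psi_\infty$ concentrated at infinity, and $\psi_1$ avoiding $0$ and infinity. Compose with $\wop{H_{-n}}$ ($n\in\NN$): since $\psi_0$ is concentrated at $0$ one has $\psi_0\circ\wop{H_{-n}}=\psi_0-\psi_0\circ\wop{G_{-n}}=\psi_0$; since $\wop{H_m}\wop{H_{-n}}=\wop{H_{-n}}$ for $m,n\in\NN$ and $\psi_\infty\circ\wop{H_m}=0$, one has $\psi_\infty\circ\wop{H_{-n}}=0$; and since $\psi_1$ avoids $0$, $\psi_1\circ\wop{H_{-n}}=\psi_1-\psi_1\circ\wop{G_{-n}}\to 0$. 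Hence $\phi_0=\lim_n\phi\circ\wop{H_{-n}}=\psi_0$. The symmetric computation with $\wop{G_n}$ (using $\wop{G_{-m}}\wop{G_n}=\wop{G_n}$, that $\psi_0$ is concentrated at $0$, and that $\psi_1$ avoids infinity) gives $\phi_\infty=\lim_n\phi\circ\wop{G_n}=\psi_\infty$, whence $\psi_1=\phi-\psi_0-\psi_\infty=\phi-\phi_0-\phi_\infty=\kaltonsum{\phi}$.

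For \eqref{eq:decomp_0_infty_norm}, $\le$ is the triangle inequality. For $\ge$, fix $\varepsilon\in(0,\tfrac12)$ and pick $f_0,f_1,f_\infty\in\ball{\Lip_0(M)}$ with $\duality{f_0,\phi_0}>\norm{\phi_0}-\varepsilon$, $\duality{f_1,\kaltonsum{\phi}}>\norm{\kaltonsum{\phi}}-\varepsilon$, $\duality{f_\infty,\phi_\infty}>\norm{\phi_\infty}-\varepsilon$. By \eqref{eq:kalton_absconv} choose $N$ with $3\sum_{\abs{k}\ge N}\norm{\phi\circ\wop{\Lambda_k}}<\varepsilon$, and set $\delta=\varepsilon 2^{-N}$, $R=2^N/\varepsilon$, so $\delta<2^{-N}\le 2^N<R$. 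Define $h$ on the closed set $S=\cl{B(0,\delta)}\cup\{2^{-N}\le\rho\le 2^N\}\cup\{\rho\ge R\}$ to equal $f_0$, $f_1$, $f_\infty$ on the three pieces. Since each $f_i(0)=0$, we have $\abs{f_i(x)}\le\rho(x)$, and a short case check shows $\lipnorm{h\restrict_S}\le\frac{1+\varepsilon}{1-\varepsilon}$: e.g.\ for $x\in\cl{B(0,\delta)}$ and $y$ in either of the other pieces, $\abs{h(x)-h(y)}\le\delta+\rho(y)$ while $d(x,y)\ge\rho(y)-\delta$, and likewise between the middle piece and $\{\rho\ge R\}$ (this is where the annuli must be taken \emph{wide}, so $R/2^N$ and $2^{-N}/\delta$ are large). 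Extend $h$ to $\Lip_0(M)$ by McShane's theorem with the same constant. Then $h-f_0$ vanishes on $B(0,\delta)$, so $\ucomp{(h-f_0)}$ is constant near $0$ in $\ucomp{M}$ (by the separation property of $\ucomp M$, Proposition~\ref{pr:uc_definition}), whence $\duality{h,\phi_0}=\duality{f_0,\phi_0}$ by Definition~\ref{def:derivation}; $h-f_\infty$ has bounded support, so $\duality{h,\phi_\infty}=\duality{f_\infty,\phi_\infty}$ by part~(a) of Proposition~\ref{pr:separation_classes_equivalences}; and $h-f_1$ vanishes on $\{2^{-N}\le\rho\le 2^N\}$, so $(h-f_1)\Lambda_k=0$ for $\abs{k}\le N-1$ (as $\supp\Lambda_k\subset\{2^{k-1}\le\rho\le 2^{k+1}\}$), giving $\abs{\duality{h-f_1,\kaltonsum{\phi}}}\le\lipnorm{h-f_1}\sum_{\abs{k}\ge N}\norm{\phi\circ\wop{\Lambda_k}}<\varepsilon$. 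Summing, $\duality{h,\phi}>\norm{\phi_0}+\norm{\kaltonsum{\phi}}+\norm{\phi_\infty}-4\varepsilon$, so $\norm{\phi}\ge\frac{1-\varepsilon}{1+\varepsilon}\pare{\norm{\phi_0}+\norm{\kaltonsum{\phi}}+\norm{\phi_\infty}-4\varepsilon}$, and $\varepsilon\to 0$ finishes. I expect this construction of $h$ to be the main obstacle: one must balance two competing requirements --- the transition annuli must be wide enough (exploiting that the $f_i$ are small near the base point) that the glued Lipschitz constant only exceeds $1$ by a factor tending to $1$, while $N$ must be large enough that the Kalton tail bound \eqref{eq:kalton_absconv} controls the error introduced on the middle part.

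Finally, if $\phi\ge 0$ then, since $H_{-n},\Pi_n,G_n$ are nonnegative, $fH_{-n},f\Pi_n,fG_n\ge 0$ whenever $f\ge 0$; thus $\duality{f,\phi\circ\wop{H_{-n}}}=\duality{fH_{-n},\phi}\ge 0$, and likewise for $\wop{\Pi_n}$ and $\wop{G_n}$, so passing to the limit shows $\phi_0,\kaltonsum{\phi},\phi_\infty\ge 0$.
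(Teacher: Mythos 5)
Your proposal is correct and follows essentially the same route as the paper: the components are defined by the same limits, uniqueness is obtained by composing with the weighting operators and passing to limits, and the norm identity comes from gluing near-optimal witnesses on widely separated annuli around the base point (exploiting $\abs{f(x)}\le\rho(x)$), which is exactly the paper's construction. The only cosmetic differences are that you pair the glued function directly against $\phi_0$, $\kaltonsum{\phi}$, $\phi_\infty$ via their structural properties rather than against the approximants $\phi\circ\wop{H_{-m}}$, $\phi\circ\wop{\Pi_n}$, $\phi\circ\wop{G_m}$ as the paper does, plus two harmless slips (the identity $H_{-m}G_{-n}=0$ needs $m\ge n+1$ rather than $m\ge n$, and your tail constant $3$ should be $4$ unless $\varepsilon\le\tfrac13$), neither of which affects the argument.
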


\begin{proof}
Let us first prove the existence of the decomposition. Let $\phi_0,\kaltonsum{\phi},\phi_\infty$ be given by \eqref{eq:decomp_0_infty_components}. We have already seen that all of these limits exist, and \eqref{eq:decomp_0_infty} follows from taking limits in the identity
$$
\phi = \phi\circ\wop{H_{-(n+1)}} + \phi\circ\wop{\Pi_n} + \phi\circ\wop{G_n}.
$$
Notice that, for any fixed $n\geq 1$
$$
\phi_0\circ\wop{G_{-n}} = \pare{\lim_{k\rightarrow\infty} \phi\circ\wop{H_{-k}}}\circ\wop{G_{-n}} = \lim_{k\rightarrow\infty} \phi\circ\wop{H_{-k}G_{-n}} = 0
$$
as $H_{-k}G_{-n}=0$ eventually, so $\phi_0$ is concentrated at $0$, hence it is a derivation at $0$ by Proposition \ref{pr:separation_classes_equivalences}(b). By an analogous argument, $\phi_\infty$ is concentrated at infinity and $\kaltonsum{\phi}=\lim_{n\rightarrow\infty} \kaltonsum{\phi}\circ\wop{\Pi_n}$, which means that $\kaltonsum{\phi}$ avoids $0$ and infinity by the comment below Definition \ref{def:separation_classes}. The statement about positivity follows from the fact that all weighting operators used in the construction preserve positivity.

Next we prove that the decomposition is unique. Assume that
$$
\phi=\phi_0+\kaltonsum{\phi}+\phi_\infty=\phi'_0+\kaltonsum{\phi}'+\phi'_\infty
$$
where $\phi_0,\phi'_0$ are concentrated at $0$, $\phi_\infty,\phi'_\infty$ are concentrated at infinity, and $\kaltonsum{\phi},\kaltonsum{\phi}'$ avoid $0$ and infinity. Then $\phi_0+\kaltonsum{\phi}-\phi'_0-\kaltonsum{\phi}'=\phi'_\infty-\phi_\infty$, and we have
\begin{align*}
0 = \lim_{n\rightarrow\infty}(\phi'_\infty-\phi_\infty)\circ\wop{H_n} &= \lim_{n\rightarrow\infty}(\phi_0-\phi'_0+\kaltonsum{\phi}-\kaltonsum{\phi}')\circ\wop{H_n} \\
&= \phi_0-\phi'_0 + \lim_{n\rightarrow\infty}(\kaltonsum{\phi}-\kaltonsum{\phi}')\circ\wop{H_n} \\
&= \phi_0-\phi'_0+\kaltonsum{\phi}-\kaltonsum{\phi}'
\end{align*}
so that $\phi'_0-\phi_0=\kaltonsum{\phi}-\kaltonsum{\phi}'$. It follows similarly that
$$
0 = \lim_{n\rightarrow\infty}(\phi'_0-\phi_0)\circ\wop{G_{-n}} = \lim_{n\rightarrow\infty}(\kaltonsum{\phi}-\kaltonsum{\phi}')\circ\wop{G_{-n}} = \kaltonsum{\phi}-\kaltonsum{\phi}' .
$$
Hence $\kaltonsum{\phi}=\kaltonsum{\phi}'$, and uniqueness follows.

Finally, we will prove \eqref{eq:decomp_0_infty_norm}. Fix $\varepsilon>0$, and choose functions $f_0,f_s,f_\infty\in\sphere{\Lip_0(M)}$ such that $\duality{f_0,\phi_0}>\norm{\phi_0}-\varepsilon$, $\duality{f_s,\kaltonsum{\phi}}>\norm{\kaltonsum{\phi}}-\varepsilon$ and $\duality{f_\infty,\phi_\infty}>\norm{\phi_\infty}-\varepsilon$. By the convergence of \eqref{eq:kalton_absconv}, we may find $n\in\NN$ such that
$$
\sum_{\substack{k\in\ZZ \\ \abs{k}>n}}\norm{\phi\circ\wop{\Lambda_k}}<\varepsilon .
$$
This implies that $\norm{\kaltonsum{\phi}-\phi\circ\wop{\Pi_n}}<\varepsilon$ and, more generally
\begin{equation}
\label{eq:decomp_proof_bound}
\norm{\phi\circ\wop{1-H_{-m}-\Pi_n-G_m}} = \norm{\sum_{k=-m+1}^{-(n+1)}\phi\circ\wop{\Lambda_k} + \sum_{k=n+1}^m\phi\circ\wop{\Lambda_k}} < \varepsilon
\end{equation}
for any $m>n+1$. Fix $m>n+2$, to be determined later. Since $\phi\circ\wop{H_{-m}}\rightarrow\phi_0$ and $\phi\circ\wop{G_m}\rightarrow\phi_\infty$ as $m\rightarrow\infty$, $m$ can be chosen to be large enough so that $\norm{\phi_0-\phi\circ\wop{H_{-m}}}<\varepsilon$ and $\norm{\phi_\infty-\phi\circ\wop{G_m}}<\varepsilon$.

Let $M'=M_0\cup M_s\cup M_\infty$ where
\begin{align*}
M_0 &= B(0,2^{-m+1}) \\
M_s &= B(0,2^{n+1})\setminus B(0,2^{-(n+1)}) \\
M_\infty &= M\setminus B(0,2^{m})
\end{align*}
and define $g:M'\rightarrow\RR$ by $g=f_0$ on $M_0$, $g=f_s$ on $M_s$ and $g=f_\infty$ on $M_\infty$. Let us estimate $\lipnorm{g}$. Clearly $\abs{g(x)-g(y)}\leq d(x,y)$ when $x,y$ belong to the same one of the disjoint sets $M_0$, $M_s$, $M_\infty$. Otherwise suppose $d(x,0)>d(y,0)$, then we actually have $d(x,0)\geq 2^{m-n-2}d(y,0)$ and hence
$$
\frac{\abs{g(x)-g(y)}}{d(x,y)} \leq \frac{\abs{g(x)}+\abs{g(y)}}{d(x,y)} \leq \frac{d(x,0)+d(y,0)}{d(x,0)-d(y,0)} \leq \frac{2^{m-n-2}+1}{2^{m-n-2}-1} .
$$
Thus, by choosing $m$ large enough we can guarantee that $\lipnorm{g}<1+\varepsilon$. Now extend $g$ to $M$ without increasing its norm. Then we have
\begin{multline}
\label{eq:decomp_proof_split}
\duality{g,\phi} = \duality{g,\phi\circ\wop{H_{-m}}} + \duality{g,\phi\circ\wop{\Pi_n}} \\ + \duality{g,\phi\circ\wop{G_m}} + \duality{g,\phi\circ\wop{1-H_{-m}-\Pi_n-G_m}} .
\end{multline}
Notice that
$$
\duality{g,\phi\circ\wop{H_{-m}}} = \duality{f_0,\phi\circ\wop{H_{-m}}} = \duality{f_0,\phi_0} - \duality{f_0,\phi_0-\phi\circ\wop{H_{-m}}} > \norm{\phi_0}-2\varepsilon
$$
and similarly the second and third terms in \eqref{eq:decomp_proof_split} are bounded below by $\norm{\kaltonsum{\phi}}-2\varepsilon$ and $\norm{\phi_\infty}-2\varepsilon$, respectively. Substituting into \eqref{eq:decomp_proof_split} and using \eqref{eq:decomp_proof_bound} we obtain
$$
\duality{g,\phi} > \norm{\phi_0}+\norm{\kaltonsum{\phi}}+\norm{\phi_\infty} - 6\varepsilon - \lipnorm{g}\varepsilon
$$
and it follows that
$$
\norm{\phi} \geq \frac{\duality{g,\phi}}{\lipnorm{g}} > \frac{\norm{\phi_0} + \norm{\kaltonsum{\phi}} + \norm{\phi_\infty} - 6\varepsilon}{1+\varepsilon} - \varepsilon .
$$
Letting $\varepsilon\rightarrow 0$ yields $\norm{\phi} \geq\norm{\phi_0} + \norm{\kaltonsum{\phi}} + \norm{\phi_\infty}$, and the converse inequality is obvious.
\end{proof}

By adding $\phi_A=\phi_0+\kaltonsum{\phi}$ (or following the reasoning in the proof of Theorem \ref{th:decomp_0_infty}), we obtain a similar decomposition of $\phi$ into a part that is concentrated at infinity and a part that avoids infinity, and they do not depend on the choice of base point:

\begin{corollary}
\label{cor:decomposition_A_infinity}
Every $\phi\in\bidualfree{M}$ can be expressed uniquely as $\phi=\phi_A+\phi_\infty$ where $\phi_A$ avoids infinity and $\phi_\infty$ is concentrated at infinity. These terms are given by $\phi_A = \lim_n \phi\circ\wop{H_n}$ and $\phi_\infty = \lim_n \phi\circ\wop{G_n}$. Moreover we have $\norm{\phi}=\norm{\phi_A}+\norm{\phi_\infty}$.
\end{corollary}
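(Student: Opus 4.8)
The plan is to read this off directly from Theorem~\ref{th:decomp_0_infty} by merging its first two summands. Given $\phi\in\bidualfree{M}$, write $\phi=\phi_0+\kaltonsum{\phi}+\phi_\infty$ as in that theorem and set $\phi_A:=\phi_0+\kaltonsum{\phi}$, so that $\phi=\phi_A+\phi_\infty$. Theorem~\ref{th:decomp_0_infty} already tells us that $\phi_\infty$ is concentrated at infinity and that $\kaltonsum{\phi}$ avoids infinity, so what remains is to check that $\phi_A$ avoids infinity, to identify $\phi_A$ and $\phi_\infty$ with the asserted limits, to prove uniqueness, and to establish the norm identity.

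The crux of the first point is that $\phi_0$ is in fact \emph{strongly bounded}. Indeed, $\phi_0$ is a derivation at $0$, hence concentrated at $0$ by Proposition~\ref{pr:separation_classes_equivalences}(b), i.e. $\phi_0\circ\wop{G_{-n}}=0$ for all $n\in\NN$; since $\wop{H_{-1}}=I-\wop{G_{-1}}$ on $\Lip_0(M)$ this gives $\phi_0\circ\wop{H_{-1}}=\phi_0$, and then for every $n\in\NN$
$$
\phi_0\circ\wop{H_n}=\phi_0\circ\wop{H_{-1}}\circ\wop{H_n}=\phi_0\circ\wop{H_{\min\set{-1,n}}}=\phi_0\circ\wop{H_{-1}}=\phi_0 .
$$
In particular $\phi_0$ avoids infinity, and since $\kaltonsum{\phi}$ does too, linearity of the norm limit yields $\lim_n\phi_A\circ\wop{H_n}=\phi_0+\kaltonsum{\phi}=\phi_A$, so $\phi_A$ avoids infinity. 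The limit formulas now follow: $\phi_\infty$ being concentrated at infinity means $\phi_\infty\circ\wop{H_n}=0$ for all $n$, whence $\lim_n\phi\circ\wop{H_n}=\lim_n\big(\phi_0\circ\wop{H_n}+\kaltonsum{\phi}\circ\wop{H_n}+\phi_\infty\circ\wop{H_n}\big)=\phi_0+\kaltonsum{\phi}=\phi_A$, and then $\phi_\infty=\phi-\phi_A=\lim_n(\phi-\phi\circ\wop{H_n})=\lim_n\phi\circ\wop{G_n}$ because $\wop{G_n}=I-\wop{H_n}$.

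For uniqueness, if $\phi=\psi_A+\psi_\infty$ with $\psi_A$ avoiding infinity and $\psi_\infty$ concentrated at infinity, then applying $\lim_n(\,\cdot\,)\circ\wop{H_n}$ annihilates $\psi_\infty$ and fixes $\psi_A$, forcing $\psi_A=\phi_A$ and hence $\psi_\infty=\phi_\infty$. For the norm identity I would sidestep re-running the three-region construction from the proof of Theorem~\ref{th:decomp_0_infty}: the triangle inequality gives $\norm{\phi}\le\norm{\phi_A}+\norm{\phi_\infty}\le\norm{\phi_0}+\norm{\kaltonsum{\phi}}+\norm{\phi_\infty}$, and the right-hand side equals $\norm{\phi}$ by \eqref{eq:decomp_0_infty_norm}, so every inequality is an equality; in particular $\norm{\phi}=\norm{\phi_A}+\norm{\phi_\infty}$ (and, as a byproduct, $\norm{\phi_A}=\norm{\phi_0}+\norm{\kaltonsum{\phi}}$). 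I do not expect any real obstacle; the only step that is not purely mechanical is the observation that a functional concentrated at $0$ is automatically strongly bounded, which is exactly what guarantees that $\phi_A$ genuinely avoids infinity rather than merely failing to be concentrated at infinity.
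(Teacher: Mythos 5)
Your proof is correct and follows exactly the route the paper indicates for this corollary, namely merging the first two terms of the decomposition in Theorem \ref{th:decomp_0_infty} into $\phi_A=\phi_0+\kaltonsum{\phi}$. The one detail the paper leaves implicit --- that the derivation $\phi_0$ at $0$ is strongly bounded and hence avoids infinity --- you verify correctly via $\phi_0=\phi_0\circ\wop{H_{-1}}$ and the identity $\wop{H_m}\wop{H_n}=\wop{H_{\min\set{m,n}}}$, and the rest (limit formulas, uniqueness, and the norm identity squeezed from \eqref{eq:decomp_0_infty_norm}) is sound.
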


\noindent This shows, in particular, that the collection of all functionals of $\bidualfree{M}$ that avoid infinity (resp. avoid $0$ and infinity) forms a 1-complemented subspace of $\bidualfree{M}$.

\subsection{Extended supports}

Let us now deal with the generalized concept of support for elements of $\bidualfree{M}$. Before giving the definition, we will briefly discuss the reasoning behind the changes with respect to the existing notion for $\lipfree{M}$.

First of all, for any such notion, it is a reasonable expectation that the support of an evaluation functional is precisely the point where the evaluation takes place. That is the case for the support of elements $\delta(x)\in\lipfree{M}$, for $x\in M$. However, we have seen that the evaluation on any $\zeta\in\rcomp{M}\setminus M$ is also an element of $\bidualfree{M}$. In order to accommodate for that, the support should now be a subset of some compactification of $M$. And since any compactification larger than $\ucomp{M}$ could contain elements that are not separated by Lipschitz functions and therefore lead to inconsistencies when defining the support of their associated evaluation functionals, the appropriate choice is precisely $\ucomp{M}$.

Second, it is not possible to obtain a support that is sharp in the sense of Proposition \ref{pr:equiv_char_support}, meaning that the action of $\phi\in\bidualfree{M}$ on a function $f\in\Lip_0(M)$ would only depend on the values it (or its extension $\ucomp{f}$) takes on the support of $\phi$. The best example to illustrate this issue are derivations. For instance, let $\phi$ be the derivation at $0$ that we have constructed just after Definition \ref{def:derivation}. It does not make sense for the support of $\phi$ to contain any point other than $0$, since the value of $\duality{f,\phi}$ is independent of the behaviour of $f$ away from $0$, so the support should be just $\set{0}$. However, note that $\duality{f,\phi}$ does not depend just on the value of $f$ at $0$, it depends on its behaviour in a neighborhood of $0$ (any neighborhood is valid). 

This discussion suggests that the support of a functional $\phi\in\bidualfree{M}$ should be defined as a subset $S$ of $\ucomp{M}$ that satisfies the following property, weaker than Proposition \ref{pr:equiv_char_support}: \textit{$\duality{f,\phi}=\duality{g,\phi}$ whenever $\ucomp{f}=\ucomp{g}$ on some open set containing $S$} (cf. Theorems \ref{th:extended_support_bounded} and \ref{th:extended_support_minimal}). This motivates the next definition:

\begin{definition}
\label{def:extended_support}
Let $\phi\in\bidualfree{M}$. The set
\begin{align*}
\esupp{\phi} &= \bigcap\set{K\subset\ucomp{M}:\text{$K$ is compact and $\phi\in\ideal{K}^\perp$}} \\
&= \bigcap\set{K\subset\ucomp{M}:\text{$K$ is compact and $\duality{f,\phi}=0$ whenever $\ucomp{f}\restrict_K=0$}}
\end{align*}
will be called the \textit{extended support} of $\phi$.
\end{definition}

\noindent Here, we are extending the definition of $\ideal{K}$ from \eqref{eq:ideal_def} to subsets of $\ucomp{M}$ in the obvious way:
$$
\ideal{K} = \set{f\in\Lip_0(M):\ucomp{f}(\zeta)=0\text{ for all }\zeta\in K} .
$$
It is straightforward to check that Definition \ref{def:extended_support} is equivalent to
\begin{equation}
\label{eq:ext_support_on_M}
\esupp{\phi} = \bigcap\set{\ucl{A}:\text{$A\subset M$ and $\duality{f,\phi}=0$ whenever $f\restrict_A=0$}} .
\end{equation}
Note that the family of sets in either definition of $\esupp{\phi}$ is non-empty as it always contains $\ucomp{M}$ or $M$, respectively.

Before we move on, let us issue a word of warning: it is possible for the extended support of a non-zero functional $\phi\in\bidualfree{M}$ to be empty if $\phi$ is concentrated at infinity. This phenomenon arises when $\duality{f,\phi}\neq 0$ for \textit{unbounded} functions $f\in\Lip_0(M)$ that admit no continuous real-valued extension to $\ucomp{M}$. See Example \ref{ex:empty_extended_support} below.

Let us now establish some basic properties of extended supports. We start by checking that this definition really is a generalization of supports in $\lipfree{M}$ and admits a characterization similar to the one given by Proposition \ref{pr:equiv_points_support}.

\begin{proposition}
\label{pr:equiv_points_support_ext}
Let $\phi\in\bidualfree{M}$ and $\zeta\in\ucomp{M}$. Then $\zeta\in\esupp{\phi}$ if and only if for every neighborhood $U\subset\ucomp{M}$ of $\zeta$ there exists a function $f\in\Lip_0(M)$ whose support is contained in $U\cap M$ and such that $\duality{f,\phi}\neq 0$.
\end{proposition}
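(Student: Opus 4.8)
The plan is to prove both implications by unwinding the definition of $\esupp{\phi}$ given in \eqref{eq:ext_support_on_M} as an intersection of sets $\ucl{A}$ over $A\subset M$ such that $\duality{f,\phi}=0$ whenever $f\restrict_A=0$.

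For the ``if'' direction, I would argue by contraposition. Suppose $\zeta\notin\esupp{\phi}$. By \eqref{eq:ext_support_on_M} there is $A\subset M$ with $\zeta\notin\ucl{A}$ such that $\duality{f,\phi}=0$ whenever $f\restrict_A=0$. Since $\zeta\notin\ucl{A}$ and $\ucomp{M}$ is a compact Hausdorff space, $U=\ucomp{M}\setminus\ucl{A}$ is an open neighborhood of $\zeta$. Now if $f\in\Lip_0(M)$ has $\supp(f)\subset U\cap M$, then $f$ vanishes on $M\setminus U\supset A$, hence $f\restrict_A=0$ and so $\duality{f,\phi}=0$. This shows $\zeta$ fails the stated neighborhood condition.

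For the ``only if'' direction, again by contraposition: suppose $\zeta$ fails the condition, so there is an open neighborhood $U\subset\ucomp{M}$ of $\zeta$ such that $\duality{f,\phi}=0$ for every $f\in\Lip_0(M)$ with $\supp(f)\subset U\cap M$. I want to produce a set $A\subset M$ witnessing $\zeta\notin\ucl{A}$ in the sense of \eqref{eq:ext_support_on_M}. The natural candidate is $A=M\setminus V$ for a suitable smaller open neighborhood $V$ of $\zeta$ whose closure sits inside $U$. Using Proposition \ref{pr:uc_separation} applied to the disjoint closed sets $\set{\zeta}$ and $\ucomp{M}\setminus U$, one gets disjoint open neighborhoods $V$ of $\zeta$ and $W$ of $\ucomp{M}\setminus U$ with $d(V\cap M, W\cap M)>0$; in particular $\ucl{V}\subset U$. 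Set $A=M\setminus V$; then $\zeta\notin\ucl{A}$ (since $\ucl{A}\cap V=\varnothing$ — indeed $A\subset M\setminus V$ and $M\setminus V$ has closure disjoint from $V$ because $V$ is open and... this needs a small check, see below). It remains to verify that $\duality{f,\phi}=0$ whenever $f\restrict_A=0$: such an $f$ vanishes outside $V$, so $\supp(f)\subset\ucl{V\cap M}\cap M\subset U\cap M$, and we conclude $\duality{f,\phi}=0$ by the hypothesis on $U$.

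The main obstacle, and the point requiring care, is the topological bookkeeping in the ``only if'' direction: ensuring that the set $A=M\setminus V$ really satisfies $\zeta\notin\ucl{A}$, and that a function vanishing on $A$ has its $M$-support inside $U\cap M$ rather than merely inside $\ucl{U}$. The first point is not automatic from $V$ open alone (in a general topological space the closure of the complement of an open set can meet the open set's boundary, which could contain $\zeta$ if $V$ is not chosen carefully); this is precisely why one should invoke Proposition \ref{pr:uc_separation} to get a neighborhood $V$ with $d(V\cap M, (\ucomp{M}\setminus U)\cap M)>0$, equivalently with $\ucl{V}$ disjoint from $\ucl{(\ucomp M\setminus U)\cap M}$, so that choosing an even smaller $V'$ with $\zeta\in V'\subset\ucl{V'}\subset V$ lets us take $A=M\setminus V'$ and get $\ucl{A}\subset\ucl{M\setminus V'}$ disjoint from $V'\ni\zeta$. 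For the support containment, one uses that $\supp(f)=\cl{\set{x:f(x)\neq 0}}^M\subset\cl{V'\cap M}^M$, and since $\ucl{V'}\subset V\subset U$ we get $\cl{V'\cap M}^M\subset U\cap M$, as needed. Once these two topological facts are pinned down, both implications follow immediately from the hypotheses without any further computation.
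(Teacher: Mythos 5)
Your proof is correct and follows essentially the same route as the paper's: the paper argues directly from Definition \ref{def:extended_support} with compact sets $K\subset\ucomp{M}$, while you start from the equivalent formulation \eqref{eq:ext_support_on_M} with sets $A\subset M$, but the key neighborhood-shrinking steps ($\zeta\in V\subset\ucl{V}\subset U$ and $\supp(f)\subset\cl{V\cap M}\subset U\cap M$) are identical. One small remark: your worry that $\zeta\notin\ucl{M\setminus V}$ might fail is unfounded, since $V$ is open and disjoint from $M\setminus V$, so no point of $V$ can lie in $\ucl{M\setminus V}$; the genuine reason for passing to a smaller $V'$ with $\ucl{V'}\subset U$ is the support containment $\cl{V'\cap M}\subset U\cap M$, which you (and the paper) handle correctly.
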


\begin{proof}
Suppose that $\zeta$ satisfies the condition in the statement, and let $K\subset\ucomp{M}$ be closed and such that $\phi\in\ideal{K}^\perp$. If $\zeta\notin K$ then let $U$ be a neighborhood of $\zeta$ such that $\ucl{U}\cap K=\varnothing$, then by assumption there is $f\in\Lip_0(M)$ supported on $U\cap M$ with $\duality{f,\phi}\neq 0$. But $\ucomp{f}$ vanishes on $K$ and hence $\duality{f,\phi}=0$, a contradiction. So $\zeta\in K$, and it follows that $\zeta\in\esupp{\phi}$.

Now suppose that $\zeta$ does not satisfy the condition. This means that there is an open neighborhood $U$ of $\zeta$ such that $\duality{f,\phi}=0$ for every $f\in\Lip_0(M)$ whose support is contained in $U\cap M$. Let $V$ be an open set with $\zeta\in V\subset\ucl{V}\subset U$ and $K=\ucomp{M}\setminus V$. Then $f\in\ideal{K}$ implies $\supp(f)\subset\cl{V\cap M}\subset U\cap M$ and therefore $\duality{f,\phi}=0$. Thus $\phi\in\ideal{K}^\perp$, hence $\esupp{\phi}\subset K$ and $\zeta\notin\esupp{\phi}$.
\end{proof}

This characterization yields several basic properties of extended supports almost immediately. First of all we check that, for elements of $\lipfree{M}$, supports and extended supports agree as much as possible, considering that the latter must be closed in $\ucomp{M}$.

\begin{corollary}
\label{cr:extended_support_fm}
If $m\in\lipfree{M}$ then $\supp(m)=\esupp{m}\cap M$ and $\esupp{m}=\ucl{\supp(m)}$.
\end{corollary}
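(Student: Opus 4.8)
The plan is to prove the two identities by comparing the defining families of sets. Recall that $\supp(m)$ is the intersection of all closed $K \subset M$ with $m \in \lipfree{K}$, while $\esupp{m}$ is the intersection of all compact $K \subset \ucomp{M}$ with $m \in \ideal{K}^\perp$. The key link is the identification $\lipfree{K}^{\perp\perp} = \ideal{K}^\perp \cong \bidualfree{K}$ for $K \subset M$, together with the fact (stated before Lemma~\ref{lm:positive_functional_lemma}) that $\lipfree{K}^\perp = \ideal{K}$ and $\ideal{K}_\perp = \lipfree{K}$; in particular, for $m \in \lipfree{M}$ we have $m \in \lipfree{K}$ if and only if $m \in \ideal{K}^\perp$ (since $\lipfree{K}$ is weak$^\ast$ closed in $\bidualfree{M}$, being the annihilator of $\ideal{K}$). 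So on the level of subsets of $M$, the two families defining $\supp(m)$ and the right-hand side of \eqref{eq:ext_support_on_M} coincide, up to the passage from a closed subset $K$ of $M$ to its closure $\ucl{K}$ in $\ucomp{M}$.

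The main steps would be as follows. First, for the inclusion $\esupp{m} \subset \ucl{\supp(m)}$: if $K = \supp(m)$ then $m \in \lipfree{K}$, hence $m \in \ideal{K}^\perp$, and one checks that $m$ also lies in $\ideal{\ucl{K}}^\perp$ because $\ideal{\ucl{K}} = \ideal{K}$ (a function in $\Lip_0(M)$ vanishes on $\ucl{K}$ iff its extension vanishes there iff it vanishes on the dense subset $K$, by continuity of $\ucomp{f}$). Since $\ucl{K}$ is compact, it is admissible in the intersection defining $\esupp{m}$, giving $\esupp{m} \subset \ucl{\supp(m)}$. Second, for the reverse inclusion $\ucl{\supp(m)} \subset \esupp{m}$: by Proposition~\ref{pr:equiv_points_support} and Proposition~\ref{pr:equiv_points_support_ext} it suffices to compare the pointwise characterizations. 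If $x \in \supp(m) \subset M$, then for every neighborhood $U$ of $x$ in $\ucomp{M}$, the set $U \cap M$ is a neighborhood of $x$ in $M$, so Proposition~\ref{pr:equiv_points_support} furnishes $f \in \Lip_0(M)$ with $\supp(f) \subset U \cap M$ and $\duality{m,f} \neq 0$; thus $x \in \esupp{m}$ by Proposition~\ref{pr:equiv_points_support_ext}. This shows $\supp(m) \subset \esupp{m}$, and since $\esupp{m}$ is closed in $\ucomp{M}$ we get $\ucl{\supp(m)} \subset \esupp{m}$.

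Combining the two inclusions gives $\esupp{m} = \ucl{\supp(m)}$, which is the second identity. For the first identity $\supp(m) = \esupp{m} \cap M$, intersect with $M$: we get $\esupp{m} \cap M = \ucl{\supp(m)} \cap M$, so it remains to observe that $\ucl{\supp(m)} \cap M = \supp(m)$. This is because $\supp(m)$ is a closed subset of $M$, and for any closed $A \subset M$ one has $\ucl{A} \cap M = A$ — indeed $\ucomp{M}$ induces on $M$ its original topology, so the closure in $\ucomp{M}$ of a set, intersected back with $M$, recovers the closure in $M$, which for the already-closed set $\supp(m)$ is itself.

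The only mildly delicate point — and the one I would expect to be the main obstacle — is verifying $\ideal{\ucl{K}} = \ideal{K}$ and, relatedly, that $m \in \lipfree{K}$ forces $m \in \ideal{\ucl{K}}^\perp$ rather than merely $m \in \ideal{K}^\perp$; but this is handled cleanly by the density of $K$ in $\ucl{K}$ and the continuity of the extensions $\ucomp{f}$ on $\ucomp{M}$, so no real difficulty arises. Everything else is a matter of unwinding the definitions and invoking the already-established pointwise characterizations and the standard duality $\lipfree{K} = \ideal{K}_\perp$.
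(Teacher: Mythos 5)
Your proof is correct and follows essentially the same route as the paper: one inclusion via the pointwise characterizations (Propositions \ref{pr:equiv_points_support} and \ref{pr:equiv_points_support_ext}), the other by observing that $m$ annihilates $\ideal{\supp(m)}$ (equivalently, Proposition \ref{pr:equiv_char_support} together with the reformulation \eqref{eq:ext_support_on_M}), your explicit check that $\ideal{\ucl{K}}=\ideal{K}$ being precisely what justifies that reformulation. The only cosmetic difference is that you deduce the first identity from the second by intersecting with $M$, whereas the paper obtains it directly from the two pointwise characterizations.
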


\begin{proof}
The first equality follows immediately from comparing Proposition \ref{pr:equiv_points_support_ext} and Proposition \ref{pr:equiv_points_support}. This also yields $\ucl{\supp(m)}\subset\esupp{m}$, whereas the reverse inclusion follows from \eqref{eq:ext_support_on_M} and Proposition \ref{pr:equiv_char_support}.
\end{proof}

Recall that for any $m\in\lipfree{M}$ we know that $\supp(m)$ is a separable subset of $M$. In view of Corollary \ref{cr:extended_support_fm} this means that $\esupp{m}\cap M$ is a separable subset of $M$. We now show that this is true for any functional in $\dual{\Lip_0(M)}$.

\begin{corollary}
\label{cor:extended_support_separable}
For any $\phi\in\bidualfree{M}$, the subset $\esupp{\phi}\cap M$ of $M$ is separable.
\end{corollary}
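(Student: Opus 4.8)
The plan is to argue by contradiction, using only Proposition \ref{pr:equiv_points_support_ext}. Suppose $S:=\esupp{\phi}\cap M$ is not separable. By a standard property of metric spaces (for each $k\in\NN$ take, via Zorn's lemma, a maximal $1/k$-separated subset of $S$; by maximality it is $1/k$-dense in $S$, so if all these subsets were countable their union would be a countable dense subset of $S$), there exist $\varepsilon>0$ and an uncountable set $\set{x_\alpha:\alpha\in I}\subset S$ with $d(x_\alpha,x_\beta)\geq\varepsilon$ whenever $\alpha\neq\beta$. For each $\alpha$ the open ball $B(x_\alpha,\varepsilon/3)$ equals $U_\alpha\cap M$ for some open $U_\alpha\subset\ucomp{M}$ containing $x_\alpha$, so Proposition \ref{pr:equiv_points_support_ext} provides $f_\alpha\in\Lip_0(M)$ with $\supp(f_\alpha)\subset B(x_\alpha,\varepsilon/3)$ and $\duality{f_\alpha,\phi}\neq 0$; rescaling, we may assume $\lipnorm{f_\alpha}=1$. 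Note that $\supp(f_\alpha)$ and $\supp(f_\beta)$ are at distance $\geq\varepsilon/3$ for $\alpha\neq\beta$, and that each $f_\alpha$ is bounded since its support is bounded.

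Next I would refine $I$ twice by pigeonholing. Writing $(0,\infty)$ as a countable union of bounded intervals, there is a bound $B$ with $\norm{f_\alpha}_\infty\leq B$ for uncountably many $\alpha$; doing the same with $(0,\norm{\phi}]$, there is $\delta>0$ with $\abs{\duality{f_\alpha,\phi}}\geq\delta$ for uncountably many $\alpha$ in some uncountable set $I'$. For $\alpha\in I'$ set $\sigma_\alpha\in\set{-1,1}$ equal to the sign of $\duality{f_\alpha,\phi}$ and $\lambda_\alpha=\abs{\duality{f_\alpha,\phi}}^{-1}\leq\delta^{-1}$, and for a finite set $F\subset I'$ put $g_F=\sum_{\alpha\in F}\sigma_\alpha\lambda_\alpha f_\alpha\in\Lip_0(M)$. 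On one hand $\duality{g_F,\phi}=\sum_{\alpha\in F}\lambda_\alpha\abs{\duality{f_\alpha,\phi}}=\abs{F}$. On the other hand, the summands $\sigma_\alpha\lambda_\alpha f_\alpha$ have pairwise $(\varepsilon/3)$-separated supports and satisfy $\lipnorm{\sigma_\alpha\lambda_\alpha f_\alpha}\leq\delta^{-1}$ and $\norm{\sigma_\alpha\lambda_\alpha f_\alpha}_\infty\leq B\delta^{-1}$; estimating $\abs{g_F(x)-g_F(y)}$ according to whether $x,y$ lie in the support of the same summand (or outside all supports), or in supports of two different summands (in which case $d(x,y)\geq\varepsilon/3$), one obtains $\lipnorm{g_F}\leq\max\set{\delta^{-1},\,6B\delta^{-1}\varepsilon^{-1}}=:C$, independent of $F$. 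Hence $\norm{\phi}\geq\duality{g_F,\phi}/\lipnorm{g_F}\geq\abs{F}/C$ for all finite $F\subset I'$; since $I'$ is infinite this forces $\norm{\phi}=\infty$, a contradiction. Therefore $S$ is separable.

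The pigeonhole reductions and the case analysis in the Lipschitz estimate are routine bookkeeping. The point that genuinely has to be arranged is the \emph{uniform} bound $C$ on $\lipnorm{g_F}$: when functions have well-separated supports, the Lipschitz constant of their sum is controlled by the individual Lipschitz constants \emph{together with} the individual supremum norms divided by the separation distance, so after normalizing $\lipnorm{f_\alpha}=1$ one must still keep $\norm{f_\alpha}_\infty$ bounded and $\abs{\duality{f_\alpha,\phi}}$ bounded away from $0$ — which is precisely what the two pigeonhole passes secure, and is the reason the argument cannot stop at merely producing the functions $f_\alpha$. (Alternatively, one could split $\phi=\phi_0+\kaltonsum{\phi}+\phi_\infty$ via Theorem \ref{th:decomp_0_infty} and check that $\esupp{\cdot}$ is finitely subadditive, reducing the claim to each summand; but the direct argument above is more economical and avoids having to treat $\kaltonsum{\phi}$ separately.)
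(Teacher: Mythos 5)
Your proof is correct. It shares the paper's overall skeleton --- a maximal-separated-set argument reduces separability to ruling out an uncountable, uniformly separated family of points of $\esupp{\phi}\cap M$, and Proposition \ref{pr:equiv_points_support_ext} turns such a family into functions with pairwise separated supports that pair nontrivially with $\phi$ --- but it diverges at the key step. The paper at that point simply invokes \cite[Lemma 1.5]{CCGMR_2019}, which asserts that any sequence in $\ball{\Lip_0(M)}$ with pairwise disjoint supports is weakly null, so that for each $k$ only finitely many test functions can satisfy $\abs{\duality{f_i,\phi}}\geq 1/k$, whence countability. You instead prove the needed quantitative fact directly: two pigeonhole passes secure a uniform bound on $\norm{f_\alpha}_\infty$ and a uniform lower bound on $\abs{\duality{f_\alpha,\phi}}$, and then the signed, rescaled finite sums $g_F$ have $\lipnorm{g_F}\leq C$ independently of $F$ (this is where the $\varepsilon/3$-separation of the supports, not mere disjointness, is used) while $\duality{g_F,\phi}=\abs{F}$, contradicting $\norm{\phi}<\infty$. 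Your version is self-contained and elementary precisely because the construction hands you genuinely separated supports, for which the uniform Lipschitz estimate is a routine two-case computation; the paper's version is shorter but outsources the hard part to an external lemma valid for merely disjoint supports. You correctly identified that the pigeonholing is not optional --- the $f_\alpha$ produced by Proposition \ref{pr:equiv_points_support_ext} carry no uniform control on $\norm{f_\alpha}_\infty$ or on $\abs{\duality{f_\alpha,\phi}}$ --- so there is no gap.
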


\begin{proof}
Take an arbitrary family $\set{B(p_i,2r_i):i\in I}$ of open balls in $M$ such that $p_i\in\esupp{\phi}\cap M$, $r_i>0$, and the sets $B(p_i,2r_i)\cap\esupp{\phi}$ are pairwise disjoint. Then the balls $B(p_i,r_i)$ are disjoint in $M$. Indeed, suppose otherwise that there exists $x\in B(p_i,r_i)\cap B(p_j,r_j)$ for some $i\neq j\in I$. Assume $r_i\leq r_j$. Then
$$
d(p_i,p_j) \leq d(p_i,x) + d(x,p_j) < r_i + r_j \leq 2r_j
$$
therefore $p_i\in B(p_i,2r_i)\cap B(p_j,2r_j)$, contradicting disjointness.

For each $i\in I$ consider the function $\rho_i\in\Lip(M)$ given by $\rho_i:x\mapsto d(x,p_i)$, then $U_i=(\ucomp{\rho_i})^{-1}(-\infty,r_i)$ is an open neighborhood of $p_i$ in $\ucomp{M}$. Since $p_i\in\esupp{\phi}$, Proposition \ref{pr:equiv_points_support_ext} yields a function $f_i\in\ball{\Lip_0(M)}$ such that $\duality{f_i,\phi}\neq 0$ and $\supp(f_i)\subset U_i\cap M\subset B(p_i,r_i)$. Now apply \cite[Lemma 1.5]{CCGMR_2019}, which states that any sequence in $\ball{\Lip_0(M)}$ with pairwise disjoint supports is weakly null, to conclude that for any $k\in\NN$ there may only exist a finite amount of indices $i\in I$ such that $\abs{\duality{f_i,\phi}}\geq\frac{1}{k}$. Hence $I$ is countable.
\end{proof}

Another basic fact of extended supports is that they are compatible with finite sums:

\begin{corollary}
\label{cr:support_of_sum}
If $\phi,\psi\in\bidualfree{M}$, then $\esupp{\phi+\psi}\subset\esupp{\phi}\cup\esupp{\psi}$. If moreover $\esupp{\phi}\cap\esupp{\psi}=\varnothing$, then we get an equality.
\end{corollary}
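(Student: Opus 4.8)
The plan is to prove the inclusion $\esupp{\phi+\psi}\subset\esupp{\phi}\cup\esupp{\psi}$ straight from Definition \ref{def:extended_support}, and the reverse inclusion under disjointness from the pointwise characterization in Proposition \ref{pr:equiv_points_support_ext}.

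For the inclusion, let $K_1,K_2\subset\ucomp{M}$ be any compact sets with $\phi\in\ideal{K_1}^\perp$ and $\psi\in\ideal{K_2}^\perp$. Then $K_1\cup K_2$ is compact, and if $f\in\Lip_0(M)$ satisfies $\ucomp{f}\restrict_{K_1\cup K_2}=0$ then $\ucomp{f}$ vanishes on each $K_i$, so $\duality{f,\phi}=0=\duality{f,\psi}$ and hence $\duality{f,\phi+\psi}=0$; that is, $\phi+\psi\in\ideal{K_1\cup K_2}^\perp$, and therefore $\esupp{\phi+\psi}\subset K_1\cup K_2$. Intersecting over all admissible pairs $(K_1,K_2)$ --- a nonempty family, since $K_i=\ucomp{M}$ is always admissible as $\ideal{\ucomp{M}}=\set{0}$ --- and using the elementary identity $\bigcap_{K_1,K_2}(K_1\cup K_2)=\bigl(\bigcap_{K_1}K_1\bigr)\cup\bigl(\bigcap_{K_2}K_2\bigr)=\esupp{\phi}\cup\esupp{\psi}$ (both inclusions of which are immediate set-theoretic checks) yields the claimed inclusion.

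For the equality when $\esupp{\phi}\cap\esupp{\psi}=\varnothing$, it suffices to show $\esupp{\phi}\subset\esupp{\phi+\psi}$, the inclusion $\esupp{\psi}\subset\esupp{\phi+\psi}$ being symmetric; combined with the inclusion already proved, this gives the equality. Fix $\zeta\in\esupp{\phi}$; by disjointness $\zeta\notin\esupp{\psi}$, so Proposition \ref{pr:equiv_points_support_ext} provides an open neighborhood $U_0$ of $\zeta$ in $\ucomp{M}$ such that $\duality{f,\psi}=0$ for every $f\in\Lip_0(M)$ with $\supp(f)\subset U_0\cap M$. Given an arbitrary neighborhood $U$ of $\zeta$, apply Proposition \ref{pr:equiv_points_support_ext} to $\zeta\in\esupp{\phi}$ and the neighborhood $U\cap U_0$ to obtain $f\in\Lip_0(M)$ with $\supp(f)\subset (U\cap U_0)\cap M$ and $\duality{f,\phi}\neq 0$. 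Then $\duality{f,\psi}=0$, hence $\duality{f,\phi+\psi}=\duality{f,\phi}\neq 0$, while $\supp(f)\subset U\cap M$; by Proposition \ref{pr:equiv_points_support_ext} again, $\zeta\in\esupp{\phi+\psi}$.

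The argument is short, but there is one pitfall to avoid: one cannot shortcut the first part by taking $K_1=\esupp{\phi}$ and $K_2=\esupp{\psi}$, because --- unlike in $\lipfree{M}$, where $m\in\lipfree{\supp(m)}$ --- it is generally false that $\phi\in\ideal{\esupp{\phi}}^\perp$ (a nonzero derivation at $0$ has extended support $\set{0}$ but does not annihilate $\ideal{\set{0}}=\Lip_0(M)$). One must therefore argue throughout at the level of the defining intersections. This is the only real subtlety; everything else is routine.
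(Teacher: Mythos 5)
Your proof is correct and takes essentially the same route as the paper: the reverse inclusion under disjointness is argued exactly as in the paper's proof (isolate a neighborhood on which $\psi$ annihilates all supported functions via Proposition \ref{pr:equiv_points_support_ext}, then test $\phi+\psi$ inside it), while the first inclusion, which the paper dismisses as straightforward, you carry out explicitly via $\phi+\psi\in\ideal{K_1\cup K_2}^\perp$ and the distributive law for the intersection over pairs. Your closing caveat --- that one cannot take $K_i=\esupp{\cdot}$ directly because $\phi\in\ideal{\esupp{\phi}}^\perp$ may fail (e.g.\ for derivations at $0$) --- is accurate and correctly identifies why the argument must stay at the level of the defining family.
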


\begin{proof}
The first part of the statement is straightforward. Assume therefore that  $\esupp{\phi}$ and $\esupp{\psi}$ are disjoint and let $\zeta\in\esupp{\phi}$. By Proposition \ref{pr:equiv_points_support_ext} there exists a neighborhood $V\subset\ucomp{M}$ of $\zeta$ such that for every function $f\in\Lip_0(M)$ supported in $V\cap M$ we have $\duality{f,\psi}=0$. If now $U\subset\ucomp{M}$ is any neighborhood of $\zeta$, by Proposition \ref{pr:equiv_points_support_ext} again, we may find a function $f\in\Lip_0(M)$ with the support contained in $U\cap V\cap M$ which satisfies $\duality{f,\phi}\neq 0$. Hence also $\duality{f,\phi+\psi}=\duality{f,\phi}\neq 0$. We conclude that $\zeta\in\esupp{\phi+\psi}$. 
\end{proof}

\noindent Compatibility of the extended support with infinite sums, or more generally with limits, requires extra hypotheses and so its discussion will be withheld until Lemma \ref{lm:extended_support_limit}.

As a final consequence of Proposition \ref{pr:equiv_points_support_ext}, we extend some facts about positive elements of $\lipfree{M}$ from Propositions 2.8 and 2.9 of \cite{APPP_2020} to the bidual.

\begin{corollary}
\label{cr:positive_facts_bidual}
Let $\phi,\psi$ be positive elements of $\bidualfree{M}$.
\begin{enumerate}[label={\upshape{(\alph*)}}]
\item If $f\in\Lip_0(M)$, $f\geq 0$ and $\duality{f,\phi}=0$, then $\ucomp{f}=0$ on $\esupp{\phi}\cap\rcomp{M}$.
\item If $f\in\ball{\Lip_0(M)}$ and $\duality{f,\phi}=\norm{\phi}$, then $\ucomp{f}=\ucomp{\rho}$ on $\esupp{\phi}\cap\rcomp{M}$.
\item If $\phi\leq\psi$ then $\esupp{\phi}\subset\esupp{\psi}$.
\end{enumerate}
\end{corollary}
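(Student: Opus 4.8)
The plan is to prove (a) first, deduce (b) from it applied to $\rho-f$, and prove (c) directly; all three arguments run through the pointwise characterization of the extended support in Proposition \ref{pr:equiv_points_support_ext}.

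For (a) I would argue by contraposition. Fix $\zeta\in\esupp{\phi}\cap\rcomp{M}$ and assume $\ucomp{f}(\zeta)=c>0$; the value $c$ is finite precisely because $\zeta\in\rcomp{M}$. Using continuity of $\ucomp{f}$ and $\ucomp{\rho}$ on $\ucomp{M}$, set
\[
U=\set{\xi\in\ucomp{M}:\ucomp{f}(\xi)>c/2 \text{ and } \ucomp{\rho}(\xi)<\ucomp{\rho}(\zeta)+1},
\]
an open neighborhood of $\zeta$; the second clause forces $U\cap M$ to be bounded. Since $\zeta\in\esupp{\phi}$, Proposition \ref{pr:equiv_points_support_ext} gives $h\in\Lip_0(M)$ with $\supp(h)\subset U\cap M$ and $\duality{h,\phi}\neq 0$. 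As $\supp(h)$ is bounded, $h$ is bounded; pick $N\geq\norm{h}_\infty$. Then $\abs{h}\leq\tfrac{2N}{c}f$ pointwise on $M$ (on $U\cap M$ because $\tfrac{2N}{c}f>N\geq\abs{h}$ there, and off $U\cap M$ because $h$ vanishes while $f\geq 0$), so $\tfrac{2N}{c}f\pm h\geq 0$ and the positivity of $\phi$ yields $\abs{\duality{h,\phi}}\leq\tfrac{2N}{c}\duality{f,\phi}=0$, a contradiction. Hence $\ucomp{f}(\zeta)\leq 0$, and as $f\geq 0$ this gives $\ucomp{f}(\zeta)=0$.

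Part (b) follows immediately: if $f\in\ball{\Lip_0(M)}$ and $\duality{f,\phi}=\norm{\phi}=\duality{\rho,\phi}$, then $\rho-f\in\pos{\Lip_0(M)}$ (since $\rho\geq f$ for $f\in\ball{\Lip_0(M)}$) and $\duality{\rho-f,\phi}=0$, so (a) applied to $\rho-f$ gives $\ucomp{(\rho-f)}=0$ on $\esupp{\phi}\cap\rcomp{M}$; because $\ucomp{\rho}$ and $\ucomp{f}$ are finite on $\rcomp{M}$ we have $\ucomp{(\rho-f)}=\ucomp{\rho}-\ucomp{f}$ there, hence $\ucomp{f}=\ucomp{\rho}$ on $\esupp{\phi}\cap\rcomp{M}$. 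For (c), suppose $\phi\leq\psi$ and $\zeta\notin\esupp{\psi}$. By Proposition \ref{pr:equiv_points_support_ext} there is a neighborhood $U$ of $\zeta$ in $\ucomp{M}$ with $\duality{f,\psi}=0$ for all $f\in\Lip_0(M)$ supported in $U\cap M$. For nonnegative such $f$, the inequalities $0\leq\duality{f,\phi}\leq\duality{f,\psi}=0$ (from $\phi\geq 0$ and $\psi-\phi\geq 0$) give $\duality{f,\phi}=0$; for arbitrary such $f$, the splitting $f=(f\vee 0)-((-f)\vee 0)$ into nonnegative parts still supported in $U\cap M$ gives $\duality{f,\phi}=0$. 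Hence $\zeta\notin\esupp{\phi}$ by Proposition \ref{pr:equiv_points_support_ext}, so $\esupp{\phi}\subset\esupp{\psi}$.

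The single delicate point is the domination $\abs{h}\leq\tfrac{2N}{c}f$ in (a): the function $h$ produced by Proposition \ref{pr:equiv_points_support_ext} is a priori unbounded when $U\cap M$ is unbounded, and an unbounded $h$ cannot be controlled by a multiple of $f$. This is handled by imposing the extra constraint $\ucomp{\rho}<\ucomp{\rho}(\zeta)+1$ when choosing $U$, which is legitimate exactly because $\zeta\in\rcomp{M}$ makes $\ucomp{\rho}(\zeta)$ finite; this forces $U\cap M$ to be bounded, hence $h$ to be bounded, and the estimate then closes.
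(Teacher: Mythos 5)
Your proof is correct and follows essentially the same route as the paper: in (a) both arguments pick a neighborhood of $\zeta$ on which $f$ is bounded below by a positive constant and whose intersection with $M$ is bounded (using $\zeta\in\rcomp{M}$), extract a bounded $h$ via Proposition \ref{pr:equiv_points_support_ext}, and dominate it by a multiple of $f$ to contradict positivity — the paper normalizes $h$ to $0\le h\le c$ and tests $f-h$, while you test $\tfrac{2N}{c}f\pm h$, a cosmetic difference; (b) is identical, and (c) is the same argument run in the contrapositive direction.
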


\begin{proof}
(a) Suppose that $\ucomp{f}(\zeta)>0$ for some $\zeta\in\esupp{\phi}\cap\rcomp{M}$. Then there are $c>0$ and an open neighborhood $U$ of $\zeta$ such that $\ucomp{f}(\xi)\geq c$ for $\xi\in U$. Moreover, since $\zeta\in\rcomp{M}$, we may choose $U$ such that $U\cap M$ is bounded. By Proposition \ref{pr:equiv_points_support_ext} there exists $h\in\Lip_0(M)$ such that $\supp(h)\subset U\cap M$ and $\duality{h,\phi}\neq 0$. We may assume that $h\geq 0$ and $\duality{h,\phi}>0$ by replacing $h$ with $h^+$ or $h^-$. Since $h$ is bounded, we may also assume that $h\leq c$ by multiplying by a positive constant. Then $f-h\geq 0$ but $\duality{f-h,\phi}<0$, a contradiction.

(b) Since $\norm{\phi}=\duality{\rho,\phi}$, this is immediate from applying (a) to the function $\rho-f$.

(c) Fix $\zeta\in\esupp{\phi}$ and let $U$ be a neighborhood of $\zeta$. As in part (a), there is $h\in\Lip_0(M)$ such that $\supp(h)\subset U\cap M$, $h\geq 0$ and $\duality{h,\phi}>0$. Thus $\duality{h,\psi}\geq\duality{h,\phi}>0$, and Proposition \ref{pr:equiv_points_support_ext} implies that $\zeta\in\esupp{\psi}$.
\end{proof}

A generalized version of inclusion \eqref{eq:support_Th} also holds for supports of weighted functionals:

\begin{proposition}
\label{pr:support_Th}
Let $h\in\Lip(M)$ be such that $\wop{h}$ is a bounded linear operator on $\Lip_0(M)$. Then for every $\phi\in\bidualfree{M}$ we have $\esupp{\phi\circ\wop{h}}\subset\esupp{\phi}\cap\ucl{\supp(h)}$.
\end{proposition}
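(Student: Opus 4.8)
The plan is to prove the two inclusions $\esupp{\phi\circ\wop{h}}\subset\esupp{\phi}$ and $\esupp{\phi\circ\wop{h}}\subset\ucl{\supp(h)}$ separately, using the point-characterization from Proposition \ref{pr:equiv_points_support_ext}. Fix $\zeta\in\esupp{\phi\circ\wop{h}}$; I will show $\zeta\in\esupp{\phi}$ and $\zeta\in\ucl{\supp(h)}$. For membership in $\esupp{\phi}$, let $U\subset\ucomp{M}$ be an arbitrary open neighborhood of $\zeta$. By Proposition \ref{pr:equiv_points_support_ext} applied to $\phi\circ\wop{h}$, there is $f\in\Lip_0(M)$ with $\supp(f)\subset U\cap M$ and $\duality{f,\phi\circ\wop{h}}\neq 0$, i.e. $\duality{fh,\phi}\neq 0$. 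Now $fh\in\Lip_0(M)$ (since $\supp(f)$ is bounded, $fh$ is Lipschitz and vanishes at $0$), and $\supp(fh)\subset\supp(f)\subset U\cap M$. Hence $g:=fh$ witnesses, via Proposition \ref{pr:equiv_points_support_ext} in the other direction, that $\zeta\in\esupp{\phi}$.

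For the inclusion $\esupp{\phi\circ\wop{h}}\subset\ucl{\supp(h)}$, suppose toward a contradiction that $\zeta\notin\ucl{\supp(h)}$. The sets $\{\zeta\}$ and $\ucl{\supp(h)}$ are disjoint closed subsets of $\ucomp{M}$, so by Proposition \ref{pr:uc_separation} there are disjoint open neighborhoods $V$ of $\zeta$ and $W$ of $\ucl{\supp(h)}$ with $d(V\cap M,W\cap M)>0$; in particular $d(V\cap M,\supp(h))>0$, so $h$ vanishes on $V\cap M$. Then for any $f\in\Lip_0(M)$ with $\supp(f)\subset V\cap M$ we have $fh=0$, hence $\duality{f,\phi\circ\wop{h}}=\duality{fh,\phi}=0$. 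By Proposition \ref{pr:equiv_points_support_ext} this forces $\zeta\notin\esupp{\phi\circ\wop{h}}$, a contradiction. Therefore $\zeta\in\ucl{\supp(h)}$, and combining the two inclusions gives $\esupp{\phi\circ\wop{h}}\subset\esupp{\phi}\cap\ucl{\supp(h)}$.

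The only point that needs a little care — and the closest thing to an obstacle — is verifying that $fh\in\Lip_0(M)$ and that its support stays inside $U\cap M$ in the first part; this uses that $\supp(f)$ is bounded (being contained in $U\cap M$ is not automatically enough, but one may shrink $U$ so that $U\cap M$ is bounded when $\zeta\in\rcomp{M}$, or more simply note that a function in $\Lip_0(M)$ with support in an open set automatically has bounded support only when that open set meets $M$ in a bounded set). A cleaner route that avoids this subtlety altogether is to work with the ideal formulation directly: if $A\subset M$ satisfies ``$\duality{f,\phi}=0$ whenever $f\restrict_A=0$'', then $\duality{f,\phi\circ\wop{h}}=\duality{fh,\phi}=0$ whenever $(fh)\restrict_A=0$, which holds in particular whenever $f\restrict_A=0$; hence every $\ucl{A}$ appearing in \eqref{eq:ext_support_on_M} for $\phi$ also appears for $\phi\circ\wop{h}$, giving $\esupp{\phi\circ\wop{h}}\subset\esupp{\phi}$ with no boundedness hypotheses. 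The inclusion into $\ucl{\supp(h)}$ then proceeds as above, noting that the set $A=M\setminus\supp(h)$ satisfies $fh=0$ whenever $f\restrict_A=0$ is not quite right, so one instead checks directly that $A_0:=\{x\in M:h(x)=0\}$ has the defining property for $\phi\circ\wop{h}$ and that $\ucl{A_0}\supset\ucomp{M}\setminus\ucl{\supp(h)}$ up to the boundary, yielding the claim.
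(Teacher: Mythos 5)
Your argument is correct in substance, and its ``cleaner route'' is in fact the paper's proof: the paper establishes both inclusions in a few lines directly from the ideal formulation \eqref{eq:ext_support_on_M}, taking $A=\supp(h)$ for the inclusion into $\ucl{\supp(h)}$ (if $f\restrict_{\supp(h)}=0$ then $fh=0$, so $\duality{f,\phi\circ\wop{h}}=0$) and observing for the inclusion into $\esupp{\phi}$ that every admissible $A$ for $\phi$ is admissible for $\phi\circ\wop{h}$, since $(fh)\restrict_A=0$ whenever $f\restrict_A=0$. Two comments on your write-up. First, the ``subtlety'' you agonize over is a non-issue: $fh=\wop{h}(f)$ belongs to $\Lip_0(M)$ by the standing hypothesis that $\wop{h}$ is a bounded linear operator on $\Lip_0(M)$ --- no boundedness of $\supp(f)$ is needed (and indeed $\supp(f)$ need not be bounded when $\zeta\notin\rcomp{M}$), while $\supp(fh)\subset\supp(f)\subset U\cap M$ is immediate; so your pointwise argument via Proposition \ref{pr:equiv_points_support_ext} for $\esupp{\phi\circ\wop{h}}\subset\esupp{\phi}$ is already complete as written, once you cite the hypothesis rather than a boundedness claim. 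Second, your closing sentence about $A=M\setminus\supp(h)$ versus $A_0=\set{x\in M:h(x)=0}$ has the logic backwards: in \eqref{eq:ext_support_on_M} one wants a set $A$ such that vanishing \emph{on} $A$ annihilates the functional, and for $\phi\circ\wop{h}$ that set is $\supp(h)$ itself, which yields $\esupp{\phi\circ\wop{h}}\subset\ucl{\supp(h)}$ in one line. Fortunately your separation-based contradiction argument in the second paragraph already establishes this inclusion correctly (it is just a more roundabout version of the same fact), so nothing essential is missing.
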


\begin{proof}
Let $f\in\Lip_0(M)$ and suppose $f=0$ on $\supp(h)$, then $fh=0$ and so $\duality{f,\phi\circ\wop{h}}=\duality{fh,\phi}=0$. Thus $\esupp{\phi\circ\wop{h}}\subset\ucl{\supp(h)}$ by \eqref{eq:ext_support_on_M}.
Now let $A\subset M$ be such that $\duality{f,\phi}=0$ for any $f\in\Lip_0(M)$ such that $f\restrict_A=0$. Then, for any such $f$ we also have $\duality{f,\phi\circ\wop{h}}=\duality{fh,\phi}=0$ since $(fh)\restrict_A=0$, and therefore $\esupp{\phi\circ\wop{h}}\subset\ucl{A}$ by \eqref{eq:ext_support_on_M}. Taking the intersection over all such $A$ we get $\esupp{\phi\circ\wop{h}}\subset\esupp{\phi}$.
\end{proof}

We now turn to the main result in this section, the localization property stated in the discussion before Definition \ref{def:extended_support}. An example provided there demonstrates a different behaviour of supports in $\lipfree{M}$ and in $\bidualfree{M}$. Since Proposition \ref{pr:equiv_char_support} is equivalent to the intersection property \eqref{eq: intersection_property}, another way to see this difference is that  a hypothetical bidual version of the intersection property, equating $\bigcap_i\mathcal{I}\pare{K_i}^\perp$ to $\mathcal{I}\pare{\bigcap_i K_i}^\perp$ this time, is not true in general. Indeed, if $\set{K_i}$ is the family of balls centered at $0$ then the derivations at $0$ are contained in $\bigcap_i\mathcal{I}\pare{K_i}^\perp$ but not in $\mathcal{I}\pare{\bigcap_i K_i}^\perp$.
This also suggests the accurate formulation of the property, which is that $\duality{f,\phi}$ depends only on the values of $f$ in a neighborhood of $\esupp{\phi}$. This turns out to be true for bounded functions, and hence also for functionals determined by those. Example \ref{ex:empty_extended_support} then shows that it does not hold in general.

\begin{theorem}
\label{th:extended_support_bounded}
Let $\phi\in\bidualfree{M}$. If $U$ is any open subset of $\ucomp{M}$ containing $\esupp{\phi}$, then $\duality{f,\phi}=0$ for any bounded $f\in\Lip_0(M)$ that vanishes on $U\cap M$.
\end{theorem}

\begin{proof}
We will adapt an argument from \cite[Lemma 7.28]{Weaver2}. Let
\begin{equation}
\label{eq:z}
\mathcal{Z}=\set{K\subset\ucomp{M}:\text{$K$ is compact and $\phi\in\ideal{K}^\perp$}}
\end{equation}
so that we have $\bigcap\mathcal{Z}=\esupp{\phi}$, and denote $K=\ucomp{M}\setminus U$. Since $K\cap\esupp{\phi}=\varnothing$, for each $\zeta\in K$ we may choose a compact $K_\zeta\in\mathcal{Z}$ such that $\zeta\notin K_\zeta$. Separate $\zeta$ and $K_\zeta$ by open neighbourhoods $V_\zeta$ and $W_\zeta$, respectively, whose intersections with $M$ are at a positive distance, and construct a function $g_\zeta\in\Lip(M)$ such that $0\leq g\leq 1$, $\ucomp{g_\zeta}=1$ on $V_\zeta$, and $\ucomp{g_\zeta}=0$ on $W_\zeta$.
The sets $\set{V_\zeta:\zeta\in K}$ form an open cover of the compact $K$, so we can extract a finite subcover $V_{\zeta_1}\cup\ldots\cup V_{\zeta_n}$. Now let $g = g_{\zeta_1} + \ldots + g_{\zeta_n}$. Then we have $\ucomp{g}(\zeta)\geq 1$ for all $\zeta\in K$,
therefore $1/g$ is a bounded Lipschitz function on $M\setminus U$, and by McShane's theorem there exists a bounded $h\in\Lip(M)$ such that $h=1/g$ on $M\setminus U$.

Now suppose that $f\in\Lip_0(M)$ is bounded and vanishes on $U\cap M$. Then $f-fgh=0$ on all of $M$: indeed, we have $f=0$ on $U\cap M$ by hypothesis and $gh=1$ on $M\setminus U$ by construction. Thus
$$
\duality{f,\phi} = \duality{fgh,\phi} = \duality{fg_{\zeta_1}h,\phi} + \ldots + \duality{fg_{\zeta_n}h,\phi} .
$$
Notice that each function $fg_{\zeta_i}h$ is a product of bounded Lipschitz functions and vanishes at $0$, hence belongs to $\Lip_0(M)$. For each $i=1,\ldots,n$ we have $fg_{\zeta_i}h\in\ideal{K_{\zeta_i}}$ and so $\duality{fg_{\zeta_i}h,\phi}=0$ since $K_{\zeta_i}\in\mathcal{Z}$. We conclude that $\duality{f,\phi}=0$.
\end{proof}

Thus we obtain a characterization of extended supports via localization property for functionals that avoid infinity.

\begin{theorem}
\label{th:extended_support_minimal}
Suppose that $\phi\in\bidualfree{M}$ avoids infinity. If $U$ is any open subset of $\ucomp{M}$ containing $\esupp{\phi}$, then $\duality{f,\phi}=0$ for any $f\in\Lip_0(M)$ that vanishes on $U\cap M$. Moreover, $\esupp{\phi}$ is the smallest closed subset of $\ucomp{M}$ with that property.
\end{theorem}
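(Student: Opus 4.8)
The plan is to prove the two assertions separately: first the localization property (vanishing of $\duality{f,\phi}$ for $f$ supported off a neighborhood of $\esupp{\phi}$), then the minimality of $\esupp{\phi}$ among closed sets with that property. For the first part, suppose $\phi$ avoids infinity and let $U\supset\esupp{\phi}$ be open in $\ucomp{M}$; let $f\in\Lip_0(M)$ vanish on $U\cap M$. The idea is to reduce to the strongly bounded case. Write $\phi=\lim_n\phi\circ\wop{H_n}$ in norm (since $\phi$ avoids infinity), so it suffices to show $\duality{f,\phi\circ\wop{H_n}}=\duality{fH_n,\phi}=0$ for each fixed $n$. Now $g:=fH_n$ has bounded support and still vanishes on $U\cap M$. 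The key point is that $\esupp{\phi}$ is compact and contained in the open set $U$, so by normality of $\ucomp{M}$ we can interpose a compact set: there is an open $V$ with $\esupp{\phi}\subset V\subset\ucl{V}\subset U$. Then $K:=\ucomp{M}\setminus V$ is compact and disjoint from $\esupp{\phi}$; by the definition of $\esupp{\phi}$ as an intersection (or rather its complement), there is a compact set $K'$ with $\phi\in\ideal{K'}^\perp$ and $K'\cap(\ucl{V})=\varnothing$ — more carefully, since $\zeta\notin\esupp{\phi}$ for each $\zeta\in K$, we use Proposition~\ref{pr:equiv_points_support_ext} and a compactness argument on $K$ to produce finitely many neighborhoods, but it is cleaner to argue as follows: since $\esupp{\phi}\subset V$, the compact set $K=\ucomp{M}\setminus V$ satisfies $\phi\in\ideal{K}^\perp$ (this is exactly what $\esupp{\phi}\subset V$ means once we know $\esupp{\phi}$ is the minimal such compact set — but that minimality is the part we have not yet proved, so we instead argue directly). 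Directly: for each $\zeta\in K$ we have $\zeta\notin\esupp{\phi}$, so by Proposition~\ref{pr:equiv_points_support_ext} there is an open neighborhood $U_\zeta$ of $\zeta$ such that $\duality{f',\phi}=0$ for every $f'\in\Lip_0(M)$ supported in $U_\zeta\cap M$. This is where the argument needs care; see below.

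Let me restructure the first part more robustly. Since $\esupp{\phi}$ is defined as $\bigcap\set{K : K\text{ compact}, \phi\in\ideal{K}^\perp}$ and $\ucomp{M}$ is compact Hausdorff, a standard compactness argument shows this intersection is attained on a decreasing net, hence there exists a single compact $K_0$ with $\phi\in\ideal{K_0}^\perp$ and $K_0\subset U$ whenever $U\supset\esupp{\phi}$ is open — provided the family of such $K$ is \emph{downward directed}, i.e. closed under finite intersections. So the real first step is: \textbf{the family $\set{K\subset\ucomp{M} : K\text{ compact}, \phi\in\ideal{K}^\perp}$ is closed under finite intersections.} Granting this, its intersection $\esupp{\phi}$ is a decreasing-net limit, and by compactness every open $U\supset\esupp{\phi}$ contains some member $K_0$ of the family; then any $f$ vanishing on $U\cap M$ vanishes on $K_0\cap M$, hence (since $K_0\cap M$ is dense in $K_0$ and $\ucomp{f}$ is continuous — but $\ucomp{f}$ may take value $\pm\infty$...) — this is exactly the subtlety. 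If $f$ is \emph{bounded} then $\ucomp{f}\in C(\ucomp M)$ and vanishing on the dense set $K_0\cap M$ forces $\ucomp{f}=0$ on $K_0$, so $f\in\ideal{K_0}$ and $\duality{f,\phi}=0$. For unbounded $f$ we first reduce via $f\rightsquigarrow fH_n$ using that $\phi$ avoids infinity, as above, since each $fH_n$ is bounded. So the logical order is: (1) prove the intersection family is downward directed; (2) deduce the attained-minimum/small-neighborhood statement; (3) for $\phi$ avoiding infinity and $f$ vanishing on $U\cap M$, approximate $\phi$ by $\phi\circ\wop{H_n}$ and apply (2) to the bounded functions $fH_n$, each of which lies in $\ideal{K_0}$.

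For the downward-directedness in step (1): if $\phi\in\ideal{K_1}^\perp$ and $\phi\in\ideal{K_2}^\perp$, I must show $\phi\in\ideal{K_1\cap K_2}^\perp$, i.e. $\duality{f,\phi}=0$ whenever $\ucomp f=0$ on $K_1\cap K_2$. Here is where $\phi$ avoiding infinity will again be used (the statement is false in general — that is the content of Example~\ref{ex:empty_extended_support}). Given such $f$, reduce to bounded $f$ by the $fH_n$ trick. For bounded $f$ with $\ucomp f=0$ on $K_1\cap K_2$, I would use Proposition~\ref{pr:uc_separation} to separate the disjoint compact sets $K_1\setminus K_2$ and $K_2\setminus K_1$... actually the clean approach is a partition of unity subordinate to the open cover: write $K_1\subset(\ucomp M\setminus K_2)\cup(\text{nbhd of }K_1\cap K_2)$ and use Lipschitz Urysohn functions from Proposition~\ref{pr:uc_separation} to split $f=f_1+f_2$ with $\ucomp{f_1}=0$ on $K_1$ (so $f_1\in\ideal{K_1}$, $\duality{f_1,\phi}=0$) and $\ucomp{f_2}=0$ on $K_2$ (so $f_2\in\ideal{K_2}$, $\duality{f_2,\phi}=0$); but for this splitting to land inside $\Lip_0(M)$ one multiplies $f$ by Lipschitz cutoffs $u,1-u$ where $u$ separates the relevant closed sets, which is exactly the weighting operator machinery of \eqref{eq:weighting_op_norm}, and one needs the cutoffs to have \emph{bounded support} or bounded co-support — which is available precisely because we have already reduced to $f=fH_n$ with bounded support, so everything stays in range. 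This gluing is the main obstacle: making the partition-of-unity / weighting argument go through with the non-locally-compact space $\ucomp M$ and the constraint that multiplier functions be Lipschitz on $M$.

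For the second (minimality) assertion: suppose $S\subset\ucomp M$ is such that $\duality{f,\phi}=0$ for every $f\in\Lip_0(M)$ vanishing on (some open neighborhood of) $S$ — but note the theorem's phrasing says ``with that property'', and the property proven for $\esupp\phi$ is: $f$ vanishing on $U\cap M$ for \emph{some} open $U\supset\esupp\phi$ implies $\duality{f,\phi}=0$. So I must show: if $S$ is closed and has the analogous property (every open $U\supset S$ and every $f$ vanishing on $U\cap M$ gives $\duality{f,\phi}=0$), then $\esupp\phi\subset S$. Given such $S$, I claim $\phi\in\ideal{S}^\perp$ when $S$ is... hmm, not quite — the property is about neighborhoods. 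Take the closure: actually, if $S$ has the neighborhood-property, then for the shrinking family of open $U\supset S$, pick $V$ open with $S\subset V\subset\ucl V$; then $\ideal{\ucl V}\subset\set{f : f=0\text{ on }V\cap M}$, so every $f\in\ideal{\ucl V}$ satisfies $\duality{f,\phi}=0$, i.e. $\phi\in\ideal{\ucl V}^\perp$, so $\esupp\phi\subset\ucl V$. Letting $V$ range over a neighborhood base of the compact set $S$ (using that $\ucomp M$ is compact Hausdorff hence $S=\bigcap\ucl V$) gives $\esupp\phi\subset\bigcap_V\ucl V=S$. So minimality is comparatively easy once the first part is in hand; the genuine work is steps (1)–(3) above, and within those, the bounded-support partition-of-unity gluing for downward directedness.

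\begin{remark}
In writing the final proof one should double-check the precise quantifier in the theorem statement (``open subset $U$ containing $\esupp\phi$'') against the property attributed to $S$ in the minimality clause; the two should be phrased identically, and the reduction $f\rightsquigarrow fH_n$ together with Proposition~\ref{pr:separation_classes_equivalences}(c)-style reasoning handles the passage from bounded to general $f$ under the standing hypothesis that $\phi$ avoids infinity.
\end{remark}
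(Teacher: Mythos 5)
Your reduction to bounded functions via $f\rightsquigarrow fH_n$ (using that $\phi$ avoids infinity) and your treatment of the minimality clause are both fine and match the paper's strategy. The problem is your step (1): the claim that the family $\mathcal{Z}=\set{K\subset\ucomp{M}:K\text{ compact},\ \phi\in\ideal{K}^\perp}$ is closed under finite intersections is \emph{false}, even for $\phi$ avoiding infinity, so the whole plan of replacing the finite subfamily produced by compactness with a single $K_0\in\mathcal{Z}$ contained in $U$ collapses. Concretely, take $M=[0,1]$ with base point $1$ and let $\phi$ be a weak$^\ast$ cluster point of $n(\delta(1/n)-\delta(0))$, a derivation at $0$ with $\esupp{\phi}=\set{0}$. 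Put $K_1=\set{0}\cup\set{1/2k:k\in\NN}$ and $K_2=\set{0}\cup\set{1/(2k-1):k\in\NN}$. If $f$ vanishes on $K_2$, then for even $n$ the point $1/n$ lies within $1/n(n+1)$ of a zero of $f$, so $n\abs{f(1/n)}\leq\lipnorm{f}/(n+1)\rightarrow 0$; hence $\phi\in\ideal{K_2}^\perp$, and symmetrically $\phi\in\ideal{K_1}^\perp$. But $K_1\cap K_2=\set{0}$ and $f(x)=\min(x,1-x)$ lies in $\ideal{\set{0}}$ with $\duality{f,\phi}=1$, so $K_1\cap K_2\notin\mathcal{Z}$. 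Your sketched proof of directedness fails for exactly the reason you sensed: a function with $\ucomp{f}=0$ on $K_1\cap K_2$ need not vanish on a \emph{neighborhood} of $K_1\cap K_2$, so the Lipschitz cutoff $u$ cannot be arranged to make $fu$ vanish exactly on $K_1$ and $f(1-u)$ exactly on $K_2$; you only get smallness in sup norm, which controls nothing for a functional on $\Lip_0(M)$. (There is also a minor slip earlier: $K_0\cap M$ need not be dense in $K_0$; what you actually need is that $U\cap M$ is dense in the open set $U$, which does force $\ucomp{f}=0$ on $U\supset K_0$ for bounded $f$ vanishing on $U\cap M$.)

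The repair is to avoid ever forming $K_1\cap\ldots\cap K_m$ as a member of $\mathcal{Z}$. Compactness of $\ucomp{M}$ gives $K_1,\ldots,K_m\in\mathcal{Z}$ with $K_1\cap\ldots\cap K_m\subset U$, hence an open cover $M=(U\cap M)\cup(M\setminus K_1)\cup\ldots\cup(M\setminus K_m)$. Take a Lipschitz partition of unity $\varphi_0,\ldots,\varphi_m$ subordinated to it and write $fH_n=\sum_{i=0}^m fH_n\varphi_i$; the $i=0$ term is identically zero because $f$ vanishes on $U\cap M\supset\supp(\varphi_0)$, and for $i\geq 1$ the function $fH_n\varphi_i$ lies in $\Lip_0(M)$ (bounded support) and vanishes on a neighborhood of $K_i$, so $\duality{fH_n\varphi_i,\phi}=0$ since $\phi\in\ideal{K_i}^\perp$. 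Letting $n\rightarrow\infty$ finishes the first assertion. The key difference from your plan is that each piece is tested against the $K_i$ it actually avoids, with room to spare, so no exact splitting relative to $K_1\cap K_2$ is ever required.
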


\begin{proof}
The first statement is an immediate corollary of the previous theorem. Indeed, if $f\in\Lip_0(M)$ vanishes on $U\cap M$, then so do the bounded functions $fH_n\in\Lip_0(M)$ for every $n\in\NN$, and Theorem \ref{th:extended_support_bounded} implies that $\duality{f,\phi}=\lim_n\duality{fH_n,\phi}=0.$

To prove the minimality, let $\mathcal{Z}$ be again as in \eqref{eq:z} so that $\esupp{\phi}=\bigcap\mathcal{Z}$. Suppose that $S'$ is a closed subset of $\ucomp{M}$ satisfying the hypothesis and that it does not contain $\esupp{\phi}$, i.e. there exists $\zeta\in \esupp{\phi}\setminus S'$. Let $U$ and $V$ be disjoint open sets in $\ucomp{M}$ containing $S'$ and $\zeta$, respectively. Given $f\in\Lip_0(M)$ that vanishes on $\ucomp{M}\setminus V$, it vanishes on $U$ in particular and, by the assumption on $S'$, we have $\duality{f,\phi}=0$. Thus $\ucomp{M}\setminus V\in\mathcal{Z}$ and $\zeta\in \esupp{\phi}\subset\ucomp{M}\setminus V$, a contradiction. Therefore we must have $\esupp{\phi}\subset S'$ as we wanted to prove.
\end{proof}

As an immediate consequence of Theorem \ref{th:extended_support_minimal} we obtain:

\begin{corollary}
\label{cr:extended_support_empty}
If $\phi\in\bidualfree{M}$ avoids infinity, then $\esupp{\phi}=\varnothing$ if and only if $\phi=0$.
\end{corollary}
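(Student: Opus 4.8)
The plan is to read off both implications from Theorem \ref{th:extended_support_minimal} together with the definition of the extended support, so essentially no new work is needed. For the nontrivial direction, suppose $\phi\in\bidualfree{M}$ avoids infinity and $\esupp{\phi}=\varnothing$. I would point out that the empty set is an open subset of $\ucomp{M}$ that (vacuously) contains $\esupp{\phi}$, so Theorem \ref{th:extended_support_minimal} applies with $U=\varnothing$. Since $U\cap M=\varnothing$, every $f\in\Lip_0(M)$ vanishes on $U\cap M$, and the theorem then gives $\duality{f,\phi}=0$ for all such $f$; hence $\phi=0$.

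For the converse, if $\phi=0$ then $\phi\in\ideal{K}^\perp$ for every compact $K\subset\ucomp{M}$ — in particular for $K=\varnothing$, since $\ideal{\varnothing}=\Lip_0(M)$ and therefore $\ideal{\varnothing}^\perp=\set{0}\ni 0$ — so the intersection defining $\esupp{\phi}$ in Definition \ref{def:extended_support} is contained in $\varnothing$, i.e.\ $\esupp{\phi}=\varnothing$. (This implication does not use the hypothesis that $\phi$ avoids infinity.) I do not anticipate any real obstacle: the substantive content is already carried by Theorem \ref{th:extended_support_minimal}, and the only point that deserves an explicit word is that one is permitted to take the open set (resp.\ the compact set) to be empty, which is entirely legitimate and is precisely what makes the argument work.
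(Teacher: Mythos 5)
Your proof is correct and matches the paper's intended argument: the corollary is stated there as an immediate consequence of Theorem \ref{th:extended_support_minimal}, exactly as you derive it, and your explicit check that one may take $U=\varnothing$ (and $K=\varnothing$ for the trivial direction) is legitimate since the theorem's compactness argument goes through when $\bigcap\mathcal{Z}=\varnothing$. Nothing further is needed.
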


Theorem \ref{th:extended_support_minimal} and Corollary \ref{cr:extended_support_empty} may fail in the general case as demonstrated by the next example. This also shows that, unlike in $\lipfree{M}$, a bounded extended support does not necessarily imply avoiding infinity, strongly or not.

\begin{example}
\label{ex:empty_extended_support}
Let $M\subset\RR^2$ consist of the points $x_n=(0,n)$ and $y_n=(1,n)$ for $n\geq 0$. Define $m_n=\delta(x_n)/n$ for $n\geq 1$. Then $\norm{m_n}=1$, so we may find a subnet $(x_{n_i})$ of the sequence $(x_n)$ that converges to some $\zeta\in\ucomp{M}$ and such that $(m_{n_i})$ converges weak$^\ast$ to some $\phi\in\bidualfree{M}$. Notice that $\phi\geq 0$ and $\norm{\phi}=\duality{\rho,\phi}=1$.

We claim that $\esupp{\phi}=\varnothing$. Indeed, let $X=\set{x_n:n\geq 0}$ and $Y=\set{y_n:n\geq 0}$, then $\ucomp{M}=\ucl{X}\cup\ucl{Y}$ is a disjoint union by Proposition \ref{pr:uc_definition} and it is clear that $\esupp{\phi}\subset\ucl{X}$ by \eqref{eq:ext_support_on_M}. Now let $U\subset\ucomp{M}$ be a neighborhood of $\ucl{X}$ such that $U\cap\ucl{Y}=\varnothing$ and suppose that $f\in\Lip_0(M)$ is supported on $U\cap M=X$. Then $f(y_n)=0$ and $\abs{f(x_n)}\leq\lipnorm{f}d(x_n,y_n)=\lipnorm{f}$ for every $n$, hence $\duality{f,\phi}=\lim_i f(x_{n_i})/n_i=0$. So $\esupp{\phi}\cap\ucl{X}=\varnothing$ by Proposition \ref{pr:equiv_points_support_ext}, therefore $\esupp{\phi}$ is empty.

Observe also that if $f\in\Lip_0(M)$ is constant on some neighborhood of $\zeta$, then in particular it is bounded on $(x_{n_i})$ and $\duality{f,\phi}=0$ again. Hence $\phi$ is a derivation at $\zeta$.
\end{example}

Note that Example \ref{ex:empty_extended_support} together with Corollary \ref{cr:support_of_sum} allow us to generate examples of functionals with non-empty extended supports failing Theorem \ref{th:extended_support_minimal}. Indeed, consider for instance any functional of the form $\phi+\delta(x_n)$. Its extended support is $\{x_n\}$ and the function $\rho G_n$ vanishes on a neighborhood of $\{x_n\}$, however $\duality{\rho G_n, \phi+\delta(x_n)}=1$.

The following statement, illustrated by Example \ref{ex:empty_extended_support}, also follows easily from Theorem \ref{th:extended_support_bounded}:

\begin{corollary}
Let $\phi\in\bidualfree{M}$ be such that $\esupp{\phi}=\varnothing$. Then $\duality{f,\phi}=0$ for every bounded $f\in\Lip_0(M)$.
\end{corollary}

Let us point out by the following example that the converse to this statement is not true, and that the minimum property of $\esupp{\phi}$ from Theorem \ref{th:extended_support_minimal} does not carry over to Theorem \ref{th:extended_support_bounded}.

\begin{example}
Let $M\subset\RR$ consist of $0$ and the points $x_n=2^n$ for $n\in\NN$. The functionals $\phi_n=\delta(x_n)/2^n$ are uniformly bounded, so there is a subnet $(x_{n_i})$ of $(x_n)$ such that $x_{n_i}$ converges to some $\zeta\in\ucomp{M}$ and $\phi_{n_i}$ converges weak$^\ast$ to some $\phi\in\bidualfree{M}$. It is clear that $\duality{f,\phi}=0$ whenever $f\in\Lip_0(M)$ is bounded. Let us check that $\zeta\in\esupp{\phi}$. To that end, fix a neighborhood $U$ of $\zeta$. By Proposition \ref{pr:equiv_points_support_ext} it will be enough to find $f\in\Lip_0(M)$ such that $\duality{f,\phi}=1$ and $\supp(f)\subset U\cap M$. We claim that this is satisfied by the function defined by $f(0)=0$ and
$$
f(x_n)=
\begin{cases}
2^n &\text{, if } x_n\in U \\
0 &\text{, if } x_n\notin U
\end{cases}
$$
for $n\in\NN$. Indeed, it is easy to check that $\lipnorm{f}\leq 2$. Moreover $\duality{f,\phi_n}=1$ for $x_n\in U$, so considering a subnet of $(x_{n_i})$ contained in $U$ allows us to conclude that $\duality{f,\phi}=1$. It follows that $\esupp{\phi}$ is non-empty, and therefore it is not the smallest closed set $S$ with the property that any bounded $f\in\Lip_0(M)$ vanishing on a neighborhood of $S$ also vanishes on $\phi$ (that minimum set would be $S=\varnothing$).
\end{example}

\subsection{Classification of functionals using supports}

We will show now how extended supports may be used to characterize some of the classes of functionals defined in Definition \ref{def:separation_classes}. The following proposition complements Proposition \ref{pr:separation_classes_equivalences}.

\begin{proposition}
\label{pr:separation_classes_equivalences_part2}
Let $\phi\in\bidualfree{M}$.
\begin{enumerate}[label={\upshape{(\alph*)}}]
\item $\phi$ is concentrated at infinity if and only if $\esupp{\phi}\cap\rcomp{M}=\varnothing$.
\item $\phi$ avoids $0$ strongly if and only if $0\notin\esupp{\phi}$.
\end{enumerate}
\end{proposition}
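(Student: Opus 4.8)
The plan is to derive each equivalence from the weighting machinery by reducing it to the vanishing of a single weighted functional. Throughout I will use the identity $G_{-n}=1-H_{-n}$, so that $\wop{G_{-n}}=I-\wop{H_{-n}}$ as operators on $\Lip_0(M)$ and hence $\phi\circ\wop{G_{-n}}=\phi-\phi\circ\wop{H_{-n}}$; in particular $\phi$ avoids $0$ strongly exactly when $\phi\circ\wop{H_{-n}}=0$ for some $n\in\NN$. Similarly I will reduce concentration at infinity, via Proposition~\ref{pr:separation_classes_equivalences}(a), to $\phi$ annihilating every function of bounded support.

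For part (b), the forward direction is immediate: if $\phi\circ\wop{H_{-n}}=0$ then $\phi=\phi\circ\wop{G_{-n}}$, so Proposition~\ref{pr:support_Th} gives $\esupp{\phi}\subset\ucl{\supp(G_{-n})}$; since $G_{-n}$ vanishes on $B(0,2^{-n})$ we have $d(\{0\},\supp(G_{-n}))\geq 2^{-n}>0$, hence the $\ucomp{M}$-closures of $\{0\}$ and $\supp(G_{-n})$ are disjoint by property~\ref{uc:separation} of the uniform compactification, and so $0\notin\esupp{\phi}$. For the converse, if $0\notin\esupp{\phi}$ then Proposition~\ref{pr:equiv_points_support_ext} provides an open neighbourhood $U$ of $0$ in $\ucomp{M}$ such that $\duality{f,\phi}=0$ for every $f\in\Lip_0(M)$ supported in $U\cap M$. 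Since $U\cap M$ is a neighbourhood of $0$ in $M$, there is $\varepsilon>0$ with $B(0,\varepsilon)\subset U\cap M$, and choosing $n$ with $2^{-n+1}<\varepsilon$ gives $\supp(H_{-n})\subset U\cap M$; therefore $\duality{f,\phi\circ\wop{H_{-n}}}=\duality{fH_{-n},\phi}=0$ for all $f\in\Lip_0(M)$, i.e.\ $\phi\circ\wop{H_{-n}}=0$, so $\phi$ avoids $0$ strongly.

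For part (a), if $\phi$ is concentrated at infinity and some $\zeta\in\esupp{\phi}\cap\rcomp{M}$ existed, then $\ucomp{\rho}(\zeta)<\infty$, so by continuity of $\ucomp{\rho}$ the set $U=\{\xi\in\ucomp{M}:\ucomp{\rho}(\xi)<\ucomp{\rho}(\zeta)+1\}$ is an open neighbourhood of $\zeta$ with $U\cap M$ bounded; Proposition~\ref{pr:equiv_points_support_ext} would then produce $f\in\Lip_0(M)$ of bounded support with $\duality{f,\phi}\neq0$, contradicting Proposition~\ref{pr:separation_classes_equivalences}(a). Conversely, assume $\esupp{\phi}\cap\rcomp{M}=\varnothing$ and fix $n\in\NN$; put $\psi=\phi\circ\wop{H_n}$. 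Using $H_nH_{n+1}=H_n$ one checks at once that $\psi\circ\wop{H_{n+1}}=\psi$, so $\psi$ is strongly bounded by Proposition~\ref{pr:separation_classes_equivalences}(c) and therefore avoids infinity. On the other hand, Proposition~\ref{pr:support_Th} gives $\esupp{\psi}\subset\esupp{\phi}\cap\ucl{\supp(H_n)}$, and since $\ucomp{\rho}\leq 2^{n+1}$ on $\ucl{\supp(H_n)}$ (again by continuity) we have $\ucl{\supp(H_n)}\subset\rcomp{M}$, whence $\esupp{\psi}\subset\esupp{\phi}\cap\rcomp{M}=\varnothing$. Corollary~\ref{cr:extended_support_empty} then forces $\psi=\phi\circ\wop{H_n}=0$, and since $n$ was arbitrary $\phi$ is concentrated at infinity.

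I do not anticipate a serious obstacle: the argument is careful bookkeeping with the operators $\wop{H_n}$ and $\wop{G_{-n}}$. The two points that need attention are the distinction between $\ucomp{M}$ and $\rcomp{M}$ — one must observe that the $\ucomp{M}$-closure of a bounded subset of $M$ stays inside $\rcomp{M}$, which follows because $\ucomp{\rho}$ is continuous and bounded there — and the observation that $\phi\circ\wop{H_n}$ genuinely avoids infinity, since that is precisely the hypothesis under which Corollary~\ref{cr:extended_support_empty} is available (it fails for functionals that do not avoid infinity, as Example~\ref{ex:empty_extended_support} shows).
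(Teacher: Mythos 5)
Your proof is correct and follows essentially the same route as the paper's: Proposition \ref{pr:equiv_points_support_ext} together with the weighting operators $\wop{H_{\pm n}}$, $\wop{G_{-n}}$, Proposition \ref{pr:support_Th}, and Corollary \ref{cr:extended_support_empty} applied to the strongly bounded functional $\phi\circ\wop{H_n}$. The only cosmetic difference is that in the forward direction of (a) you pass through Proposition \ref{pr:separation_classes_equivalences}(a) instead of computing $\duality{f,\phi}=\duality{fH_n,\phi}$ directly, which amounts to the same thing.
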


\begin{proof} 
(a) Suppose that $\phi$ is concentrated at infinity. Let $\zeta\in\rcomp{M}$ and take $n\in\NN$ such that $2^n>\ucomp{\rho}(\zeta)$. Then $U=\set{\xi\in\ucomp{M}:\ucomp{\rho}(\xi)<2^n}$ is a neighborhood of $\zeta$, and for every $f\in\Lip_0(M)$ with $\supp(f)\subset U$ we have $\duality{f,\phi}=\duality{fH_n,\phi}=\duality{f,\phi\circ\wop{H_n}}=0$. By Proposition \ref{pr:equiv_points_support_ext}, $\zeta\notin\esupp{\phi}$.

Now suppose that $\esupp{\phi}\cap\rcomp{M}=\varnothing$ and fix $n\in\NN$. Then by Proposition \ref{pr:support_Th}
$$
\esupp{\phi\circ\wop{H_n}} \subset \esupp{\phi}\cap\ucl{\supp(H_n)} \subset \ucl{B(0,2^{n+1})}\setminus\rcomp{M} = \varnothing .
$$
But $\phi\circ\wop{H_n}$ clearly avoids infinity (e.g. by Proposition \ref{pr:separation_classes_equivalences}(c)), so $\phi\circ\wop{H_n}=0$ by Corollary \ref{cr:extended_support_empty}.

(b) If $\phi$ avoids $0$ strongly then $0$ cannot be contained in $\esupp{\phi}=\esupp{\phi\circ\wop{G_{-n}}}\subset\ucl{\supp(G_{-n})}$ by Proposition \ref{pr:support_Th}. For the converse, assume $0\notin\esupp{\phi}$.
By Proposition \ref{pr:equiv_points_support_ext} there is a neighborhood $U\subset\ucomp{M}$ of $0$ such that $\duality{f,\phi}=0$ for every $f\in\Lip_0(M)$ supported in $U\cap M$. Let $n\in\NN$ be so large that $\supp(H_{-n})\subset U\cap M$. Then for every $f\in\Lip_0(M)$ we have 
$$\duality{f,\phi\circ\wop{G_{-n}}}=\duality{f,\phi}-\duality{f,\phi\circ\wop{H_{-n}}}=\duality{f,\phi}-\duality{fH_{-n},\phi}=\duality{f,\phi}$$
because $\supp(fH_{-n})\subset U\cap M$. Hence $\phi=\phi\circ\wop{G_{-n}}$.
\end{proof}

It is not possible to characterize all the classes in Definition \ref{def:separation_classes} via the extended supports since a functional may have a component concentrated at infinity which cannot be detected in its support (add for instance the functional from Example \ref{ex:empty_extended_support} with empty extended support to any element of $\bidualfree{M}$ and apply Corollary \ref{cr:support_of_sum}). However, under the assumption of avoiding infinity there is a mutual correspondence between the extended support of a functional and its classification. The next two propositions describe that link.

\begin{proposition}
\label{pr:extended_support_str_bounded}
If $\phi\in\bidualfree{M}$ is strongly bounded, then $\esupp{\phi}\subset\rcomp{M}$. If $\phi$ avoids infinity then the converse is also true.
\end{proposition}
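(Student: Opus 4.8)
The plan is to handle the two implications separately; both come down to elementary manipulations with the truncations $H_n,G_n$ of \eqref{eq:H_n} and \eqref{eq:G_n}, together with Proposition \ref{pr:support_Th} and Theorem \ref{th:extended_support_minimal}.

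For the first implication I would argue as follows. If $\phi$ is strongly bounded, then $\phi=\phi\circ\wop{H_n}$ for some $n\in\NN$, so Proposition \ref{pr:support_Th} gives $\esupp{\phi}=\esupp{\phi\circ\wop{H_n}}\subset\ucl{\supp(H_n)}$. Since $H_n$ vanishes outside $B(0,2^{n+1})$, we have $\supp(H_n)\subset\set{x\in M:\rho(x)\leq 2^{n+1}}$; and because the extension $\ucomp{\rho}$ is continuous on $\ucomp{M}$, the level set $\set{\zeta\in\ucomp{M}:\ucomp{\rho}(\zeta)\leq 2^{n+1}}$ is closed and therefore contains $\ucl{\supp(H_n)}$. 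As this level set is contained in $\set{\ucomp{\rho}<\infty}=\rcomp{M}$, we conclude $\esupp{\phi}\subset\rcomp{M}$.

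For the converse, assume that $\phi$ avoids infinity and $\esupp{\phi}\subset\rcomp{M}$. The crux is that $\esupp{\phi}$ is a closed, hence compact, subset of $\ucomp{M}$, whereas $\rcomp{M}$ is the increasing union of the open sets $\set{\zeta\in\ucomp{M}:\ucomp{\rho}(\zeta)<2^n}$ over $n\in\NN$; by compactness, $\esupp{\phi}\subset U:=\set{\zeta\in\ucomp{M}:\ucomp{\rho}(\zeta)<2^n}$ for some fixed $n$, and one notes that $U\cap M=\set{x\in M:\rho(x)<2^n}$. Now I would fix an arbitrary $f\in\Lip_0(M)$ and consider $fG_n$, which lies in $\Lip_0(M)$ because $\wop{G_n}=I-\wop{H_n}$ is a bounded operator on $\Lip_0(M)$; since $G_n$ vanishes on $\set{x\in M:\rho(x)\leq 2^n}\supset U\cap M$, so does $fG_n$. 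Because $\phi$ avoids infinity and $U$ is an open neighborhood of $\esupp{\phi}$, Theorem \ref{th:extended_support_minimal} applies and yields $\duality{f,\phi\circ\wop{G_n}}=\duality{fG_n,\phi}=0$. As $f$ was arbitrary, this forces $\phi\circ\wop{G_n}=0$, hence $\phi=\phi\circ\wop{H_n}+\phi\circ\wop{G_n}=\phi\circ\wop{H_n}$, so $\phi$ is strongly bounded.

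I do not expect a genuine obstacle here; the delicate points are all of a bookkeeping nature: verifying that $\ucl{\supp(H_n)}$ lies in the level set $\set{\ucomp{\rho}\leq 2^{n+1}}$ of the continuous extension, that the compactness of $\esupp{\phi}$ allows one to pass from the open cover by the sets $\set{\ucomp{\rho}<2^n}$ to a single index, and — above all — that one invokes Theorem \ref{th:extended_support_minimal} for the genuinely Lipschitz function $fG_n\in\Lip_0(M)$ rather than for the ill-defined product of two possibly unbounded Lipschitz functions.
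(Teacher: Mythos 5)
Your proposal is correct and follows essentially the same route as the paper: Proposition \ref{pr:support_Th} for the forward direction, and compactness of $\esupp{\phi}$ plus Theorem \ref{th:extended_support_minimal} applied to the function $fG_n=f-fH_n$ (the paper phrases this as $f=fH_n$ on $U$) for the converse. The extra bookkeeping you supply — that $\ucl{\supp(H_n)}$ lies in the closed level set $\set{\ucomp{\rho}\leq 2^{n+1}}\subset\rcomp{M}$ — is a correct elaboration of what the paper leaves implicit.
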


\begin{proof}
Let $\phi\in\bidualfree{M}$ be strongly bounded, i.e. $\phi=\phi\circ\wop{H_n}$ for some $n\in\NN$. In view of Proposition \ref{pr:support_Th} then
$$\esupp{\phi}=\esupp{\phi\circ\wop{H_n}}\subset\ucl{\supp(H_n)}\subset\rcomp{M}.$$
For the converse, assume that $\phi$ avoids infinity and that $\esupp{\phi}\subset\rcomp{M}$. Since $\esupp{\phi}$ is compact and $\ucomp{\rho}\restrict_{\rcomp{M}}$ is a real-valued continuous function, it is bounded on $\esupp{\phi}$ and there exists $n\in\NN$ such that $U=\set{\zeta\in\ucomp{M}:\ucomp{\rho}(\zeta)<2^n}$ is an open neighborhood of $\esupp{\phi}$. For every $f\in\Lip_0(M)$ we thus get by Theorem \ref{th:extended_support_minimal} that
$$\duality{f,\phi}=\duality{fH_n,\phi}=\duality{f,\phi\circ\wop{H_n}}$$
because $f=fH_n$ on $U$. Hence $\phi=\phi\circ\wop{H_n}$.
\end{proof}

For the second proposition, we need an independent lemma describing the stability of extended supports with respect to weak$^\ast$ limits:

\begin{lemma}
\label{lm:extended_support_limit}
If $\phi\in\bidualfree{M}$ and there is a sequence $(\phi_n)\subset\bidualfree{M}$ of functionals which avoid infinity and such that $(\phi_n)$ weak$^\ast$ converges to $\phi$, then $\esupp{\phi}\subset\ucl{\bigcup_{n\in\NN}\esupp{\phi_n}}$.
\end{lemma}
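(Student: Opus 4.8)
The plan is to argue the inclusion pointwise, in contrapositive form. Write $S=\ucl{\bigcup_{n\in\NN}\esupp{\phi_n}}$, a closed (hence compact) subset of $\ucomp{M}$, fix an arbitrary $\zeta\in\ucomp{M}\setminus S$, and show that $\zeta\notin\esupp{\phi}$; since $\zeta$ is arbitrary outside $S$, this gives $\esupp{\phi}\subset S$. Since $\ucomp{M}$ is compact Hausdorff, hence regular, there is an open neighborhood $U$ of $\zeta$ whose closure satisfies $\ucl{U}\cap S=\varnothing$. By the neighborhood criterion of Proposition \ref{pr:equiv_points_support_ext}, it then suffices to prove that $\duality{f,\phi}=0$ for every $f\in\Lip_0(M)$ with $\supp(f)\subset U\cap M$.

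To do this, fix such an $f$ and fix $n\in\NN$. Consider the open set $W=\ucomp{M}\setminus\ucl{U}$: it contains $\esupp{\phi_n}$, because $\esupp{\phi_n}\subset S$ and $S\cap\ucl{U}=\varnothing$; and $f$ vanishes on $W\cap M$, because $\set{x\in M:f(x)\neq 0}\subset\supp(f)\subset U$, which is disjoint from $W$. Since $\phi_n$ avoids infinity, Theorem \ref{th:extended_support_minimal} applies with the open neighborhood $W$ of $\esupp{\phi_n}$ and yields $\duality{f,\phi_n}=0$. As $n$ was arbitrary and $\phi_n\to\phi$ in the weak$^\ast$ topology of $\bidualfree{M}$ (i.e. tested against elements of $\Lip_0(M)$), we conclude $\duality{f,\phi}=\lim_n\duality{f,\phi_n}=0$, which finishes the argument.

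The substantive point, and the only place where the hypothesis on the $\phi_n$ enters, is the step converting ``$f$ is supported near $\zeta$'' into ``$f$ vanishes on an \emph{open} set containing $\esupp{\phi_n}$'': Theorem \ref{th:extended_support_minimal} requires an open enlargement of the extended support, and is available precisely for functionals that avoid infinity. This is exactly why the conclusion fails for general $\phi_n$ (cf. Example \ref{ex:empty_extended_support}). The only mild technical care needed is to separate $\zeta$ from $S$ by a neighborhood whose \emph{closure} still misses $S$, which is why we invoke regularity of $\ucomp{M}$ rather than bare Hausdorffness; everything else is routine.
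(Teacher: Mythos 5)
Your proof is correct and follows essentially the same route as the paper's: separate $\zeta$ from the closed set $\ucl{\bigcup_n\esupp{\phi_n}}$ by open sets, apply Theorem \ref{th:extended_support_minimal} to each $\phi_n$ (this is where avoiding infinity is used), and pass to the weak$^\ast$ limit, concluding via Proposition \ref{pr:equiv_points_support_ext}. The only cosmetic difference is that you produce the open neighborhood of the supports as $\ucomp{M}\setminus\ucl{U}$ via regularity, whereas the paper directly takes disjoint open neighborhoods of $\zeta$ and of the closed set.
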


\begin{proof}
Let $\zeta\notin\ucl{\bigcup_{n\in\NN}\esupp{\phi_n}}$ and let $U$ and $V$ be disjoint open neighborhoods of $\zeta$ and $\ucl{\bigcup_{n\in\NN}\esupp{\phi_n}}$, respectively. Then for any $f\in\Lip_0(M)$ with $\supp(f)\subset U$ we have by Theorem \ref{th:extended_support_minimal} that $\duality{f,\phi_n}=0$ for every $n\in\NN$. Hence also $\duality{f,\phi}=\lim_{n\to\infty}\duality{f,\phi_n}=0$ which proves that $\zeta\notin\esupp{\phi}$ by Proposition \ref{pr:equiv_points_support_ext}.
\end{proof}

\begin{proposition}
\label{pr:extended_support_avoid_inf}
If $\phi\in\bidualfree{M}$ avoids infinity then $\esupp{\phi}=\ucl{\esupp{\phi}\cap\rcomp{M}}$.
\end{proposition}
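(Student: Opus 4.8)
The approach is to truncate $\phi$ by the standard cutoff functions $H_n$ and to combine the hypothesis that $\phi$ avoids infinity with the behaviour of extended supports under weak$^\ast$ limits recorded in Lemma \ref{lm:extended_support_limit}.

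First I would dispose of the trivial inclusion $\ucl{\esupp{\phi}\cap\rcomp{M}}\subset\esupp{\phi}$: indeed $\esupp{\phi}$ is compact, hence closed in the Hausdorff space $\ucomp{M}$, so taking closures of the obvious inclusion $\esupp{\phi}\cap\rcomp{M}\subset\esupp{\phi}$ gives the claim.

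For the reverse inclusion, set $\phi_n=\phi\circ\wop{H_n}$ for $n\in\NN$. Using the identity $\wop{H_n}\wop{H_{n+1}}=\wop{H_n}$ one gets $\phi_n=\phi_n\circ\wop{H_{n+1}}$, and since $H_{n+1}$ has bounded support, Proposition \ref{pr:separation_classes_equivalences}(c) shows that $\phi_n$ is strongly bounded; in particular $\phi_n$ avoids infinity, and Proposition \ref{pr:extended_support_str_bounded} yields $\esupp{\phi_n}\subset\rcomp{M}$. On the other hand Proposition \ref{pr:support_Th} gives $\esupp{\phi_n}\subset\esupp{\phi}$, so $\esupp{\phi_n}\subset\esupp{\phi}\cap\rcomp{M}$ for every $n$. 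Finally, as $\phi$ avoids infinity we have $\phi_n\to\phi$ in norm, hence weak$^\ast$, so Lemma \ref{lm:extended_support_limit} applied to the sequence $(\phi_n)$ (each of which avoids infinity) gives $\esupp{\phi}\subset\ucl{\bigcup_{n\in\NN}\esupp{\phi_n}}\subset\ucl{\esupp{\phi}\cap\rcomp{M}}$, which completes the argument.

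I do not anticipate a genuine obstacle: the statement is a short assembly of Propositions \ref{pr:support_Th}, \ref{pr:separation_classes_equivalences} and \ref{pr:extended_support_str_bounded} together with Lemma \ref{lm:extended_support_limit}. The one point that deserves a moment's care is checking that each $\phi_n$ avoids infinity, so that Lemma \ref{lm:extended_support_limit} is applicable; this follows immediately from the identity $\phi_n=\phi_n\circ\wop{H_{n+1}}$ and the fact that strongly bounded functionals avoid infinity.
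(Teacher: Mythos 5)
Your proof is correct and follows essentially the same route as the paper: truncate by $H_n$, observe via Proposition \ref{pr:support_Th} (the paper directly, you via Proposition \ref{pr:extended_support_str_bounded}) that $\esupp{\phi\circ\wop{H_n}}\subset\esupp{\phi}\cap\rcomp{M}$, and apply Lemma \ref{lm:extended_support_limit} to the norm-convergent sequence $\phi\circ\wop{H_n}\to\phi$. Your explicit check that each $\phi\circ\wop{H_n}$ avoids infinity is a welcome detail the paper leaves implicit.
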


\begin{proof}
One inclusion is obvious. For the other one, recall that
$$
\esupp{\phi\circ\wop{H_n}}\subset\esupp{\phi}\cap \ucl{B(0,2^{n+1})}\subset\esupp{\phi}\cap\rcomp{M}
$$
according to Proposition \ref{pr:support_Th}. Applying Lemma \ref{lm:extended_support_limit} to $\phi=\lim_{n\to\infty}\phi\circ\wop{H_n}$ we conclude that $\esupp{\phi}\subset\ucl{\esupp{\phi}\cap\rcomp{M}}$ as desired.
\end{proof}

Compare Proposition \ref{pr:extended_support_avoid_inf} with Corollary \ref{cr:extended_support_fm}: if $\phi\in\bidualfree{M}$ is weak$^\ast$ continuous then its extended support is completely determined by its intersection with $M$, whereas if $\phi$ avoids infinity then it is completely determined by its intersection with $\rcomp{M}$.

Let us end this section by identifying those functionals in $\bidualfree{M}$ which are supported on just one point. In $\lipfree{M}$, these are easily seen to be just the (multiples of) evaluations on said point. In the case of $\bidualfree{M}$ the situation is more complicated, since we have evaluations (on elements of $\rcomp{M}$ this time) but also derivations. The next proposition states that, when we restrict ourselves to the functionals that avoid infinity, these cover all possible cases.

\begin{proposition}
\label{pr:support_derivation}
Suppose that $\phi\in\bidualfree{M}$ avoids infinity and let $\zeta\in\rcomp{M}$. Then the following are equivalent:
\begin{enumerate}[label={\upshape{(\roman*)}}]
\item $\esupp{\phi}\subset\set{\zeta}$,
\item $\phi=a\delta(\zeta)+\psi$, where $a\in\RR$ and $\psi$ is a derivation at $\zeta$.
\end{enumerate}
\end{proposition}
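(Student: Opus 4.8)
The plan is to prove the two implications separately; the implication (ii)$\Rightarrow$(i) is a short computation, while (i)$\Rightarrow$(ii) is where the real work lies. Throughout I will lean on Proposition \ref{pr:equiv_points_support_ext} for detecting points of $\esupp{\phi}$ and on Theorem \ref{th:extended_support_minimal} for the localization property that is available precisely because $\phi$ avoids infinity.

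For (ii)$\Rightarrow$(i): assume $\phi=a\delta(\zeta)+\psi$ with $\psi$ a derivation at $\zeta$, noting $\delta(\zeta)\in\bidualfree{M}$ because $\zeta\in\rcomp{M}$. I would fix $\eta\in\ucomp{M}\setminus\set{\zeta}$, pick an open $V\ni\eta$ with $\zeta\notin\cl{V}$, and observe that any $f\in\Lip_0(M)$ supported in $V\cap M$ has $\ucomp{f}$ identically zero on the open neighbourhood $\ucomp{M}\setminus\ucl{V\cap M}$ of $\zeta$; hence $\duality{f,\psi}=0$ since $\psi$ is a derivation at $\zeta$, and $\duality{f,a\delta(\zeta)}=a\,\ucomp{f}(\zeta)=0$, so $\duality{f,\phi}=0$. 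Proposition \ref{pr:equiv_points_support_ext} then gives $\eta\notin\esupp{\phi}$, hence $\esupp{\phi}\subset\set{\zeta}$. (This direction does not use that $\phi$ avoids infinity.)

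For (i)$\Rightarrow$(ii): first I would dispose of the case $\zeta=0$, where $\delta(\zeta)=0$ and one only has to show $\phi$ is itself a derivation at $0$. If $\ucomp{f}$ is constant on a neighbourhood $U$ of $0$ then that constant is $\ucomp{f}(0)=f(0)=0$, so $f$ vanishes on $U\cap M$, and since $U\supset\esupp{\phi}$ Theorem \ref{th:extended_support_minimal} yields $\duality{f,\phi}=0$; take $a=0$, $\psi=\phi$. For $\zeta\neq0$, the idea is to manufacture a normalizing function: since $\set{0}$ and $\set{\zeta}$ are disjoint closed subsets of $\ucomp{M}$, Proposition \ref{pr:uc_separation} together with McShane's theorem provide $g\in\Lip_0(M)$ with $g=1$ on $W\cap M$ for some open $W\ni\zeta$ and $g=0$ near $0$. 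I would then set $a=\duality{g,\phi}$ and $\psi=\phi-a\delta(\zeta)\in\bidualfree{M}$, and check that $\psi$ is a derivation at $\zeta$ as follows: if $f\in\Lip_0(M)$ has $\ucomp{f}$ constant equal to $t$ on an open neighbourhood $U_0$ of $\zeta$ (so $\ucomp{f}(\zeta)=t$), then $h=f-tg\in\Lip_0(M)$ vanishes on $U_0\cap W\cap M$, an open neighbourhood of $\zeta$ in $\ucomp{M}$ intersected with $M$ that contains $\esupp{\phi}$; hence $\duality{h,\phi}=0$ by Theorem \ref{th:extended_support_minimal}, which gives $\duality{f,\phi}=t\,\duality{g,\phi}=ta$ and therefore $\duality{f,\psi}=ta-a\,\ucomp{f}(\zeta)=ta-at=0$. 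Since $\phi=a\delta(\zeta)+\psi$ by construction, (ii) follows.

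The main obstacle, and the step I would treat most carefully, is the construction of $g$ in the case $\zeta\neq0$ and the bookkeeping with the right neighbourhoods when invoking Theorem \ref{th:extended_support_minimal}; this is exactly where the metric separation property of the uniform compactification is needed, and it is the reason the case $\zeta=0$ must be handled apart, since no Lipschitz function vanishing at the base point can be locally constant and nonzero there. Everything else is routine. One may additionally note, though it is not asserted, that $a$ is then forced: for $\zeta\neq0$ the evaluation $\delta(\zeta)$ is itself not a derivation at $\zeta$. Finally, the hypothesis that $\phi$ avoids infinity is genuinely necessary for (i)$\Rightarrow$(ii): adding the functional of Example \ref{ex:empty_extended_support} to an evaluation and applying Corollary \ref{cr:support_of_sum} produces a counterexample once that hypothesis is dropped.
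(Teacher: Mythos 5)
Your proposal is correct and follows essentially the same route as the paper: (ii)$\Rightarrow$(i) via Proposition \ref{pr:equiv_points_support_ext} applied to functions supported away from $\zeta$, and (i)$\Rightarrow$(ii) by treating $\zeta=0$ separately and otherwise normalizing with a function equal to $1$ near $\zeta$ (the paper uses the explicit choice $h=(\tfrac1b\rho)\wedge 1$ where your argument invokes Proposition \ref{pr:uc_separation} and McShane, which is an immaterial difference) and then applying Theorem \ref{th:extended_support_minimal}. Your closing remarks on the necessity of the avoiding-infinity hypothesis are also accurate.
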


\begin{proof}
First assume (ii), and let $\xi\in\ucomp{M}$, $\xi\neq\zeta$. Choose neighborhoods $U$, $V$ of $\xi$, $\zeta$ such that $d(U\cap M,V\cap M)>0$. Suppose $f\in\Lip_0(M)$ is such that $\supp(f)\subset U\cap M$. Then $\ucomp{f}=0$ on $V$, so $\duality{f,\psi}=0$ by definition and so $\duality{f,\phi}=0$. By Proposition \ref{pr:equiv_points_support_ext} we get $\xi\notin\esupp{\phi}$, and (i) follows. 

Now assume (i). First suppose $\zeta=0$, and let $f\in\Lip_0(M)$ be such that $\ucomp{f}$ is constant in a neighborhood $U$ of $0$. This constant must obviously be $0$. By Theorem \ref{th:extended_support_minimal}, we have $\duality{f,\phi}=0$. This means that $\phi$ is a derivation at $0$.

Suppose now that $\zeta\neq 0$, and choose $h\in\Lip_0(M)$ such that $\ucomp{h}=1$ in a neighborhood of $\zeta$; e.g. take $h=(\frac{1}{b}\rho)\wedge 1$ where $b<\ucomp{\rho}(\zeta)$. Let $\psi=\phi-a\delta(\zeta)$ where $a=\duality{h,\phi}$. Then $\psi$ is a derivation at $\zeta$. Indeed, suppose $f\in\Lip_0(M)$ is such that $\ucomp{f}$ takes a constant value $c$ in a neighborhood $U$ of $\zeta$. Let $g=f-ch$, then $\ucomp{g}=0$ on $U$, and $\duality{g,\phi}=0$ by Theorem \ref{th:extended_support_minimal}, so
$$
\duality{f,\psi}=\duality{f,\phi}-a\ucomp{f}(\zeta)=\duality{f,\phi}-ac=\duality{f-ch,\phi}=\duality{g,\phi}=0 .
$$
\end{proof}

\begin{corollary}
Suppose that $\phi\in\bidualfree{M}$ avoids infinity. Then $\esupp{\phi}$ is finite if and only if $\phi$ is a finite linear combination of point evaluations and derivations on points of $\ucomp{M}$.
\end{corollary}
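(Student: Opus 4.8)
The plan is to prove the two implications separately; the hypothesis that $\phi$ avoids infinity is only needed for the implication ``finite extended support $\Rightarrow$ the stated form''.

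For the converse, suppose $\phi=\sum_{i=1}^p a_i\delta(\zeta_i)+\sum_{j=1}^q\psi_j$, where the $\zeta_i$ lie in $\rcomp{M}$ (so that the evaluations $\delta(\zeta_i)$ belong to $\bidualfree{M}$) and each $\psi_j$ is a derivation at some point $\xi_j\in\ucomp{M}$. I would first note, directly from Proposition \ref{pr:equiv_points_support_ext}, that $\esupp{\delta(\zeta_i)}\subset\{\zeta_i\}$ and $\esupp{\psi_j}\subset\{\xi_j\}$: any $f\in\Lip_0(M)$ supported away from $\zeta_i$ has $\ucomp{f}(\zeta_i)=0$, and any $f$ supported in a small enough neighbourhood of a point different from $\xi_j$ has $\ucomp{f}$ constant (equal to $0$) near $\xi_j$, hence $\duality{f,\psi_j}=0$. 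Repeated use of Corollary \ref{cr:support_of_sum} then gives $\esupp{\phi}\subset\{\zeta_1,\dots,\zeta_p,\xi_1,\dots,\xi_q\}$, which is finite.

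For the forward implication, assume $\phi$ avoids infinity and write $\esupp{\phi}=\{\zeta_1,\dots,\zeta_k\}$. Since a finite set equals its own closure, Proposition \ref{pr:extended_support_avoid_inf} forces $\esupp{\phi}\subset\rcomp{M}$, so each $\delta(\zeta_i)$ is a member of $\bidualfree{M}$. Fix $N\in\NN$ with $\ucomp{\rho}(\zeta_i)<2^N$ for all $i$. Using that $\ucomp{M}$ is compact Hausdorff, I would choose open sets $V_i'\subset\ucl{V_i'}\subset V_i\subset\{\ucomp{\rho}<2^N\}$ with $\zeta_i\in V_i'$ and the closures $\ucl{V_1},\dots,\ucl{V_k}$ pairwise disjoint. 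Proposition \ref{pr:uc_separation} gives $d(V_i'\cap M,\,M\setminus V_i)>0$, so McShane's theorem provides $h_i\in\Lip(M)$ with $0\le h_i\le 1$, $h_i=1$ on $V_i'\cap M$, $h_i=0$ on $M\setminus V_i$, and (since $V_i\cap M\subset B(0,2^N)$ is bounded) $\supp(h_i)$ bounded; hence $\wop{h_i}$ is a bounded operator on $\Lip_0(M)$, the functional $\phi_i:=\phi\circ\wop{h_i}$ is strongly bounded, and $\esupp{\phi_i}\subset\esupp{\phi}\cap\ucl{\supp(h_i)}\subset\{\zeta_1,\dots,\zeta_k\}\cap\ucl{V_i}=\{\zeta_i\}$ by Proposition \ref{pr:support_Th}.

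The crux is to show $\phi=\sum_{i=1}^k\phi_i$. The functional $\phi-\sum_i\phi_i$ avoids infinity (each $\phi_i$ does, and that class is a subspace by Corollary \ref{cor:decomposition_A_infinity}), and by Corollary \ref{cr:support_of_sum} its extended support lies in $\{\zeta_1,\dots,\zeta_k\}$; I claim none of the $\zeta_j$ actually belongs to it. For this, consider the neighbourhood $W_j=V_j'\cap\bigcap_{i\neq j}(\ucomp{M}\setminus\ucl{V_i})$ of $\zeta_j$: on $W_j\cap M$ one has $h_j\equiv 1$ and $h_i\equiv 0$ for $i\neq j$, so any $f\in\Lip_0(M)$ with $\supp(f)\subset W_j\cap M$ satisfies $fh_j=f$ and $fh_i=0$, whence $\duality{f,\phi-\sum_i\phi_i}=\duality{f,\phi}-\duality{fh_j,\phi}=\duality{f,\phi}-\duality{f,\phi}=0$, and Proposition \ref{pr:equiv_points_support_ext} gives $\zeta_j\notin\esupp{\phi-\sum_i\phi_i}$. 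Thus $\esupp{\phi-\sum_i\phi_i}=\varnothing$, and Corollary \ref{cr:extended_support_empty} yields $\phi=\sum_i\phi_i$. Finally, each $\phi_i$ avoids infinity and has $\esupp{\phi_i}\subset\{\zeta_i\}$ with $\zeta_i\in\rcomp{M}$, so Proposition \ref{pr:support_derivation} writes $\phi_i=a_i\delta(\zeta_i)+\psi_i$ with $a_i\in\RR$ and $\psi_i$ a derivation at $\zeta_i$, giving $\phi=\sum_i a_i\delta(\zeta_i)+\sum_i\psi_i$ as required. I expect the genuine work to be concentrated in this last paragraph — the careful construction of the separating weights $h_i$ via Proposition \ref{pr:uc_separation} and the verification that the remainder has empty extended support; everything else is an assembly of results from Sections 2 and 3.
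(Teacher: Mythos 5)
Your proof is correct and follows essentially the same route as the paper's: localize $\phi$ near each point of its finite extended support (which lies in $\rcomp{M}$ by Proposition \ref{pr:extended_support_avoid_inf}) using bounded-support weighting functions $h_i$, show that $\phi=\sum_i\phi\circ\wop{h_i}$ via the localization property of the extended support for functionals avoiding infinity, and apply Proposition \ref{pr:support_derivation} to each piece. The only cosmetic differences are that you obtain $\phi=\sum_i\phi_i$ by checking that the remainder avoids infinity and has empty extended support (Corollary \ref{cr:extended_support_empty}) instead of invoking Theorem \ref{th:extended_support_minimal} directly, and that your converse verifies the singleton supports straight from Proposition \ref{pr:equiv_points_support_ext}, correctly observing in passing that this direction does not need the avoiding-infinity hypothesis.
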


\begin{proof}
If $\esupp{\phi}$ is finite, then by virtue of Proposition \ref{pr:extended_support_avoid_inf} we have $\esupp{\phi}\subset\rcomp{M}$. Denote $\esupp{\phi}=\set{\zeta_1,\ldots,\zeta_k}$ and find neighborhoods $U_1,\ldots,U_k$ of $\zeta_1,\ldots,\zeta_k$ with pairwise disjoint closures. For each $i$ define $h_i\in\Lip_0(M)$ such that $\ucomp{h_i}=1$ on $U_i$ and $\ucomp{h_i}=0$ on $\bigcup_{j\neq i}U_j$. Since $\max_{i}\ucomp{\rho}(\zeta_i)<\infty$, we may moreover take $h_i$ to have a bounded support. By Theorem \ref{th:extended_support_minimal} then $$\phi=\phi\circ\wop{\sum_{i=1}^kh_i}=\sum_{i=1}^k\phi\circ\wop{h_i}.$$
But Proposition \ref{pr:support_Th} gives that $\esupp{\phi\circ\wop{h_i}}\subset\set{\zeta_i}$, so Proposition \ref{pr:support_derivation} yields
$$
\phi\circ\wop{h_i}=a_i\delta(\zeta_i)+\psi_i
$$
for some $a_i\in\RR$ and $\psi_i$ a derivation at $\zeta_i$, and the forward implication follows. On the other hand, if $\phi$ avoiding infinity is a nontrivial finite linear combination of evaluations and derivations, then these also avoid infinity by the uniqueness of the decomposition in Theorem \ref{th:decomp_0_infty}, and Proposition \ref{pr:support_derivation} implies that their supports are singletons. An appeal to Corollary \ref{cr:support_of_sum} yields that $\esupp{\phi}$ is finite.
\end{proof}

Combining Proposition \ref{pr:support_derivation} with Corollary \ref{cr:extended_support_fm} immediately implies the following, that can also be proved by a direct argument:

\begin{corollary}
If $\phi\in\lipfree{M}$ is a derivation then $\phi=0$.
\end{corollary}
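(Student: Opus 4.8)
The plan is to follow the indicated route: combine the pointwise description of the extended support in Proposition~\ref{pr:equiv_points_support_ext} with the identification of supports and extended supports for weak$^\ast$ continuous functionals in Corollary~\ref{cr:extended_support_fm}. So suppose $\phi\in\lipfree{M}$ is a derivation at some point $\zeta\in\ucomp{M}$. The first step is to show that $\esupp{\phi}\subset\set{\zeta}$. Fix $\xi\in\ucomp{M}$ with $\xi\neq\zeta$ and choose disjoint open sets $U,V\subset\ucomp{M}$ with $\xi\in U$ and $\zeta\in V$. If $f\in\Lip_0(M)$ has $\supp(f)\subset U\cap M$ then $f$ vanishes on $V\cap M$ (since $V\cap M\subset M\setminus(U\cap M)$); as $V\cap M$ is dense in the open set $V$ and $\ucomp{f}$ is continuous, $\ucomp{f}$ vanishes on all of $V$, so $\ucomp{f}$ is constant on a neighborhood of $\zeta$ and hence $\duality{f,\phi}=0$ by Definition~\ref{def:derivation}. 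By Proposition~\ref{pr:equiv_points_support_ext} this gives $\xi\notin\esupp{\phi}$, and since $\xi$ was arbitrary, $\esupp{\phi}\subset\set{\zeta}$.

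For the second step, since $\phi\in\lipfree{M}$, Corollary~\ref{cr:extended_support_fm} yields $\supp(\phi)=\esupp{\phi}\cap M\subset\set{\zeta}\cap M$. If $\zeta\notin M$ then $\supp(\phi)=\varnothing$ and hence $\phi=0$, so we may assume $\zeta\in M$; then $\supp(\phi)\subset\set{\zeta}$, hence $\phi\in\cl{\lspan}\set{\delta(\zeta)}$ and so $\phi=a\,\delta(\zeta)$ for some $a\in\RR$, and in particular $\phi=0$ already when $\zeta=0$ (as then $\delta(0)=0$). To rule out $a\neq 0$ when $\zeta\neq 0$, take $b$ with $0<b<\ucomp{\rho}(\zeta)=d(\zeta,0)$ and set $f=(\tfrac1b\rho)\wedge 1\in\Lip_0(M)$; then $\ucomp{f}$ equals the constant $1$ on the open neighborhood $\set{\eta\in\ucomp{M}:\ucomp{\rho}(\eta)>b}$ of $\zeta$, so the derivation property forces $0=\duality{f,\phi}=a\,\ucomp{f}(\zeta)=a$. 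Thus $\phi=0$ in every case.

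No step is a genuine obstacle; the only slightly delicate point is the passage in the first step from ``$f$ vanishes on $V\cap M$'' to ``$\ucomp{f}$ vanishes on the whole open set $V$'' before Definition~\ref{def:derivation} can be invoked, together with the need to treat $\zeta=0$ separately in the second step, since there is no function in $\Lip_0(M)$ that is a nonzero constant near $0$ and one instead uses $\cl{\lspan}\set{\delta(0)}=\set{0}$ directly. As noted in the statement, a direct argument is also available, most transparently for $\zeta=0$: writing $f=fH_{-k}+fG_{-k}$, the summand $fG_{-k}$ vanishes on $B(0,2^{-k})$, so $\ucomp{(fG_{-k})}$ is constant near $0$ and $\duality{fG_{-k},\phi}=0$, whence $\duality{f,\phi}=\duality{fH_{-k},\phi}$ for all $k$; as $k\to\infty$ the functions $fH_{-k}$ converge to $0$ pointwise while staying norm-bounded in $\Lip_0(M)$, so weak$^\ast$ continuity of $\phi$ gives $\duality{f,\phi}=0$, and $\phi=0$ as $f$ was arbitrary.
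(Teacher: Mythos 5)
Your proof is correct and follows essentially the same route as the paper, which obtains the corollary by combining Proposition~\ref{pr:support_derivation} with Corollary~\ref{cr:extended_support_fm}: you show $\esupp{\phi}\subset\set{\zeta}$, intersect with $M$ to localize $\supp(\phi)$, and then kill the remaining multiple of $\delta(\zeta)$. Your first step in fact re-derives the (ii)$\Rightarrow$(i) direction of Proposition~\ref{pr:support_derivation} for an arbitrary $\zeta\in\ucomp{M}$ (via density of $V\cap M$ in $V$ and continuity of $\ucomp{f}$), which neatly covers the case $\zeta\notin\rcomp{M}$ not literally included in that proposition's hypotheses.
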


\section{Elements of \texorpdfstring{$\lipfree{M}$}{F(M)} and \texorpdfstring{$\bidualfree{M}$}{Lip0(M)*} with integral representation}

The properties of supports in $\lipfree{M}$ and $\bidualfree{M}$ suggest a possible relationship with spaces of measures defined on the complete metric space $M$.
Indeed, a Borel measure $\mu$ defined on $M$ acts by integration on measurable functions, in particular on Lipschitz functions, and thus we may define a formal mapping $\opint\mu$ from $\Lip_0(M)$ to the reals or extended reals by
$$
\opint\mu(f)=\int_M f\,d\mu
$$
for $f\in\Lip_0(M)$; at least, for those $f$ that are $\mu$-integrable. More generally, if $\mu$ is a Borel measure on the uniform compactification $\ucomp{M}$ then we denote
$$
\opint\mu(f)=\int_{\ucomp{M}}\ucomp{f}\,d\mu
$$
for $f\in\Lip_0(M)$. Note that if the integral $\int_M f\,d\mu$ (resp. $\int_{\ucomp{M}} \ucomp{f}\,d\mu$) exists for every $f$ then this defines a linear functional $\opint\mu$ on $\Lip_0(M)$.

We will pay special attention to Radon measures, and denote by $\meas{M}$ and $\meas{\ucomp{M}}$ the Banach spaces of Radon measures on $M$ and $\ucomp{M}$, respectively. Recall that, since $\ucomp{M}$ is compact, $\meas{\ucomp{M}}$ is the dual of the space $C(\ucomp{M})$ of continuous functions on $\ucomp{M}$, which by Proposition \ref{pr:uc_restriction} can be identified with the space of bounded, uniformly continuous functions on $M$. Let us recall that finite Borel measures on $M$ or $\ucomp{M}$ are Radon as soon as they are inner regular.

Using the operator of extension of measures, it is possible to identify $\meas{M}$ with the subspace of $\meas{\ucomp{M}}$ consisting of those measures that are concentrated on $M$. This follows easily from the fact that $M$ is always a Borel subset of $\ucomp{M}$ (it is in fact a $G_\delta$ subset if we assume $M$ to be complete) and that a subset $K\subset M$ is compact as a topological subspace of $M$ if and only if it is compact as a topological subspace of $\ucomp{M}$. We will make use of this identification without further notice; let us just remark that the notation $\opint\mu$ is consistent with it.

We will say that a functional $\phi$ in $\lipfree{M}$ or $\bidualfree{M}$ is \textit{induced} or \textit{represented} by a measure $\mu$ if $\phi=\opint\mu$. We shall use the notation $\delta_\zeta$ for the Dirac measure on $\zeta\in\ucomp{M}$; note the difference with the functional $\delta(\zeta)$ of $\bidualfree{M}$ (if $\zeta\in\rcomp{M}$), which is obviously induced by $\delta_\zeta$. Let us remark that the measure $\delta_0$ induces the null functional because all $f\in\Lip_0(M)$ vanish at the base point. Thus, any functional that is represented by a measure $\mu$ is also represented by a measure
$$
\hat{\mu}=\mu-\mu(\set{0})\cdot\delta_0=\mu\restrict_{\ucomp{M}\setminus\set{0}}
$$
such that $\hat{\mu}(\set{0})=0$. We shall denote by $\measz{M}$ and $\measz{\ucomp{M}}$ the spaces of such Radon measures.

We will, in fact, consider a slightly more general class than the Radon measures:

\begin{definition}
\label{def:almost radon}
Let us say that a Borel measure $\mu$ on $M$ (resp. $\ucomp{M}$) is \textit{almost Radon} if $\mu(\set{0})=0$ and, for every closed subset $K$ of $M$ (resp. $\ucomp{M}$) such that $0\notin K$, the restriction $\mu\restrict_K$ is Radon.
\end{definition}

\noindent It is straightforward to check that every almost Radon measure $\mu$ on $M$ is inner regular by noticing that
$$
\mu(E) = \lim_{n\rightarrow\infty}\mu(E\setminus B(0,2^{-n})) = \lim_{n\rightarrow\infty}\mu\restrict_{M\setminus B(0,2^{-n})}(E)
$$
for every Borel set $E\subset M$, and similarly for $\ucomp{M}$ (note that we use the condition $\mu(\set{0})=0$ here). Thus, almost Radon measures are either signed Radon measures or $\sigma$-finite positive measures that are Radon except for a ``singularity'' at the base point. We must exclude the case of non-finite signed measures because of indeterminacies of the form $\infty-\infty$ around the base point - instead, such objects will be treated as the difference between two almost Radon measures.

Notice that, for an almost Radon measure $\mu$, we have $\supp(\mu)=\varnothing$ if and only if $\mu=0$. Indeed, for every closed set $K$ not containing $0$ we have $\supp(\mu\restrict_K)\subset\supp(\mu)=\varnothing$ and hence $\mu\restrict_K=0$ as $\mu\restrict_K$ is Radon. Also, any almost Radon measure on $M$ has separable support.

Finally, we observe that every almost Radon measure can be expressed canonically as a limit of Radon measures.

\begin{lemma}
\label{lm:setwise_convergence}
Let $\mu$ be an almost Radon measure on $\ucomp{M}$, and for $n\in\NN$ let $\mu_n$ be defined by $d\mu_n = \ucomp{G_{-n}}\,d\mu$. Then
\begin{enumerate}[label={\upshape{(\alph*)}}]
\item each $\mu_n$ is Radon,
\item $\mu_n$ converges setwise to $\mu$ (i.e. $\lim_n\mu_n(E)=\mu(E)$ for all Borel $E\subset\ucomp{M}$), and
\item if $\opint\mu\in\bidualfree{M}$ then $\opint\mu_n=\opint\mu\circ\wop{G_{-n}}$.
\end{enumerate}
\end{lemma}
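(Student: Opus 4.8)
The plan is to verify the three claims in order, each being a routine consequence of properties already established. Throughout, fix an almost Radon measure $\mu$ on $\ucomp{M}$ and write $\mu_n$ for the measure with $d\mu_n=\ucomp{G_{-n}}\,d\mu$, i.e. $\mu_n=\ucomp{G_{-n}}\cdot\mu$ as a density.

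For part (a), I would first observe that $\ucomp{G_{-n}}$ is a bounded Borel function on $\ucomp{M}$ that vanishes on the set $\{\zeta:\ucomp{\rho}(\zeta)\leq 2^{-n}\}$, which is a neighborhood of the base point $0$. Hence $\mu_n$ is concentrated on the closed set $K_n=\{\zeta\in\ucomp{M}:\ucomp{\rho}(\zeta)\geq 2^{-n}\}$, which does not contain $0$. Since $\mu$ is almost Radon, $\mu\restrict_{K_n}$ is Radon, and $\mu_n$ is the measure obtained from $\mu\restrict_{K_n}$ by multiplying by the bounded continuous density $\ucomp{G_{-n}}$ (note $0\leq\ucomp{G_{-n}}\leq 1$). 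A measure with bounded Borel density with respect to a Radon measure on a compact space is again Radon — it is finite because $\abs{\mu_n}\leq\abs{\mu\restrict_{K_n}}$ in total variation, and it inherits inner regularity from $\mu\restrict_{K_n}$ — so $\mu_n$ is Radon.

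For part (b), setwise convergence means $\mu_n(E)\to\mu(E)$ for every Borel set $E\subset\ucomp{M}$. Since $\ucomp{G_{-n}}$ increases pointwise to $1$ on $\ucomp{M}\setminus\{0\}$ (because $G_{-n}(x)=1-H_{-n}(x)$ and $H_{-n}$ has support shrinking to $\{0\}$, so $H_{-n}(x)\to 0$ for every $x\neq 0$, and the same holds for the extensions on $\ucomp{M}\setminus\{0\}$ using that $\ucomp{H_{-n}}$ is supported in $\{\ucomp{\rho}\leq 2^{-n+1}\}$), for the signed case we split $\mu=\mu^+-\mu^-$ into its Jordan components (both almost Radon, hence $\sigma$-finite away from $0$) and apply monotone convergence to each: $\mu_n^\pm(E)=\int_E\ucomp{G_{-n}}\,d\mu^\pm\to\int_{E\setminus\{0\}}1\,d\mu^\pm=\mu^\pm(E)$, using $\mu^\pm(\{0\})=0$ which follows from $\mu(\{0\})=0$. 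Subtracting gives $\mu_n(E)\to\mu(E)$.

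For part (c), assume $\opint\mu\in\bidualfree{M}$, so $\int_{\ucomp{M}}\ucomp{f}\,d\mu$ is defined and finite for every $f\in\Lip_0(M)$ and $\opint\mu$ is a continuous linear functional. Then for any $f\in\Lip_0(M)$,
$$
\opint\mu_n(f)=\int_{\ucomp{M}}\ucomp{f}\,d\mu_n=\int_{\ucomp{M}}\ucomp{f}\cdot\ucomp{G_{-n}}\,d\mu=\int_{\ucomp{M}}\ucomp{(fG_{-n})}\,d\mu=\opint\mu(fG_{-n})=\duality{fG_{-n},\opint\mu}=\duality{f,\opint\mu\circ\wop{G_{-n}}} ,
$$
where the third equality uses that $\ucomp{(fG_{-n})}=\ucomp{f}\cdot\ucomp{G_{-n}}$ as an identity of extended-real-valued functions on $\ucomp{M}$ — this product is unproblematic since $\ucomp{G_{-n}}$ vanishes on a neighborhood of $0$, so no $0\times\infty$ indeterminacy arises even at points $\zeta\notin\rcomp{M}$ where $\ucomp{f}$ may be infinite, because there $\ucomp{G_{-n}}(\zeta)=1$. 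In particular the integral on the left is defined and finite, so $\opint\mu_n=\opint\mu\circ\wop{G_{-n}}$ as functionals. The only genuinely delicate point is this last product identity and the integrability bookkeeping; the rest is standard measure theory. I do not anticipate a real obstacle — the lemma is a ``packaging'' result — but the cleanest presentation keeps the $0\times\infty$ discussion front and center, since that is precisely why the restriction to almost Radon measures (with the $\mu(\{0\})=0$ condition) is needed.
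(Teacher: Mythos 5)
Your proof is correct and follows essentially the same route as the paper's: part (a) by writing $d\mu_n=\ucomp{G_{-n}}\,d(\mu\restrict_{K_n})$ with $K_n$ a closed set avoiding $0$ and invoking almost Radonness, part (b) by monotone convergence against the increasing limit $\ucomp{G_{-n}}\nearrow\chi_{\ucomp{M}\setminus\set{0}}$, and part (c) by the change-of-density and adjoint computation (which the paper dismisses as ``obvious''). Your explicit handling of the signed case in (b) and of the potential $0\times\infty$ indeterminacy in (c) is more detailed than the paper's write-up but does not constitute a different argument.
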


\begin{proof}
Notice that $d\mu_n=\ucomp{G_{-n}}\,d(\mu\restrict_{R_n})$ where $R_n=\set{\zeta\in\ucomp{M}:\ucomp{\rho}(\zeta)\geq 2^{-n}}$, therefore $\mu\restrict_{R_n}$ and $\mu_n$ are Radon. This establishes (a), and (c) is obvious. For statement (b), observe that $(\ucomp{G_{-n}})$ converges pointwise and increasingly to the characteristic function of the set $\ucomp{M}\setminus\{0\}$. Therefore, for any Borel set $E\subset\ucomp{M}$ we have by Lebesgue's monotone convergence theorem that
$$
\mu(E) =
\mu(E\setminus\set{0}) = 
\int_{\ucomp{M}\setminus\set{0}}\chi_E\,d\mu =
\lim_{n\rightarrow\infty}\int_{\ucomp{M}\setminus\set{0}}\chi_E\cdot\ucomp{G_{-n}}\,d\mu
 = \lim_{n\rightarrow\infty}\mu_n(E)
$$
where $\chi_E$ is the characteristic function of $E$.
\end{proof}

\subsection{Measures that induce continuous functionals}

Let us start by identifying those measures that induce bounded functionals on $\Lip_0(M)$. The next two propositions show that they are precisely the measures with ``finite first moment''. This generalizes the first part of \cite[Proposition 2.7]{AmPu_2016}; see also \cite[Lemma 3.3]{HiWo_2009}.

\begin{proposition}
\label{pr:elements_induced_by_measure_bidual}
Let $\mu$ be a Borel measure on $\ucomp{M}$ such that $\mu(\set{0})=0$. Then the following are equivalent:
\begin{enumerate}[label={\upshape{(\roman*)}}]
\item $\opint\mu\in\bidualfree{M}$,
\item $\int_{\ucomp{M}}\abs{\ucomp{f}}\,d\abs{\mu}<\infty$ for every $f\in\Lip_0(M)$,
\item $\int_{\ucomp{M}}\ucomp{\rho}\,d\abs{\mu}<\infty$.
\end{enumerate}
If they hold then $\mu$ is concentrated on $\rcomp{M}$, $\abs{\mu}(K)<\infty$ for any closed $K\subset\ucomp{M}$ such that $0\notin K$, and $\mu$ is $\sigma$-finite. If $\mu$ is moreover inner regular then it is almost Radon.
\end{proposition}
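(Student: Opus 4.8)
The plan is to prove the equivalences by showing (ii) $\Leftrightarrow$ (iii) and (iii) $\Leftrightarrow$ (i), and then to read the ``moreover'' clause off condition (iii). Everything hinges on one elementary inequality: since $f(0)=0$ we have $\abs{f(x)}\le\lipnorm{f}\,\rho(x)$ for all $x\in M$, and since $\abs{\ucomp{f}}\le\lipnorm{f}\,\ucomp{\rho}$ is a closed condition in $\ucomp{M}$ (the two sides being continuous functions valued in $[-\infty,+\infty]$ and $[0,+\infty]$ respectively) that holds on the dense subset $M$, it holds everywhere on $\ucomp{M}$.

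Granting this, (ii) $\Leftrightarrow$ (iii) is immediate: (iii) is the instance $f=\rho$ of (ii), and conversely integrating the inequality against $\abs{\mu}$ gives $\int_{\ucomp{M}}\abs{\ucomp{f}}\,d\abs{\mu}\le\lipnorm{f}\int_{\ucomp{M}}\ucomp{\rho}\,d\abs{\mu}<\infty$. The implication (iii) $\Rightarrow$ (i) is equally soft. First, (iii) forces $\abs{\mu}(\set{\ucomp{\rho}=+\infty})=0$, so $\mu$ is concentrated on $\rcomp{M}$ and all integrals below only see the region where $f\mapsto\ucomp{f}$ is linear and finite-valued; then each $\ucomp{f}$ is $\abs{\mu}$-integrable, $\opint{\mu}(f)=\int_{\ucomp{M}}\ucomp{f}\,d\mu$ is a well-defined real number, $\opint{\mu}$ is linear, and $\abs{\opint{\mu}(f)}\le\int_{\ucomp{M}}\abs{\ucomp{f}}\,d\abs{\mu}\le\lipnorm{f}\int_{\ucomp{M}}\ucomp{\rho}\,d\abs{\mu}$, so $\opint{\mu}\in\dual{\Lip_0(M)}=\bidualfree{M}$ with $\norm{\opint{\mu}}\le\int_{\ucomp{M}}\ucomp{\rho}\,d\abs{\mu}$.

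The implication (i) $\Rightarrow$ (iii) is where care is needed, and I expect it to be the only genuine obstacle. Since $\rho\in\Lip_0(M)$ and $\opint{\mu}$ is real-valued, $\opint{\mu}(\rho)=\int_{\ucomp{M}}\ucomp{\rho}\,d\mu$ must be a well-defined \emph{finite} number. Taking a Hahn decomposition $\mu=\mu^+-\mu^-$ with $\mu^+$ or $\mu^-$ finite (so $\abs{\mu}=\mu^++\mu^-$), both $\int\ucomp{\rho}\,d\mu^+$ and $\int\ucomp{\rho}\,d\mu^-$ lie in $[0,+\infty]$, and the only way their difference is a finite real number is for both to be finite; hence $\int_{\ucomp{M}}\ucomp{\rho}\,d\abs{\mu}=\int\ucomp{\rho}\,d\mu^++\int\ucomp{\rho}\,d\mu^-<\infty$. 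The delicate points are that a Lebesgue integral of the nonnegative function $\ucomp{\rho}$ against a possibly non-finite, possibly signed measure is a finite real number only if the integral against the total variation is finite, and that $\ucomp{\rho}$ really attains the value $+\infty$ off $\rcomp{M}$ — so one must be clear about exactly what ``$\opint{\mu}\in\bidualfree{M}$'' presupposes concerning the existence of $\int_{\ucomp{M}}\ucomp{f}\,d\mu$ for $f\in\Lip_0(M)$.

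It remains to deduce the ``moreover'' clause from (iii). Since $\ucomp{M}\setminus\rcomp{M}=\set{\ucomp{\rho}=+\infty}$ is Borel with $\ucomp{\rho}\equiv+\infty$ on it, finiteness of $\int_{\ucomp{M}}\ucomp{\rho}\,d\abs{\mu}$ forces $\abs{\mu}(\ucomp{M}\setminus\rcomp{M})=0$, i.e.\ $\mu$ is concentrated on $\rcomp{M}$. If $K\subset\ucomp{M}$ is closed with $0\notin K$, then $\ucomp{\rho}$ is continuous and (vanishing only at the base point) positive on the compact set $K$, so $\varepsilon:=\min_{K}\ucomp{\rho}>0$ and $\abs{\mu}(K)\le\varepsilon^{-1}\int_{\ucomp{M}}\ucomp{\rho}\,d\abs{\mu}<\infty$; applying this with $K=\set{\ucomp{\rho}\ge 2^{-n}}$ and using $\ucomp{M}=\set{0}\cup\bigcup_{n}\set{\ucomp{\rho}\ge 2^{-n}}$ together with $\mu(\set{0})=0$ yields $\sigma$-finiteness. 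Finally, if $\mu$ is inner regular then for each such $K$ the restriction $\mu\restrict_{K}$ is finite and inherits inner regularity, hence is also outer regular (approximating Borel sets from outside by complements of compact sets in the compact space $\ucomp{M}$), hence Radon; combined with $\mu(\set{0})=0$ this is exactly the statement that $\mu$ is almost Radon.
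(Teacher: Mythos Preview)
Your proof is correct and follows essentially the same path as the paper's: the paper proves the cycle (i)$\Rightarrow$(ii)$\Rightarrow$(iii)$\Rightarrow$(i) where you do (ii)$\Leftrightarrow$(iii) and (i)$\Leftrightarrow$(iii), and the ``moreover'' clause is derived identically from (iii). If anything, you are slightly more careful than the paper at the step (i)$\Rightarrow$(iii): the paper simply declares (i)$\Rightarrow$(ii) ``obvious'' on the grounds that $\int_{\ucomp{M}}\ucomp{f}\,d\mu$ is finite, whereas you spell out via the Jordan decomposition why finiteness of $\int_{\ucomp{M}}\ucomp{\rho}\,d\mu$ forces finiteness of $\int_{\ucomp{M}}\ucomp{\rho}\,d\abs{\mu}$.
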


\begin{proof}
The implication (i)$\Rightarrow$(ii) is obvious, as $\duality{f,\opint\mu}=\int_{\ucomp{M}}\ucomp{f}\,d\mu$ is finite for every $f\in\Lip_0(M)$. (ii)$\Rightarrow$(iii) is also obvious taking $f=\rho$. Now assume (iii), then for every $f\in\Lip_0(M)$ we have
$$
\abs{\duality{f,\opint\mu}} = \abs{\int_{\ucomp{M}}\ucomp{f}\,d\mu} \leq \int_{\ucomp{M}}\abs{\ucomp{f}}\,d\abs{\mu} \leq \lipnorm{f}\cdot\int_{\ucomp{M}}\ucomp{\rho}\,d\abs{\mu},
$$
hence we get (i) with $\norm{\opint\mu}\leq\int_{\ucomp{M}}\ucomp{\rho}\,d\abs{\mu}$. In particular, $\ucomp{\rho}<\infty$ a.e.($\abs{\mu}$) and therefore $\mu$ is concentrated on $\rcomp{M}$. Moreover, if $K$ is a closed subset of $\ucomp{M}$ such that $0\notin K$, then there is $r>0$ such that $\ucomp{\rho}\geq r$ on $K$ and so
$$
\int_{\ucomp{M}}\ucomp{\rho}\,d\abs{\mu} \geq \int_K\ucomp{\rho}\,d\abs{\mu} \geq r\cdot\abs{\mu}(K)
$$
hence $\abs{\mu}(K)<\infty$. Since $\ucomp{M}\setminus\set{0}=\bigcup_{n\in\NN}K_n$ where $K_n=\set{\zeta\in\ucomp{M}:\ucomp{\rho}(\zeta)\geq\frac{1}{n}}$ and $\mu(\set{0})=0$, we get that $\mu$ is $\sigma$-finite.
The last statement is obvious.
\end{proof}

In the version for measures on $M$, we require additional hypotheses but we also get more information.

\begin{proposition}
\label{pr:elements_induced_by_measure}
Let $\mu$ be a Borel measure on $M$ such that $\mu(\set{0})=0$. Suppose that either $M$ is separable or $\mu$ is inner regular. Then the following are equivalent:
\begin{enumerate}[label={\upshape{(\roman*)}}]
\item $\opint\mu\in\bidualfree{M}$,
\item $\opint\mu\in\lipfree{M}$,
\item $\int_M \abs{f}\,d\abs{\mu}<\infty$ for every $f\in\Lip_0(M)$,
\item $\int_M \rho\,d\abs{\mu}<\infty$,
\item $\displaystyle \opint\mu=\int_M \delta(x)\,d\mu(x)$ as a Bochner integral.
\end{enumerate}
If they hold then $\mu$ is almost Radon.
\end{proposition}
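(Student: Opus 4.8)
The plan is to pull most of the equivalences from Proposition~\ref{pr:elements_induced_by_measure_bidual} by a transfer to $\ucomp{M}$, and then to close the loop through the Bochner-integral clause~(v). Extend $\mu$ to a Borel measure $\nu$ on $\ucomp{M}$ by $\nu(E)=\mu(E\cap M)$; then $\nu$ is concentrated on $M$, $\nu(\set{0})=0$, $\abs{\nu}$ is the extension of $\abs{\mu}$, and, since $\ucomp{f}\restrict_M=f$ for every $f\in\Lip_0(M)$, we have $\opint\mu=\opint\nu$ and $\int_{\ucomp{M}}\ucomp{\rho}\,d\abs{\nu}=\int_M\rho\,d\abs{\mu}$. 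Applying Proposition~\ref{pr:elements_induced_by_measure_bidual} to $\nu$ yields at once (i)$\Leftrightarrow$(iii)$\Leftrightarrow$(iv), together with the fact that, when these hold, $\abs{\mu}$ is $\sigma$-finite and $\abs{\mu}(K)<\infty$ for every closed $K\subset M$ with $0\notin K$ (such a $K$ has $d(0,K)>0$, hence $0\notin\ucl{K}$ by Proposition~\ref{pr:uc_definition}\ref{uc:separation}, so $\abs{\mu}(K)=\abs{\nu}(K)\leq\abs{\nu}(\ucl{K})<\infty$). It remains to establish (iv)$\Rightarrow$(v)$\Rightarrow$(ii)$\Rightarrow$(i) and the closing assertion.

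For (iv)$\Rightarrow$(v) I would show that $x\mapsto\delta(x)$ is Bochner $\mu$-integrable as a map from $M$ into $\lipfree{M}$. Weak measurability is immediate since $x\mapsto\duality{\delta(x),f}=f(x)$ is continuous for every $f\in\Lip_0(M)$, so by the Pettis measurability theorem it suffices to check that the map is $\abs{\mu}$-essentially separably valued. This is trivial when $M$ is separable; when $\mu$ is inner regular one combines inner regularity with the $\sigma$-finiteness of $\abs{\mu}$ to produce a $\sigma$-compact (hence separable) set $D\subset M$ with $\abs{\mu}(M\setminus D)=0$, after which $\delta(\cdot)$ takes values in the separable space $\lipfree{D}$ off a null set. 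Since $\norm{\delta(x)}=d(x,0)=\rho(x)$ and $\int_M\rho\,d\abs{\mu}<\infty$ by (iv), the map is Bochner integrable (splitting $\mu$ into its positive and negative parts, each a $\sigma$-finite measure with finite first moment, makes the integral well defined); writing $m=\int_M\delta(x)\,d\mu(x)\in\lipfree{M}$ and pulling $f\in\Lip_0(M)$ through the integral gives $\duality{m,f}=\int_M f\,d\mu=\duality{f,\opint\mu}$, so $m=\opint\mu$, which is~(v).

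The implication (v)$\Rightarrow$(ii) is immediate, because a Bochner integral of a function valued in the closed subspace $\lipfree{M}$ of $\bidualfree{M}$ lies in $\lipfree{M}$; and (ii)$\Rightarrow$(i) is trivial. For the final statement, assume (i)--(v) hold. Then $\mu(\set{0})=0$ by hypothesis, and for every closed $K\subset M$ with $0\notin K$ we have $\abs{\mu}(K)<\infty$ by the first paragraph, so $\mu\restrict_K$ is a finite signed Borel measure on the complete metric space $M$; it is Radon because $\abs{\mu\restrict_K}=\abs{\mu}\restrict_K$ is a finite positive Borel measure that is either automatically Radon (when $M$ is separable, by completeness) or inner regular as a restriction of an inner regular measure, and being finite it is then regular. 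Hence $\mu$ is almost Radon.

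The step I expect to be most delicate is the $\abs{\mu}$-essential separable-valuedness of $\delta(\cdot)$ in the case where $\mu$ is inner regular but $M$ is possibly non-separable; this is precisely where one must use the $\sigma$-finiteness furnished by Proposition~\ref{pr:elements_induced_by_measure_bidual} together with a $\sigma$-compact exhaustion along the sets $\set{x\in M:\rho(x)\geq 1/n}$. The remaining bookkeeping — notably handling signed or merely $\sigma$-finite $\mu$ in the definition of the Bochner integral via the Jordan decomposition — is routine.
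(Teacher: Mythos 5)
Your proposal is correct and follows essentially the same route as the paper: reduce (i)$\Leftrightarrow$(iii)$\Leftrightarrow$(iv) to Proposition \ref{pr:elements_induced_by_measure_bidual} by viewing $\mu$ as a measure on $\ucomp{M}$ concentrated on $M$, obtain (iv)$\Rightarrow$(v) by checking Bochner integrability of $\delta$, and close with the trivial implications (v)$\Rightarrow$(ii)$\Rightarrow$(i). The only cosmetic difference is that for essential separable-valuedness in the inner regular case the paper first notes $\mu$ is almost Radon and hence concentrated on its (separable) support, whereas you build a $\sigma$-compact carrier directly from inner regularity and $\sigma$-finiteness — the same idea in different clothing.
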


\begin{proof}
By considering $\mu$ as a Borel measure on $\ucomp{M}$ that is concentrated on $M$, the equivalence of (i), (iii) and (iv) follows from Proposition \ref{pr:elements_induced_by_measure_bidual}, as does the last statement if $\mu$ is inner regular. If $M$ is assumed to be separable instead then every closed $K\subset M$ with $0\notin K$ is complete and separable and hence the finiteness of $\mu\restrict_K$ implies that it is Radon, thus $\mu$ is again almost Radon.

The implication (ii)$\Rightarrow$(i) is obvious, as is (v)$\Rightarrow$(ii) since the integrand $\delta(x)$ belongs to $\lipfree{M}$ for all $x\in M$. Finally, assume (iv) and we will prove (v).
Notice that the mapping $\delta:M\rightarrow\lipfree{M}$ is continuous and its range is $\mu$-essentially separable:
indeed $\mu$ is almost Radon so it is concentrated on $\supp(\mu)$ which is separable. Thus $\delta$ is $\mu$-measurable and, since
$$
\int_M\norm{\delta(x)}\,d\mu(x) = \int_M\rho(x)\,d\mu(x)
$$
is finite, the Bochner integral in (v) exists as an element of $\lipfree{M}$ (see e.g. \cite[Theorems II.1.2 and II.2.2]{DiUh}), and its action on each $f\in\Lip_0(M)$ is obviously the same as that of $\opint\mu$. This finishes the proof.
\end{proof}

We do not know whether the implication (i)$\Rightarrow$(ii) holds for non-separable $M$ without regularity assumptions on $\mu$.

\begin{remark}
\label{rm:restricted_functionals}
Using the condition that $\rho$ (or $\ucomp{\rho}$) be $\mu$-integrable, we obtain the following simple consequence: if $\opint\mu\in\bidualfree{M}$ then $\opint(\mu\restrict_E)\in\bidualfree{M}$ for any Borel subset $E$ of $\ucomp{M}$. If moreover $M$ is separable or $\mu$ is inner regular then $\opint(\mu\restrict_E)\in\lipfree{M}$ for any Borel $E\subset M$, in particular $\opint(\mu\restrict_M)\in\lipfree{M}$.
\end{remark}

We also get the following immediate consequence:

\begin{corollary}
\label{cr:bounded_measures}
The following are equivalent:
\begin{enumerate}[label={\upshape{(\roman*)}}]
\item $M$ is bounded,
\item every Radon measure on $M$ induces an element of $\lipfree{M}$,
\item every Radon measure on $\ucomp{M}$ induces an element of $\bidualfree{M}$.
\end{enumerate}
\end{corollary}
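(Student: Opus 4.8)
The plan is to prove the three statements equivalent by establishing the cycle of implications (i) $\Rightarrow$ (iii) $\Rightarrow$ (ii) $\Rightarrow$ (i), reading off most of the work from Propositions \ref{pr:elements_induced_by_measure_bidual} and \ref{pr:elements_induced_by_measure} and supplying a counterexample in the last step.

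For (i) $\Rightarrow$ (iii), I would note that when $M$ is bounded every $f\in\Lip_0(M)$ satisfies $\norm{f}_\infty\leq\diam(M)\lipnorm{f}$, so its extension $\ucomp{f}$ is a bounded continuous function on the compact space $\ucomp{M}$; hence for a Radon (in particular finite) measure $\mu$ on $\ucomp{M}$ the quantity $\duality{f,\opint\mu}=\int_{\ucomp{M}}\ucomp{f}\,d\mu$ is a finite real number with $\abs{\duality{f,\opint\mu}}\leq\diam(M)\lipnorm{f}\norm{\mu}$, so $\opint\mu\in\bidualfree{M}$. Equivalently, one observes that $\ucomp{\rho}$ is bounded on $\ucomp{M}$, so $\int_{\ucomp{M}}\ucomp{\rho}\,d\abs{\mu}<\infty$ and condition (iii) of Proposition \ref{pr:elements_induced_by_measure_bidual} holds after discarding the atom at the base point.

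For (iii) $\Rightarrow$ (ii), a Radon measure $\mu$ on $M$ is, under the identification of $\meas{M}$ with the measures of $\meas{\ucomp{M}}$ concentrated on $M$, a Radon measure on $\ucomp{M}$; by (iii) it induces an element of $\bidualfree{M}$, and then Remark \ref{rm:restricted_functionals} (or the equivalence (i)$\Leftrightarrow$(ii) of Proposition \ref{pr:elements_induced_by_measure}, valid since $\mu$ is inner regular) yields $\opint\mu\in\lipfree{M}$. For (ii) $\Rightarrow$ (i) I argue by contraposition: if $M$ is unbounded, choose distinct points $x_n\in M$ with $\rho(x_n)>2^n$ and set $\mu=\sum_{n\geq 1}2^{-n}\delta_{x_n}$. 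This is a finite Borel measure on $M$ which is inner regular (for any Borel $E$, $\mu(E)$ is the supremum of the masses of its finite, hence compact, subsets $\set{x_n:n\in F,\ x_n\in E}$), therefore Radon, with $\mu(\set{0})=0$ and $\int_M\rho\,d\mu=\sum_n 2^{-n}\rho(x_n)=+\infty$. By the equivalence (ii)$\Leftrightarrow$(iv) of Proposition \ref{pr:elements_induced_by_measure} (applicable because $\mu$ is inner regular) we get $\opint\mu\notin\lipfree{M}$; in fact $\opint\mu$ is not even a well-defined functional on $\Lip_0(M)$ since $\opint\mu(\rho)=+\infty$. Hence (ii) fails.

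I do not expect any genuine obstacle here: the corollary is essentially bookkeeping on top of the two preceding propositions. The only point worth recording with care is that the discrete measure in the last step is Radon even when $M$ is not separable, which is why one verifies its inner regularity directly through finite subsets rather than appealing to a completeness–separability criterion.
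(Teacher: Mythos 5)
Your proposal is correct and follows essentially the same route as the paper: boundedness of $\ucomp{\rho}$ together with Propositions \ref{pr:elements_induced_by_measure_bidual} and \ref{pr:elements_induced_by_measure} for the forward direction, and the same discrete measure $\sum_n 2^{-n}\delta_{x_n}$ with $d(x_n,0)\geq 2^n$ for the converse. The only cosmetic difference is that you close a cycle via (iii)$\Rightarrow$(ii) using the identification of $\meas{M}$ with the measures in $\meas{\ucomp{M}}$ concentrated on $M$, whereas the paper deduces (ii) and (iii) directly from (i) and refutes both at once when $M$ is unbounded.
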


\begin{proof}
If $M$ is bounded then so are $\rho$ and $\ucomp{\rho}$, and hence $\ucomp{\rho}$ is $\mu$-integrable for every finite measure $\mu$ on $M$ or $\ucomp{M}$. On the other hand, if $M$ is unbounded then we may find points $x_n\in M$ such that $d(x_n,0)\geq 2^n$, and $\mu=\sum_n 2^{-n}\delta_{x_n}$ is a finite Radon measure such that $\int_M\rho\,d\mu=\infty$.
\end{proof}

\subsection{Uniqueness of representing measures}

A natural question is whether it is possible for different measures to induce the same functional on $\Lip_0(M)$. We can show that the representing measure must be unique (up to its content at $\set{0}$) without requiring finiteness; instead, a milder assumption of regularity is enough. This result will follow from the correspondence between the concepts of support introduced for functionals on $\Lip_0(M)$ and the usual notion of support when these functionals are represented by almost Radon measures:

\begin{proposition}
\label{pr:induced_supp_pos_bidual}
Suppose that $\phi\in\bidualfree{M}$ is induced by an almost Radon measure $\mu$ on $\ucomp{M}$. Then $\esupp{\phi}=\supp(\mu)$, and $\phi$ is positive if and only if $\mu$ is positive.
\end{proposition}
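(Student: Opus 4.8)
The plan is to prove the three assertions of Proposition \ref{pr:induced_supp_pos_bidual} more or less in the order $\esupp{\phi}\supset\supp(\mu)$, $\esupp{\phi}\subset\supp(\mu)$, and then the positivity equivalence, reducing everything to Radon measures via Lemma \ref{lm:setwise_convergence} whenever that is convenient.

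\emph{First, $\supp(\mu)\subset\esupp{\phi}$.} Let $\zeta\in\supp(\mu)$ and let $U\subset\ucomp{M}$ be an open neighborhood of $\zeta$; I may shrink $U$ so that $\ucl{U\cap M}\subset U'$ for a slightly larger open $U'$ with $0\notin\cl{U'}$ if $\zeta\neq 0$, but in fact it is cleaner to argue directly. Pick an open $V$ with $\zeta\in V\subset\cl{V}\subset U$, choose $h\in\Lip(M)$ with $0\le h\le 1$, $\ucomp{h}=1$ on $V$ and $\supp(h)\subset U\cap M$ (possible by Proposition \ref{pr:uc_separation} together with McShane), and modify $h$ to lie in $\Lip_0(M)$ by subtracting $h(0)\rho$-type corrections if $0\notin U$; when $0\in U$ one simply takes $h$ vanishing at $0$. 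Then $f=h$ (or a suitable multiple supported in $U\cap M$) satisfies $\duality{f,\phi}=\int_{\ucomp{M}}\ucomp{f}\,d\mu$. Since $\ucomp{f}\ge \chi_{V}$ after scaling and $\abs{\mu}(V)>0$ (because $\zeta\in\supp(\mu)$), by splitting $\mu$ into its positive and negative parts on the closed set $\cl{V}$ (which has finite variation, being away from $0$ or containing $0$ only harmlessly since $\mu(\{0\})=0$) one finds some $f\in\Lip_0(M)$ supported in $U\cap M$ with $\duality{f,\phi}\ne 0$. By Proposition \ref{pr:equiv_points_support_ext}, $\zeta\in\esupp{\phi}$.

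\emph{Second, $\esupp{\phi}\subset\supp(\mu)$.} By \eqref{eq:ext_support_on_M} it suffices to exhibit, for the complement, a closed set: if $\zeta\notin\supp(\mu)$ pick an open $W\ni\zeta$ with $\abs{\mu}(W)=0$; then any $f\in\Lip_0(M)$ supported in $W\cap M$ has $\ucomp{f}$ vanishing $\abs{\mu}$-a.e., so $\duality{f,\phi}=\int\ucomp{f}\,d\mu=0$, whence $\zeta\notin\esupp{\phi}$ by Proposition \ref{pr:equiv_points_support_ext}. This direction is essentially immediate.

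\emph{Third, the positivity equivalence.} If $\mu\ge 0$ then for $f\ge 0$ in $\Lip_0(M)$ we have $\ucomp{f}\ge 0$ on $\ucomp{M}$ (the continuous extension of a nonnegative function is nonnegative), so $\duality{f,\phi}=\int\ucomp{f}\,d\mu\ge 0$; hence $\phi\ge 0$. For the converse, suppose $\phi\ge 0$ but $\mu$ is not positive; then $\mu^-\ne 0$, so there is a closed set $K\subset\ucomp{M}$ with $0\notin K$ (using $\mu(\{0\})=0$) on which $\mu\restrict_K$ is Radon and $\mu^-\restrict_K\ne 0$. Take a Hahn decomposition $K=K^+\cup K^-$ for $\mu\restrict_K$; then $\mu(K^-)<0$. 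Using outer regularity of $\mu\restrict_K$ shrink to an open $U\supset K^-$ with $\ucl{U}\cap M$ away from $0$ and $\mu(U)<0$ still, and by Proposition \ref{pr:uc_separation} choose $h\in\Lip_0(M)$ with $0\le h\le 1$, $\ucomp{h}=1$ on $K^-$, $\supp(h)\subset U\cap M$; then $h\ge 0$ but $\duality{h,\phi}=\int\ucomp{h}\,d\mu$. This last integral need not be negative because $\mu$ could have large positive mass on $U\setminus K^-$; the fix is to work instead with the Radon measure $\mu_n$ from Lemma \ref{lm:setwise_convergence} and the finer decomposition—since $\mu\restrict_K$ is an honest signed Radon measure, one can choose a compact $C\subset K^-$ with $\mu(C)<0$ and then, by outer regularity of $\mu\restrict_K$ applied to $K^-\setminus C$ and inner regularity to locate mass, trap $C$ inside an open set on which the total mass of $\mu$ is still negative; concretely, pick $C$ compact with $\mu^+(K^-\setminus C)=0$ and $\mu^-(C)>0$, then an open $U\supset C$ with $\mu^+(U\cap K)<\mu^-(C)$, giving $\mu(U\cap K)<0$, and finally intersect with a neighborhood separating $C$ from $\supp(\mu)\setminus K$ so that $\mu(U)<0$. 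With $h$ as above, $\duality{h,\phi}<0$, contradicting $\phi\ge 0$.

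\emph{Main obstacle.} The routine parts are the two support inclusions; the delicate point is the ``only if'' in the positivity statement, where one must localize a bit of negative mass of $\mu$ inside an \emph{open} set on which the \emph{total} signed mass of $\mu$ is still negative, and then realize that open set by the support of a nonnegative Lipschitz function. The regularity built into ``almost Radon'' (inner and outer regularity away from the base point, via Lemma \ref{lm:setwise_convergence}) is exactly what makes this trapping argument go through; the key is to first pass to a closed piece $K$ with $0\notin K$ so that $\mu\restrict_K$ is a genuine finite signed Radon measure with a Hahn decomposition, and only afterwards to separate from the rest of $\supp(\mu)$ using Proposition \ref{pr:uc_separation}.
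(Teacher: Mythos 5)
Your second inclusion ($\esupp{\phi}\subset\supp(\mu)$) and the ``if'' half of the positivity claim are correct and agree with the paper. The trouble is concentrated in the two places where you must produce a test function detecting mass of a \emph{signed} measure, and in both your argument stops short of the construction that actually does the work.

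For $\supp(\mu)\subset\esupp{\phi}$: knowing $\abs{\mu}(V)>0$ and producing a nonnegative $f$ with $\ucomp{f}\geq\chi_V$ gives nothing, since $\int\ucomp{f}\,d\mu$ can vanish by cancellation; you acknowledge this by invoking ``splitting $\mu$ into its positive and negative parts,'' but the function that is actually needed --- equal to $+1$ on a compact set nearly exhausting the positive part, $-1$ on a compact set nearly exhausting the negative part, and $0$ outside a set $U'\cap R_n$ (with $R_n=\set{\ucomp{\rho}>2^{-n}}$) of positive \emph{finite} variation --- is never built; ``one finds some $f$'' is an assertion, not a proof. Moreover, your remark that $\cl{V}$ may contain $0$ ``harmlessly'' is wrong: an almost Radon measure can have infinite variation near the base point, so one must first locate a piece $U'\cap R_n$ of positive finite variation before any Hahn/compactness argument can start; the paper does exactly this.

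The ``only if'' half of positivity has the more serious gap. You correctly identify the obstacle (a nonnegative $h$ supported near the negative compact piece $C$ may still have $\int\ucomp{h}\,d\mu\geq 0$ because of positive mass of $\mu$ on $U\setminus K$), but the proposed fix does not repair it: shrinking $U$ and ``separating $C$ from $\supp(\mu)\setminus K$'' is impossible in general, because the positive mass of $\mu$ outside $K$ can accumulate at points of $C$ itself (take $K=\set{\zeta:\ucomp{\rho}(\zeta)\geq 1}$, $C$ on its boundary, and positive atoms of $\mu$ just inside $\set{\ucomp{\rho}<1}$ converging to a point of $C$). Controlling $\mu^+(U\cap K)$ by outer regularity says nothing about $\mu(U\setminus K)$. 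The paper's device is different and essential: take the Hahn decomposition $A^\pm$ of the whole measure $\mu$, set $B^\pm=A^\pm\cap R_n$, choose compacts $K^\pm\subset B^\pm$ with $\abs{\mu}(B^\pm\setminus K^\pm)$ small, and --- crucially --- force the test function to vanish on a neighborhood of the \emph{closed} set $K^+\cup(\ucomp{M}\setminus R_n)$, which is disjoint from the compact set $K^-$ and hence can be separated from it via Proposition \ref{pr:uc_separation}. Then the only region where the function is uncontrolled is $R_n\setminus(K^+\cup K^-)=(B^+\setminus K^+)\cup(B^-\setminus K^-)$, whose variation is small by construction, and the estimate closes. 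Without this step your inequality $\duality{h,\phi}<0$ is not established.
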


\begin{proof}
For the rest of the proof, we fix a Hahn decomposition $A^+,A^-$ of $\ucomp{M}$ associated to $\mu$. We will also consider the open sets
$$
R_n = \set{\zeta\in\ucomp{M}:\ucomp{\rho}(\zeta)>2^{-n}}
$$
for $n\in\NN$. Note that $R_n\subset R_{n+1}$ and $\bigcup_{n\in\NN}R_n=\ucomp{M}\setminus\set{0}$.

We will start with the first assertion. Let $\zeta\in\ucomp{M}$ and suppose $\zeta\notin\supp(\mu)$. Then there is an open neighborhood $U$ of $\zeta$ such that $\abs{\mu}(U)=0$; by passing to a smaller neighborhood, we may assume that $\abs{\mu}(\ucl{U})=0$. If $f\in\Lip_0(M)$ is supported on $U\cap M$, then $\ucomp{f}$ is supported on $\ucl{U}$ and $\duality{f,\phi}=\int_{\ucl{U}}\ucomp{f}\,d\mu=0$. Thus $\zeta\notin\esupp{\phi}$ by Proposition \ref{pr:equiv_points_support_ext}.

Now suppose $\zeta\in\supp(\mu)$ and let $U$ be a neighborhood of $\zeta$. We will construct a function $f\in\Lip_0(M)$ such that $\supp(f)\subset U\cap M$ and $\duality{f,\phi}>0$ and this will show that $\zeta\in\esupp{\phi}$, again by Proposition \ref{pr:equiv_points_support_ext}. Let $U'$ be a neighborhood of $\zeta$ such that $\ucl{U'}\subset U$, then $\abs{\mu}(U'\setminus\set{0})>0$ because $\mu(\set{0})=0$. Writing $U'\setminus\set{0}=\bigcup_{n\in\NN}(U'\cap R_n)$ as a nested union, we may find an $n\in\NN$ such that $\abs{\mu}(U'')>0$ where $U''=U'\cap R_n$. Since $0\notin\ucl{R_n}$, the measure $\mu\restrict_{\ucl{R_n}}$, hence also $\mu\restrict_{U''}$, is Radon by assumption. Let $B^+ = U''\cap A^+$, $B^- = U''\cap A^-$
and, using the inner regularity of $\mu\restrict_{U''}$, choose compact sets $K^+\subset B^+$, $K^-\subset B^-$ such that $\abs{\mu}(B^+\setminus K^+)$ and $\abs{\mu}(B^-\setminus K^-)$ are less than $\frac{1}{4}\abs{\mu}(U'')$. The compact sets $K^+$, $K^-$ and $\ucomp{M}\setminus U''$ are pairwise disjoint, hence by Proposition \ref{pr:uc_separation} they have pairwise disjoint open neighborhoods $V^+$, $V^-$, $W$ whose intersections with $M$ are at a positive distance from each other. Therefore there exists $f\in\Lip_0(M)$ such that $\abs{f}\leq 1$, $f=1$ on $V^+\cap M$, $f=-1$ on $V^-\cap M$, and $f=0$ on $W\cap M$. Then $\supp(f)\subset \cl{U''\cap M}\subset U\cap M$ and $\ucomp{f}$ vanishes outside of $U''$, and we have
$$
\duality{f,\phi} = \int_{\ucomp{M}}\ucomp{f}\,d\mu = \int_{U''}\ucomp{f}\,d\mu = \abs{\mu}(K^+)+\abs{\mu}(K^-)+\int_{U''\setminus (K^+\cup K^-)}\ucomp{f}\,d\mu
$$
and therefore
\begin{align*}
\abs{\mu}(U'')-\duality{f,\phi} &= \abs{\mu}(B^+) + \abs{\mu}(B^-) - \duality{f,\phi} \\
&\leq \abs{\mu}(B^+\setminus K^+) + \abs{\mu}(B^-\setminus K^-) + \abs{\mu}(U''\setminus (K^+\cup K^-)) \\
&= 2\abs{\mu}(B^+\setminus K^+) + 2\abs{\mu}(B^-\setminus K^-) < \abs{\mu}(U''),
\end{align*}
thus $\duality{f,\phi}>0$ as required.

Now we proceed with the second assertion. If $\mu$ is positive then $\phi$ is obviously positive, too. Assume now that $\mu$ is not positive, that is $\abs{\mu}(A^-)>0$. Then, as above, there exists $n\in\NN$ such that $0<\abs{\mu}(A^-\cap R_n)<\infty$ and that $\mu\restrict_{R_n}$ is Radon. Let $B^+=A^+\cap R_n$, $B^-=A^-\cap R_n$. By inner regularity there are compact sets $K^+\subset B^+$, $K^-\subset B^-$ such that $\abs{\mu}(B^+\setminus K^+)$ and $\abs{\mu}(B^-\setminus K^-)$ are less than $\frac{1}{3}\abs{\mu}(B^-)$. Since $K^+\cup (\ucomp{M}\setminus R_n)$ and $K^-$ are disjoint and compact, by Proposition \ref{pr:uc_separation} they have disjoint open neighborhoods $V^+$ and $V^-$ such that $d(V^+\cap M, V^-\cap M)>0$, and there exists $f\in\Lip_0(M)$ such that $0\leq f\leq 1$, $f=0$ on $V^+\cap M$, and $f=1$ on $V^-\cap M$. Thus $\ucomp{f}$ vanishes outside $R_n$ and
\begin{align*}
\duality{f,\phi} = \int_{\ucomp{M}}\ucomp{f}\,d\mu &= \mu(K^-)+\int_{R_n\setminus (K^+\cup K^-)}\ucomp{f}\,d\mu \\
&\leq -\abs{\mu}(K^-)+\abs{\mu}(B^+\setminus K^+)+\abs{\mu}(B^-\setminus K^-) \\
&= -\abs{\mu}(B^-)+\abs{\mu}(B^+\setminus K^+)+2\abs{\mu}(B^-\setminus K^-) < 0
\end{align*}
and, since $f\geq 0$, this implies that $\phi$ is not positive.
\end{proof}

Let us remark that the same argument is valid for $\phi\in\lipfree{M}$ and measures on $M$, with the following adjustments:
\begin{itemize}
\item Proposition \ref{pr:equiv_points_support} is used instead of Proposition \ref{pr:equiv_points_support_ext},
\item Proposition \ref{pr:uc_separation} is replaced by the fact that two disjoint closed subsets $A$, $B$ of $M$, at least one of them compact, can also be separated by neighborhoods at a positive distance of each other.
\end{itemize}
Thus we also have:

\begin{proposition}
\label{pr:induced_supp_pos}
Suppose that $\phi\in\lipfree{M}$ is induced by an almost Radon measure $\mu$ on $M$. Then $\supp(\phi)=\supp(\mu)$, and $\phi$ is positive if and only if $\mu$ is positive.
\end{proposition}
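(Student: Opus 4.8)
The plan is to carry over the proof of Proposition~\ref{pr:induced_supp_pos_bidual} essentially word for word, using throughout the identification of $\mu$ with a Borel measure concentrated on $M$, and making the two substitutions flagged after that proof: Proposition~\ref{pr:equiv_points_support} replaces Proposition~\ref{pr:equiv_points_support_ext}, and the separation of two disjoint closed subsets of $M$, at least one of them compact, by open sets lying at positive distance replaces Proposition~\ref{pr:uc_separation}. Concretely, I would fix a Hahn decomposition $M=A^+\cup A^-$ for $\mu$ and set $R_n=\set{x\in M:d(x,0)>2^{-n}}$, so that $0\notin\cl{R_n}$, $\mu\restrict_{\cl{R_n}}$ (hence $\mu\restrict_E$ for every Borel $E\subset\cl{R_n}$) is Radon, and $\bigcup_n R_n=M\setminus\set{0}$.

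For the inclusion $\supp(\phi)\subset\supp(\mu)$: if $x\notin\supp(\mu)$, shrink a neighbourhood of $x$ to an open $U$ with $\abs{\mu}(\cl{U})=0$; then $\duality{f,\phi}=\int_{\cl U}f\,d\mu=0$ for every $f\in\Lip_0(M)$ with $\supp(f)\subset U$, so $x\notin\supp(\phi)$ by Proposition~\ref{pr:equiv_points_support}. For the reverse inclusion: given $x\in\supp(\mu)$ and a neighbourhood $U$, pick $U'$ with $\cl{U'}\subset U$; since $\mu(\set{0})=0$ we get $\abs{\mu}(U'\setminus\set{0})>0$, so some $U''=U'\cap R_n$ has $\abs{\mu}(U'')>0$ with $\mu\restrict_{U''}$ Radon. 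Split $U''=B^+\cup B^-$ along $A^\pm$, use inner regularity to extract compacts $K^\pm\subset B^\pm$ with $\abs{\mu}(B^\pm\setminus K^\pm)<\tfrac14\abs{\mu}(U'')$, separate the disjoint compacta $K^+,K^-$ from each other and from the closed set $M\setminus U''$ by sets at positive distance, and take $f\in\Lip_0(M)$ with $\abs f\le1$, $f\equiv1$ near $K^+$, $f\equiv-1$ near $K^-$, and $\supp(f)\subset U''\subset U$. The same arithmetic as in Proposition~\ref{pr:induced_supp_pos_bidual} gives $\duality{f,\phi}>0$, hence $x\in\supp(\phi)$. The positivity equivalence is identical: $\mu\ge0$ trivially forces $\phi\ge0$; conversely, if $\abs{\mu}(A^-)>0$, pick $n$ with $0<\abs{\mu}(A^-\cap R_n)<\infty$, extract compacts $K^\pm\subset B^\pm:=A^\pm\cap R_n$ missing less than $\tfrac13\abs{\mu}(B^-)$ of the mass, separate the compact set $K^-$ from the closed set $K^+\cup(M\setminus R_n)$, and produce $0\le f\le1$ vanishing off $R_n$, $f\equiv0$ near $K^+$ and $f\equiv1$ near $K^-$; then $\duality{f,\phi}<0$ while $f\ge0$.

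There is no substantial new difficulty, since everything of weight already lives in Proposition~\ref{pr:induced_supp_pos_bidual}. The one point to watch is that the separation steps now take place inside the noncompact space $M$: in the bidual argument the auxiliary sets $\ucomp{M}\setminus U''$ and $K^+\cup(\ucomp{M}\setminus R_n)$ are \emph{compact}, whereas here $M\setminus U''$ and $M\setminus R_n$ are merely closed, so one must use the version of metric separation valid when only one of the two disjoint closed sets is compact, and must check that the Urysohn-type function really has $\supp(f)\subset U$ --- this is automatic because $0\notin R_n$, so $f$ vanishes near $0$ (in particular $f\in\Lip_0(M)$) and $\supp(f)\subset\cl{U''}\subset U$.

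An alternative, shorter route avoids rerunning the argument: regard $\mu$ as the measure $\tilde\mu$ on $\ucomp{M}$ obtained by extension, which is again almost Radon (for closed $K\subset\ucomp{M}$ with $0\notin K$ one has $\tilde\mu\restrict_K=\tilde\mu\restrict_{K\cap M}$, which is finite and inner regular because compact subsets of $M$ stay compact in $\ucomp{M}$), with $\opint\tilde\mu=\phi$. Proposition~\ref{pr:induced_supp_pos_bidual} then gives $\esupp{\phi}=\supp(\tilde\mu)$ and $\phi\ge0\iff\tilde\mu\ge0$; since $\phi\in\lipfree{M}$, Corollary~\ref{cr:extended_support_fm} yields $\supp(\phi)=\esupp{\phi}\cap M$, while clearly $\supp(\tilde\mu)\cap M=\supp(\mu)$ and $\tilde\mu\ge0\iff\mu\ge0$. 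Combining these recovers the statement.
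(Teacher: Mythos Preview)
Your primary approach is correct and is exactly what the paper does: the paper's proof of Proposition~\ref{pr:induced_supp_pos} consists precisely of the remark that the argument for Proposition~\ref{pr:induced_supp_pos_bidual} carries over with the two substitutions you identify (Proposition~\ref{pr:equiv_points_support} in place of Proposition~\ref{pr:equiv_points_support_ext}, and metric separation of a compact from a disjoint closed set in place of Proposition~\ref{pr:uc_separation}). Your careful handling of the one delicate point --- that in $M$ the complementary sets $M\setminus U''$ and $M\setminus R_n$ are merely closed rather than compact, so one must invoke the compact-versus-closed separation --- matches the paper's intent.

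Your alternative route via extension to $\ucomp{M}$ is also valid and is not the path the paper takes here. It is shorter and more conceptual: once you verify that the extension $\tilde\mu$ remains almost Radon on $\ucomp{M}$ (which you do correctly, using that $K\cap M$ is closed in $M$ with $0\notin K\cap M$ and that compacta in $M$ remain compact in $\ucomp{M}$), the result follows immediately from Proposition~\ref{pr:induced_supp_pos_bidual} and Corollary~\ref{cr:extended_support_fm}. The paper likely chose the direct rerun to keep Proposition~\ref{pr:induced_supp_pos} self-contained within the $M$-setting, but your alternative has the advantage of making explicit that the $\lipfree{M}$ statement is a genuine specialization of the bidual one rather than a parallel argument.
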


Now, using the supports, we can prove the uniqueness of the measure inducing a functional on $\Lip_0(M)$ as long as it is inner regular.

\begin{proposition}
\label{pr:measure_uniqueness}
Let $\mu,\lambda$ be almost Radon measures on $\ucomp{M}$ (resp. on $M$) inducing the same functional $\opint\mu=\opint\lambda$ in $\bidualfree{M}$ (resp. in  $\lipfree{M}$). Then $\mu=\lambda$.
\end{proposition}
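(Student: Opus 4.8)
The plan is to reduce the statement to the case of \emph{finite} Radon measures, for which it follows at once from the correspondence between the support of a functional and the support of a representing measure established in Proposition \ref{pr:induced_supp_pos_bidual}.

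First I would truncate away from the base point using Lemma \ref{lm:setwise_convergence}. Set $d\mu_n=\ucomp{G_{-n}}\,d\mu$ and $d\lambda_n=\ucomp{G_{-n}}\,d\lambda$ for $n\in\NN$. By part (a) of that lemma both $\mu_n$ and $\lambda_n$ are (finite) Radon measures, and $\mu_n(\set{0})=\lambda_n(\set{0})=0$ since $\ucomp{G_{-n}}(0)=0$. Since $\opint\mu=\opint\lambda\in\bidualfree{M}$ by hypothesis, part (c) of the lemma yields
$$
\opint\mu_n=\opint\mu\circ\wop{G_{-n}}=\opint\lambda\circ\wop{G_{-n}}=\opint\lambda_n .
$$
Consequently $\nu_n:=\mu_n-\lambda_n$ belongs to $\meas{\ucomp{M}}$, i.e.\ it is a finite signed Radon measure, and $\nu_n(\set{0})=0$; in particular $\nu_n$ is almost Radon, and it induces the zero functional: $\opint{\nu_n}=\opint\mu_n-\opint\lambda_n=0$.

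Next I would apply Proposition \ref{pr:induced_supp_pos_bidual} to the functional $\phi=0$ and the almost Radon measure $\nu_n$, obtaining $\supp(\nu_n)=\esupp{0}$. Since the zero functional trivially avoids infinity, Corollary \ref{cr:extended_support_empty} gives $\esupp{0}=\varnothing$, so $\supp(\nu_n)=\varnothing$. As $\nu_n$ is almost Radon, this forces $\nu_n=0$, that is, $\mu_n=\lambda_n$ for every $n\in\NN$. Finally, by part (b) of Lemma \ref{lm:setwise_convergence} we get $\mu(E)=\lim_n\mu_n(E)=\lim_n\lambda_n(E)=\lambda(E)$ for every Borel set $E\subset\ucomp{M}$, i.e.\ $\mu=\lambda$. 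The case of almost Radon measures on $M$ follows either by regarding them as measures on $\ucomp{M}$ that are concentrated on $M$ (which are then almost Radon on $\ucomp{M}$, since a compact subset of $M$ stays compact in $\ucomp{M}$), or by running the same argument with Proposition \ref{pr:induced_supp_pos} in place of Proposition \ref{pr:induced_supp_pos_bidual}.

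The main obstacle --- and precisely the reason the truncation step is needed --- is that an almost Radon measure need not be finite: it is only $\sigma$-finite away from the base point. One therefore cannot directly form the difference $\mu-\lambda$ (an $\infty-\infty$ indeterminacy may appear at $0$) nor invoke the Riesz-type uniqueness available for finite measures. The weights $\ucomp{G_{-n}}$ supply the canonical finite Radon approximations $\mu_n,\lambda_n$ that remain compatible with the functional calculus via $\opint\mu_n=\opint\mu\circ\wop{G_{-n}}$, and on these the finite theory --- in the guise of Proposition \ref{pr:induced_supp_pos_bidual} --- can be brought to bear.
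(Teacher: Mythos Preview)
Your proof is correct and follows essentially the same approach as the paper: truncate via $\ucomp{G_{-n}}$ using Lemma \ref{lm:setwise_convergence}, apply Proposition \ref{pr:induced_supp_pos_bidual} to the finite difference $\mu_n-\lambda_n$ to conclude its support is empty, and pass to the setwise limit. The only cosmetic difference is that you invoke Corollary \ref{cr:extended_support_empty} to get $\esupp{0}=\varnothing$, whereas this is immediate from Definition \ref{def:extended_support} (take $K=\varnothing$).
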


\begin{proof}
We will prove the statement for functionals in $\bidualfree{M}$ and measures on $\ucomp{M}$, the predual case would follow the same lines with $\lipfree{M}$ and $M$ and corresponding references instead.

For $n\in\NN$, let $\mu_n,\lambda_n$ be the measures given by $d\mu_n=\ucomp{G_{-n}}\,d\mu$ and $d\lambda_n=\ucomp{G_{-n}}\,d\lambda$. By Lemma \ref{lm:setwise_convergence}, they are Radon measures on $\ucomp{M}$ and
$$\opint\mu_n=\opint\mu\circ\wop{G_{-n}}=\opint\lambda\circ\wop{G_{-n}}=\opint\lambda_n.$$
Therefore, from the linearity of the operator $\opint$ on finite measures and from Proposition \ref{pr:induced_supp_pos_bidual} we get that
$$
\supp(\mu_n-\lambda_n) = \esupp{\opint(\mu_n-\lambda_n)}=\esupp{\opint\mu_n-\opint\lambda_n}=\esupp{0} = \varnothing ,
$$
which implies that $\mu_n=\lambda_n$ by Radonness. An application of Lemma \ref{lm:setwise_convergence}(b) now yields $\mu=\lambda$ as claimed.
\end{proof}

As we noted in Remark \ref{rm:restricted_functionals}, if $\mu$ is a Radon measure on $\ucomp{M}$ that induces a functional in $\bidualfree{M}$ and $\mu$ is concentrated on $M$, then the induced functional is actually in $\lipfree{M}$. It is natural to ask whether the opposite implication also holds. That is, if $\opint\mu\in\lipfree{M}$, must $\mu$ be concentrated on $M$?

The answer to this question is negative when $\mu$ is defined on a compactification $X$ that is strictly larger than $\ucomp{M}$. For instance, take two distinct elements $\zeta,\xi\in X$ that cannot be separated by Lipschitz functions on $M$, then $\delta_\zeta-\delta_\xi$ induces the element $0$ of $\lipfree{M}$ (see Example \ref{ex:lip_dont_separate} for a particular case of this). However, the answer is positive when we consider the uniform compactification. This result can be viewed as an extension of \cite[Proposition 2.1.6]{Weaver1}, which only covers Dirac measures. In order to prove it, we will require the following property that is established as a part of the proof of that result (or, more indirectly, in \cite[Lemma 2.6]{GPPR_2018}); we prefer to state it independently due to its usefulness:

\begin{lemma}
\label{lm:separation_from_M}
Let $M$ be a complete metric space. Then, for every $\zeta\in\ucomp{M}\setminus M$ there is $r>0$ such that, for every $p\in M$, every net in $M$ that converges to $\zeta$ is eventually disjoint from $B(p,r)$.
\end{lemma}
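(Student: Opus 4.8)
\emph{Proof plan.}

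The plan is to argue by contradiction after recasting the conclusion topologically. Since $M$ is densely embedded in $\ucomp{M}$, each ball $B(p,r)$ is the trace on $M$ of an open subset of $\ucomp{M}$; consequently, for fixed $p$ and $r$, the assertion ``every net in $M$ converging to $\zeta$ is eventually disjoint from $B(p,r)$'' holds if and only if $\zeta\notin\ucl{B(p,r)}$. Indeed, if $\zeta\notin\ucl{B(p,r)}$ then $\ucomp{M}\setminus\ucl{B(p,r)}$ is an open neighbourhood of $\zeta$, so every net converging to $\zeta$ is eventually inside it and hence eventually disjoint from $B(p,r)$; conversely, if $\zeta\in\ucl{B(p,r)}$ then there is a net in $B(p,r)\subseteq M$ converging to $\zeta$, which never leaves $B(p,r)$. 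So the lemma is equivalent to the statement that there is $r>0$ with $\zeta\notin\ucl{B(p,r)}$ for \emph{all} $p\in M$. I would suppose this fails and fix, for each $n\in\NN$, a point $p_n\in M$ with $\zeta\in\ucl{B(p_n,2^{-n})}$.

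The crucial observation is then that $(p_n)$ is a Cauchy sequence. Indeed, $\zeta$ lies in $\ucl{B(p_n,2^{-n})}\cap\ucl{B(p_m,2^{-m})}$, so these closures are not disjoint, and Proposition~\ref{pr:uc_definition}\ref{uc:separation} forces $d(B(p_n,2^{-n}),B(p_m,2^{-m}))=0$. Picking $a\in B(p_n,2^{-n})$ and $b\in B(p_m,2^{-m})$ with $d(a,b)<\varepsilon$ and using the triangle inequality yields $d(p_n,p_m)<2^{-n}+2^{-m}+\varepsilon$; letting $\varepsilon\to0$ gives $d(p_n,p_m)\le 2^{-n}+2^{-m}$. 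By completeness of $M$, the sequence $(p_n)$ converges to some $p\in M$.

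To finish, I would show that $\zeta=p$, contradicting $\zeta\in\ucomp{M}\setminus M$. Assume $\zeta\ne p$. As $\ucomp{M}$ is compact Hausdorff, hence regular, there is an open set $U\ni\zeta$ in $\ucomp{M}$ with $p\notin\ucl{U}$; then $(\ucomp{M}\setminus\ucl{U})\cap M$ is an open neighbourhood of $p$ in $M$, so it contains a ball $B(p,\eta)$, whence $U\cap M\subseteq M\setminus B(p,\eta)$. Choosing $n$ large enough that $d(p_n,p)+2^{-n}<\eta/2$, we get $B(p_n,2^{-n})\subseteq B(p,\eta/2)$ and therefore
$$
d\bigl(B(p_n,2^{-n}),\,U\cap M\bigr) \ \ge\ d\bigl(B(p,\eta/2),\,M\setminus B(p,\eta)\bigr) \ \ge\ \eta/2 \ >\ 0 .
$$
By Proposition~\ref{pr:uc_definition}\ref{uc:separation} the closures $\ucl{B(p_n,2^{-n})}$ and $\ucl{U\cap M}$ are disjoint. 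This is the desired contradiction, since $\zeta$ belongs to the first closure by construction and to the second because $\zeta\in U$ and $M$ is dense in $\ucomp{M}$.

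The one step that requires care is the last paragraph: it is not enough merely to separate $\zeta$ and $p$ by disjoint open sets, since disjoint open subsets of $\ucomp{M}$ may have overlapping closures. The separation has to be routed through the metric criterion of Proposition~\ref{pr:uc_definition}\ref{uc:separation}, which is precisely why one shrinks the radius from $\eta$ to $\eta/2$. The remaining ingredients (density of $M$ in $\ucomp{M}$, the triangle inequality, and completeness of $M$, without which the limit $p$ need not lie in $M$) enter only routinely.
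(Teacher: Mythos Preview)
Your proof is correct. The paper does not actually prove this lemma; it merely states it and cites \cite[Proposition 2.1.6]{Weaver1} and \cite[Lemma 2.6]{GPPR_2018} for the argument. Your approach --- reformulating the conclusion as $\zeta\notin\ucl{B(p,r)}$, extracting a Cauchy sequence via Proposition~\ref{pr:uc_definition}\ref{uc:separation}, and then using completeness together with the same separation property to derive a contradiction --- is essentially the standard argument and matches the spirit of the cited proofs. All steps are sound, including the final one where you correctly route the separation through the metric criterion rather than relying on mere topological disjointness.
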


\begin{theorem}
\label{th:normal_measure}
Let $\mu$ be an almost Radon measure on $\ucomp{M}$ that induces an element of $\bidualfree{M}$. Then the following are equivalent:
\begin{enumerate}[label={\upshape{(\roman*)}}]
\item $\opint\mu\in\lipfree{M}$,
\item $\opint\mu=\opint(\mu\restrict_M)$,
\item $\mu$ is concentrated on $M$.
\end{enumerate}
\end{theorem}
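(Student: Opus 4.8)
The plan is to prove the cycle (iii)$\Rightarrow$(ii)$\Rightarrow$(i)$\Rightarrow$(iii), with essentially all the work in the last implication.

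(iii)$\Rightarrow$(ii) is immediate: if $\mu$ is concentrated on $M$ then $\mu=\mu\restrict_M$. For (ii)$\Rightarrow$(i), note that an almost Radon measure is inner regular (as observed right after Definition~\ref{def:almost radon}) and $M$ is a Borel subset of $\ucomp{M}$, so Remark~\ref{rm:restricted_functionals} gives $\opint(\mu\restrict_M)\in\lipfree{M}$; combined with (ii) this is (i). It thus remains to prove (i)$\Rightarrow$(iii), and I would do this by contraposition: assuming $\abs{\mu}(\ucomp{M}\setminus M)>0$, show that $\opint\mu$ fails to be weak$^\ast$ continuous, hence $\opint\mu\notin\lipfree{M}$.

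First I would reduce to finite Radon measures. With $\mu_n$ as in Lemma~\ref{lm:setwise_convergence}, each $\mu_n$ is finite and Radon, $\opint\mu_n=\opint\mu\circ\wop{G_{-n}}$ lies in $\lipfree{M}$ whenever $\opint\mu$ does (weighting preserves $\lipfree{M}$), and $\mu_n\to\mu$ setwise; running the same argument on the Jordan pieces (sharing a single Hahn set for $\mu$) also gives $\abs{\mu_n}\to\abs{\mu}$ setwise, so $\abs{\mu_n}(\ucomp{M}\setminus M)>0$ for some $n$. Hence it suffices to treat a finite Radon $\nu$ with $\opint\nu\in\lipfree{M}$ and $\abs{\nu}(\ucomp{M}\setminus M)>0$. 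Writing $\nu=\nu\restrict_M+\nu\restrict_{\ucomp{M}\setminus M}$ and applying Remark~\ref{rm:restricted_functionals} to the first summand, the second summand $\sigma:=\nu\restrict_{\ucomp{M}\setminus M}$ also induces an element of $\lipfree{M}$; moreover $\sigma\neq0$, $\sigma$ is concentrated on $\rcomp{M}$ by Proposition~\ref{pr:elements_induced_by_measure_bidual}, and refining $F$ via a Hahn decomposition and inner regularity we may assume (after discarding some of the mass) that $\sigma\restrict_F\geq 0$ and $\sigma(F)=c_0>0$ for some compact $F\subset\rcomp{M}\setminus M$ with $0\notin F$. So the goal reduces to: \emph{a nonzero finite Radon measure concentrated on $\ucomp{M}\setminus M$ cannot induce an element of $\lipfree{M}$.}

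One case of this is cheap. By Proposition~\ref{pr:induced_supp_pos_bidual} we have $\esupp{\opint\sigma}=\supp(\sigma)$, and if $\opint\sigma\in\lipfree{M}$ then $\esupp{\opint\sigma}\cap M=\supp(\opint\sigma)$ by Corollary~\ref{cr:extended_support_fm}; so if $\supp(\sigma)\cap M=\varnothing$ then $\supp(\opint\sigma)=\varnothing$, which forces $\opint\sigma=0$ and then $\sigma=0$ by Proposition~\ref{pr:measure_uniqueness} — a contradiction. (This already handles, for instance, locally compact $M$, where $\ucomp{M}\setminus M$ is closed.) In the remaining case there is $x_0\in\supp(\sigma)\cap M$, and since $\sigma(M)=0$, every neighbourhood $U$ of $x_0$ in $\ucomp{M}$ has $\abs{\sigma}\bigl(U\cap(\ucomp{M}\setminus M)\bigr)=\abs{\sigma}(U)>0$: the mass of $\sigma$ crowds against $M$ at $x_0$ at arbitrarily small scales. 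Here I would aim for the standard non–weak$^\ast$–continuity contradiction: exhibit a bounded net $(f_\alpha)$ in $\Lip_0(M)$ converging pointwise on $M$ (hence weak$^\ast$) for which $\duality{f_\alpha,\opint\sigma}=\int_{\ucomp{M}}\ucomp{f_\alpha}\,d\sigma$ does not converge accordingly. Each $f_\alpha$ is built from Urysohn-type bumps carried by balls $B(x,\epsilon)$ of $M$ whose $\ucomp{M}$-closures avoid $F$ (and the rest of $\supp(\sigma)$ off a small neighbourhood of $F$), which together crowd the remainder points near $x_0$ that carry a positive amount of $\sigma$-mass, and is forced to vanish on a prescribed finite subset of $M$ so that $f_\alpha\to 0$ pointwise; such bumps can be placed by Lemma~\ref{lm:separation_from_M} and Proposition~\ref{pr:uc_separation}, which keep points of $M$ a definite distance from any fixed point of $\ucomp{M}\setminus M$. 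The main obstacle — and the delicate point — is that the separation radius provided by Lemma~\ref{lm:separation_from_M} for a point $\zeta$ is generally not bounded below as $\zeta$ ranges over a compact $F$ (one can construct $M$ and $F$ for which it tends to $0$), so the bumps cannot all be taken of comparable size; they must instead be organised in a telescoping family — partial sums $g_1,g_2,\dots$ converging in $\Lip_0(M)$ to $g_\infty=\sum_n f_n$, using that a finite union of $F$-avoiding bumps still has $\ucomp{M}$-closure disjoint from $F$ while the closure of the whole infinite union captures at least $c_0/2$ of the $\sigma$-mass on $F$ — exactly as in the classical argument that $\delta(\zeta)\notin\lipfree{M}$ for $\zeta\in\ucomp{M}\setminus M$ \cite[Proposition 2.1.6]{Weaver1}. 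Keeping a uniform Lipschitz bound while the bump radii shrink is the crux; once it is in place one gets $\liminf_\alpha\abs{\duality{f_\alpha,\opint\sigma}}>0$ against $f_\alpha\to 0$ weak$^\ast$, contradicting $\opint\sigma\in\lipfree{M}$. This gives $\sigma=0$, hence $\mu$ is concentrated on $M$; Proposition~\ref{pr:measure_uniqueness} together with Corollary~\ref{cr:extended_support_fm} then also records that the representing measure is the expected restriction $\mu\restrict_M$.
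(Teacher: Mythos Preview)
Your cycle (iii)$\Rightarrow$(ii)$\Rightarrow$(i) and the reduction of (i)$\Rightarrow$(iii) to the finite Radon case via Lemma~\ref{lm:setwise_convergence} match the paper exactly. Your further reduction to $\sigma=\nu\restrict_{\ucomp{M}\setminus M}$ and the ``cheap case'' via supports are not in the paper but are perfectly sound.

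The genuine gap is in the ``hard case''. You correctly identify the obstacle --- the separation radius from Lemma~\ref{lm:separation_from_M} is not bounded below on an arbitrary compact $F\subset\ucomp{M}\setminus M$ --- but your telescoping proposal does not resolve it: you yourself flag ``keeping a uniform Lipschitz bound while the bump radii shrink'' as the crux and leave it unaddressed. If the radii shrink to zero the Lipschitz constants blow up, and there is no mechanism in your sketch preventing this; the reference to \cite[Proposition~2.1.6]{Weaver1} does not help, since that argument treats a \emph{single} $\zeta$ with a single fixed radius.

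The paper's key idea is to reverse the order of your two choices. Rather than first picking a compact $F$ by inner regularity and then struggling with varying radii on $F$, define $\sigma(\zeta)=\inf_{p\in M}\ucomp{\rho_p}(\zeta)$ and stratify $\ucomp{M}\setminus M=\bigcup_n\mathcal{K}_n$ where $\mathcal{K}_n=\{\sigma\geq 1/n\}$; Lemma~\ref{lm:separation_from_M} says exactly that $\sigma>0$ off $M$. Now choose $n$ so that $\abs{\mu}(B^\pm\setminus\mathcal{K}_n)<\varepsilon$ (where $B^\pm=A^\pm\setminus M$ for a Hahn decomposition $A^\pm$), and \emph{only then} use inner regularity to pick compact $K^\pm\subset B^\pm\cap\mathcal{K}_n$. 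On $K^+\cup K^-$ the radius is uniformly $\geq 1/n$, so a single $r>0$ works. The net is then simply
\[
f_F(x)=1\wedge\tfrac{1}{r}\,d\bigl(x,\,F\cup(U^-\cap M)\bigr),
\]
indexed by finite $F\subset M$ containing $0$ (with $U^\pm$ separating neighbourhoods of $K^\pm$ from Proposition~\ref{pr:uc_separation}). These all have $\lipnorm{f_F}\leq 1/r$, converge pointwise to $0$, and satisfy $\ucomp{f_F}=1$ on $K^+$ and $\ucomp{f_F}=0$ on $K^-$; weak$^\ast$ continuity of $\opint\mu$ and $\opint(\mu\restrict_M)$ then forces $\mu(K^+)$ small, hence $\mu(B^+)<7\varepsilon$, and symmetrically for $B^-$. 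No telescoping, no shrinking radii, no case split on $\supp(\sigma)\cap M$ is needed.
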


\begin{proof}
(iii)$\Rightarrow$(ii) is obvious and (ii)$\Rightarrow$(i) is contained in Remark \ref{rm:restricted_functionals}.

It is enough to prove the implication (i)$\Rightarrow$(iii) when $\mu$ is Radon. Indeed, if (iii) fails then $\abs{\mu}(\ucomp{M}\setminus M)>0$ and Lemma \ref{lm:setwise_convergence} implies that $\abs{\mu_n}(\ucomp{M}\setminus M)>0$ for some $n\in\NN$, where $d\mu_n=\ucomp{G_{-n}}\,d\mu$. So $\opint\mu_n=\opint\mu\circ\wop{G_{-n}}\in\lipfree{M}$ but $\mu_n$ is not concentrated on $M$. That is, (i)$\Rightarrow$(iii) fails for the Radon measure $\mu_n$.

Hence, for the rest of the proof we will assume (i) and suppose that $\mu$ is Radon. Let $A^+$, $A^-$ be a Hahn decomposition of $\ucomp{M}$ associated to $\mu$, denote $B^\pm=A^\pm\setminus M$, and fix $\varepsilon>0$. We will prove that $\abs{\mu}(B^\pm)<7\varepsilon$ and this will imply (iii).

For every $p\in M$ consider the 1-Lipschitz function $\rho_p$ given by $\rho_p(x)=d(x,p)$ for $x\in M$, and define
$$
\sigma(\zeta)=\inf\set{\ucomp{\rho_p}(\zeta):p\in M}
$$
for every $\zeta\in\ucomp{M}$, where each $\rho_p$ is extended to a continuous function with values in $[0,+\infty]$. It is clear that $\sigma(\zeta)=0$ if $\zeta\in M$, and Lemma \ref{lm:separation_from_M} asserts that $\sigma(\zeta)>0$ when $\zeta\notin M$. Thus $\ucomp{M}\setminus M=\bigcup_{n\in\NN}\mathcal{K}_n$ where
$$
\mathcal{K}_n=\set{\zeta\in\ucomp{M}:\sigma(\zeta)\geq\tfrac{1}{n}} .
$$
Therefore $\mu(B^\pm)=\lim_{n\rightarrow\infty}\mu(B^\pm\cap\mathcal{K}_n)$.
Choose $n\in\NN$ such that $\abs{\mu}(B^\pm\setminus\mathcal{K}_n)<\varepsilon$. By inner regularity of $\mu$ we may choose compact sets $K^\pm\subset B^\pm\cap\mathcal{K}_n$ such that $\abs{\mu}((B^\pm\cap\mathcal{K}_n)\setminus K^\pm)<\varepsilon$, and hence $\abs{\mu}(B^\pm\setminus K^\pm)<2\varepsilon$. Since $K^+$ and $K^-$ are disjoint, by Proposition \ref{pr:uc_separation} there are disjoint open neighborhoods $U^\pm$ of $K^\pm$ such that $d(U^+\cap M,U^-\cap M)>0$. Denote
$$
r=\frac{1}{2}\min\set{\frac{1}{n},d(U^+\cap M,U^-\cap M)} .
$$

Now let $\mathfrak{F}$ be the family of all finite subsets of $M$ containing $0$. For every $F\in\mathfrak{F}$, let
\begin{equation}
\label{eq:net_f_F}
f_F(x)=1\wedge\frac{1}{r}d(x,F\cup(U^-\cap M))
\end{equation}
for $x\in M$. Then $f_F\in\Lip_0(M)$ is such that $0\leq f_F\leq 1$, $\lipnorm{f_F}\leq\frac{1}{r}$, and $f_F=0$ on $U^-\cap M$, hence $\ucomp{f_F}=0$ on $K^-$.
We claim that $\ucomp{f_F}=1$ on $K^+$. Indeed, let $\zeta\in K^+$ and choose a net $(x_i)$ in $M$ that converges to $\zeta$. We may assume that $x_i\in U^+$ for all $i$ by passing to a subnet, so that $d(x_i,U^-\cap M)>r$. Since $\sigma(\zeta)\geq\frac{1}{n}>r$ (because $\zeta\in\mathcal{K}_n$) and $F$ is finite, we may pass to a further subnet such that $d(x_i,p)=\rho_p(x_i)>r$ for every $p\in F$. Therefore $d(x_i,F\cup(U^-\cap M))>r$ and $f_F(x_i)=1$ for all $i$, proving the claim.

Consider the net $(f_F)_{F\in\mathfrak{F}}$ in $\Lip_0(M)$, where $\mathfrak{F}$ is directed by inclusion. It is a norm-bounded net and it converges pointwise (even monotonically) to $0$, as for any fixed $x\in M$ we have $f_F(x)=0$ whenever $F\supset\set{x}$. Hence $f_F\wsconv 0$. Since we assume $\opint\mu\in\lipfree{M}$, we have $\duality{f_F,\opint\mu}\rightarrow 0$. But $\opint(\mu\restrict_M)\in\lipfree{M}$ by Remark \ref{rm:restricted_functionals}, so $\duality{f_F,\opint(\mu\restrict_M)}\rightarrow 0$ as well. Choose $F\in\mathfrak{F}$ such that $\abs{\duality{f_F,\opint\mu-\opint(\mu\restrict_M)}}<\varepsilon$. Then
\begin{align*}
\duality{f_F,\opint\mu-\opint(\mu\restrict_M)} &= \int_{\ucomp{M}\setminus M}\ucomp{f_F}\,d\mu \\
&= \mu(K^+)+\int_{B^+\setminus K^+}\ucomp{f_F}\,d\mu+\int_{B^-\setminus K^-}\ucomp{f_F}\,d\mu
\end{align*}
so that
$$
\mu(K^+) <\varepsilon + \abs{\mu}(B^+\setminus K^+) + \abs{\mu}(B^-\setminus K^-) < 5\varepsilon
$$
and therefore $\mu(B^+)=\mu(B^+\setminus K^+)+\mu(K^+)<7\varepsilon$. A similar construction replacing $U^-$ with $U^+$ in \eqref{eq:net_f_F} shows that $\abs{\mu(B^-)}<7\varepsilon$. This finishes the proof.
\end{proof}

Thanks to Theorem \ref{th:normal_measure}, some results about elements of $\lipfree{M}$ induced by Radon measures in the subsequent sections will follow relatively easily from their counterparts for $\bidualfree{M}$. However, we prefer to state them separately as the required hypotheses might differ.

\subsection{Functionals that admit an integral representation}

To end this section, we now proceed in the opposite direction to Propositions \ref{pr:elements_induced_by_measure_bidual} and \ref{pr:elements_induced_by_measure} and attempt to identify which elements of $\lipfree{M}$ and $\bidualfree{M}$ can be represented by Radon measures. Let us begin with a simple observation:

\begin{lemma}
\label{lm:measure_avoids_infty}
Suppose that $\phi\in\bidualfree{M}$ is induced by a Borel measure on $\ucomp{M}$. Then $\phi$ avoids $0$ and infinity.
\end{lemma}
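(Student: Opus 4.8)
The plan is to use the characterization, stated right after Definition \ref{def:separation_classes} and backed by \eqref{eq:kalton_finite_part}, that $\phi$ avoids $0$ and infinity precisely when $\kaltonsum{\phi}=\phi$. Since $\phi\circ\wop{\Pi_n}\to\kaltonsum{\phi}$ in norm for every $\phi\in\bidualfree{M}$, it is enough to prove that $\phi\circ\wop{\Pi_n}$ also converges to $\phi$ in the weak$^\ast$ topology, and then invoke uniqueness of weak$^\ast$ limits. To prepare, I would first replace $\mu$ by $\mu\restrict_{\ucomp{M}\setminus\set{0}}$, which induces the same functional (every $f\in\Lip_0(M)$ vanishes at the base point) and has no mass at $0$; then Proposition \ref{pr:elements_induced_by_measure_bidual} applies and yields that $\mu$ is concentrated on $\rcomp{M}$ and that $\ucomp{\rho}\in L^1(\abs{\mu})$.

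For the weak$^\ast$ convergence, fix $f\in\Lip_0(M)$ and write $\duality{f,\phi\circ\wop{\Pi_n}}=\duality{\Pi_n f,\phi}=\int_{\ucomp{M}}\ucomp{(\Pi_n f)}\,d\mu$. Because $\Pi_n$, and hence $\Pi_n f$, is supported in $\cl{B(0,2^{n+1})}$, both $\ucomp{(\Pi_n f)}$ and $\ucomp{\Pi_n}$ vanish off $\ucl{B(0,2^{n+1})}$, and this set is contained in $\rcomp{M}$ since $\ucomp{\rho}$ is continuous into $[0,+\infty]$ and is bounded by $2^{n+1}$ there. On $\rcomp{M}$ the product rule $\ucomp{(\Pi_n f)}=\ucomp{\Pi_n}\cdot\ucomp{f}$ is valid, so, $\mu$ being concentrated on $\rcomp{M}$, we obtain $\duality{f,\phi\circ\wop{\Pi_n}}=\int_{\rcomp{M}}\ucomp{\Pi_n}\,\ucomp{f}\,d\mu$. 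Here $\abs{\ucomp{\Pi_n}\,\ucomp{f}}\leq\abs{\ucomp{f}}\leq\lipnorm{f}\,\ucomp{\rho}$ on $\rcomp{M}$, which is an $L^1(\abs{\mu})$ majorant, and $\ucomp{\Pi_n}(\zeta)\ucomp{f}(\zeta)\to\ucomp{f}(\zeta)$ for every $\zeta\in\rcomp{M}\setminus\set{0}$: indeed $\Pi_n$ depends only on $\rho$ and equals $1$ on $\set{2^{-n}\leq\rho\leq 2^n}$, so $\ucomp{\Pi_n}(\zeta)=1$ once $2^{-n}\leq\ucomp{\rho}(\zeta)\leq 2^n$. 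Since $\mu(\set{0})=0$, dominated convergence gives $\duality{f,\phi\circ\wop{\Pi_n}}\to\int_{\rcomp{M}}\ucomp{f}\,d\mu=\int_{\ucomp{M}}\ucomp{f}\,d\mu=\duality{f,\phi}$, completing the argument.

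The one point that requires care is that the identity $\ucomp{(\Pi_n f)}=\ucomp{\Pi_n}\cdot\ucomp{f}$ holds only on $\rcomp{M}$, since off $\rcomp{M}$ one may meet an indeterminate $0\times\infty$; this is handled by localizing the integral to $\ucl{B(0,2^{n+1})}$ via the bounded support of $\Pi_n f$ and then using that $\mu$ is carried by $\rcomp{M}$. The remaining ingredients — the reduction to $\mu(\set{0})=0$, the inclusion $\ucl{B(0,2^{n+1})}\subset\rcomp{M}$, and the dominated convergence step — are routine.
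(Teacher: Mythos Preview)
Your proof is correct and follows essentially the same route as the paper: both show that $\phi\circ\wop{\Pi_n}\wsconv\phi$ by computing $\duality{f,\phi\circ\wop{\Pi_n}}=\int_{\rcomp{M}}\ucomp{\Pi_n}\,\ucomp{f}\,d\mu$ and passing to the limit under the integral sign. The only cosmetic difference is that the paper splits $f=f^+-f^-$ and $\mu=\mu^+-\mu^-$ into positive parts and applies monotone convergence, whereas you keep the signed measure intact and use dominated convergence with the majorant $\lipnorm{f}\,\ucomp{\rho}\in L^1(\abs{\mu})$; your version is slightly more economical but not materially different.
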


\begin{proof}
Let $\phi=\opint\mu$ where $\mu$ is a Borel measure on $\ucomp{M}$. Then $\opint\mu^+,\opint\mu^-\in\bidualfree{M}$ by Remark \ref{rm:restricted_functionals}. To show that $\phi$ avoids $0$ and infinity, it is enough to prove that $\phi\circ\wop{\Pi_n}\wsconv\phi$, i.e. that $\duality{f\Pi_n,\phi}\rightarrow\duality{f,\phi}$ for every $f\in\Lip_0(M)$. Now notice that
\begin{align*}
\lim_{n\rightarrow\infty}\duality{f^+\Pi_n,\opint\mu^+} &= \lim_{n\rightarrow\infty}\int_{\rcomp{M}}\ucomp{(f^+\Pi_n)}\,d\mu^+ \\
&= \lim_{n\rightarrow\infty}\int_{\rcomp{M}}\ucomp{(f^+)}\ucomp{\Pi_n}\,d\mu^+ = \int_{\rcomp{M}}\ucomp{(f^+)}\,d\mu^+ = \duality{f^+,\opint\mu^+}
\end{align*}
by Lebesgue's monotone convergence theorem because $(\ucomp{\Pi_n})$ converges pointwise and monotonically to the characteristic function of $\rcomp{M}\setminus\set{0}$; note that we have used the fact that $\mu$ is concentrated on $\rcomp{M}$ by Proposition \ref{pr:elements_induced_by_measure_bidual}. By replacing $f^+$ with $f^-$ and/or $\mu^+$ with $\mu^-$ we get the desired conclusion.
\end{proof}

Lemma \ref{lm:measure_avoids_infty} is evidently not a characterization of functionals induced by measures (consider e.g. derivations). The next two theorems will provide such characterizations, although some of their implications only hold under assumptions of boundedness or positivity.
Let us first illustrate the basic idea on a simple example of a compact metric space $M$. Denote $J$ the canonical bounded linear injection of $\Lip_0(M)$ into $C(M)$. Then the fact that a functional $\phi\in\bidualfree{M}$ is induced by a Radon measure on $M$ means just that $\phi\in J^*(\dual{C(M)})$. In particular, $\delta(x)=J^*(\delta_x)$ for any $x\in M$, so the elements of $\lspan \delta(M)$ correspond to finitely supported measures in $\meas{M}$ and we can consider their measure norm $\norm{\cdot}_1$. If $\phi=J^*(\mu)$ for some $\mu\in {\meas{M}}$, then by approximating $\mu$ weak$^*$ by a bounded net of finitely supported measures according to Krein-Milman theorem, and from the weak$^*$ continuity of $J^*$, we obtain that $\phi$ is the weak$^*$ limit of a $\norm{\cdot}_1$-bounded net of finitely supported elements of $\lipfree{M}$. Conversely, if $\phi$ is the weak$^*$ limit of a $\norm{\cdot}_1$-bounded net in $\lspan \delta(M)$, then by passing to a subnet of the corresponding measures that converges weak$^*$ to some $\mu$ in $\meas{M}$, we infer that $\phi=J^*(\mu)$. 

We now extend this observation to the general non-compact setting. In order to do so, let us formalize the notation: if $m\in\lspan\,\delta(M)$ is a finitely supported element of $\lipfree{M}$, say $m=\sum_{k=1}^n a_k\delta(x_k)$ for distinct $x_k\in M\setminus\set{0}$, let $\norm{m}_1=\sum_{k=1}^n\abs{a_k}$. It is clear that this value is uniquely defined, and that $m=\opint\mu$ where $\mu=\sum_{k=1}^n a_k\delta_{x_k}\in\measz{M}$ satisfies $\norm{\mu}=\norm{m}_1$.

\begin{theorem}
\label{th:induced_elements_bidual}
Let $\phi\in\bidualfree{M}$. If $\phi$ is induced by a Radon measure on $\ucomp{M}$, then it is the weak$^\ast$ limit of a net $(m_i)$ of elements of $\lspan\,\delta(M)$ such that $(\norm{m_i}_1)$ is bounded. If $\phi$ is strongly bounded or $\phi$ is positive and avoids infinity, then the converse also holds. Moreover, $\phi$ is positive if and only if $(m_i)$ can be chosen to be positive.
\end{theorem}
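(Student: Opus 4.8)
The plan is to organize the proof around four implications, keeping the two "easy" directions (measure $\Rightarrow$ net of finitely supported elements, and the positivity correspondence) short, while devoting most of the work to the converse under the boundedness or positivity hypotheses.

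First I would prove the forward implication. Suppose $\phi = \opint\mu$ for a Radon measure $\mu$ on $\ucomp{M}$; by Proposition \ref{pr:elements_induced_by_measure_bidual}, $\mu$ is concentrated on $\rcomp{M}$ and $\int\ucomp{\rho}\,d\abs{\mu} < \infty$, with $\norm{\opint\mu} \le \int\ucomp{\rho}\,d\abs{\mu}$. The idea is to approximate $\mu$ weak$^\ast$ by finitely supported discrete measures of comparable total variation. Concretely, using inner regularity and the fact that $\ucomp{M}$ is compact, for each finite open cover refining a net of partitions one picks a representative point $\zeta_j \in \rcomp{M}$ in each cell and forms $m_i = \sum_j \mu(E_j)\,\delta(\zeta_j)$; since each $\delta(\zeta_j)\in\bidualfree{M}$ one may further approximate $\delta(\zeta_j)$ by finitely supported elements of $\lspan\,\delta(M)$ (as $\zeta_j$ is a limit of a net in $M$) and still obtain a net with $\norm{m_i}_1 \le \abs{\mu}(\ucomp{M}) + 1$ and $m_i \wsconv \phi$ (weak$^\ast$ convergence against each $f\in\Lip_0(M)$ follows from uniform continuity of $\ucomp{f}$ on the compact space $\ucomp{M}$, after truncating the at-infinity part via Lemma \ref{lm:measure_avoids_infty}). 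If $\mu \ge 0$ the weights $\mu(E_j)$ are nonnegative, so the $m_i$ are positive; this already gives one direction of the last sentence.

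Next, the hard direction: assume $\phi = \wsconv\lim m_i$ with $\sup_i \norm{m_i}_1 =: C < \infty$. Write $m_i = \opint\mu_i$ where $\mu_i \in \measz{M} \subset \measz{\ucomp{M}}$ and $\norm{\mu_i} = \norm{m_i}_1 \le C$. Since $\meas{\ucomp{M}}$ is the dual of $C(\ucomp{M})$, the ball of radius $C$ is weak$^\ast$ compact, so $(\mu_i)$ has a subnet converging weak$^\ast$ to some $\mu \in \meas{\ucomp{M}}$. The point is then that $\phi = \opint\mu$: for $f \in \Lip_0(M)$ that is bounded (equivalently $\ucomp{f}\in C(\ucomp{M})$), testing against $\ucomp{f}$ gives $\duality{f,\opint\mu} = \lim_i \duality{f,m_i} = \duality{f,\phi}$ directly. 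For general $f$ one uses that $\phi$ avoids infinity (given by hypothesis in the positive case, and automatic in the strongly bounded case) together with $\phi = \lim_n \phi\circ\wop{H_n}$ and $\opint\mu = \lim_n \opint(\ucomp{H_n}\,d\mu)$ (Lemma \ref{lm:measure_avoids_infty} applied to $\opint\mu$, or a direct monotone-convergence argument), reducing everything to bounded test functions. The role of the two alternative hypotheses is exactly this: to guarantee $\phi$ avoids infinity, so that $\phi$ is determined by its action on bounded Lipschitz functions, which is what the weak$^\ast$ limit $\mu$ controls. Finally, if the $m_i$ were chosen positive then $\mu \ge 0$ (weak$^\ast$ limit of positive measures on a compact space is positive), closing the loop on the last sentence; conversely if $\phi = \opint\mu \ge 0$ then $\mu \ge 0$ by Proposition \ref{pr:induced_supp_pos_bidual}, and the forward construction furnishes positive $m_i$.

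The main obstacle I anticipate is the passage from bounded to unbounded test functions in the converse, i.e.\ verifying that the weak$^\ast$ cluster measure $\mu$ genuinely represents $\phi$ and not merely $\phi_A$ (the avoids-infinity part). This is precisely where the hypothesis that $\phi$ avoids infinity — or is strongly bounded — must be used essentially, and one should check carefully that $\mu$ is concentrated on $\rcomp{M}$ (so that $\ucomp{H_n}\,d\mu \to d\mu$ setwise) using Proposition \ref{pr:elements_induced_by_measure_bidual} once $\opint\mu$ is known to lie in $\bidualfree{M}$. A secondary technical point is bookkeeping the $\norm{\cdot}_1$ bound through the approximation of $\delta(\zeta_j)$ by genuinely $M$-supported finitely supported elements; this is routine given that $\delta(\zeta)$ is a weak$^\ast$ limit of $\delta(x)/1$-type nets along a net in $M$ converging to $\zeta$, with norms $d(x,0) \to \ucomp{\rho}(\zeta)$.
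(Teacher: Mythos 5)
Your converse direction is essentially the paper's argument: represent $m_i=\opint\mu_i$, extract a weak$^\ast$ cluster point $\mu$ of $(\mu_i)$ in $\ball[C]{\meas{\ucomp{M}}}=\ball[C]{\dual{C(\ucomp{M})}}$, match $\phi$ and $\mu$ on bounded Lipschitz functions, and then use the truncations $fH_n$ together with the strongly-bounded or positive-and-avoids-infinity hypothesis to pass to unbounded $f$ (monotone convergence in the positive case). Your forward direction, however, takes a genuinely different route: you build the approximating net by Riemann sums over fine Borel partitions of $\ucomp{M}$ (choosing representatives $\zeta_j\in\rcomp{M}$ and then pushing them down into $M$), whereas the paper simply invokes Krein--Milman to get $\mu\in\wscl{\conv}\set{\pm\delta_\zeta:\zeta\in\ucomp{M}}$ and notes that each $\delta_\zeta$ is a weak$^\ast$ limit of $\delta_x$, $x\in M$, in $\meas{\ucomp{M}}$. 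Your version is more constructive and makes the bound $\norm{m_i}_1\leq\norm{\mu}$ explicit; the paper's is shorter and hands the positivity refinement over to the extreme points of $\pos{\ball{\meas{\ucomp{M}}}}$ for free. Both approaches share the same essential two-step structure (first convergence against bounded $f$, then upgrade using that $\opint\mu$ avoids infinity and that the weak$^\ast$ closure of $\set{m:\norm{m}_1\leq C}$ is norm closed), and you correctly identify that this upgrade is where the extra hypotheses enter. Two small points to fix in a write-up: (1) in the strongly bounded case the cluster measure $\mu$ itself need not represent $\phi$ -- the measure that does is $d\lambda=\ucomp{H_n}\,d\mu$, which your own reduction to bounded test functions produces anyway; and (2) avoid the circular-sounding step ``apply Proposition \ref{pr:elements_induced_by_measure_bidual} once $\opint\mu$ is known to lie in $\bidualfree{M}$'' -- in the positive case you can first verify $\int\ucomp{\rho}\,d\mu\leq\norm{\phi}<\infty$ directly by testing against $\rho\wedge 2^n$ and using monotone convergence, and in the strongly bounded case $\lambda$ is finite with bounded support so no such verification is needed.
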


\begin{proof}
Suppose that $\phi=\opint\mu\in\bidualfree{M}$ for some $\mu\in\meas{\ucomp{M}}$, and assume $\norm{\mu}\leq 1$ without loss of generality. We claim that $\phi\in\wscl{\conv}(\pm\delta(M))$, which will prove the forward implication with the bound $\norm{m_i}_1\leq 1$. Indeed, by the Krein-Milman theorem we have
$$
\mu\in\ball{\mathcal{M}(\ucomp{M})}=\wscl{\conv}\ext\ball{\mathcal{M}(\ucomp{M})}=\wscl{\conv}\set{\pm\delta_\zeta:\zeta\in\ucomp{M}} .
$$
Now, if $(x_i)$ is a net in $M$ that converges to $\zeta\in\ucomp{M}$ then clearly $\delta_{x_i}\wsconv\delta_\zeta$ in $\meas{\ucomp{M}}$.
We conclude that $\mu\in\wscl{A}$ where $A=\conv\set{\pm\delta_x:x\in M}$, hence there is a net $(\mu_i)$ in $A$ that converges weak$^\ast$ to $\mu$. This implies in particular that $\duality{f,\opint\mu_i}\rightarrow\duality{f,\opint\mu}=\duality{f,\phi}$ for any bounded $f\in\Lip_0(M)$. It is clear that $m_i=\opint\mu_i\in\conv(\pm\delta(M))$ and that $\norm{m_i}_1=\norm{\mu_i}\leq 1$. 
Now notice that, for any $f\in\Lip_0(M)$ and $n\in\NN$, $fH_n$ is a bounded Lipschitz function and therefore
$$
\lim_i\duality{m_i\circ\wop{H_n},f} = \lim_i\duality{m_i,fH_n} = \duality{fH_n,\phi} = \duality{f,\phi\circ\wop{H_n}}
$$
i.e. $m_i\circ\wop{H_n}\wsconv\phi\circ\wop{H_n}$. But clearly $m_i\circ\wop{H_n}\in\conv(\pm\delta(M))$ for all $i$ and $n$, hence $\phi\circ\wop{H_n}\in\wscl{\conv}(\pm\delta(M))$. To finish, notice that $\phi$ avoids infinity by Lemma \ref{lm:measure_avoids_infty}, hence $\phi\circ\wop{H_n}\rightarrow\phi$ and $\phi\in\wscl{\conv}(\pm\delta(M))$.

If $\phi$ is positive in the argument above, then $\mu$ may be chosen to be positive by Proposition \ref{pr:induced_supp_pos_bidual}, and we can apply the Krein-Milman theorem to $\pos{\ball{\meas{\ucomp{M}}}}$ instead of $\ball{\meas{\ucomp{M}}}$, which is a \weaks-compact convex set whose extreme points are $0$ and $\delta_\zeta$ for $\zeta\in\ucomp{M}$. We then get that $\phi\in\wscl{\conv}\delta(M)$.

For the converse implication, let $\phi\in\bidualfree{M}$ be the weak$^\ast$ limit of elements $m_i$ of $\lspan\delta(M)$ such that $\norm{m_i}_1\leq 1$ for all $i$. Then $m_i=\opint\mu_i$ where $\mu_i\in\meas{\ucomp{M}}$ has finite support and $\norm{\mu_i}=\norm{m_i}_1\leq 1$. Since $\ball{\meas{\ucomp{M}}}=\ball{\dual{C(\ucomp{M})}}$ is \weaks-compact, we may replace $(\mu_i)$ with a subnet that converges weak$^\ast$ to some $\mu\in\meas{\ucomp{M}}$, that is, such that $\lim_i \int_{\ucomp{M}} g\,d\mu_i=\int_{\ucomp{M}} g\,d\mu$ for each $g\in C(\ucomp{M})$. In particular, if $f\in\Lip_0(M)$ is bounded then we have
$$
\int_{\ucomp{M}}\ucomp{f}\,d\mu = \lim_i\int_{\ucomp{M}}\ucomp{f}\,d\mu_i = \lim_i\duality{f,m_i} = \duality{f,\phi} .
$$

Suppose first that $\phi$ is strongly bounded, i.e. $\phi=\phi\circ\wop{H_n}$ for some $n\in\NN$. Then $fH_n$ is bounded for every $f\in\Lip_0(M)$, so we have
$$
\duality{f,\phi} = \duality{f,\phi\circ\wop{H_n}} = \duality{fH_n,\phi} = \int_{\ucomp{M}}\ucomp{(fH_n)}\,d\mu = \int_{\ucomp{M}}\ucomp{f}\,d\lambda
$$
where $d\lambda=\ucomp{H_n}\,d\mu$ defines a measure in $\meas{\ucomp{M}}$.
Thus $\phi=\opint\lambda$ as required. 

Now assume that $\phi$ is positive and avoids infinity, i.e. $\phi=\lim_{n}\phi\circ\wop{H_n}$. For any function $f\in\Lip_0(M)$ and any $n$, the function $fH_n$ is bounded and $\ucomp{(fH_n)}=0$ on $\ucomp{M}\setminus\rcomp{M}$. Moreover, $(\ucomp{H_n})$ converges pointwise and increasingly to the characteristic function of $\rcomp{M}$. Therefore for a positive $f\in\Lip_0(M)$ we have
\begin{align*}
\duality{f,\phi}&=\lim_n\duality{f,\phi\circ\wop{H_n}}=\lim_n\duality{fH_n,\phi}=\lim_n\int_{\ucomp{M}}\ucomp{(fH_n)}\,d\mu\\
&=\lim_n\int_{\rcomp{M}}\ucomp{(fH_n)}\,d\mu=\lim_n\int_{\rcomp{M}}\ucomp{f}\ucomp{H_n}d\mu=\int_{\rcomp{M}}\ucomp{f}d\mu=\int_{\ucomp{M}}\ucomp{f}d(\mu\restrict_{\rcomp{M}})
\end{align*}
by Lebesgue's monotone convergence theorem. Decomposing any $f\in\Lip_0(M)$ as $f=f^+-f^-$, we may conclude that $\phi=\opint\mu\restrict_{\rcomp{M}}$. Obviously $\mu\restrict_{\rcomp{M}}\in\meas{\ucomp{M}}$ since $\mu\in\meas{\ucomp{M}}$.

To conclude, let us remark that $\mu$, and hence $\lambda$ (for strongly bounded $\phi$) or $\mu\restrict_{\rcomp{M}}$ (for positive $\phi$ that avoids infinity) are positive if all $\mu_i$ are.
\end{proof}

The converse of Theorem \ref{th:induced_elements_bidual} does not hold in general. We have proved that there is a measure $\mu$ whose action coincides with that of $\phi$ \textit{on all bounded functions $f\in\Lip_0(M)$}, but it might differ for unbounded $f$ when $\phi$ is not assumed to avoid infinity. The following shows that we can find a counterexample whenever $M$ is unbounded:

\begin{example}
\label{ex:counterexample_unbounded}
Suppose that $M$ is unbounded. Let $(x_n)$ be a sequence in $M$ such that $d(x_n,0)\rightarrow\infty$ and define $m_n=\delta(x_n)/d(x_n,0)$. Then $\norm{m_n}=1$, so there is a subnet $(m_{n_i})$ that converges weak$^\ast$ to some $\phi\in\bidualfree{M}$ which is clearly positive. We claim that $\phi$ cannot be represented by a positive Borel measure $\mu$ on $\ucomp{M}$, even if we allow it to be $\sigma$-finite. Suppose otherwise, then we may also assume that $\mu(\set{0})=0$ and Proposition \ref{pr:elements_induced_by_measure_bidual} implies that $\mu$ is concentrated on $\rcomp{M}$. Since $\rcomp{M}=\bigcup_{n=1}^\infty B_n$ where $B_n=\ucl{B(0,n)}$, we have $\mu(\rcomp{M})=\lim_n\mu(B_n)$.
Let $\rho_n=\rho\wedge n$, then
$$
\int_{B_n}\ucomp{\rho}\,d\mu = \int_{B_n}\ucomp{\rho_n}\,d\mu \leq \int_{\ucomp{M}}\ucomp{\rho_n}\,d\mu = \duality{\rho_n,\phi} = \lim_i\frac{\rho_n(x_{n_i})}{d(x_{n_i},0)} = 0
$$
which implies that $\mu(B_n)=0$ for all $n$. Thus $\mu=0$ and so $\phi=0$, but this contradicts the fact that $\duality{\rho,\phi}=1$.

So $\phi$ is not induced by a measure even if $\norm{m_n}_1=1/d(x_n,0)$ converges to $0$. The argument in the proof of Theorem \ref{th:induced_elements_bidual} still yields a measure $\mu$ such that the actions of $\phi$ and $\mu$ agree on bounded functions of $\Lip_0(M)$: it is just $\mu=0$.
\end{example}

For elements of $\lipfree{M}$ we have a similar result:

\begin{theorem}
\label{th:induced_elements}
Let $\phi\in\lipfree{M}$. If $\phi$ is induced by a Radon measure on $M$, then it is the limit of a sequence $(m_n)$ of elements of $\lspan\,\delta(M)$ such that $(\norm{m_n}_1)$ is bounded. If $\supp(\phi)$ is bounded or $\phi$ is positive, then the converse also holds. Moreover, $\phi$ is positive if and only if the $(m_n)$ can be chosen to be positive.
\end{theorem}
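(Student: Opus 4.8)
The plan is to obtain the forward implication by an explicit approximation built from the Bochner integral representation of $\phi$, and to deduce both converse implications (and the positivity clause) from the already-established results for the bidual, Theorems \ref{th:induced_elements_bidual} and \ref{th:normal_measure}.

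\textbf{Forward implication.} Assume $\phi=\opint\mu$ for a Radon measure $\mu$ on $M$; subtracting $\mu(\set{0})\delta_0$ we may assume $\mu(\set{0})=0$. By Proposition \ref{pr:elements_induced_by_measure}, $\mu$ is almost Radon, hence concentrated on its separable support $S=\supp(\mu)$ and so on $S\setminus\set{0}$, we have $\int_M\rho\,d\abs{\mu}<\infty$, and $\phi=\int_M\delta(x)\,d\mu(x)$ is a Bochner integral. For each $n\in\NN$ I would fix a countable Borel partition $\set{E_k^{(n)}}_k$ of $S\setminus\set{0}$ into nonempty sets of diameter $<1/n$ (possible by separability), pick $x_k^{(n)}\in E_k^{(n)}\subset M\setminus\set{0}$, and, using $\int_M\rho\,d\abs{\mu}<\infty$, choose $N_n$ with $\int_{A_n}\rho\,d\abs{\mu}<1/n$, where $A_n=\bigcup_{k>N_n}E_k^{(n)}$. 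Put $m_n=\sum_{k=1}^{N_n}\mu(E_k^{(n)})\,\delta(x_k^{(n)})\in\lspan\,\delta(M)$, so $\norm{m_n}_1\leq\abs{\mu}(M)=\norm{\mu}$. Since $\mu(E_k^{(n)})\delta(x_k^{(n)})=\int_{E_k^{(n)}}\delta(x_k^{(n)})\,d\mu(x)$, countable additivity of the Bochner integral gives
$$
\phi-m_n=\sum_{k=1}^{N_n}\int_{E_k^{(n)}}\big(\delta(x)-\delta(x_k^{(n)})\big)\,d\mu(x)+\int_{A_n}\delta(x)\,d\mu(x),
$$
and using $\norm{\delta(x)-\delta(y)}=d(x,y)$ and $\norm{\delta(x)}=\rho(x)$ we obtain $\norm{\phi-m_n}\leq\tfrac1n\abs{\mu}(M)+\int_{A_n}\rho\,d\abs{\mu}<(\norm{\mu}+1)/n\to0$. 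If $\mu\geq 0$ then each coefficient $\mu(E_k^{(n)})$ is $\geq 0$, so the $m_n$ may be taken positive.

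\textbf{Converse implication.} Suppose now $\phi\in\lipfree{M}$ equals $\lim_n m_n$ with $m_n\in\lspan\,\delta(M)$ and $\sup_n\norm{m_n}_1<\infty$, and that $\phi$ is positive or $\supp(\phi)$ is bounded. Viewing $\phi\in\bidualfree{M}$, the sequence $(m_n)$ converges to $\phi$ in the weak$^\ast$ topology and has bounded $\norm{\cdot}_1$, so Theorem \ref{th:induced_elements_bidual} applies once we recall that every element of $\lipfree{M}$ avoids infinity, and that an element of $\lipfree{M}$ with bounded support is strongly bounded: in either case $\phi=\opint\nu$ for a Radon measure $\nu$ on $\ucomp{M}$, positive if the $m_n$ are. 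Subtracting $\nu(\set{0})\delta_0$ we may assume $\nu(\set{0})=0$, so $\nu$ is almost Radon; since $\opint\nu=\phi\in\lipfree{M}$, Theorem \ref{th:normal_measure} forces $\nu$ to be concentrated on $M$, i.e.\ $\nu$ restricts to a Radon measure on $M$ (positive if the $m_n$ are) that induces $\phi$. For the positivity clause, if the $m_n$ are positive then $\phi=\lim_n m_n$ is positive because the positive cone of $\bidualfree{M}$ is norm closed; conversely, if $\phi\geq 0$ the converse implication just proved yields a positive Radon measure on $M$ representing $\phi$, and feeding that measure into the forward implication returns a positive sequence $(m_n)$ with bounded $\norm{\cdot}_1$ converging to $\phi$.

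\textbf{Main obstacle.} The only delicate points are keeping $\norm{m_n}_1$ under control throughout the Bochner approximation — which is why one approximates via a partition-and-truncation of $S\setminus\set{0}$ rather than quoting a generic simple-function approximation — and the bookkeeping at the base point; the substantial input, moving a representing measure from $\ucomp{M}$ back onto $M$, is already isolated in Theorem \ref{th:normal_measure}.
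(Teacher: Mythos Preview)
Your converse implication and use of Theorem \ref{th:normal_measure} match the paper exactly. Your forward implication, however, takes a different route: the paper applies Theorem \ref{th:induced_elements_bidual} to get a bounded \emph{net} $(v_i)\subset\lspan\,\delta(M)$ converging to $\phi$ weak$^\ast$ in $\bidualfree{M}$, hence weakly in $\lipfree{M}$, and then invokes Mazur's lemma to upgrade to a norm-convergent sequence of convex combinations (which preserves both the $\norm{\cdot}_1$ bound and positivity). Your direct construction via a partition of $\supp(\mu)\setminus\set{0}$ and truncation of the Bochner integral is more explicit and self-contained, avoiding the detour through the bidual and Mazur; the paper's approach has the advantage of simply reusing the compactness machinery already set up for Theorem \ref{th:induced_elements_bidual}.

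One small gap: in your positivity clause you claim that ``the converse implication just proved yields a positive Radon measure'' when $\phi\geq 0$, but your own statement of the converse only guarantees positivity of $\nu$ when the approximating $m_n$ are positive, which you have not yet arranged. The missing link is Proposition \ref{pr:induced_supp_pos}: once $\phi\geq 0$ is represented by any Radon $\nu$ with $\nu(\set{0})=0$, that proposition forces $\nu\geq 0$, and then your forward construction applied to this positive $\nu$ returns positive $m_n$. (The paper handles this point via the bidual analogue, Proposition \ref{pr:induced_supp_pos_bidual}, inside the proof of Theorem \ref{th:induced_elements_bidual}.)
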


\begin{proof}
Assume that $\phi=\opint\mu\in\lipfree{M}$ for some $\mu\in\meas{M}$, and identify $\mu$ with an element of $\meas{\ucomp{M}}$ that is concentrated on $M$. Theorem \ref{th:induced_elements_bidual} then yields a net $(v_i)$ in $\lspan\,\delta(M)$ that converges to $\phi$ in $(\bidualfree{M},w^\ast)$, or equivalently in $(\lipfree{M},w)$, and such that $\norm{v_i}_1$ is bounded. By Mazur's lemma, $\phi$ is the limit of a sequence $(m_n)$ of convex combinations of the $v_i$. Clearly $\norm{m_n}_1$ is bounded by the same value as $\norm{v_i}_1$, and the $m_n$ are positive if all $v_i$ are.

For the converse implication, since all elements of $\lipfree{M}$ avoid infinity and the ones with bounded support are strongly bounded as functionals in $\bidualfree{M}$, for a $\phi$ as in the hypothesis, Theorem \ref{th:induced_elements_bidual} yields a measure $\mu\in\meas{\ucomp{M}}$ such that $\phi=\opint\mu$, which is moreover positive if the $m_n$ are. But $\phi\in\lipfree{M}$, so the measure has to be concentrated on $M$ by Theorem \ref{th:normal_measure} and therefore can be regarded as a Radon measure on $M$.
\end{proof}

Note that in Theorems \ref{th:induced_elements_bidual} and \ref{th:induced_elements} the norm $\norm{\mu}$ of the representing measure $\mu$ is related to the sum $\norm{m}_1$ of the coefficients of the approximating elements of finite support. So we get the following, more succinct characterization:

\begin{corollary}
The set of elements of $\lipfree{M}$ that can be represented by a measure in $\pos{\ball{\meas{M}}}$ is precisely $\cl{\conv}\,\delta(M)$. If $M$ is bounded, then the set of elements of $\bidualfree{M}$ that can be represented by a measure in $\pos{\ball{\meas{\ucomp{M}}}}$ is precisely $\wscl{\conv}\,\delta(M)$.
\end{corollary}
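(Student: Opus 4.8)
The plan is to read both statements off Theorems \ref{th:induced_elements} and \ref{th:induced_elements_bidual}, which already characterize measure-induced functionals via approximation by finitely supported ones, after recording two elementary facts. First, a positive, finitely supported $m\in\lipfree{M}$ has $\norm{m}_1\leq 1$ if and only if $m\in\conv\delta(M)$: writing $m=\sum_k a_k\delta(x_k)$ with $a_k>0$ and distinct $x_k\in M\setminus\set{0}$, one has $m=\sum_k a_k\delta(x_k)+(1-\norm{m}_1)\delta(0)$, and since $\delta(0)=0$ this exhibits $m$ as a convex combination of points of $\delta(M)$ exactly when $\norm{m}_1\leq 1$; the reverse implication is immediate. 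Second, by Propositions \ref{pr:induced_supp_pos} and \ref{pr:induced_supp_pos_bidual} a functional induced by a positive measure is positive, and the positive cones of $\lipfree{M}$ and $\bidualfree{M}$ are norm-, resp.\ weak$^\ast$-, closed; hence positivity of $\phi$, of its representing measure, and of its approximants are all interchangeable. The only delicate point is that Theorems \ref{th:induced_elements} and \ref{th:induced_elements_bidual} merely assert that $(\norm{m_i}_1)$ is bounded, whereas we want the sharp bound $\norm{m_i}_1\leq 1$ when $\norm{\mu}\leq 1$, and conversely; this is precisely the content of the remark immediately preceding the corollary.

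For the first statement: if $\phi\in\lipfree{M}$ is induced by $\mu\in\pos{\ball{\meas{M}}}$, then $\phi\geq 0$, so the forward direction of Theorem \ref{th:induced_elements} (with the sharp norm bound and its positivity clause) produces a sequence of positive, finitely supported $m_n$ with $\norm{m_n}_1\leq 1$ and $m_n\to\phi$ in norm; by the first fact above $m_n\in\conv\delta(M)$, hence $\phi\in\cl{\conv}\delta(M)$. Conversely, if $\phi\in\cl{\conv}\delta(M)$ write $\phi=\lim_n m_n$ with $m_n\in\conv\delta(M)$; these are positive with $\norm{m_n}_1\leq 1$, and $\phi\geq 0$ since the cone is closed, so the converse direction of Theorem \ref{th:induced_elements} applies and, together with Proposition \ref{pr:induced_supp_pos} for positivity and Theorem \ref{th:normal_measure} to bring the measure back onto $M$, yields $\mu\in\pos{\ball{\meas{M}}}$ with $\opint\mu=\phi$.

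For the second statement, assume $M$ is bounded; then every $f\in\Lip_0(M)$ is bounded and every $\phi\in\bidualfree{M}$ is strongly bounded, so the hypotheses of Theorem \ref{th:induced_elements_bidual} hold in both directions. If $\phi$ is induced by $\mu\in\pos{\ball{\meas{\ucomp{M}}}}$ then $\phi\geq 0$, and the forward direction gives a net of positive, finitely supported $m_i$ with $\norm{m_i}_1\leq 1$ and $m_i\wsconv\phi$, so $m_i\in\conv\delta(M)$ and $\phi\in\wscl{\conv}\delta(M)$. Conversely, $\phi\in\wscl{\conv}\delta(M)$ is a weak$^\ast$ limit of such $m_i$, hence positive and (being on a bounded space) strongly bounded, so the converse direction of Theorem \ref{th:induced_elements_bidual} for strongly bounded functionals, together with Proposition \ref{pr:induced_supp_pos_bidual} and the norm control, represents it by some $\mu\in\pos{\ball{\meas{\ucomp{M}}}}$.

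The main obstacle is the sharp norm bookkeeping flagged above. One rereads the proof of Theorem \ref{th:induced_elements_bidual}: when $\norm{\mu}\leq 1$ the net constructed there lies in $\conv(\pm\delta(M))$, or in $\conv\delta(M)$ when $\phi\geq 0$, so $\norm{m_i}_1\leq 1$; and conversely the weak$^\ast$ cluster measure of a net with $\norm{m_i}_1\leq 1$ has norm $\leq 1$, a property not destroyed by restriction to $\rcomp{M}$, by multiplication by $\ucomp{H_n}$, or (for Theorem \ref{th:induced_elements}) by Mazur's passage to convex combinations, since $\conv\delta(M)$ is itself convex. No new idea is needed. Note finally that boundedness of $M$ in the second statement is used precisely to make every functional strongly bounded, and hence the converse of Theorem \ref{th:induced_elements_bidual} available; without it the inclusion $\wscl{\conv}\delta(M)\subseteq\set{\opint\mu:\mu\in\pos{\ball{\meas{\ucomp{M}}}}}$ fails, as shown by Example \ref{ex:counterexample_unbounded}.
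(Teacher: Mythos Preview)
Your proposal is correct and follows exactly the approach the paper intends: the corollary is stated without proof, preceded only by the remark that in Theorems \ref{th:induced_elements_bidual} and \ref{th:induced_elements} the norm $\norm{\mu}$ of the representing measure is tied to the bound on $\norm{m_i}_1$, and you have simply spelled out the bookkeeping (the identification of positive finitely supported $m$ with $\norm{m}_1\leq 1$ as elements of $\conv\delta(M)$, and the preservation of the bound $\norm{\mu}\leq 1$ through the constructions in those proofs). Your closing remark about why boundedness of $M$ is needed, with reference to Example \ref{ex:counterexample_unbounded}, is a helpful addition.
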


The next example shows that boundedness or positivity are again essential for the converse to hold in Theorem \ref{th:induced_elements}. In particular, it also shows that the converse in Theorem \ref{th:induced_elements_bidual} does not hold if $\phi$ avoids infinity but is not positive.

\begin{example}
Let $M\subset c_0$ consist of $0$ as the base point and the sequences $x_n=2^n e_n$ and $y_n=(2^n+1)e_n$, where $e_n$ are the standard basis vectors, and let
$$
m_n=\sum_{k=1}^n 2^{-k}(\delta(x_k)-\delta(y_k)) .
$$
Since $\norm{\delta(x_n)-\delta(y_n)}=d(x_n,y_n)=1$, the sequence $(m_n)$ is Cauchy and converges to $m\in\lipfree{M}$. Moreover $\norm{m_n}_1<2$ for every $n$. Nevertheless, $m$ cannot be represented by a Radon measure on $M$. Indeed, suppose $m=\opint\mu$ where $\mu\in\measz{M}$, then $\mu$ is supported on $\supp(m)=M\setminus\set{0}$ by Proposition \ref{pr:induced_supp_pos}, and it is clear that every $x_n$ belongs to the support of $\mu^+$. Denote $X_n=\set{x_1,\ldots,x_n}$, then $\rho\chi_{X_n}\in\Lip_0(M)$ and
$$
\int_M \rho\,d\abs{\mu} \geq \int_{X_n} \rho\,d\abs{\mu}
\geq \int_M \rho\chi_{X_n}\,d\mu=\duality{m,\rho\chi_{X_n}}=
\sum_{k=1}^n 2^{-k}\rho(x_k) = n
$$
for every $n$, which contradicts Proposition \ref{pr:elements_induced_by_measure}.
\end{example}

\section{Majorizable elements of \texorpdfstring{$\lipfree{M}$}{F(M)} and \texorpdfstring{$\bidualfree{M}$}{Lip0(M)*}}

Let us introduce the following definition:

\begin{definition}
\label{def:majorizable}
Let $X$ be an ordered vector space. We will say that an element $x\in X$ is \textit{majorizable} (more specifically, \textit{majorizable in $X$}) if there exists a positive element $x^+\in\pos{X}$ such that $x\leq x^+$. Equivalently, $x$ is majorizable if it may be written as the difference $x=x^+-x^-$ between two positive elements $x^+,x^-$ of $\pos{X}$.
\end{definition}

In this section we will study the majorizable elements of $\lipfree{M}$ and $\bidualfree{M}$. Let us start by recalling that not all elements of $\lipfree{M}$ are majorizable in general, as illustrated e.g. by Example 3.24 in \cite{Weaver2}, however all elements of $\lspan\delta(M)$ trivially are.

Notice that, for an element $m\in\lipfree{M}$, there are two separate notions of majorizability: $m$ may be majorizable in $\lipfree{M}$ or in $\bidualfree{M}$. That is, there may exist a positive $m^+$ in $\lipfree{M}$ such that $m\leq m^+$, or there may exist a positive $\phi^+$ in $\bidualfree{M}$ such that $m\leq\phi^+$. The latter is formally a weaker condition, and it is by no means obvious whether both conditions are equivalent. They are, as a matter of fact, as we will prove in Theorem \ref{th:minimal_majorant}.

\begin{remark}
\label{remark:majorants_avoiding_stuff}
If $\phi\in\bidualfree{M}$ is majorizable and avoids $0$ and infinity, and $\phi^+\in\pos{(\dual{\Lip_0(M)})}$ is its majorant, then there is a majorant $\psi\in\pos{(\bidualfree{M})}$ such that $\phi\leq\psi\leq\phi^+$ and $\psi$ also avoids $0$ and infinity. Indeed, just consider 
$\psi=(\phi^+)_s=\lim_n\phi^+\circ\wop{\Pi_n}$. 
Then clearly $\psi\leq\phi^+$ from positivity and
$$
\phi = \lim_n\phi\circ\wop{\Pi_n} \leq \lim_n\phi^+\circ\wop{\Pi_n} = (\phi^+)_s = \psi.
$$
A similar reasoning holds for functionals just avoiding either $0$ or infinity, or, on the contrary, concentrated at $0$ or at infinity. To see this, replace the function $\Pi_n$ above with $G_{\pm n}$ or $H_{\pm n}$ accordingly. From the uniqueness of the decomposition \eqref{eq:decomp_0_infty}, it follows that $\phi$ is majorizable if and only if each term of its decomposition is.
\end{remark}

\subsection{Characterizations of majorizable functionals}
\label{subs:Characterizations of majorizable functionals}

Let us now tackle the problem of characterizing the majorizable elements of $\lipfree{M}$ and $\bidualfree{M}$. It is possible to use similar reasoning to handle both cases simultaneously, as was done in the previous section. Observe first that any element $\phi\in\bidualfree{M}$ that is induced by a Borel measure $\mu$ on $\ucomp{M}$ is majorizable. Indeed, suppose $\phi=\opint\mu$, then $\abs{\mu}$ also induces an element of $\bidualfree{M}$ by Proposition \ref{pr:elements_induced_by_measure_bidual}, therefore $\phi$ is majorized by $\opint(\abs{\mu})$. Moreover, $\phi$ avoids $0$ and infinity by Lemma \ref{lm:measure_avoids_infty}. A similar argument applies for $\phi\in\lipfree{M}$.

It turns out that the converse of this observation is ``almost'' true. We start by showing that it holds for the elements that are supported away from the base point and that avoid infinity:

\begin{theorem}
\label{th:majorizable_elements_bidual_not0}
Suppose that $\phi\in\bidualfree{M}$ avoids infinity and $0\notin\esupp{\phi}$. Then $\phi$ is majorizable in $\bidualfree{M}$ if and only if it is induced by a Radon measure on $\ucomp{M}$.
\end{theorem}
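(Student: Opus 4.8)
The "if" direction is already recorded in the discussion preceding the theorem: a functional induced by a Radon measure $\mu$ is majorized by $\opint(\abs{\mu})\in\pos{(\bidualfree{M})}$ (using Proposition~\ref{pr:elements_induced_by_measure_bidual} applied to $\abs\mu$), so only the "only if" direction requires work. The plan is as follows. Assume $\phi$ avoids infinity, $0\notin\esupp{\phi}$, and $\phi$ is majorizable, say $\phi\leq\phi^+$ with $\phi^+\in\pos{(\bidualfree{M})}$. First I would use Proposition~\ref{pr:extended_support_str_bounded} together with $0\notin\esupp{\phi}$ and Proposition~\ref{pr:separation_classes_equivalences_part2}(b) to deduce that $\phi$ avoids $0$ strongly; combined with avoiding infinity this localizes $\esupp{\phi}$ inside $\rcomp{M}$ at a positive distance from $0$ and from infinity. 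Next, by Remark~\ref{remark:majorants_avoiding_stuff} (applied with the functions $G_{-n}$ and $H_n$) I may replace $\phi^+$ by a majorant $\psi$ that still satisfies $\phi\leq\psi$ and additionally avoids $0$ strongly and avoids infinity; in particular $\psi$ is strongly bounded and avoids $0$ strongly, so by Proposition~\ref{pr:separation_classes_equivalences}(c) there is a single $h\in\Lip(M)$ — one may take a product $H_n G_{-k}$ — with bounded support, bounded away from $0$, such that $\psi=\psi\circ\wop{h}$, and likewise $\phi=\phi\circ\wop{h}$.

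**Reduction to a bounded metric space away from the base point.** The key structural move is that $\psi$ (hence $\phi$) can now be viewed as a functional on $\Lip_0(K)$ where $K=\set{0}\cup\cl{\supp(h)}$ is a bounded metric space whose base point is isolated. On such a $K$, $\ucomp{K}=\cl{\supp(h)}\cup\set{0}$ with $\cl{\supp(h)}$ compact, and the realcompactification issues disappear. So I reduce to proving: a positive functional $\psi$ on a bounded metric space with isolated base point is always induced by a (positive, finite) Radon measure on $\ucomp{K}$. This is where I expect the main work. The natural route is via Lemma~\ref{lm:positive_sum_closure}: write $\psi$ as the weak$^\ast$ limit of a net $(m_i)$ of positive finitely supported elements. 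Each $m_i=\opint\mu_i$ for a positive finitely supported measure $\mu_i$ on $M$ with $\norm{\mu_i}=\norm{m_i}_1=\duality{\rho,m_i}\to\duality{\rho,\psi}=\norm{\psi}$, so $(\mu_i)$ is bounded in $\meas{\ucomp{K}}$. Passing to a weak$^\ast$-convergent subnet $\mu_i\to\mu\in\pos{\meas{\ucomp{K}}}$ and using that on $\Lip_0(K)$ every function is bounded (so its extension is in $C(\ucomp{K})$, by Proposition~\ref{pr:uc_restriction}), I get $\duality{f,\psi}=\lim_i\duality{f,m_i}=\lim_i\int \ucomp{f}\,d\mu_i=\int\ucomp{f}\,d\mu$ for every $f\in\Lip_0(K)$, i.e. $\psi=\opint\mu$. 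This is essentially the argument already used in the proof of Theorem~\ref{th:induced_elements_bidual} for strongly bounded $\phi$; indeed, once I have reduced to $\psi$ strongly bounded I can cite that theorem directly rather than redo the compactness argument.

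**Transferring the measure back and finishing with $\phi$.** Having produced a positive Radon measure $\nu$ on $\ucomp{M}$ with $\psi=\opint\nu$ (extending $\mu$ from $\ucomp{K}$ to $\ucomp{M}$ by zero — note $\ucl{K}$ embeds into $\ucomp{M}$), I now need a representing measure for $\phi$ itself, not just for the majorant $\psi$. Since $\phi\leq\psi=\opint\nu$ and $\phi$, $\psi$ are functionals that avoid infinity, $\psi-\phi$ is a positive functional that also avoids infinity and has support away from $0$; rather than recursing, the cleaner finish is to observe that $0\leq\psi-\phi\leq\psi=\opint\nu$, apply the argument to $\psi-\phi$ to get $\psi-\phi=\opint\lambda$ for a positive Radon measure $\lambda$, and then $\phi=\opint(\nu-\lambda)$ with $\nu-\lambda$ a signed Radon measure on $\ucomp{M}$. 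Alternatively — and this is probably how I would actually write it — I would directly run the Theorem~\ref{th:induced_elements_bidual} machinery on $\phi$: since $\phi$ is strongly bounded (being equal to $\phi\circ\wop{h}$ with $\supp(h)$ bounded), Theorem~\ref{th:induced_elements_bidual} says $\phi$ is induced by a Radon measure on $\ucomp{M}$ \emph{provided} $\phi$ is the weak$^\ast$ limit of a net $(m_i)\subset\lspan\delta(M)$ with $(\norm{m_i}_1)$ bounded. So the real content to establish is exactly this boundedness of the $1$-norms of an approximating net, and \emph{that} is what majorizability buys: writing $\phi=\psi-(\psi-\phi)$ as a difference of two positive functionals, approximate each by positive finitely supported elements with $\norm{\cdot}_1$-norms converging to the (finite) norms $\norm{\psi}$ and $\norm{\psi-\phi}$ respectively via Lemma~\ref{lm:positive_sum_closure}, subtract, and obtain an approximating net for $\phi$ with $\norm{m_i}_1$ bounded by $\norm{\psi}+\norm{\psi-\phi}$. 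The main obstacle, then, is not any single hard estimate but assembling these pieces correctly: ensuring the reduction via Remark~\ref{remark:majorants_avoiding_stuff} genuinely yields a \emph{strongly} bounded majorant away from $0$ (so that both $\phi$ and its majorant become honest functionals on a compact-away-from-base-point space), and checking that the weak$^\ast$ approximating nets for $\psi$ and $\psi-\phi$ can be combined — this last point being routine since Lemma~\ref{lm:positive_sum_closure} gives nets, which can be taken over a common directed set.
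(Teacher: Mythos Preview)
Your outline mirrors the paper's approach: write $\phi$ as a difference of two positive functionals that avoid infinity and avoid $0$ strongly, approximate each by positive finitely supported elements via Lemma~\ref{lm:positive_sum_closure}, bound the $\norm{\cdot}_1$-norms of the approximants, and invoke Theorem~\ref{th:induced_elements_bidual}. However, two concrete steps fail as written.

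First, from Remark~\ref{remark:majorants_avoiding_stuff} you can arrange that the majorant $\psi$ avoids infinity and avoids $0$ strongly, but you cannot conclude that $\psi$ (or $\phi$) is \emph{strongly bounded}. The hypothesis is only that $\phi$ avoids infinity, and nothing in Propositions~\ref{pr:extended_support_str_bounded} or~\ref{pr:separation_classes_equivalences_part2} upgrades this to strong boundedness (Proposition~\ref{pr:extended_support_str_bounded} would require $\esupp{\phi}\subset\rcomp{M}$, which is not given). So the reduction to a bounded $K$, the identity $\psi=\psi\circ\wop{H_nG_{-k}}$, and the direct appeal to the ``strongly bounded'' branch of Theorem~\ref{th:induced_elements_bidual} for $\phi$ are all unjustified.

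Second, you write $\norm{m_i}_1=\duality{\rho,m_i}$, but these are different: for positive finitely supported $m=\sum a_k\delta(x_k)$ one has $\norm{m}_1=\sum a_k$ whereas $\duality{\rho,m}=\sum a_k\,d(x_k,0)=\norm{m}$. Lemma~\ref{lm:positive_sum_closure} gives $\norm{m_i}\to\norm{\psi}$, not $\norm{m_i}_1\to\norm{\psi}$, and without a lower bound on $\rho$ the latter need not even stay bounded.

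The paper's fix is precisely to use ``avoids $0$ strongly'' in place of ``strongly bounded'': since $\psi=\psi\circ\wop{G_{-n}}$ for some $n$, replace $(m_i)$ by $(m_i\circ\wop{G_{-n}})$, still converging weak$^\ast$ to $\psi$; these are supported where $\rho\geq 2^{-n}$, so $\norm{m_i\circ\wop{G_{-n}}}_1\leq 2^n\norm{m_i\circ\wop{G_{-n}}}$, and the right side is bounded because $\duality{\rho G_{-n},m_i}\to\norm{\psi}$. Then apply the \emph{positive, avoids infinity} branch of Theorem~\ref{th:induced_elements_bidual} to $\psi$ and to $\psi-\phi$ separately---no strong boundedness needed. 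With these two corrections your argument becomes exactly the paper's proof.
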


\begin{proof}
As we have discussed above, it is not difficult to see that measure-induced functionals are majorizable. For the other implication, assume that $\phi=\phi^+-\phi^-$ where $\phi^+,\phi^-\in\bidualfree{M}$ are positive.
Note that by hypothesis and Remark \ref{remark:majorants_avoiding_stuff} we may assume that $\phi^+$ and $\phi^-$ avoid infinity.
Moreover, the second part of the hypothesis together with Proposition \ref{pr:separation_classes_equivalences_part2}(b) yields
$$\phi=\phi\circ\wop{G_{-n}}=\phi^+\circ\wop{G_{-n}}-\phi^-\circ\wop{G_{-n}}$$
for some $n$. So altogether we may assume that $\phi^\pm$ avoid infinity and $\phi^\pm=\phi^\pm\circ\wop{G_{-n}}$.

By Lemma \ref{lm:positive_sum_closure}, there is a net $(m_i)$ that converges weak$^\ast$ to $\phi^+$ and such that every $m_i$ is of the form
$$
m_i=\sum_{k=1}^N a_k^i\delta(x_k^i)
$$
for some $N\in\NN$, $a_k^i>0$ and $x_k^i\in M$. Thus $m_i\circ\wop{G_{-n}}\wsconv\phi^+\circ\wop{G_{-n}}=\phi^+$. Each $m_i\circ\wop{G_{-n}}$ is positive and
$$
\lim_i\norm{m_i\circ\wop{G_{-n}}} = \lim_i\duality{\rho,m_i\circ\wop{G_{-n}}} = \duality{\rho,\phi^+} = \norm{\phi^+} ,
$$
hence we may assume that $\norm{m_i\circ\wop{G_{-n}}}$ is bounded. Since $G_{-n}(x)=0$ for $x\in B(0,2^{-n})$, we have
\begin{align*}
\norm{m_i\circ\wop{G_{-n}}} =\duality{\rho,m_i\circ\wop{G_{-n}}} &= \sum_{k=1}^N a_k^i G_{-n}(x_k^i) d(x_k^i,0) \\
&\geq 2^{-n}\sum_{k=1}^N a_k^i G_{-n}(x_k^i) = 2^{-n}\norm{m_i\circ\wop{G_{-n}}}_1
\end{align*}
so $(\norm{m_i\circ\wop{G_{-n}}}_1)$ is bounded. Since $\phi^+$ is positive and avoids infinity, we may apply Theorem \ref{th:induced_elements_bidual} to conclude that $\phi^+$ is represented by a positive Radon measure $\mu^+$ on $\ucomp{M}$.

The same argument shows that $\phi^-$ is represented by a positive Radon measure $\mu^-$. Thus $\phi$ is represented by $\mu=\mu^+-\mu^-\in\meas{\ucomp{M}}$ and this ends the proof.
\end{proof}

In particular, if $M$ is bounded then the result applies to all $\phi$ such that $0\notin\esupp{\phi}$. Notice that the hypothesis of avoiding infinity cannot be omitted, as witnessed by the positive element constructed in Example \ref{ex:counterexample_unbounded}.

By virtue of the fact that elements of $\lipfree{M}$ avoid infinity, Theorem \ref{th:majorizable_elements_bidual_not0} and Theorem \ref{th:normal_measure} yield the corresponding result for $\lipfree{M}$. We also obtain the equivalence of majorizability in $\lipfree{M}$ and in $\bidualfree{M}$ under the additional assumption that the support does not contain the base point.

\begin{theorem}
\label{th:majorizable_elements_not0}
Let $m\in\lipfree{M}$ be such that $0\notin\supp(m)$.
Then the following are equivalent:
\begin{enumerate}[label={\upshape{(\roman*)}}]
\item $m$ is majorizable in $\lipfree{M}$,
\item $m$ is majorizable in $\bidualfree{M}$,
\item $m$ is induced by a Radon measure on $M$.
\end{enumerate}
\end{theorem}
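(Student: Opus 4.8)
The plan is to derive this theorem as a fairly direct consequence of Theorems~\ref{th:majorizable_elements_bidual_not0} and~\ref{th:normal_measure}, using that every element of $\lipfree{M}$ avoids infinity (as recalled below Definition~\ref{def:separation_classes}). The implication (i)$\Rightarrow$(ii) is immediate, since $\lipfree{M}$ carries the order inherited from $\bidualfree{M}$, so a positive majorant of $m$ in $\lipfree{M}$ is in particular a positive majorant in $\bidualfree{M}$. For (iii)$\Rightarrow$(i), I would start from $m=\opint\mu$ with $\mu$ a Radon measure on $M$; replacing $\mu$ by $\mu-\mu(\set{0})\delta_0\in\measz{M}$, which induces the same functional, we may assume $\mu(\set{0})=0$. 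Then, by the discussion at the beginning of this section (relying on Proposition~\ref{pr:elements_induced_by_measure_bidual} to see that $\int_M\rho\,d\abs{\mu}<\infty$ and on Proposition~\ref{pr:elements_induced_by_measure} to conclude $\opint\mu^\pm\in\lipfree{M}$ for the Jordan components $\mu^\pm$), the functional $m=\opint\mu^+-\opint\mu^-$ is a difference of positive elements of $\lipfree{M}$, hence majorizable there.

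The one implication requiring an actual argument is (ii)$\Rightarrow$(iii), and it reduces to two observations. First, I would translate the support hypothesis: by Corollary~\ref{cr:extended_support_fm} we have $\esupp{m}=\ucl{\supp(m)}$, and since $\supp(m)$ is closed in $M$ and omits $0$ it is at positive distance from $0$; hence Proposition~\ref{pr:uc_definition}\ref{uc:separation} gives $\ucl{\set{0}}\cap\ucl{\supp(m)}=\varnothing$, i.e. $0\notin\esupp{m}$. Since $m$ also avoids infinity, Theorem~\ref{th:majorizable_elements_bidual_not0} applies, and as $m$ is majorizable in $\bidualfree{M}$ it is induced by a Radon measure $\mu$ on $\ucomp{M}$. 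Replacing $\mu$ by $\mu\restrict_{\ucomp{M}\setminus\set{0}}$ we may assume $\mu(\set{0})=0$, so $\mu$ is almost Radon; then, because $\opint\mu=m\in\lipfree{M}$, Theorem~\ref{th:normal_measure} forces $\mu$ to be concentrated on $M$, and it may therefore be regarded as a Radon measure on $M$ inducing $m$. This yields (iii).

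I do not anticipate a genuine obstacle: the statement is essentially a repackaging of Theorems~\ref{th:majorizable_elements_bidual_not0} and~\ref{th:normal_measure}. The only points demanding (minor) care are the passage from $0\notin\supp(m)$ to $0\notin\esupp{m}$, which uses the metric separation property of $\ucomp{M}$ together with the elementary fact that a closed subset of $M$ not containing the base point lies at a strictly positive distance from it, and the normalization $\mu(\set{0})=0$ of the various representing measures, which is what makes them almost Radon so that Theorem~\ref{th:normal_measure} is applicable.
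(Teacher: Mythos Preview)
Your proposal is correct and follows essentially the same route as the paper, which derives the result directly from Theorems~\ref{th:majorizable_elements_bidual_not0} and~\ref{th:normal_measure} together with the fact that elements of $\lipfree{M}$ avoid infinity. You have simply spelled out in detail the steps that the paper leaves implicit, including the passage from $0\notin\supp(m)$ to $0\notin\esupp{m}$ and the normalization of the representing measures.
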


\begin{remark}
\label{rm:ambrosio_puglisi}
Theorem \ref{th:majorizable_elements_not0} does not hold (and therefore, neither does Theorem \ref{th:majorizable_elements_bidual_not0}) for the case where the base point is contained in the (extended) support of a positive element. To see this, assume that the base point is not isolated and choose $x_n\in M\setminus\set{0}$ such that $d(x_n,0)<2^{-n}$ and $d(x_{n+1},0)<d(x_n,0)$ for every $n\in\NN$. Consider $m=\sum_{n=1}^{\infty} \delta(x_n)$, which is clearly an element of $\pos{\lipfree{M}}$ by absolute convergence. Suppose that $m$ is represented by a Radon measure $\mu$ on $M$. We may assume that $\mu\in\measz{M}$ and so $\mu$ is positive by Proposition \ref{pr:induced_supp_pos}. Let $g_n$ be a Lipschitz function on $\RR$ with $0\leq g_n\leq 1$, $g_n(t)=0$ for $t\leq d(x_{n+1},0)$ and $g_n(t)=1$ for $t\geq d(x_n,0)$. Then
$$
\norm{\mu} = \int_M 1\,d\mu \geq \int_M (g_n\circ\rho) \,d\mu = \duality{m,g_n\circ\rho} = n
$$
for any $n\in\NN$, therefore $\mu$ is not finite, a contradiction. Note however that $\mu$ is $\sigma$-finite.
\end{remark}

The construction in the preceding remark provides the following equivalence:

\begin{proposition}
\label{pr:all_majorizable_are_radon_measures}
The following are equivalent:
\begin{enumerate}[label={\upshape{(\roman*)}}]
\item the base point is an isolated point of $M$,
\item every majorizable element of $\lipfree{M}$ can be represented by a Radon measure on $M$,
\item every majorizable element of $\bidualfree{M}$ which avoids infinity can be represented by a Radon measure on $\ucomp{M}$.
\end{enumerate}
\end{proposition}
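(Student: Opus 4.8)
The plan is to establish the cyclic chain \textup{(i)}$\Rightarrow$\textup{(iii)}$\Rightarrow$\textup{(ii)}$\Rightarrow$\textup{(i)}, since each link turns out to be a short assembly of results already in hand.

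For \textup{(i)}$\Rightarrow$\textup{(iii)}: assuming that the base point is isolated, I would recall (from the discussion following Definition \ref{def:separation_classes}) that then \emph{every} $\phi\in\bidualfree{M}$ avoids $0$ strongly, so that $0\notin\esupp{\phi}$ by Proposition \ref{pr:separation_classes_equivalences_part2}(b). Hence any majorizable $\phi\in\bidualfree{M}$ that avoids infinity satisfies exactly the hypotheses of Theorem \ref{th:majorizable_elements_bidual_not0}, which immediately provides a representing Radon measure on $\ucomp{M}$. For \textup{(iii)}$\Rightarrow$\textup{(ii)}: given $m\in\lipfree{M}$ majorizable in $\lipfree{M}$, it is a fortiori majorizable in $\bidualfree{M}$, and since every element of $\lipfree{M}$ avoids infinity, \textup{(iii)} yields a Radon measure $\mu$ on $\ucomp{M}$ with $\opint\mu=m$; because $m\in\lipfree{M}$, Theorem \ref{th:normal_measure} forces $\mu$ to be concentrated on $M$, so it can be viewed as a Radon measure on $M$ representing $m$.

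The implication \textup{(ii)}$\Rightarrow$\textup{(i)} is the contrapositive of the construction already carried out in Remark \ref{rm:ambrosio_puglisi}: if $0$ is not isolated, choose $x_n\in M\setminus\set{0}$ with $d(x_{n+1},0)<d(x_n,0)<2^{-n}$ and set $m=\sum_{n=1}^\infty\delta(x_n)$, which is a positive (hence majorizable) element of $\lipfree{M}$. The computation in that remark shows $m$ cannot be represented by any finite, hence any Radon, measure on $M$, so \textup{(ii)} fails. This closes the cycle.

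There is no substantial obstacle here: the heavy lifting is done by Theorems \ref{th:majorizable_elements_bidual_not0} and \ref{th:normal_measure} together with Remark \ref{rm:ambrosio_puglisi}. The only points requiring a little care are (a) checking that isolation of the base point is precisely what is needed to deduce $0\notin\esupp{\phi}$ so that Theorem \ref{th:majorizable_elements_bidual_not0} becomes applicable, and (b) remembering that elements of $\lipfree{M}$ automatically avoid infinity, which is what lets \textup{(iii)} be invoked for them; both are already recorded in the preliminaries.
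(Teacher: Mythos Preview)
Your proof is correct and follows essentially the same route the paper intends: the paper does not write out an explicit proof but presents the proposition immediately after Remark \ref{rm:ambrosio_puglisi} as a direct consequence of that construction together with Theorems \ref{th:majorizable_elements_bidual_not0} and \ref{th:majorizable_elements_not0}, and your cycle \textup{(i)}$\Rightarrow$\textup{(iii)}$\Rightarrow$\textup{(ii)}$\Rightarrow$\textup{(i)} is precisely the natural way to assemble these pieces. The only tiny point you might add for completeness in \textup{(iii)}$\Rightarrow$\textup{(ii)} is that before invoking Theorem \ref{th:normal_measure} one should normalize the Radon measure so that $\mu(\{0\})=0$ (replacing $\mu$ by $\hat\mu$), but this is routine.
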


We will now extend Theorem \ref{th:majorizable_elements_bidual_not0} to all positive elements of $\bidualfree{M}$ that avoid $0$ and infinity and show that they can also be represented by a positive measure. The representing measure may not be finite, but it will always be \textit{almost Radon} (see Definition \ref{def:almost radon}); recall that this implies that it is inner regular and $\sigma$-finite, and its restriction to every closed set separated from $0$ is Radon. We will construct the desired measure as an inverse limit of Radon measures that are supported away from the base point, reversing the construction from Lemma \ref{lm:setwise_convergence}.

\begin{theorem}
\label{th:positive_elements_bidual}
Suppose that $\phi\in\bidualfree{M}$ is positive and avoids $0$ and infinity. Then $\phi$ is represented by a positive, almost Radon measure $\mu$ on $\ucomp{M}$. In fact, $\mu$ can be chosen as the setwise limit of $\mu_n$, where $\mu_n$ are Radon measures representing $\phi\circ\wop{G_{-n}}$ for $n\in\NN$.
\end{theorem}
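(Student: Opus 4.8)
The plan is to realize $\mu$ as the projective (setwise) limit of Radon measures representing the truncated functionals $\phi\circ\wop{G_{-n}}$, and then to verify that this limit is almost Radon and that it represents $\phi$.

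First, fix $n\in\NN$ and set $\phi_n=\phi\circ\wop{G_{-n}}$. Using the identities $\wop{G_a}\wop{G_b}=\wop{G_{\max\set{a,b}}}$ and $G_{-n}H_m=G_{-n}-G_m$ for $m\ge -n$, together with the hypothesis that $\phi$ avoids infinity (i.e. $\phi\circ\wop{G_m}\to0$), one sees that $\phi_n$ is positive (hence majorizable), avoids infinity, and satisfies $\phi_n=\phi_n\circ\wop{G_{-n}}$, so it avoids $0$ strongly and $0\notin\esupp{\phi_n}$ by Proposition \ref{pr:separation_classes_equivalences_part2}(b). Theorem \ref{th:majorizable_elements_bidual_not0} therefore yields a Radon measure representing $\phi_n$, which by Proposition \ref{pr:induced_supp_pos_bidual} can be chosen positive; removing its possible atom at $0$ we obtain a positive Radon measure $\mu_n\in\measz{\ucomp M}$ with $\opint\mu_n=\phi_n$.

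The crucial point is the compatibility relation $d\mu_n=\ucomp{G_{-n}}\,d\mu_m$ whenever $m\ge n$. To prove it, note that the measure $\ucomp{G_{-n}}\,d\mu_m$ is positive, Radon, vanishes on $\set{0}$, and —since $\mu_m$ is concentrated on $\rcomp M$ by Proposition \ref{pr:elements_induced_by_measure_bidual}, where $\ucomp{(fg)}=\ucomp f\,\ucomp g$— it represents $\phi_m\circ\wop{G_{-n}}=\phi\circ\wop{G_{-m}G_{-n}}=\phi_n=\opint\mu_n$; uniqueness of representing almost Radon measures (Proposition \ref{pr:measure_uniqueness}) gives the claim. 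In particular $\mu_n\le\mu_m$ for $m\ge n$, so $\mu(E):=\sup_n\mu_n(E)=\lim_n\mu_n(E)$ defines a positive (possibly infinite) Borel measure on $\ucomp M$ with $\mu(\set{0})=0$, to which $\mu_n$ converges setwise; countable additivity is just the monotone convergence theorem for series. To see $\mu$ is almost Radon, let $K\subset\ucomp M$ be closed with $0\notin K$; since $\ucomp\rho$ is continuous into $[0,+\infty]$ and vanishes only at $0$, it is bounded below by a positive constant on the compact set $K$, hence $\ucomp{G_{-n}}\equiv1$ on $K$ for $n$ large, and then $\mu\restrict_K=\mu_n\restrict_K$ is Radon.

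Finally, writing $\mu=\sum_k(\mu_k-\mu_{k-1})$ (with $\mu_0=0$) as a countable sum of positive Radon measures, we have $\int g\,d\mu=\lim_n\int g\,d\mu_n$ for every nonnegative Borel $g$; taking $g=\ucomp\rho$ gives $\int_{\ucomp M}\ucomp\rho\,d\mu=\lim_n\duality{\rho G_{-n},\phi}\le\duality{\rho,\phi}=\norm\phi<\infty$, so $\opint\mu\in\bidualfree M$ by Proposition \ref{pr:elements_induced_by_measure_bidual}. Since $\abs{\ucomp f}\le\lipnorm f\,\ucomp\rho$, each $\ucomp f$ lies in $L^1(\mu)$, whence $\duality{f,\opint\mu}=\int\ucomp f\,d\mu=\lim_n\int\ucomp f\,d\mu_n=\lim_n\duality{f,\phi\circ\wop{G_{-n}}}=\duality{f,\phi}$, the last equality because $\phi$ avoids $0$ and hence $\phi\circ\wop{G_{-n}}\to\phi$ in norm. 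Thus $\opint\mu=\phi$, and $\mu$ is positive since each $\mu_n$ is. The main obstacle throughout is controlling the setwise limit: ensuring it is a genuine almost Radon measure rather than a mere set function, and justifying the interchanges of limits and integrals; both rest on the compatibility relation $d\mu_n=\ucomp{G_{-n}}\,d\mu_m$ (which itself relies on uniqueness of representing measures and on the measures being concentrated on $\rcomp M$), the uniform bound $\int\ucomp\rho\,d\mu\le\norm\phi$, and the local stabilization $\mu\restrict_K=\mu_n\restrict_K$ away from the base point.
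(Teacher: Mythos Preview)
Your proof is correct and follows essentially the same route as the paper's: obtain Radon measures $\mu_n$ for the truncations $\phi\circ\wop{G_{-n}}$ via Theorem~\ref{th:majorizable_elements_bidual_not0}, show they increase setwise to an almost Radon limit $\mu$, and verify $\opint\mu=\phi$ by passing to the limit. The one technical difference is that you establish the stronger compatibility $d\mu_n=\ucomp{G_{-n}}\,d\mu_m$ via the uniqueness Proposition~\ref{pr:measure_uniqueness}, which yields monotonicity and the local stabilization $\mu\restrict_K=\mu_n\restrict_K$ in one stroke, whereas the paper derives monotonicity and stabilization separately by analyzing $\supp(\mu_{n+1}-\mu_n)\subset\ucl{\supp(\Lambda_{-n})}$ directly through Propositions~\ref{pr:induced_supp_pos_bidual} and~\ref{pr:support_Th}.
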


\begin{proof}
For $n\in\NN$ denote
$$
A_n = \set{\zeta\in\ucomp{M}: 2^{-n} \leq \ucomp{\rho}(\zeta)}
$$
and $\phi_n=\phi\circ\wop{G_{-n}}$. Since $\phi$ avoids $0$, we have $\phi=\lim_n\phi_n$. Note that $\esupp{\phi_n}\subset A_{n}$ by Proposition \ref{pr:support_Th}, hence each $\phi_n$ avoids infinity and $0\notin\esupp{\phi_n}$, so by Theorem \ref{th:majorizable_elements_bidual_not0} it is induced by a Radon measure $\mu_n$ on $\ucomp{M}$ which obviously satisfies $\mu_n(\set{0})=0$. Proposition \ref{pr:induced_supp_pos_bidual} then shows that $\supp(\mu_n)\subset A_{n}$, and in particular
\begin{equation}
\label{eq:mu_evtlly_0}
\mu_n(E)=0 \text{ for every Borel set } E\subset \ucomp{M}\setminus A_{n} .
\end{equation}
We also have
$$
\opint(\mu_{n+1}-\mu_n) = \phi_{n+1}-\phi_n = \phi\circ\wop{G_{-(n+1)}-G_{-n}}.
$$
Since $G_{-(n+1)}-G_{-n}=\Lambda_{-n}\geq 0$, this is a positive element of $\bidualfree{M}$ and so $\mu_{n+1}-\mu_n\geq 0$ by Proposition \ref{pr:induced_supp_pos_bidual}. That is,
\begin{equation}
\label{eq:mu_increasing_setwise}
\mu_{n+1}(E)\geq\mu_n(E) \text{ for every Borel set } E\subset\ucomp{M} .
\end{equation}
Moreover, combining Propositions \ref{pr:induced_supp_pos_bidual} and \ref{pr:support_Th}
$$
\supp(\mu_{n+1}-\mu_n) = \esupp{\phi_{n+1}-\phi_n} \subset \ucl{\supp(\Lambda_{-n})}
$$
hence in particular we get
\begin{equation}
\label{eq:mu_evtlly_constant}
\mu_{n+1}(E)=\mu_n(E) \text{ for every Borel set } E\subset A_{n-2} .
\end{equation}

By \eqref{eq:mu_increasing_setwise}, the limit
$$
\mu(E)=\lim_{n\rightarrow\infty}\mu_n(E)
$$
exists and is positive (possibly infinite) for every Borel set $E\subset\ucomp{M}$. It is straightforward to check that $\mu$ is a Borel measure on $\ucomp{M}$. Notice also that $\mu(\set{0})=\mu(\ucomp{M}\setminus\rcomp{M})=0$ by Proposition \ref{pr:elements_induced_by_measure_bidual}. Moreover, \eqref{eq:mu_evtlly_constant} implies
\begin{equation}
\label{eq:mu_restrict_mun}
\mu\restrict_{A_n}=\mu_{n+2}\restrict_{A_n}
\end{equation}
for every $n$. Thus, if $K$ is any closed subset of $\ucomp{M}$ such that $0\notin K$, we have $K\subset A_n$ for some $n$ and therefore  $\mu\restrict_K=\mu_{n+2}\restrict_K$, which is a Radon measure. Hence $\mu$ is almost Radon.

We will now finish the proof by showing that $\phi$ is represented by $\mu$. Let $f\in\Lip_0(M)$ be positive. Since $\phi$ avoids $0$ and $\wop{G_{-n}}\circ\wop{G_{-(n+2)}}=\wop{G_{-n}}$, we may write
\begin{align*}
\duality{f,\phi} &= \lim_{n\to\infty}\duality{f,\phi_n} = \lim_{n\to\infty}\duality{f,\phi\circ\wop{G_{-n}}\circ\wop{G_{-(n+2)}}} \\
&= \lim_{n\to\infty}\duality{fG_{-n},\phi_{n+2}} = \lim_{n\rightarrow\infty}\int_{\ucomp{M}}\ucomp{\left(fG_{-n}\right)}\,d\mu_{n+2}\\
&= \lim_{n\rightarrow\infty}\int_{\ucomp{M}}\ucomp{f}\ucomp{G_{-n}}\,d\mu_{n+2} \\
&=  \lim_{n\rightarrow\infty}\int_{\ucomp{M}}\ucomp{f}\ucomp{G_{-n}}\,d\mu 
\end{align*}
where the last equality follows from the fact that $\ucomp{G_{-n}}=0$ on $\ucomp{M}\setminus A_n$ and identity \eqref{eq:mu_restrict_mun}. But $\ucomp{G_{-n}}$ converge pointwise and increasingly to the characteristic function of the set $\ucomp{M}\setminus\{0\}$, so by Lebesgue's monotone convergence theorem we obtain
$$
\duality{f,\phi} = \lim_{n\rightarrow\infty}\int_{\ucomp{M}}\ucomp{f}\ucomp{G_{-n}}\,d\mu = \int_{\ucomp{M}}\ucomp{f}\,d\mu
$$
for every positive $f\in\Lip_0(M)$, and hence for every $f\in\Lip_0(M)$.
\end{proof}

Since elements of $\lipfree{M}$ also avoid $0$ and infinity, appealing to Theorem \ref{th:normal_measure} we obtain one of the implications from \cite[Proposition 2.7]{AmPu_2016} as a corollary:

\begin{corollary}
\label{cr:positive_elements_free}
Every positive element of $\lipfree{M}$ can be represented by a positive, almost Radon measure on $M$.
\end{corollary}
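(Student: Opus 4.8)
The plan is to deduce this immediately from the two central results already established, with essentially no new argument required. Let $m\in\pos{\lipfree{M}}$. Viewed inside $\bidualfree{M}$ it is still positive, and, as recorded in the discussion following Definition \ref{def:separation_classes}, every element of $\lipfree{M}$ avoids $0$ and infinity. Hence Theorem \ref{th:positive_elements_bidual} applies and produces a positive, almost Radon measure $\mu$ on $\ucomp{M}$ with $\opint\mu=m$ (obtained there as the setwise limit of the Radon measures representing $m\circ\wop{G_{-n}}$, $n\in\NN$).

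Next I would invoke Theorem \ref{th:normal_measure}: the measure $\mu$ is almost Radon and $\opint\mu=m\in\lipfree{M}\subset\bidualfree{M}$, so condition (i) of that theorem holds and therefore so does condition (iii), i.e. $\mu$ is concentrated on $M$. It then only remains to regard $\mu$ as a measure on $M$ and check that the almost-Radon property survives the transfer. For this, note that $\mu\restrict_M$ is a positive Borel measure on $M$ with $(\mu\restrict_M)(\set{0})=0$; and if $K\subset M$ is closed with $0\notin K$ then necessarily $d(K,0)>0$ (otherwise completeness of $M$ would force $0\in K$), so $\ucl{K}$ is a closed subset of $\ucomp{M}$ disjoint from $\set{0}$ and $\mu\restrict_{\ucl{K}}$ is Radon on $\ucomp{M}$. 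Since $\mu$ is concentrated on $M$ and a subset of $M$ is compact in $M$ exactly when it is compact in $\ucomp{M}$, the measure $\mu\restrict_K$ is finite and inner regular — hence Radon — as a measure on $M$. Thus $\mu$ is almost Radon on $M$ and represents $m$, as desired; positivity is inherited directly from the construction.

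I do not expect any real obstacle here: all the substantive work has been front-loaded into Theorem \ref{th:positive_elements_bidual} (the inverse-limit construction of the representing measure away from the base point) and Theorem \ref{th:normal_measure} (which forces that measure to be concentrated on $M$). The only mildly delicate point is the routine verification, sketched above, that ``almost Radon on $\ucomp{M}$ and concentrated on $M$'' transfers to ``almost Radon on $M$''. It is also worth emphasizing at this point, in comparison with \cite[Proposition 2.7]{AmPu_2016} (cf. Remark \ref{rm:ambrosio_puglisi}), that the representing measure $\mu$ need not be finite in general.
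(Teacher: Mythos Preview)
Your proposal is correct and follows exactly the paper's approach: apply Theorem \ref{th:positive_elements_bidual} (using that elements of $\lipfree{M}$ avoid $0$ and infinity) and then Theorem \ref{th:normal_measure} to force the measure onto $M$. The paper states this in one sentence; you add the routine verification that the almost-Radon property transfers from $\ucomp{M}$ to $M$, which is a welcome detail the paper leaves implicit. One minor remark: the parenthetical about completeness is unnecessary, since for any closed $K$ in a metric space with $0\notin K$ one already has $d(0,K)>0$ simply because the complement of $K$ is open.
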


Let us remark that, as is the case with finiteness, full regularity cannot be achieved in Theorem \ref{th:positive_elements_bidual} and Corollary \ref{cr:positive_elements_free} in general. Indeed, it is straightforward to check that any Borel measure $\mu$ representing the functional $m$ constructed in Remark \ref{rm:ambrosio_puglisi} satisfies $\mu(U)=\infty$ for every open neighborhood $U$ of $0$. Therefore, if $\mu$ is outer regular then $\mu(\set{0})=\infty$ and so $\mu$ cannot be $\sigma$-finite.

Although stated in terms of positive elements, Theorem \ref{th:positive_elements_bidual} and Corollary \ref{cr:positive_elements_free} also yield representation results for majorizable elements, as they can be written as the difference between two positive elements and therefore represented as the difference between two almost Radon positive measures. It is tempting to state that a majorizable functional $\phi$ is represented by a signed $\sigma$-finite measure, but that would be inaccurate because of a potential indeterminacy of the form $\infty-\infty$ around the base point. Therefore the most accurate statement would be the following:

\begin{theorem}
\label{th:majorizable_characterization}
Let $\phi\in\bidualfree{M}$. Then the following are equivalent:
\begin{enumerate}[label={\upshape{(\roman*)}}]
\item $\phi$ is majorizable and avoids $0$ and infinity,
\item $\phi$ is the difference of two elements of $\bidualfree{M}$ induced by positive almost Radon measures on $\ucomp{M}$.
\end{enumerate}
The same holds if $\bidualfree{M}$ and $\ucomp{M}$ are replaced by $\lipfree{M}$ and $M$, respectively, and the condition ``avoids $0$ and infinity'' is removed.
\end{theorem}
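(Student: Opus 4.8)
The plan is to obtain this theorem as an essentially formal consequence of Theorem~\ref{th:positive_elements_bidual} (and, for the predual statement, of Corollary~\ref{cr:positive_elements_free}), which already contain the analytic core; what is left is to match up positivity with the separation classes of Definition~\ref{def:separation_classes}. I would treat the two implications of the bidual equivalence in turn.

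\textbf{(ii)$\Rightarrow$(i).} Suppose $\phi=\opint\mu-\opint\nu$ with $\mu,\nu$ positive almost Radon measures on $\ucomp{M}$ and $\opint\mu,\opint\nu\in\bidualfree{M}$. Since $\ucomp{f}\geq 0$ whenever $f\geq 0$, positivity of $\mu$ and $\nu$ gives $\opint\mu,\opint\nu\geq 0$, so $\phi\leq\opint\mu$ and $\phi$ is majorizable. Moreover each of $\opint\mu,\opint\nu$ avoids $0$ and infinity by Lemma~\ref{lm:measure_avoids_infty}, and since the functionals avoiding $0$ and infinity form a subspace of $\bidualfree{M}$ (remark after Corollary~\ref{cor:decomposition_A_infinity}), their difference $\phi$ avoids $0$ and infinity as well.

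\textbf{(i)$\Rightarrow$(ii).} Write $\phi=\phi^+-\phi^-$ with $\phi^+,\phi^-\in\pos{(\bidualfree{M})}$, and set $\psi=\kaltonsum{\phi^+}$, $\chi=\kaltonsum{\phi^-}$. Each of $\psi,\chi$ is positive, being a norm limit (by~\eqref{eq:kalton_finite_part}) of the positive functionals $\phi^\pm\circ\wop{\Pi_n}$ and the positive cone being closed; each avoids $0$ and infinity by the argument used in the proof of Theorem~\ref{th:decomp_0_infty}; and $\psi-\chi=\kaltonsum{\phi^+}-\kaltonsum{\phi^-}=\kaltonsum{\phi}=\phi$, using the linearity of $\phi\mapsto\kaltonsum{\phi}$ and the fact that $\kaltonsum{\phi}=\phi$ because $\phi$ avoids $0$ and infinity. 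Then Theorem~\ref{th:positive_elements_bidual} applies to $\psi$ and to $\chi$, yielding positive almost Radon measures $\mu,\nu$ on $\ucomp{M}$ with $\psi=\opint\mu$ and $\chi=\opint\nu$; hence $\phi=\opint\mu-\opint\nu$ with $\opint\mu=\psi\in\bidualfree{M}$ and $\opint\nu=\chi\in\bidualfree{M}$, which is~(ii).

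\textbf{Predual version.} Here no manipulation of the separation classes is needed, since every element of $\lipfree{M}$ automatically avoids $0$ and infinity. If $m\in\lipfree{M}$ is majorizable, write $m=m^+-m^-$ with $m^\pm\in\pos{\lipfree{M}}$ and invoke Corollary~\ref{cr:positive_elements_free} to write $m^+=\opint\mu$, $m^-=\opint\nu$ for positive almost Radon measures $\mu,\nu$ on $M$; then $\opint\mu=m^+\in\lipfree{M}$ and $\opint\nu=m^-\in\lipfree{M}$. Conversely, if $m=\opint\mu-\opint\nu$ with $\opint\mu,\opint\nu\in\lipfree{M}$ and $\mu,\nu$ positive, then $\opint\mu\geq 0$ and $m\leq\opint\mu$, so $m$ is majorizable. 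I do not foresee a genuine obstacle: the only point requiring care is the choice of the positive parts in (i)$\Rightarrow$(ii), where Theorem~\ref{th:positive_elements_bidual} demands positivity \emph{together with} avoiding $0$ and infinity, and it is exactly the Kalton-type operator $\phi\mapsto\kaltonsum{\phi}$ that produces a positive majorant within that class.
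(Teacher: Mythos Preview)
Your proof is correct and follows essentially the same route as the paper's: the paper cites Remark~\ref{remark:majorants_avoiding_stuff} to replace the majorants $\phi^\pm$ by ones that avoid $0$ and infinity, and that remark is precisely the construction $\psi=\kaltonsum{\phi^+}$ that you spell out explicitly. The (ii)$\Rightarrow$(i) direction and the predual version are likewise identical in substance.
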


\begin{proof}
For (i)$\Rightarrow$(ii), write $\phi=\phi^+-\phi^-$ where $\phi^\pm$ are positive and may be assumed to avoid $0$ and infinity by Remark \ref{remark:majorants_avoiding_stuff}, and then apply Theorem \ref{th:positive_elements_bidual} (or Corollary \ref{cr:positive_elements_free} for the $\lipfree{M}$ case). The implication (ii)$\Rightarrow$(i) follows directly from Lemma \ref{lm:measure_avoids_infty}. 
\end{proof}

Let us now summarize our results for a few important particular cases. When $M$ is bounded, every element of $\bidualfree{M}$ avoids infinity (strongly, in fact) and hence by Theorem \ref{th:decomp_0_infty} it can be expressed as a derivation at $0$ plus a functional that avoids $0$ and infinity. So every positive element of $\bidualfree{M}$ is ``a derivation at $0$ plus a measure'':

\begin{corollary}
Suppose that $M$ is bounded, and let $\phi\in\bidualfree{M}$ be positive. Then $\phi=\phi_0+\opint\mu$ where $\phi_0$ is a positive derivation at $0$ and $\mu$ is a positive almost Radon measure on $\ucomp{M}$. If moreover $0\notin\esupp{\phi}$, then $\phi_0=0$ and $\mu\in\meas{\ucomp{M}}$.
\end{corollary}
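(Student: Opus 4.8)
The plan is to read this corollary off from the canonical decomposition of Theorem \ref{th:decomp_0_infty} together with the representation results of this section. Since $M$ is bounded, every $\phi\in\bidualfree{M}$ is strongly bounded (as observed right after Definition \ref{def:separation_classes}), hence avoids infinity, so the component $\phi_\infty$ in \eqref{eq:decomp_0_infty} vanishes and we may write $\phi=\phi_0+\kaltonsum{\phi}$ where $\phi_0$ is a derivation at $0$ and $\kaltonsum{\phi}$ avoids $0$ and infinity. If $\phi$ is positive, then so are $\phi_0$ and $\kaltonsum{\phi}$ by the last clause of Theorem \ref{th:decomp_0_infty}, so $\phi_0$ is a positive derivation at $0$. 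Applying Theorem \ref{th:positive_elements_bidual} to $\kaltonsum{\phi}$ yields a positive almost Radon measure $\mu$ on $\ucomp{M}$ with $\kaltonsum{\phi}=\opint\mu$, whence $\phi=\phi_0+\opint\mu$, which is the first assertion.

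For the second assertion, I would assume $0\notin\esupp{\phi}$. By Proposition \ref{pr:separation_classes_equivalences_part2}(b) this means $\phi$ avoids $0$ strongly, in particular it avoids $0$; combined with avoiding infinity, $\phi$ avoids $0$ and infinity, so $\phi=\phi+0+0$ is a decomposition of the form provided by Theorem \ref{th:decomp_0_infty} and uniqueness forces $\phi_0=0$ and $\phi=\kaltonsum{\phi}$. It then remains to upgrade $\mu$ to a finite measure. Here I would invoke Theorem \ref{th:majorizable_elements_bidual_not0}: $\phi$ is positive, hence majorizable, it avoids infinity, and $0\notin\esupp{\phi}$, so it is induced by a Radon measure $\nu$ on $\ucomp{M}$; replacing $\nu$ by $\nu-\nu(\set{0})\delta_0$ we may assume $\nu(\set{0})=0$, and $\nu$ is positive by Proposition \ref{pr:induced_supp_pos_bidual}. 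Both $\mu$ and $\nu$ are then almost Radon and represent $\phi$, so $\mu=\nu$ by Proposition \ref{pr:measure_uniqueness}; in particular $\mu\in\meas{\ucomp{M}}$.

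There is no genuinely hard analytic step here; the argument is bookkeeping with Theorem \ref{th:decomp_0_infty} and the representation theorems of this section. The one place that needs a little care is the identification $\phi_0=0$ under the hypothesis $0\notin\esupp{\phi}$: one is tempted to argue via additivity of extended supports (Corollary \ref{cr:support_of_sum}), which would require knowing that $\esupp{\phi_0}$ and $\esupp{\kaltonsum{\phi}}$ are disjoint — information not directly at hand — so the clean route is through uniqueness of the decomposition in Theorem \ref{th:decomp_0_infty}. The other mild subtlety is that Theorem \ref{th:positive_elements_bidual} only produces an almost Radon, possibly infinite, measure, so finiteness in the special case must be recovered separately from Theorem \ref{th:majorizable_elements_bidual_not0} together with the uniqueness of the representing measure.
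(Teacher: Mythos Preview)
Your proof is correct and follows essentially the same approach as the paper, which only sketches the argument in the paragraph preceding the corollary: use boundedness of $M$ to kill $\phi_\infty$, apply Theorem \ref{th:decomp_0_infty} to get $\phi=\phi_0+\kaltonsum{\phi}$ with both pieces positive, and represent $\kaltonsum{\phi}$ by a measure via Theorem \ref{th:positive_elements_bidual}. Your treatment of the second assertion---deducing $\phi_0=0$ from uniqueness of the decomposition rather than from support considerations, and recovering finiteness of $\mu$ via Theorem \ref{th:majorizable_elements_bidual_not0} together with Proposition \ref{pr:measure_uniqueness}---is more detailed than the paper's one-line statement but is exactly the intended route.
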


Recall that there exist nontrivial positive derivations at the base point if and only if it is not isolated; see the comment before Proposition \ref{pr:norm_sum_deriv} for a simple example. In the case where $0$ is an isolated point we get the following characterization, combining with Theorems \ref{th:induced_elements_bidual} and \ref{th:induced_elements}. A particular case of this arises with $\Lip(M)$ spaces; see Section \ref{sec:conclusion} for more details.

\begin{corollary}
\label{cr:equiv_bounded_isolated}
Suppose that $M$ is bounded and the base point is isolated. Then for $\phi\in\bidualfree{M}$ the following are equivalent:
\begin{enumerate}[label={\upshape{(\roman*)}}]
\item $\phi$ is majorizable,
\item $\phi$ is represented by a Radon measure on $\ucomp{M}$,
\item $\phi$ is the weak$^\ast$ limit of a net $(m_i)$ in $\lspan\,\delta(M)$ such that $\norm{m_i}_1$ is bounded.
\end{enumerate}
The same holds if we replace $\bidualfree{M}$ with $\lipfree{M}$, $\ucomp{M}$ with $M$, ``weak$^\ast$ limit'' with ``norm limit'' and ``net'' with ``sequence'', respectively.
\end{corollary}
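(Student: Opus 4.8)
The plan is to obtain both chains of equivalences by directly combining results already established in the paper: Theorems \ref{th:majorizable_elements_bidual_not0} and \ref{th:induced_elements_bidual} for the $\bidualfree{M}$ statement, and Theorems \ref{th:majorizable_elements_not0} and \ref{th:induced_elements} for the $\lipfree{M}$ statement. The only preliminary work is to check that the two standing hypotheses place every functional into the range of applicability of those theorems. Since $M$ is bounded, every $\phi\in\bidualfree{M}$ is strongly bounded; in particular it avoids infinity and, when $\phi=m\in\lipfree{M}$, its support $\supp(m)$ is bounded, being a subset of $M$. Since the base point is isolated in $M$, every $\phi\in\bidualfree{M}$ avoids $0$ strongly, so $0\notin\esupp{\phi}$ by Proposition \ref{pr:separation_classes_equivalences_part2}(b); for $\phi=m\in\lipfree{M}$ this gives $0\notin\supp(m)$ via Corollary \ref{cr:extended_support_fm}.

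For the $\bidualfree{M}$ statement I would argue as follows. Because every $\phi$ avoids infinity and satisfies $0\notin\esupp{\phi}$, Theorem \ref{th:majorizable_elements_bidual_not0} applies verbatim and yields (i)$\Leftrightarrow$(ii). For (ii)$\Leftrightarrow$(iii) I invoke Theorem \ref{th:induced_elements_bidual}: its forward direction gives (ii)$\Rightarrow$(iii), and since every $\phi$ is strongly bounded the converse direction of that theorem is available, giving (iii)$\Rightarrow$(ii). This closes the equivalences.

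The $\lipfree{M}$ statement is proved in exactly the same way with the obvious substitutions. For $m\in\lipfree{M}$ we have $0\notin\supp(m)$, so Theorem \ref{th:majorizable_elements_not0} gives (i)$\Leftrightarrow$(ii) — and, as a bonus, shows that majorizability of $m$ in $\lipfree{M}$ and in $\bidualfree{M}$ coincide here, so the reading of condition (i) is unambiguous. Then (ii)$\Leftrightarrow$(iii) follows from Theorem \ref{th:induced_elements}: its forward direction produces a norm-convergent sequence $(m_n)$ in $\lspan\,\delta(M)$ with bounded $\norm{m_n}_1$, and since $\supp(m)$ is bounded the converse direction applies as well, giving (iii)$\Rightarrow$(ii).

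I expect no genuine obstacle here beyond bookkeeping; the one point that merits a line of care is that condition (ii) asks for a \emph{Radon} (hence finite) measure, not merely an almost Radon one. This is, however, exactly what Theorems \ref{th:majorizable_elements_bidual_not0} and \ref{th:majorizable_elements_not0} deliver under the hypothesis $0\notin\esupp{\phi}$ (resp. $0\notin\supp(m)$) that we have verified. Alternatively one could start from the almost Radon measure furnished by Theorem \ref{th:majorizable_characterization} and observe that, because $0$ is isolated in $M$ (hence isolated in $\ucomp{M}$) and $M$ is bounded, $\ucomp{\rho}$ is bounded away from $0$ on $\ucomp{M}\setminus\set{0}$, which together with $\mu(\set{0})=0$ forces $\abs{\mu}(\ucomp{M})<\infty$; but the first route is shorter and I would take it.
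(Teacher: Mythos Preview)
Your proposal is correct and follows essentially the same route as the paper, which presents this corollary without explicit proof as a direct combination of the preceding results with Theorems \ref{th:induced_elements_bidual} and \ref{th:induced_elements}. Your preliminary checks (that boundedness gives strong boundedness and that an isolated base point forces $0\notin\esupp{\phi}$ via Proposition \ref{pr:separation_classes_equivalences_part2}(b)) are exactly the bookkeeping needed, and your invocation of Theorems \ref{th:majorizable_elements_bidual_not0} and \ref{th:majorizable_elements_not0} for (i)$\Leftrightarrow$(ii) is the intended mechanism.
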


The most significant particular case of the preceding analysis is given by compact metric spaces $M$. In that case $\ucomp{M}=M$, so Proposition \ref{pr:elements_induced_by_measure} shows that $\opint\mu$ actually belongs to $\lipfree{M}$. Thus the majorizable elements of $\lipfree{M}$ and $\bidualfree{M}$ are almost the same:

\begin{corollary}
\label{cr:bidual_majorizable_compact}
Suppose that $M$ is compact, and let $\phi\in\bidualfree{M}$ be majorizable. Then $\phi=m+\phi_0$ where $m\in\lipfree{M}$ is majorizable and $\phi_0$ is a derivation at $0$. If moreover $0\notin\esupp{\phi}$, then $\phi\in\lipfree{M}$ and it is represented by a Radon measure on $M$.
\end{corollary}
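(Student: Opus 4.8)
The plan is to reduce everything to results already established for functionals that avoid $0$ and infinity, using that $\ucomp{M}=M$ when $M$ is compact and that boundedness of $M$ forces every functional to be strongly bounded. First I would observe that $M$ compact implies $M$ bounded, so every $\phi\in\bidualfree{M}$ avoids infinity; consequently, in the canonical decomposition $\phi=\phi_0+\kaltonsum{\phi}+\phi_\infty$ of Theorem \ref{th:decomp_0_infty} the term $\phi_\infty$ vanishes, giving $\phi=\phi_0+\kaltonsum{\phi}$ with $\phi_0$ a derivation at $0$ and $\kaltonsum{\phi}$ avoiding $0$ and infinity. By Remark \ref{remark:majorants_avoiding_stuff}, majorizability of $\phi$ passes to each summand, so $\kaltonsum{\phi}$ is majorizable and avoids $0$ and infinity.

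Next I would apply Theorem \ref{th:majorizable_characterization} to $\kaltonsum{\phi}$: it can be written as $\opint\mu^{+}-\opint\mu^{-}$ with $\mu^{\pm}$ positive almost Radon measures on $\ucomp{M}=M$, and the functionals $\opint\mu^{\pm}$ lie in $\bidualfree{M}$. Since almost Radon measures are inner regular and satisfy $\mu^{\pm}(\{0\})=0$, Proposition \ref{pr:elements_induced_by_measure} promotes each $\opint\mu^{\pm}$ to an element of $\lipfree{M}$, which is positive because $\mu^{\pm}\geq 0$ (recall $\ucomp{f}=f$ on the compact space $M$). Hence $m:=\kaltonsum{\phi}$ belongs to $\lipfree{M}$ and is there a difference of two positive elements, i.e.\ majorizable in $\lipfree{M}$; together with $\phi=m+\phi_0$ this proves the first assertion.

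For the case $0\notin\esupp{\phi}$, I would instead invoke the sharper Theorem \ref{th:majorizable_elements_bidual_not0}: since $\phi$ avoids infinity, is majorizable, and has $0\notin\esupp{\phi}$, it is induced by a Radon measure $\mu$ on $\ucomp{M}=M$; as $M$ is compact, hence complete and separable, and $\mu$ is inner regular, Proposition \ref{pr:elements_induced_by_measure} gives $\opint\mu\in\lipfree{M}$, so $\phi=\opint\mu$ is a weak$^{\ast}$ continuous functional represented by a Radon measure on $M$, as claimed. There is no real obstacle here beyond correct bookkeeping; the one point that needs attention is to use Theorem \ref{th:majorizable_elements_bidual_not0} (which requires the hypothesis $0\notin\esupp{\phi}$ and yields a genuinely finite measure) rather than Theorem \ref{th:positive_elements_bidual} or Theorem \ref{th:majorizable_characterization}, which in general only produce an almost Radon, possibly infinite, measure.
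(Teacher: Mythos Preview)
Your proposal is correct and follows essentially the same route that the paper sketches in the paragraph immediately preceding the corollary: use compactness to get $\ucomp{M}=M$ and boundedness, kill $\phi_\infty$ in the decomposition of Theorem~\ref{th:decomp_0_infty}, pass majorizability to $\kaltonsum{\phi}$ via Remark~\ref{remark:majorants_avoiding_stuff}, represent it by almost Radon measures on $M$ via Theorem~\ref{th:majorizable_characterization}, and then invoke Proposition~\ref{pr:elements_induced_by_measure} (using that $M$ is separable) to land in $\lipfree{M}$. Your treatment of the case $0\notin\esupp{\phi}$ via Theorem~\ref{th:majorizable_elements_bidual_not0} is exactly the intended shortcut.
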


Note that the weak$^*$ continuity of any positive functional $\phi\in\bidualfree{M}$ when $M$ is compact with isolated base point can also be proved directly without using measures. Let us sketch the argument. If $M$ is compact and $(f_i)\subset\Lip_0(M)^+$ is a bounded net converging pointwise to $0$, then it already converges uniformly. If, moreover, $0\in M$ is isolated, then the function $e$ on $M$ defined by $e(x)=1$ for every $x\in M\setminus\{0\}$ and $e(0)=0$ belongs to $\Lip_0(M)$. Hence, for every $\varepsilon>0$ there exists $i$ such that $f_j\leq\varepsilon e$ for every $j\succcurlyeq i$, and the conclusion follows from the positivity of $\phi$.

\subsection{Minimum majorants}

Now we turn to the following problem. An element $\phi$ of $\lipfree{M}$ or $\bidualfree{M}$ is majorizable precisely when it can be expressed as the difference between two positive elements. Does there exist a canonical, ``minimal'' representation as such a difference? Let us fix some notation:

\begin{definition}
Let $X$ be an ordered vector space and $x\in X$. A \textit{majorant} of $x$ is a positive element $x^+\in\pos{X}$ such that $x\leq x^+$. If there exists a majorant $x^+$ of $x$ with the property that every majorant of $x$ is also a majorant of $x^+$, we call such $x^+$ the \textit{minimum majorant} of $x$.
\end{definition}

Our question can be rephrased as: does every majorizable element have a minimum majorant? It is obvious that the minimum majorant is unique whenever it exists, and that if $x^+$ is the minimum majorant of $x$ then $x^-=x^+-x$ is the minimum majorant of $-x$. Moreover, the existence of such a minimum majorant is equivalent to the existence of an optimal representation $x=x^+-x^-$ where $x^+,x^-$ are positive and satisfy the following minimum property:
\textit{for every expression $x=y^+-y^-$ where $y^+,y^-\geq 0$ we have $y^+\geq x^+$ and $y^-\geq x^-$}. This behaviour is found in $\Lip_0(M)$ with the optimal decomposition $f=f^+-f^-$, and also in finite measures where the Jordan decomposition into their positive and negative parts is optimal in that sense. In both cases, the minimum majorant of an element can be identified with its positive part.

It is reasonable to expect similar properties from majorizable functionals on $\Lip_0(M)$, given their close relationship to measures. We will see in Corollary \ref{cr:minimal_majorant_general} that our suspicions are correct. Let us first establish that fact for functionals that avoid $0$ and infinity, where the representation by measures provides additional information.

\begin{theorem}
\label{tm:minimal_majorant_bidual}
Suppose that $\phi\in\bidualfree{M}$ avoids $0$ and infinity. If $\phi$ is majorizable then it has a minimum majorant $\phi^+$ that also avoids $0$ and infinity.
Moreover, $\phi^+$ and $\phi^-=\phi^+-\phi$ are represented by positive almost Radon measures on $\ucomp{M}$ that are concentrated on disjoint Borel subsets of $\ucomp{M}$.
\end{theorem}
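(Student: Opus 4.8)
The plan is to reduce the problem, via Theorem \ref{th:positive_elements_bidual} (or rather Theorem \ref{th:majorizable_characterization}), to a statement about almost Radon measures, and then transfer the Jordan decomposition from the measure side back to the functional side. Concretely, write $\phi=\phi^+_0-\phi^-_0$ as a difference of positive elements of $\bidualfree{M}$; by Remark \ref{remark:majorants_avoiding_stuff} we may assume both summands avoid $0$ and infinity, so by Theorem \ref{th:positive_elements_bidual} each is induced by a positive almost Radon measure, say $\phi^\pm_0=\opint\lambda^\pm$. The issue is that $\lambda^+-\lambda^-$ need not be defined as a signed measure because of the possible $\infty-\infty$ indeterminacy at $0$; but this is only a problem \emph{at} the base point, which has measure zero. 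So first I would restrict away from $0$: for each $n$ set $\mu^\pm_n=\lambda^\pm\restrict_{A_n}$ where $A_n=\{\zeta:\ucomp{\rho}(\zeta)\geq 2^{-n}\}$, which are genuine finite (Radon) measures, and let $\nu_n=\mu^+_n-\mu^-_n$ be their difference with Jordan decomposition $\nu_n=\nu_n^+-\nu_n^-$ and a Hahn decomposition $\ucomp{M}=P_n\cup N_n$. The key compatibility observation is that $\nu_n=\nu_{n+1}\restrict_{A_n}$ (since $\lambda^\pm\restrict_{A_n}$ is the restriction of $\lambda^\pm\restrict_{A_{n+1}}$), hence by uniqueness of the Jordan decomposition $\nu_{n+1}^\pm\restrict_{A_n}=\nu_n^\pm$, and one may choose the Hahn sets to be nested: $P_n=P_{n+1}\cap A_n$ up to a $|\nu_{n+1}|$-null set.

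Next I would take limits. Since $\bigcup_n A_n=\ucomp{M}\setminus\{0\}$ and the $\nu_n^\pm$ are consistent under restriction, define $\mu^+(E)=\lim_n\nu_n^+(E\cap A_n)=\sup_n\nu_n^+(E)$ and similarly $\mu^-$; these are positive Borel measures on $\ucomp{M}$ vanishing at $0$, and the same argument as in Lemma \ref{lm:setwise_convergence} / Theorem \ref{th:positive_elements_bidual} shows they are almost Radon (their restriction to any closed $K$ with $0\notin K$ equals $\nu_{n}^\pm\restrict_K$ for $n$ large, hence is Radon). Put $P=\bigcup_n P_n$, a Borel set; then $\mu^+$ is concentrated on $P$ and $\mu^-$ on $\ucomp{M}\setminus P$, so they live on disjoint Borel sets. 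Now I claim $\phi=\opint\mu^+-\opint\mu^-$: indeed $\phi\circ\wop{G_{-n}}$ is induced by $\lambda^+\restrict_{R_n}-\lambda^-\restrict_{R_n}$ (with $R_n$ as in Theorem \ref{th:positive_elements_bidual}), which up to a null adjustment near $0$ is $\nu_n$ weighted by $\ucomp{G_{-n}}$, and $\phi=\lim_n\phi\circ\wop{G_{-n}}$; passing to the limit using monotone convergence (exactly as in the final display of the proof of Theorem \ref{th:positive_elements_bidual}) gives $\duality{f,\phi}=\int\ucomp{f}\,d\mu^+-\int\ucomp{f}\,d\mu^-$ for positive $f$, hence for all $f$. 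Set $\phi^+=\opint\mu^+$ and $\phi^-=\opint\mu^-$; by Lemma \ref{lm:measure_avoids_infty} both avoid $0$ and infinity, and $\phi=\phi^+-\phi^-$ with $\phi^\pm\geq 0$.

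Finally I would verify the minimum property. Let $\psi$ be any majorant of $\phi$ in $\bidualfree{M}$, i.e. $\psi\geq 0$ and $\psi\geq\phi$; replacing $\psi$ by its $\kaltonsum{\cdot}$-part as in Remark \ref{remark:majorants_avoiding_stuff} we may assume $\psi$ avoids $0$ and infinity, so $\psi=\opint\sigma$ for a positive almost Radon $\sigma$ by Theorem \ref{th:positive_elements_bidual}. I must show $\sigma\geq\mu^+$, equivalently $\psi\geq\phi^+$. Working at level $n$: $\psi\circ\wop{G_{-n}}\geq\phi\circ\wop{G_{-n}}$ and $\psi\circ\wop{G_{-n}}\geq 0$ are functionals induced by finite Radon measures, and Proposition \ref{pr:induced_supp_pos_bidual} converts these functional inequalities into measure inequalities $\sigma\restrict_{R_n}\geq\nu_n$ (weighted appropriately) and $\sigma\restrict_{R_n}\geq 0$; by minimality of the Jordan positive part this forces $\sigma\restrict_{R_n}\geq\nu_n^+$ after the $\ucomp{G_{-n}}$-weighting is accounted for. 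Letting $n\to\infty$ (monotone convergence / Lemma \ref{lm:setwise_convergence}(b)) yields $\sigma\geq\mu^+$, hence $\psi=\opint\sigma\geq\opint\mu^+=\phi^+$, so $\psi$ is a majorant of $\phi^+$. Thus $\phi^+$ is the minimum majorant, and $\phi^-=\phi^+-\phi$ is automatically represented by $\mu^-$.

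\textbf{Main obstacle.} The delicate point is the bookkeeping at the base point: since $\lambda^+$ and $\lambda^-$ may both be infinite near $0$, one cannot simply form $\lambda^+-\lambda^-$ and take its Jordan decomposition globally. The argument must be carried out on the sets $A_n$ (or $R_n$) bounded away from $0$, where everything is finite and the classical Jordan/Hahn theory applies, and then one needs the consistency relations $\nu_{n+1}\restrict_{A_n}=\nu_n$ together with nestedness of the Hahn sets to glue the pieces into globally defined almost Radon measures $\mu^\pm$ on disjoint Borel sets. A secondary subtlety is ensuring the minimality transfers through the limit: one needs Proposition \ref{pr:induced_supp_pos_bidual} to translate $\psi\circ\wop{G_{-n}}\geq\phi\circ\wop{G_{-n}}$ into an honest measure inequality at each finite level before invoking minimality of the Jordan positive part and passing to the limit.
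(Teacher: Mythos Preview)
Your approach is correct and reaches the same conclusion, but it is organized differently from the paper's proof.

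The paper proceeds in two stages. First it treats the special case $0\notin\esupp{\phi}$: there Theorem \ref{th:majorizable_elements_bidual_not0} gives a single finite signed Radon measure $\mu$ with $\phi=\opint\mu$, and the ordinary Jordan decomposition $\mu=\mu^+-\mu^-$ directly yields the minimum majorant (minimality is checked by representing any competing majorant via Theorem \ref{th:majorizable_elements_bidual_not0} and using Proposition \ref{pr:induced_supp_pos_bidual}). Second, for general $\phi$ avoiding $0$ and infinity, it applies the special case to each $\phi_n=\phi\circ\wop{G_{-n}}$, observes that the resulting minimum majorants $\phi_n^+$ form a bounded sequence, and takes a weak$^\ast$ cluster point $\phi^+$; minimality of $\phi^+$ then follows immediately because any majorant $\psi$ dominates every $\phi_n^+$. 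Only afterwards does the paper identify the representing measures, using Lemma \ref{lm:minimal_majorant_weighted} to show $\phi^+\circ\wop{G_{-n}}=\phi_n^+$ and invoking Theorem \ref{th:positive_elements_bidual}.

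Your route is ``measures first'': you start from any decomposition $\phi=\opint\lambda^+-\opint\lambda^-$ into positive almost Radon measures, take the honest Jordan decomposition of the finite signed measure $(\lambda^+-\lambda^-)\restrict_{A_n}$ at each level, and glue these compatible Jordan parts into global almost Radon measures $\mu^\pm$ on disjoint Borel sets. This avoids the intermediate weak$^\ast$ cluster point argument and the appeal to Lemma \ref{lm:minimal_majorant_weighted}, producing the representing measures and the minimum majorant in one sweep. The cost is more measure-theoretic bookkeeping: you must track the distinction between restriction to $A_n$ and weighting by $\ucomp{G_{-n}}$ (you acknowledge this but leave it somewhat implicit), and the nestedness of the Hahn sets $P_n$ needs a short argument rather than just the phrase ``up to a null set''. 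The paper's two-stage approach is more modular and pushes the delicate limiting step onto the already-established Theorem \ref{th:positive_elements_bidual}; yours is more direct and self-contained.
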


In the proof we will use the following simple lemma about minimum majorants of weighted functionals:

\begin{lemma}
\label{lm:minimal_majorant_weighted}
Suppose that $\phi$ is a majorizable element in $\lipfree{M}$ or $\bidualfree{M}$ that has a minimum majorant $\phi^+$. Let $h\in\pos{\Lip(M)}$ be such that $\norm{h}_{\infty}<\infty$ and either of the functions $h$ or $\norm{h}_{\infty}-h$ has a bounded support. Then $\phi\circ\wop{h}$ is majorizable and its minimum majorant is $\phi^+\circ\wop{h}$.
\end{lemma}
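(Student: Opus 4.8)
The plan is to reduce the lemma to a short order-theoretic computation built around the complementary weight $g=1-h$ and the operator identity $\wop{h}+\wop{g}=I$. First I would record the two facts that make it work. Since $h\ge 0$, the operator $\wop{h}$ carries positive functions to positive functions, so the adjoint operation $\psi\mapsto\psi\circ\wop{h}$ on $\bidualfree{M}$ (and on $\lipfree{M}$) is order preserving and maps positive functionals to positive functionals: indeed, if $\psi_1\le\psi_2$ then $\duality{f,(\psi_2-\psi_1)\circ\wop{h}}=\duality{fh,\psi_2-\psi_1}\ge 0$ for every $f\in\pos{\Lip_0(M)}$. Also, if $h=0$ the statement is trivial, so we may assume $\norm{h}_\infty>0$; since $\wop{ch}=c\wop{h}$ for $c>0$ and the minimum-majorant operation is positively homogeneous, we may replace $h$ by $h/\norm{h}_\infty$ and assume $0\le h\le 1$.

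With that normalization, set $g=1-h\in\pos{\Lip(M)}$, so that $\wop{h}+\wop{g}=I$ on $\Lip_0(M)$. By hypothesis one of $h$, $g$ has bounded support; by \eqref{eq:weighting_op_norm} the corresponding weighting operator is bounded on $\Lip_0(M)$, and the other one is then bounded too, being the difference of the identity and a bounded operator. Hence both are \weaks-\weaks-continuous, and both $\psi\mapsto\psi\circ\wop{h}$ and $\psi\mapsto\psi\circ\wop{g}$ map $\bidualfree{M}$ into itself and $\lipfree{M}$ into itself; this is the only place where the precise shape of the hypothesis matters, and it is exactly what keeps us inside $\lipfree{M}$ when working there. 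The majorizability half is now immediate: from $\phi\le\phi^+$ and the observations above, $\phi\circ\wop{h}\le\phi^+\circ\wop{h}$ and $\phi^+\circ\wop{h}\ge 0$, so $\phi\circ\wop{h}$ is majorizable and $\phi^+\circ\wop{h}$ is one of its majorants.

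For minimality, let $\psi$ be an arbitrary majorant of $\phi\circ\wop{h}$, i.e.\ $\psi\ge 0$ and $\psi\ge\phi\circ\wop{h}$. Writing $\phi=\phi\circ\wop{h}+\phi\circ\wop{g}$ and using $\phi\circ\wop{h}\le\psi$ together with $\phi\circ\wop{g}\le\phi^+\circ\wop{g}$, we get $\phi\le\psi+\phi^+\circ\wop{g}$, and the right-hand side is positive as a sum of positive functionals. Thus $\psi+\phi^+\circ\wop{g}$ is a majorant of $\phi$, so by the minimality of $\phi^+$ we obtain $\phi^+\le\psi+\phi^+\circ\wop{g}$; since $\phi^+\circ\wop{g}=\phi^+-\phi^+\circ\wop{h}$ (again by $\wop{h}+\wop{g}=I$), this rearranges to $\phi^+\circ\wop{h}\le\psi$. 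Hence every majorant of $\phi\circ\wop{h}$ majorizes $\phi^+\circ\wop{h}$, which is precisely the assertion that $\phi^+\circ\wop{h}$ is the minimum majorant of $\phi\circ\wop{h}$; the same chain of inequalities works verbatim for $\lipfree{M}$. I do not anticipate a genuine obstacle; the only delicate point is securing the boundedness and \weaks-\weaks-continuity of $\wop{g}$, which is exactly why the hypothesis asks that $h$ \emph{or} $\norm{h}_\infty-h$ have bounded support rather than $h$ alone.
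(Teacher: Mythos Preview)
Your proof is correct and follows essentially the same route as the paper's: normalize to $0\le h\le 1$, set $g=1-h$, use $\wop{h}+\wop{g}=I$, and for any majorant $\psi$ of $\phi\circ\wop{h}$ observe that $\psi+\phi^+\circ\wop{g}$ majorizes $\phi$, whence $\phi^+\le\psi+\phi^+\circ\wop{g}$ and $\phi^+\circ\wop{h}\le\psi$. The only difference is cosmetic: you spell out the order-preservation of $\psi\mapsto\psi\circ\wop{h}$ and the reason both $\wop{h}$ and $\wop{g}$ are bounded, while the paper records these in one line.
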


\begin{proof}
Without loss of generality assume that $\norm{h}_\infty=1$ and notice that both $\wop{h}$ and $\wop{1-h}=I-\wop{h}$ are \weaks-\weaks-continuous operators on $\Lip_0(M)$. It is clear that $\phi\circ\wop{h}$ is majorized by $\phi^+\circ\wop{h}\geq 0$ and, similarly, $\phi\circ\wop{1-h}$ is majorized by $\phi^+\circ\wop{1-h}\geq 0$. Now suppose that $\psi\in\pos{(\bidualfree{M})}$ is another majorant for $\phi\circ\wop{h}$, then
$$
\psi + \phi^+\circ\wop{1-h} \geq \phi\circ\wop{h} + \phi\circ\wop{1-h} = \phi.
$$
Hence $\psi + \phi^+\circ\wop{1-h}\geq \phi^+$ by minimality and therefore $\psi\geq \phi^+ -\phi^+\circ\wop{1-h}=\phi^+\circ\wop{h}$ as claimed.
\end{proof}

\begin{proof}[Proof of Theorem \ref{tm:minimal_majorant_bidual}]
We will first prove the theorem under the assumption that $0\notin\esupp{\phi}$. By Theorem \ref{th:majorizable_elements_bidual_not0} we have $\phi=\opint\mu$ for some $\mu\in\meas{\ucomp{M}}$. Let $\mu=\mu^+-\mu^-$ be the Jordan decomposition of $\mu$. Then $\phi^+=\opint\mu^+$ and $\phi^-=\opint\mu^-$ are elements of $\bidualfree{M}$ by Proposition \ref{pr:elements_induced_by_measure_bidual} and Remark \ref{rm:restricted_functionals}, and avoid $0$ and infinity by Lemma \ref{lm:measure_avoids_infty}. Moreover, $0\notin\esupp{\phi^{\pm}}$ by Proposition \ref{pr:induced_supp_pos_bidual}. Clearly, $\phi^+$ is a majorant for $\phi$; we claim that it is the minimum majorant. Let $\psi$ be another majorant for $\phi$, we need to check that $\psi\geq\phi^+$. By Remark \ref{remark:majorants_avoiding_stuff}, $\psi$ may be replaced by a smaller majorant that avoids infinity. Moreover, there is $n\in\NN$ such that $\phi=\phi\circ\wop{G_{-n}}$ by assumption, hence $\psi\geq\psi\circ\wop{G_{-n}}\geq\phi$.
Theorem \ref{th:majorizable_elements_bidual_not0} and Proposition \ref{pr:induced_supp_pos_bidual} now imply that $\psi\circ\wop{G_{-n}}$ is represented by a positive measure $\lambda\in\meas{\ucomp{M}}$. Then $\opint\lambda=\psi\circ\wop{G_{-n}}\geq\opint\mu$ and thus $\lambda\geq\mu$ again by Proposition \ref{pr:induced_supp_pos_bidual}. By the Hahn decomposition theorem, we must have $\lambda\geq\mu^+$ and therefore $\psi\geq\opint\lambda\geq\opint\mu^+=\phi^+$. This proves our claim. Moreover $\phi=\phi^+-\phi^-$, and $\mu^+$ and $\mu^-$ are concentrated on disjoint Borel subsets of $\ucomp{M}$. That ends the proof of this case. 

Let us now handle the general case. For each $n\in\NN$, $\phi_n=\phi\circ\wop{G_{-n}}$ is majorizable, avoids infinity, and $0$ is not in its extended support. Therefore it has a minimum majorant $\phi_n^+$ by the previous paragraph. Now let $\psi\in\bidualfree{M}$ be any majorant of $\phi$. Then $\psi \geq \psi\circ\wop{G_{-n}} \geq \phi_n$ and so $\psi\geq\phi_n^+$. Notice also that $(\phi_n^+)$ is a bounded sequence since
$$
\norm{\phi_n^+} = \duality{\rho,\phi_n^+} \leq \duality{\rho,\psi} = \norm{\psi} ,
$$
therefore it must have a \weaks-cluster point $\phi^+\in\pos{(\bidualfree{M})}$, which will obviously satisfy $\phi^+\leq\psi$. Taking weak$^\ast$ limits in $\phi_n^+\geq\phi_n$ for the appropriate subnet yields $\phi^+\geq\lim_n\phi_n=\phi$. Thus $\phi^+$ is a majorant for $\phi$, and it is the minimum one because $\psi$ was arbitrary. This proves the existence.
The fact that $\phi^+$ avoids $0$ and infinity is an immediate consequence of Remark \ref{remark:majorants_avoiding_stuff}.

Let $\phi^-=\phi^+-\phi$ and $\phi_n^-=\phi_n^+-\phi_n$. We have already proved that $\phi_n^+,\phi_n^-$ are represented by positive Radon measures $\mu_n^+,\mu_n^-$ concentrated on disjoint Borel sets $A_n^+,A_n^-$. Since $\phi_m^+=\phi^+\circ\wop{G_{-m}}\geq\phi^+\circ\wop{G_{-n}}=\phi^+_n$ for every $m\geq n$ by Lemma \ref{lm:minimal_majorant_weighted}, the linearity of $\opint$ on Radon measures and Proposition \ref{pr:induced_supp_pos_bidual} imply that $\mu_{m}^+\geq\mu_n^+$. Therefore $\mu_n^+(A_n^+\setminus A_{m}^+)=0$ and $\mu_n^+$ is concentrated on $A_m^+$.
It follows that each $\mu_n^+$ is concentrated on $\bigcap_{m=n}^{\infty}A_m^+$, and thus also on the set
$$
A^+=\bigcup_{n=1}^\infty\bigcap_{m=n}^\infty A_m^+.
$$
By Theorem \ref{th:positive_elements_bidual}, $\phi^+$ is represented by a positive almost Radon measure $\mu^+$ on $\ucomp{M}$ which is constructed as the setwise limit of the Radon measures on $\ucomp{M}$ representing functionals $\phi^+\circ\wop{G_{-n}}$. But we have $\phi^+\circ\wop{G_{-n}}=\phi^+_n$ by Lemma \ref{lm:minimal_majorant_weighted}, so $\mu^+$ is the setwise limit of $\mu_n^+$ as $n$ tends to $\infty$ and thus $\mu^+$ is also concentrated on $A^+$.

Finally, recall that if $\phi^+$ is the minimum majorant of $\phi$ then $\phi^-=\phi^+-\phi$ is the minimum majorant of $-\phi$. Therefore a similar argument with $-\phi$, $\mu_n^-$ and $A_n^-$ shows that the almost Radon measure $\mu^-$ representing $\phi^-$ is concentrated on $A^-=\bigcup_n\bigcap_{m\geq n} A_m^-$. Since $A^+$ and $A^-$ are disjoint Borel subsets of $\ucomp{M}$, this finishes our proof.
\end{proof}

Since elements of $\lipfree{M}$ avoid $0$ and infinity, we can now deduce that the majorizable ones also have minimum majorants. It is possible to prove this following the argument used in Theorem \ref{tm:minimal_majorant_bidual}, but we will instead deduce it as a consequence. Moreover, we finally obtain in full generality the promised result that the properties of being majorizable in $\lipfree{M}$ and $\bidualfree{M}$ are equivalent.

\begin{theorem}
\label{th:minimal_majorant}
Let $m\in\lipfree{M}$. Then $m$ is majorizable in $\lipfree{M}$ if and only if it is majorizable in $\bidualfree{M}$. In that case, it has a minimum majorant $m^+$ that belongs to $\lipfree{M}$. Moreover, $m^+$ and $m^-=m^+-m$ are represented by positive almost Radon measures on $M$ that are concentrated on disjoint Borel subsets of $M$.
\end{theorem}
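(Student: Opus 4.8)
\emph{Proof plan.} The plan is to deduce the statement from Theorem \ref{tm:minimal_majorant_bidual} (which already produces a minimum majorant \emph{in $\bidualfree{M}$}) together with the normality theorem \ref{th:normal_measure}, using that every element of $\lipfree{M}$ avoids $0$ and infinity. The direction ``majorizable in $\lipfree{M}$ $\Rightarrow$ majorizable in $\bidualfree{M}$'' is trivial, since the order of $\lipfree{M}$ is the restriction of the order of $\bidualfree{M}$ and $\pos{\lipfree{M}}\subset\pos{(\bidualfree{M})}$. So I assume $m$ is majorizable in $\bidualfree{M}$; as $m$ avoids $0$ and infinity, Theorem \ref{tm:minimal_majorant_bidual} furnishes a minimum majorant $m^+\in\bidualfree{M}$ (again avoiding $0$ and infinity) and positive almost Radon measures $\mu^+,\mu^-$ on $\ucomp{M}$, concentrated on disjoint Borel sets $A^+,A^-$, with $m^+=\opint\mu^+$ and $m^-:=m^+-m=\opint\mu^-$.

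The whole argument then reduces to showing that $\mu^+$ and $\mu^-$ are concentrated on $M$. Granting that, Theorem \ref{th:normal_measure} gives $m^+=\opint\mu^+\in\lipfree{M}$ and $m^-\in\lipfree{M}$, so $m=m^+-m^-$ is majorizable in $\lipfree{M}$; moreover $m^+$ is then the minimum majorant of $m$ in $\lipfree{M}$ as well, because a majorant of $m$ lying in $\lipfree{M}$ is in particular a majorant of $m$ in $\bidualfree{M}$ and hence dominates $m^+$; and $\mu^+,\mu^-$, now regarded as measures on $M$, are positive almost Radon measures concentrated on the disjoint Borel subsets $A^+\cap M$, $A^-\cap M$ of $M$, as required.

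The main obstacle is that one cannot directly form the signed measure $\mu^+-\mu^-$ and apply Theorem \ref{th:normal_measure} to it, since $\mu^+$ and $\mu^-$ may each carry infinite mass in every neighbourhood of the base point, so their difference is an $\infty-\infty$ indeterminacy there. I circumvent this by truncating away from $0$. For $n\in\NN$, let $\mu^\pm_n$ be defined by $d\mu^\pm_n=\ucomp{G_{-n}}\,d\mu^\pm$; by Lemma \ref{lm:setwise_convergence}(a) these are finite positive Radon measures, absolutely continuous with respect to $\mu^\pm$ (hence still concentrated on $A^\pm$), and with $\mu^\pm_n(\set{0})=0$ because $\ucomp{G_{-n}}$ vanishes at $0$. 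Then $\mu_n:=\mu^+_n-\mu^-_n$ is a genuine finite signed Radon measure; as $A^+\cap A^-=\varnothing$, its Jordan decomposition is $\mu_n=\mu^+_n-\mu^-_n$, so $\abs{\mu_n}=\mu^+_n+\mu^-_n$, and $\mu_n(\set{0})=0$, whence $\mu_n$ is almost Radon. By Lemma \ref{lm:setwise_convergence}(c), $\opint\mu_n=\opint\mu^+_n-\opint\mu^-_n=(m^+-m^-)\circ\wop{G_{-n}}=m\circ\wop{G_{-n}}$, which lies in $\lipfree{M}$ because $m\in\lipfree{M}$ and $\wop{G_{-n}}$ preserves $\lipfree{M}$.

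From here everything falls into place. Theorem \ref{th:normal_measure} applied to $\mu_n$ shows it is concentrated on $M$; hence $\abs{\mu_n}=\mu^+_n+\mu^-_n$ is concentrated on $M$, and therefore so are $\mu^+_n$ and $\mu^-_n$ individually. Finally, by Lemma \ref{lm:setwise_convergence}(b), $\mu^\pm_n\to\mu^\pm$ setwise, so $\mu^\pm(\ucomp{M}\setminus M)=\lim_{n\to\infty}\mu^\pm_n(\ucomp{M}\setminus M)=0$; that is, $\mu^+$ and $\mu^-$ are concentrated on $M$, which closes the gap left in the second paragraph. Apart from the truncation step, the rest is routine bookkeeping with the support conditions (disjointness of $A^\pm$, vanishing at $\set{0}$, and propagating ``concentrated on $M$'' from $\abs{\mu_n}$ to $\mu^\pm_n$ and thence, via setwise convergence, to $\mu^\pm$).
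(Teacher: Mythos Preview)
Your proof is correct and takes a genuinely different route from the paper's argument. The paper first shows $m^+\in\lipfree{M}$ via a functional-analytic squeeze: using Lemma \ref{lm:minimal_majorant_weighted}, each $m^+\circ\wop{\Pi_n}$ is the minimum majorant of $m\circ\wop{\Pi_n}$; since $m\circ\wop{\Pi_n}$ has support bounded away from $0$, Theorem \ref{th:majorizable_elements_not0} represents it by some $\mu_n\in\meas{M}$, and then $0\leq m^+\circ\wop{\Pi_n}\leq\opint\mu_n^+\in\lipfree{M}$ together with Lemma \ref{lm:positive_functional_lemma} forces $m^+\circ\wop{\Pi_n}\in\lipfree{M}$. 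Only afterwards does the paper invoke Theorem \ref{th:normal_measure} to push the representing measures $\mu^\pm$ from $\ucomp{M}$ down to $M$. You instead work entirely on the measure side: you truncate $\mu^\pm$ to finite Radon measures $\mu^\pm_n$, exploit the disjointness of $A^+,A^-$ to recognise $\mu^+_n-\mu^-_n$ as a Jordan decomposition, apply Theorem \ref{th:normal_measure} to the signed measure $\mu_n$ (whose functional $m\circ\wop{G_{-n}}$ is weak$^\ast$ continuous), and pass to the limit via Lemma \ref{lm:setwise_convergence}(b). Your approach bypasses Lemma \ref{lm:positive_functional_lemma} and Theorem \ref{th:majorizable_elements_not0} altogether and makes more direct use of the disjoint-support conclusion of Theorem \ref{tm:minimal_majorant_bidual}; the paper's approach, on the other hand, keeps the argument at the level of functionals for as long as possible and uses the measure representation only as a final step.
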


\begin{proof}
It is clear that if $m$ is majorizable in $\lipfree{M}$ then it is also majorizable in $\bidualfree{M}$. Now suppose that $m$ is majorizable in $\bidualfree{M}$. Since $m$ satisfies the hypothesis of Theorem \ref{tm:minimal_majorant_bidual}, it has a minimum majorant $m^+\in\pos{(\bidualfree{M})}$ that avoids $0$ and infinity. Thus $m^+$ is the norm limit of $m^+\circ\wop{\Pi_n}$, so in order to prove that $m^+\in\lipfree{M}$ it will be enough to show that $m^+\circ\wop{\Pi_n}\in\lipfree{M}$ for every $n\in\NN$. Fix $n$, then Lemma \ref{lm:minimal_majorant_weighted} implies that $m^+\circ\wop{\Pi_n}$ is the minimum majorant of $m\circ\wop{\Pi_n}$. By Theorem \ref{th:majorizable_elements_not0} there exists $\mu_n\in\meas{M}$ such that $m\circ\wop{\Pi_n}=\opint\mu_n$. Then $\opint\mu_n^+\in\pos{\lipfree{M}}$ by Remark \ref{rm:restricted_functionals} and it clearly is a majorant of $m\circ\wop{\Pi_n}$. Hence by the minimality we get that $m^+\circ\wop{\Pi_n}\leq\opint\mu_n^+$, and Lemma \ref{lm:positive_functional_lemma} yields that $m^+\circ\wop{\Pi_n}\in\lipfree{M}$ as well. Thus $m$ is majorizable in $\lipfree{M}$, and clearly its minimum majorant in $\lipfree{M}$ is also $m^+$.

Finally, by Theorem \ref{tm:minimal_majorant_bidual}, $m^+$ and $m^-$ are represented by positive almost Radon measures $\mu^+$, $\mu^-$ on $\ucomp{M}$ concentrated on disjoint Borel sets. But $m^\pm\in\lipfree{M}$, hence $\mu^+$, $\mu^-$ are actually concentrated on $M$ by Theorem \ref{th:normal_measure}. This ends the proof.
\end{proof}

Notice that Theorems \ref{th:minimal_majorant} and \ref{tm:minimal_majorant_bidual} show that majorizable elements in $\lipfree{M}$ and $\bidualfree{M}$ that avoid $0$ and infinity can \textit{almost} be represented as measures with a Hahn decomposition: in general they cannot be represented as a single measure, but they are always given by a difference of two minimal positive measures that are concentrated on disjoint Borel sets. Let us remark that this separation property is valid only in terms of the Borel sets on which the measures concentrate but not for their closed supports, hence neither for the supports of the functionals themselves, i.e. we can find simple examples where $\supp(m^+)\cap\supp(m^-)\neq\varnothing$.

Let us now extend Theorem \ref{tm:minimal_majorant_bidual} to all majorizable $\phi\in\bidualfree{M}$. We may do so by employing a technique developed by Albiac, Ansorena, C\'uth and Doucha in the recent paper \cite{AACD_2021} (see also \cite[Theorem 2.20]{Weaver2}). To any metric space $M$ one may assign another metric space $\mathcal{B}$, defined as the closed metric subspace of $\lipfree{M}$ determined by the normalized evaluation functionals, i.e.
\begin{equation}
\label{eq:aacd_space}
\mathcal{B}=\cl{\set{m_x:x\in M}}\subset\lipfree{M} \quad\text{where}\quad m_x=\begin{cases} \dfrac{\delta(x)}{\rho(x)} &\text{, if $x\neq 0$} \\ 0 &\text{, if $x=0$.} \end{cases}
\end{equation}
Then $\mathcal{B}$ is complete and bounded with isolated base point, and there is an onto linear isomorphism $P:\lipfree{M}\to\lipfree{\mathcal{B}}$ given by
\begin{equation}
\label{eq:aacd_isomorphism}
P(\delta_M(x))=\rho(x)\cdot\delta_{\mathcal{B}}(m_x)
\end{equation}
for $x\in M$ (see \cite[Theorem 3.9]{AACD_2021}; note that the metric space $\mathcal{B}$ used in \cite{AACD_2021} is not the same as in \eqref{eq:aacd_space}, but a dense subset thereof). Lemma \ref{lm:positive_sum_closure} implies easily that $P$ is order-preserving. Thus its extension $\ddual{P}:\bidualfree{M}\to\bidualfree{\mathcal{B}}$ is also an onto, order-preserving linear isomorphism. In particular, it preserves majorizable elements and minimum majorants. So we conclude:

\begin{corollary}
\label{cr:minimal_majorant_general}
Every majorizable element of $\bidualfree{M}$ has a minimum majorant.
\end{corollary}

\begin{proof}
Because $\mathcal{B}$ is bounded and its base point is isolated, all elements of $\bidualfree{\mathcal{B}}$ avoid $0$ and infinity strongly. Theorem \ref{tm:minimal_majorant_bidual} and the remarks above then yield the desired conclusion.
\end{proof}

\medskip
We conclude this section by introducing a notion of variation for majorizable functionals in analogy with measures. Its existence is guaranteed by Corollary \ref{cr:minimal_majorant_general}.

\begin{definition}
\label{def:variation}
Suppose that $\phi\in\bidualfree{M}$ is majorizable. The minimum majorant of $\phi$ will be denoted by $\phi^+$, and $\phi^-=\phi^+-\phi$. We will call the \textit{variation} of $\phi$ the functional $\abs{\phi}\in\bidualfree{M}$ defined by
$$\abs{\phi}=\phi^++\phi^-.$$
\end{definition}

Note that by Theorem \ref{th:minimal_majorant} the variation of a majorizable element from $\lipfree{M}$ also belongs to $\lipfree{M}$. We always have $\norm{\phi}\leq\norm{\phi^+}+\norm{\phi^-}=\norm{\abs{\phi}}$, and equality is possible even if $\phi$ is not positive or negative. For instance, in $M=\RR$ let $\phi=\delta(1)-\delta(-1)$, then $\abs{\phi}=\delta(1)+\delta(-1)$ and $\norm{\phi}=\norm{\abs{\phi}}=2$.

The variation of $\phi$ is obviously also its majorant, but moreover, it majorizes the modulus of $\phi$ in the following sense:

\begin{proposition}
Let $\phi\in\bidualfree{M}$ be majorizable. Then its variation $\abs{\phi}$ is the smallest element of $\bidualfree{M}$ such that $\abs{\phi}\geq\phi^+$ and $\abs{\phi}\geq\phi^-$. It is also the smallest element of $\bidualfree{M}$ such that
$$
\abs{\duality{f,\phi}}\leq\duality{\abs{f},\abs{\phi}}\quad\textup{ for every }f\in\Lip_0(M).
$$
\end{proposition}

\begin{proof}
Let us start with the first claim. It is clear that $\abs{\phi}\geq\phi^\pm$. Now let $\psi\in\bidualfree{M}$ be such that $\psi\geq\phi^\pm$, and we will prove that $\psi\geq\abs{\phi}$.

Suppose first that $\phi$ avoids $0$ and infinity. Then we may assume that $\psi$ also avoids $0$ and infinity by Remark \ref{remark:majorants_avoiding_stuff}. By Theorem \ref{th:positive_elements_bidual}, $\phi^+$, $\phi^-$ and $\psi$ are represented by positive almost Radon measures $\mu^+$, $\mu^-$ and $\lambda$ on $\ucomp{M}$, so Proposition \ref{pr:induced_supp_pos_bidual} yields $\lambda\geq\mu^+$ and $\lambda\geq\mu^-$. However, by Theorem \ref{tm:minimal_majorant_bidual}, $\mu^+$ and $\mu^-$ are concentrated on disjoint Borel sets, and so it is immediate that we actually have $\lambda\geq\mu^++\mu^-$. Thus $\psi\geq\phi^++\phi^-=\abs{\phi}$ by Proposition \ref{pr:induced_supp_pos_bidual}.

In the general case, let us consider again the space $\mathcal{B}$ and the isomorphism $P$ described in \eqref{eq:aacd_space} and \eqref{eq:aacd_isomorphism}. Then $\psi\geq\phi^{\pm}$ implies $\ddual{P}\psi\geq\ddual{P}(\phi^{\pm})=(\ddual{P}\phi)^{\pm}$. But all functionals in $\bidualfree{\mathcal{B}}$ avoid $0$ and infinity, so by the previous case we get $\ddual{P}\psi\geq (\ddual{P}\phi)^++(\ddual{P}\phi)^-=\ddual{P}(\phi^+)+\ddual{P}(\phi^-)$ and thus $\psi\geq\phi^++\phi^-=\abs{\phi}$ again. This finishes the proof of the first statement.

Let us now move on to the second statement, and check that $\abs{\phi}$ satisfies the desired property. Assume first that $f\in\Lip_0(M)$ and $f\geq 0$. Then $$\duality{f,\phi}\leq\duality{f,\phi^+}\leq\duality{f,\abs{\phi}}$$
and
$$-\duality{f,\phi}=\duality{f,-\phi}\leq\duality{f,\phi^-}\leq\duality{f,\abs{\phi}},$$ hence
$\abs{\duality{f,\phi}}\leq\duality{f,\abs{\phi}}$.
Now decompose any $f\in\Lip_0(M)$ as $f=f^+-f^-$ where $f^+,f^-\geq 0$. Then
\begin{align*}
\abs{\duality{f,\phi}}&\leq\abs{\duality{f^+,\phi}}+\abs{\duality{f^-,\phi}}\leq\duality{f^+,\abs{\phi}}+\duality{f^-,\abs{\phi}}\\
&=\duality{|f|,\abs{\phi}}.
\end{align*}

Finally, suppose that $\psi\in\bidualfree{M}$ is such that $\abs{\duality{f,\phi}}\leq\duality{\abs{f},\psi}$ for every $f\in\Lip_0(M)$. This clearly implies $\psi\geq 0$. For positive $f$ we get $\duality{f,\psi}\geq\abs{\duality{f,\phi}}$, hence $\psi\geq\phi$ and $\psi\geq -\phi$, and therefore also $\psi\geq\phi^+$ and $\psi\geq (-\phi)^+=\phi^-$. The first part of the proposition now yields $\psi\geq\abs{\phi}$ and this finishes the proof.
\end{proof}

We finish by establishing the intuitively obvious fact that passing from a majorizable functional to its minimum majorant or its variation does not increase its support.

\begin{proposition}
\label{pr:variation_support}
Let $\phi\in\bidualfree{M}$ be majorizable and avoid infinity. Then
$$
\esupp{\abs{\phi}}=\esupp{\phi^+}\cup\esupp{\phi^-}=\esupp{\phi} .
$$
If $\phi\in\lipfree{M}$, the equality holds also for supports in $\lipfree{M}$ in place of extended supports.
\end{proposition}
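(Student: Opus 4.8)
\emph{Proof plan.} The plan is to dispose of the easy inclusions, reduce to the single containment $\esupp{\phi^+}\cup\esupp{\phi^-}\subset\esupp{\phi}$, prove that by truncating away from the base point (where the representing measures become genuine finite signed Radon measures), and then read off the $\lipfree{M}$ statement by intersecting with $M$. For the easy part: since $\esupp{-\psi}=\esupp{\psi}$, writing $\phi=\phi^++(-\phi^-)$ and applying Corollary~\ref{cr:support_of_sum} gives $\esupp{\phi}\subset\esupp{\phi^+}\cup\esupp{\phi^-}$; and since $\abs{\phi}=\phi^++\phi^-$ with $\phi^+,\phi^-\geq 0$ and $\phi^\pm\leq\abs{\phi}$, Corollary~\ref{cr:support_of_sum} together with Corollary~\ref{cr:positive_facts_bidual}(c) yields $\esupp{\abs{\phi}}=\esupp{\phi^+}\cup\esupp{\phi^-}$. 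So everything reduces to showing $\esupp{\phi^+}\cup\esupp{\phi^-}\subset\esupp{\phi}$, which will in fact come out as an equality.

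By Theorem~\ref{tm:minimal_majorant_bidual}, $\phi^+$ and $\phi^-$ are induced by positive almost Radon measures $\mu^+,\mu^-$ on $\ucomp{M}$ concentrated on disjoint Borel sets $A^+,A^-$; in particular $\phi^+,\phi^-$ avoid $0$ and infinity by Lemma~\ref{lm:measure_avoids_infty}. For $n\in\NN$ put $d\mu^\pm_n=\ucomp{G_{-n}}\,d\mu^\pm$; by Lemma~\ref{lm:setwise_convergence} each $\mu^\pm_n$ is Radon, still concentrated on $A^\pm$, carries no mass at $0$, and represents $\phi^\pm\circ\wop{G_{-n}}$. Then $\mu_n:=\mu^+_n-\mu^-_n$ is a finite signed measure with $\mu_n(\set{0})=0$ whose Jordan decomposition is precisely $\mu_n=\mu^+_n-\mu^-_n$ (the two pieces living on disjoint Borel sets); hence $\mu_n$ is almost Radon, and since $\abs{\mu_n}=\mu^+_n+\mu^-_n$ is a sum of positive measures, $\supp(\mu_n)=\supp(\mu^+_n)\cup\supp(\mu^-_n)$. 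As $\opint\mu_n=\phi^+\circ\wop{G_{-n}}-\phi^-\circ\wop{G_{-n}}=\phi\circ\wop{G_{-n}}$, applying Proposition~\ref{pr:induced_supp_pos_bidual} to the almost Radon measures $\mu_n$, $\mu^+_n$, $\mu^-_n$ gives $\esupp{\phi\circ\wop{G_{-n}}}=\esupp{\phi^+\circ\wop{G_{-n}}}\cup\esupp{\phi^-\circ\wop{G_{-n}}}$.

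It then remains to pass to the limit $n\to\infty$. For this I would isolate the identity $\esupp{\psi}=\ucl{\bigcup_{n\in\NN}\esupp{\psi\circ\wop{G_{-n}}}}$, valid for any $\psi\in\bidualfree{M}$ that avoids $0$ and infinity: ``$\supseteq$'' follows from Proposition~\ref{pr:support_Th} and the closedness of $\esupp{\psi}$, while ``$\subseteq$'' follows from Lemma~\ref{lm:extended_support_limit}, since $\psi=\lim_n\psi\circ\wop{G_{-n}}$ (as $\psi$ avoids $0$) and each $\psi\circ\wop{G_{-n}}$ avoids infinity (because $\wop{G_{-n}}$ commutes with every $\wop{H_m}$ and $\psi\circ\wop{H_m}\to\psi$). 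Applying this to $\phi$, $\phi^+$ and $\phi^-$ and using that closure commutes with finite unions converts the equality of the previous paragraph into $\esupp{\phi}=\esupp{\phi^+}\cup\esupp{\phi^-}$, which finishes the $\bidualfree{M}$ case. For $\phi=m\in\lipfree{M}$ one has $m^+,m^-,\abs{m}\in\lipfree{M}$ by Theorem~\ref{th:minimal_majorant} and $\supp(x)=\esupp{x}\cap M$ for $x\in\lipfree{M}$ by Corollary~\ref{cr:extended_support_fm}; intersecting the three extended-support equalities with $M$ then yields $\supp(\abs{m})=\supp(m^+)\cup\supp(m^-)=\supp(m)$.

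The step I expect to be the real obstacle is the containment $\esupp{\phi^+}\cup\esupp{\phi^-}\subset\esupp{\phi}$. A majorizable functional need not be induced by a single signed measure, because of a potential $\infty-\infty$ indeterminacy at the base point, so one cannot simply apply Proposition~\ref{pr:induced_supp_pos_bidual} to ``$\mu^+-\mu^-$'' directly; truncating by $G_{-n}$ removes exactly that obstruction while keeping the two Hahn pieces concentrated on disjoint sets, and the limiting lemma for extended supports then does the rest.
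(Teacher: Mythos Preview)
Your proof is correct, but it takes a different route from the paper's. The paper handles the key inclusion $\esupp{\phi^+}\subset\esupp{\phi}$ directly via minimality: given $\zeta\in\rcomp{M}\setminus\esupp{\phi}$, it builds a bounded-support bump $h$ with $\ucomp{h}\equiv 1$ near $\zeta$, observes that $\psi:=\phi^+\circ\wop{1-h}$ is still a positive majorant of $\phi$ (because $\langle fh,\phi\rangle=0$ for $f\geq 0$ supported in a suitable neighborhood of $\zeta$), and concludes from $\psi\leq\phi^+$ and minimality that $\phi^+=\psi$, i.e.\ $\phi^+\circ\wop{h}=0$, so $\zeta\notin\esupp{\phi^+}$. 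This argument never touches the measure representation or the disjoint sets $A^\pm$ from Theorem~\ref{tm:minimal_majorant_bidual}; it only needs that $\phi^+$ avoids infinity (to invoke Proposition~\ref{pr:extended_support_avoid_inf}).

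Your approach instead cashes in the full strength of Theorem~\ref{tm:minimal_majorant_bidual}: because $\mu^+,\mu^-$ live on disjoint Borel sets, the truncations $\mu_n=\mu_n^+-\mu_n^-$ are honest Jordan decompositions, so Proposition~\ref{pr:induced_supp_pos_bidual} delivers the support equality at each level $n$, and Lemma~\ref{lm:extended_support_limit} together with Proposition~\ref{pr:support_Th} lets you pass to the limit. This is slightly heavier machinery but arguably more transparent as to \emph{why} the supports coincide (singularity of the Hahn pieces), while the paper's argument is shorter and shows that minimality alone suffices, without ever invoking disjointness of the representing measures.
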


\begin{proof}
Apply first Corollary \ref{cr:support_of_sum} to observe that $\esupp{\abs{\phi}}\subset\esupp{\phi^+}\cup\esupp{\phi^-}$ and $\esupp{\phi}\subset\esupp{\phi^+}\cup\esupp{\phi^-}$. The inclusion $\esupp{\phi^+}\cup\esupp{\phi^-}\subset\esupp{\abs{\phi}}$ follows from statement (c) in Corollary \ref{cr:positive_facts_bidual}. Thus it only remains to be proved that $\esupp{\phi^+},\esupp{\phi^-}\subset\esupp{\phi}$. It will suffice to show that $\esupp{\phi^+}\subset\esupp{\phi}$, then $\esupp{\phi^-}\subset\esupp{\phi}$ follows from Corollary \ref{cr:support_of_sum} again. Moreover, $\phi^+$ avoids infinity by Remark \ref{remark:majorants_avoiding_stuff}, hence in view of Proposition \ref{pr:extended_support_avoid_inf} it is enough to prove that $\esupp{\phi^+}\cap\rcomp{M}\subset \esupp{\phi}$.

Let $\zeta\in \rcomp{M}\setminus\esupp{\phi}$. By Proposition \ref{pr:equiv_points_support_ext} there is a neighbourhood $U$ of $\zeta$ such that $\duality{f,\phi}=0$ for any $f\in\Lip_0(M)$ with the support contained in $U\cap M$. We may moreover assume that $U\subset \set{\xi\in\ucomp{M}: \ucomp{\rho}(\xi)<\ucomp{\rho}(\zeta)+1}$, so that $U\cap M$ is bounded. Take other neighbourhoods $V$, $W$ of $\zeta$ such that $\ucl{W}\subset V\subset \ucl{V}\subset U$ and define $h\in\Lip(M)$ so that $0\leq h\leq 1$ and that $h=1$ on $W\cap M$ and $h=0$ on $M\setminus V$. The support of such $h$ is bounded, so we may define $\psi=\phi^+\circ\wop{1-h}$. Then $\psi\leq \phi^+$, and for any positive $f\in\Lip_0(M)$ we have
$$
\duality{f,\psi} = \duality{f(1-h),\phi^+} \geq \duality{f(1-h),\phi} = \duality{f,\phi} - \duality{fh,\phi} = \duality{f,\phi}
$$
since $\supp(fh)\subset U\cap M$. Thus $\psi$ is a majorant for $\phi$, and from the minimality of $\phi^+$ it follows that $\phi^+=\psi$. Hence $\phi^+\circ\wop{h}=0$. But then $\duality{f,\phi^+}=\duality{fh,\phi^+}=0$ for any $f\in\Lip_0(M)$ such that $\supp(f)\subset W\cap M$. Therefore $\zeta\notin\esupp{\phi^+}$ by Proposition \ref{pr:equiv_points_support_ext}. This completes the proof for functionals in $\dual{\Lip_0(M)}$.

If $\phi\in\lipfree{M}$, then also $\phi^+,\phi^-\in\lipfree{M}$ by Theorem \ref{th:minimal_majorant},
and the equality for supports in $\lipfree{M}$ follows by intersecting with $M$ and applying Corollary \ref{cr:extended_support_fm}.
\end{proof}

The first equality in Proposition \ref{pr:variation_support} is valid for all majorizable functionals $\phi\in\bidualfree{M}$, but we do not know whether the second one holds in general when $\phi$ does not avoid infinity.

\section{Radially discrete spaces}

We have already witnessed that, in general, not all elements of a Lipschitz-free space $\lipfree{M}$ can be represented by a measure, or as the difference between two positive elements (see \cite[Example 3.24]{Weaver2}). In this section, we will identify the scenarios where such representations are always possible. To this end, recall first that $M$ is \textit{uniformly discrete} if
$$
\theta(M):=\inf\set{d(x,y):x\neq y\in M}>0 .
$$
This value $\theta(M)$ will be called the \textit{uniform separation constant} of $M$. Now we introduce two more classes of metric spaces:

\begin{definition}
\label{def:rud}
We will say that a pointed metric space $M$ is \textit{radially discrete} if there exists $\alpha>0$ such that $d(x,y)\geq\alpha\cdot d(x,0)$ for every pair $x,y$ of distinct points of $M$. We will say that $M$ is \textit{radially uniformly discrete} if it is radially discrete and uniformly discrete.
\end{definition}

Note that if $M$ is radially discrete, then the set $M\setminus B(0,r)$ is uniformly discrete for every $r>0$, and its uniform separation constant increases linearly with $r$. In particular, every point of $M\setminus\set{0}$ is isolated. However the base point need not be isolated, so in particular $M$ is not necessarily discrete. In fact, $M$ is uniformly discrete if and only if the base point is also isolated. Thus, the property of being radially discrete depends on the choice of base point, but it is easy to see that the property of being radially uniformly discrete does not.

Notice also that if $M$ is uniformly discrete and bounded then it is also radially uniformly discrete, taking $\alpha=\theta(M)/\diam(M)$ where $\theta(M)$ is the uniform separation constant of $M$. Unbounded uniformly discrete spaces are not necessarily radially discrete: consider e.g. $M=\NN$ with the metric inherited from $\RR$.

The following result is our main reason to introduce these classes of metric spaces:

\begin{theorem}
\label{th:all_majorizable}
The following are equivalent:
\begin{enumerate}[label={\upshape{(\roman*)}}]
\item $M$ is radially discrete,
\item all elements of $\lipfree{M}$ are majorizable,
\item all elements of $\bidualfree{M}$ are majorizable,
\item there exists a linear order-preserving isomorphism from $\lipfree{M}$ onto $\ell_1(M\setminus\{0\})$.
\end{enumerate}
If (iv) holds, then one such isomorphism is given by $\delta(x)\mapsto\rho(x) e_x$ for $x\in M\setminus\{0\}$, where $e_x$ are the standard basis elements of $\ell_1(M\setminus\{0\})$.
\end{theorem}

\begin{proof}
We will prove the implications (i)$\Rightarrow$(iv)$\Rightarrow$(iii)$\Rightarrow$(ii)$\Rightarrow$(i).

To start with the implication (i)$\Rightarrow$(iv), suppose $M$ is radially discrete and let $\alpha>0$ be such that $d(x,y)\geq \alpha\cdot d(x,0)$ for every $x\neq y\in M$. Define $T:\lspan \delta(M)\to \ell_1(M\setminus\{0\})$ linearly by $T(\delta(x))=\rho(x)e_x$ for every $x\in M\setminus\{0\}$. Fix some $\sum_{i=1}^na_i\delta(x_i)\in \lspan \delta(M)$ and let $f(0)=0$ and $f(x_i)=\sgn(a_i)\,d(x_i,0)$ for all $1\leq i\leq n$. We have
$$
|f(x_i)-f(x_j)|\leq |f(x_i)|+|f(x_j)|\leq d(x_i,0)+d(x_j,0)\leq \frac{2}{\alpha}d(x_i,x_j)
$$
and $f$ can be extended to a function in $\Lip_0(M)$ with $\lipnorm{f}\leq\frac{2}{\alpha}$. It then follows that 
$$
\norm{T\left(\sum_{i=1}^na_i\delta(x_i)\right)}_{\ell_1}=\sum_{i=1}^n|a_i|\rho(x_i)=\duality{f,\sum_{i=1}^na_i\delta(x_i)}\leq \frac{2}{\alpha}\norm{\sum_{i=1}^na_i\delta(x_i)}_{\lipfree{M}}.
$$
On the other hand,
$$
\norm{\sum_{i=1}^na_i\delta(x_i)}_{\lipfree{M}}\leq\sum_{i=1}^n|a_i|\norm{\delta(x_i)}_{\lipfree{M}}=\sum_{i=1}^n|a_i|\rho(x_i)=\norm{T\left(\sum_{i=1}^na_i\delta(x_i)\right)}_{\ell_1}.
$$
Since finitely supported elements are dense both in $\lipfree{M}$ and in $\ell_1(M\setminus\set{0})$, we have thus verified that $T$ defines an isomorphism from $\lipfree{M}$ onto $\ell_1(M\setminus\{0\})$.
Moreover, clearly $\sum_{i=1}^na_i\delta(x_i)\in\lipfree{M}^+$ if and only if $a_i\geq 0$ for all $1\leq i\leq n$, and Lemma \ref{lm:positive_sum_closure} implies that $T$ is order-preserving.

For (iv)$\Rightarrow$(iii), assume that there exists a linear order-preserving isomorphism $T$ from $\lipfree{M}$ onto $\ell_1(M\setminus\{0\})$. Then $T^{\ast\ast}$ is also a linear order-preserving isomorphism from $\bidualfree{M}$ onto $\ell_1(M\setminus\{0\})^{\ast\ast}$. But every element of $\ell_1(M\setminus\{0\})^{\ast\ast}$ is majorizable because $\ell_1(M\setminus\{0\})^{\ast\ast}$ is a Banach lattice, and majorizability is preserved by order-preserving isomorphisms.

The implication (iii)$\Rightarrow$(ii) follows immediately from Theorem \ref{th:minimal_majorant}.

Lastly, we prove (ii)$\Rightarrow$(i). Assume that every element of $\lipfree{M}$ is majorizable. We will first show that every point of $M$ is isolated, except possibly the base point. The argument follows a construction extracted from Example 3.24 in \cite{Weaver2}, where an interval augmented by a base point was considered. Let $x\in M\setminus\set{0}$ and suppose that it is not an isolated point of $M$. Then we can find sequences $(x_n)$ and $(y_n)$ of points of $B(x,r)$ where $r<\frac 13 d(x,0)$, all of them different from each other, that converge to $x$ and such that
$$
0<d(x_{n+1},y_{n+1})<\frac{1}{2}d(x_n,y_n)
$$
for every $n\in\NN$. For $n\in\NN$, let $f_n\in\Lip_0(M)$ be such that $0\leq f_n\leq 1$, $\supp(f_n)\subset B(x,2r)$, $f_n(x_k)=1$ for $k\leq n$, $f_n(x_k)=0$ for $k>n$, and $f_n(y_k)=0$ for all $k\in\NN$. Also let $h\in\Lip_0(M)$ be such that $h=1$ on $B(x,2r)$ and $h=0$ on $M\setminus B(x,3r)$. It is clear that $f_n\leq h$ for every $n\in\NN$. Now let $m=\sum_{n=1}^\infty (\delta(x_n)-\delta(y_n))$ and notice that $m\in\lipfree{M}$ as the series is absolutely convergent. By hypothesis there is a positive $m^+\in\lipfree{M}$ with $m^+\geq m$. Thus we get
$$
\duality{m^+,h}\geq\duality{m^+,f_n}\geq\duality{m,f_n}=n
$$
for every $n$. This is a contradiction. So $x$ must be isolated, as we claimed.

Suppose now, for contradiction, that $M$ is not radially discrete. We claim that we may find sequences $(x_n)$ and $(y_n)$ of points of $M\setminus\set{0}$, all of them different from each other, such that $d(x_n,y_n)\leq 2^{-n}d(x_n,0)$. Indeed, we proceed by induction. Choose any pair $(x_1,y_1)$ such that $d(x_1,y_1)<\frac{1}{2}d(x_1,0)$. Now suppose that the different points $x_1,y_1,\ldots,x_{n-1},y_{n-1}$ have been selected. Since all of them are isolated, we may choose $\delta>0$ such that none of them have any point of $M$ at distance less than or equal to $\delta$. We may also take $R>0$ such that all of them are contained in $B(0,R)$. Now let
$$
\alpha=\min\set{2^{-n},\frac{\delta}{\delta+R}}
$$
and, using the fact that $M$ is not radially discrete, choose a pair $(x_n,y_n)$ of different points such that $d(x_n,y_n)<\alpha\cdot d(x_n,0)$. It is easy to see that neither of them can be $0$. If $y_n$ was one of the points $x_k$ or $y_k$ with $k<n$ then we would have
$$
d(x_n,y_n)\leq\frac{\alpha}{1-\alpha}d(y_n,0)\leq\frac{\delta}{R}d(y_n,0)\leq\delta
$$
so it would have a point $x_n$ at distance less than or equal to $\delta$, which is impossible by construction. Analogously we check that $x_n$ cannot be one of the points $x_k$, $y_k$ with $k<n$. Thus the points $x_1,y_1,\ldots,x_n,y_n$ are all different, and the claim is proved.

Let
$$
m=\sum_{n=1}^\infty \frac{\delta(x_n)-\delta(y_n)}{d(0,x_n)}
$$
and notice that the norm of the $n$-th term is bounded by $2^{-n}$, so the series is absolutely convergent. For any $n\in\NN$, let $f_n\in\Lip_0(M)$ have bounded support and satisfy $0\leq f_n\leq 1$, $f_n(x_k)=1$ for $k\leq n$, $f_n(x_k)=0$ for $k>n$, and $f_n(y_k)=0$ for all $k\in\NN$. Then $\rho f_n\in\Lip_0(M)$ and $\rho f_n\leq \rho$ for every $n\in\NN$. Once again, by hypothesis we find a positive $m^+\in\lipfree{M}$ such that $m^+\geq m$. But then 
$$
\duality{m^+,\rho}\geq\duality{m^+,\rho f_n}\geq\duality{m,\rho f_n}=n
$$
for every $n$, a contradiction. This finishes the proof.
\end{proof}

Combining Theorem \ref{th:all_majorizable} with our previous results on the relation between majorizability and representability by measures, we immediately obtain a characterization of those metric spaces such that every element of $\lipfree{M}$ is represented by a Radon measure on $M$. Note that this extends \cite[Theorem 3.19]{Weaver2} which covers just the equivalence (i)$\Leftrightarrow$(ii) in the compact case.

\begin{corollary}
\label{cr:rud}
The following are equivalent:
\begin{enumerate}[label={\upshape{(\roman*)}}]
\item $M$ is radially uniformly discrete,
\item all elements of $\lipfree{M}$ are represented by Radon measures on $M$,
\item all elements of $\lipfree{M}$ are majorizable, and the base point of $M$ is isolated,
\item all elements of $\bidualfree{M}$ that avoid infinity are represented by Radon measures on $\ucomp{M}$,
\item all elements of $\bidualfree{M}$ are majorizable, and the base point of $M$ is isolated.
\end{enumerate}
\end{corollary}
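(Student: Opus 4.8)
The plan is to run the cycle of implications (i)$\Rightarrow$(iii)$\Rightarrow$(ii)$\Rightarrow$(i), using Theorem \ref{th:all_majorizable} as the engine and the already-available constructions for the base-point subtleties. None of the individual steps should require new ideas; this is a packaging result.

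For (i)$\Rightarrow$(iii): if $M$ is radially uniformly discrete then it is in particular radially discrete, so Theorem \ref{th:all_majorizable} gives that every element of $\lipfree{M}$ is majorizable; and since $M$ is also uniformly discrete, its base point is isolated, as observed right after Definition \ref{def:rud}. For (iii)$\Rightarrow$(ii): assuming all elements of $\lipfree{M}$ are majorizable and $0$ is isolated, the implication (i)$\Rightarrow$(ii) of Proposition \ref{pr:all_majorizable_are_radon_measures} says that every \emph{majorizable} element of $\lipfree{M}$ is represented by a Radon measure on $M$; by the first hypothesis this covers all of $\lipfree{M}$.

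For (ii)$\Rightarrow$(i): first, if $m=\opint\mu\in\lipfree{M}$ for a Radon measure $\mu$ on $M$, then $\int_M\rho\,d\abs{\mu}<\infty$ by Proposition \ref{pr:elements_induced_by_measure}, so $\opint\abs{\mu}\in\pos{\lipfree{M}}$ and $m\leq\opint\abs{\mu}$; hence (ii) forces every element of $\lipfree{M}$ to be majorizable, and Theorem \ref{th:all_majorizable} yields that $M$ is radially discrete. It remains to see that the base point is isolated, which together with radial discreteness gives radial uniform discreteness. If $0$ were not isolated, the positive element $m=\sum_{n=1}^\infty\delta(x_n)$ of $\lipfree{M}$ constructed in Remark \ref{rm:ambrosio_puglisi} would admit no representing \emph{finite} (i.e. Radon) measure on $M$, contradicting (ii). This closes the cycle.

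I do not expect any real obstacle here; the only points requiring a little care are invoking the right regularity/finiteness bookkeeping in the last paragraph (Radon means finite, and the Remark's functional is only $\sigma$-finitely representable) and making sure the equivalence ``base point isolated $\Leftrightarrow$ uniformly discrete'' is used only in the presence of radial discreteness, as stated in the text.
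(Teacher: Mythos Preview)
Your proposal is correct and essentially matches the paper's proof: both rely on Theorem \ref{th:all_majorizable} for the majorizability$\Leftrightarrow$radial-discreteness link, on the construction of Remark \ref{rm:ambrosio_puglisi} (packaged in the paper as Proposition \ref{pr:all_majorizable_are_radon_measures}) to force the base point to be isolated, and on the measure-to-majorizable observation at the start of Section \ref{subs:Characterizations of majorizable functionals}. The only cosmetic difference is that the paper proves (i)$\Leftrightarrow$(iii) and (iii)$\Leftrightarrow$(ii) separately (invoking Theorem \ref{th:majorizable_elements_not0} for (iii)$\Rightarrow$(ii)) rather than running your cycle, but the content is the same.
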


\begin{proof}
(i)$\Leftrightarrow$(iii)$\Leftrightarrow$(v): This follows immediately from Theorem \ref{th:all_majorizable} and Definition \ref{def:rud}.

(iii)$\Rightarrow$(ii): If $0$ is isolated in $M$ then it is not contained in the support of any element of $\lipfree{M}$, so this implication follows immediately from Theorem \ref{th:majorizable_elements_not0}.

(ii)$\Rightarrow$(iii): It follows from Proposition \ref{pr:all_majorizable_are_radon_measures} that the base point of $M$ is isolated. Moreover, all elements of $\lipfree{M}$ represented by Radon measures on $M$ are majorizable as explained at the beginning of Section \ref{subs:Characterizations of majorizable functionals}.

(v)$\Rightarrow$(iv): This is implied by Theorem \ref{th:majorizable_elements_bidual_not0}.

(iv)$\Rightarrow$(ii): It is enough to apply Theorem \ref{th:normal_measure}.
\end{proof}

\section{Concluding remarks}
\label{sec:conclusion}

Throughout this document, we have focused on the integral representation of functionals on the Lipschitz space $\Lip_0(M)$. However it is also possible to derive results for the related space $\Lip(M)$ of all real-valued Lipschitz functions on $M$, provided that $M$ is bounded. Indeed, suppose that $M$ is bounded, not necessarily pointed. We may assume without loss of generality that $\diam(M)\leq 2$ by scaling the metric on $M$, as this only modifies the Lipschitz norm by a constant multiplicative factor. Then every $f\in\Lip(M)$ is bounded and $\Lip(M)$ becomes a Banach space endowed with the norm $\norm{f}=\max\set{\lipnorm{f},\norm{f}_\infty}$. This is in fact a renorming of the space $\mathrm{BL}(M)$ considered in \cite{HiWo_2009}, but this norm has the following advantage: it allows $\Lip(M)$ to be identified with the space $\Lip_0(M^0)$, where $M^0=M\cup\set{0}$ is constructed by attaching an extra base point $0$ to $M$ and defining $d(x,0)=1$ for every $x\in M$ (see \cite[Proposition 2.13]{Weaver2}). The identification $\Lip(M)\equiv\Lip_0(M^0)$ is not just a linear isometry, it also preserves the algebra and order structure. Notice that $M^0$ is bounded and its base point is isolated, hence every continuous linear functional on $\Lip_0(M^0)$ avoids $0$ and infinity strongly. As a consequence, many of the results obtained in the previous sections can be expressed for $\Lip(M)$ in a simplified form.

To end the paper, let us mention some problems and questions that we have dealt with throughout this work but were not able to answer.

For any $m\in\lipfree{M}$, its support $\supp(m)$ is always a separable subset of $M$. Hence, by Corollary \ref{cr:extended_support_fm}, its extended support $\esupp{m}=\ucl{\supp(m)}$ is a separable subset of $\ucomp{M}$. One may ask whether this is true for any functional $\phi\in\dual{\Lip_0(M)}$. A partial step was done in Corollary \ref{cor:extended_support_separable}, where we show that $\esupp{\phi}\cap M$ is always a separable metric space.

\begin{question}
For a complete metric space $M$ and a functional $\phi\in\dual{\Lip_0(M)}$, is $\esupp{\phi}$ a separable subset of $\ucomp{M}$? Or does $\esupp{\phi}$ at least satisfy the countable chain condition?
\end{question}

Our other question involves Proposition \ref{pr:elements_induced_by_measure}. The argument used in the proof leads through Bochner integration, which is why we place separability, resp. regularity, requirements necessary for Bochner integrability. Note that this would be the case also if we wanted to apply directly Proposition \ref{pr:elements_induced_by_measure_bidual} combined with Lebesgue's dominated convergence theorem, so that we would only need to consider sequences of functions instead of nets.

\begin{question}
Can we remove the hypothesis that $M$ is separable or $\mu$ is regular from Proposition \ref{pr:elements_induced_by_measure}? That is, if $\mu$ is any Borel measure on $M$ such that $\int_M\rho\,d\abs{\mu}<\infty$, does it induce an element of $\lipfree{M}$?
\end{question}

Lastly, we have seen in Section 5.2 that the method for reducing the setting into bounded metric spaces presented in \cite{AACD_2021} was helpful for extending results concerning majorizability from functionals avoiding infinity to full generality. This was thanks to its order-preserving nature. It might be interesting to study also how the isomorphism defined in (\ref{eq:aacd_isomorphism}) affects extended supports (see \cite[Proposition 7.4]{AACD_2021} for a corresponding result in Lipschitz-free spaces), as this information might allow us to apply the reduction technique to other types of problems.

\section*{Acknowledgments}

The authors wish to thank Gilles Godefroy for his suggestion that we study the relationship between supports in $\lipfree{M}$ and $\meas{M}$, which was the original motivation for this work, and Daniele Puglisi for directing our attention to his related work. We also thank Marek C\'uth and Antonio Jos\'e Guirao for some very helpful corrections to our manuscript and Tomasz Kochanek for interesting discussions on the topic, as well as the anonymous referees who provided corrections and suggested improvements and connections between our work and other areas of functional analysis.

This research was carried out during visits of the first author to the Faculty of Information Technology at the Czech Technical University in Prague in 2020 and of the second author to the Universitat Polit\`ecnica de Val\`encia in 2019. Both authors are grateful for the opportunity and the hospitality.

R. J. Aliaga was partially supported by the Spanish Ministry of Economy, Industry and Competitiveness under Grant MTM2017-83262-C2-2-P. E. Perneck\'a was supported by the grant GA\v CR 18-00960Y of the Czech Science Foundation.


\end{document}